\long\def\forget#1{}
\def\quotes#1{{''#1''}} % Anführungsstriche
\def\INDENT{\hspace*{\parindent}}
\DeclareMathOperator{\Quot}{Quot}
\DeclareMathOperator{\Spec}{Spec}
\DeclareMathOperator{\id}{id}
\DeclareMathOperator{\rank}{rank}
\DeclareMathOperator{\coker}{coker}
\DeclareMathOperator{\im}{im}
\newcommand{\sep}{{\rm sep}}
\newcommand{\alg}{{\rm alg}}
\DeclareMathOperator{\weight}{wt}
\DeclareMathOperator{\supp}{supp}
\DeclareMathOperator{\Hom}{Hom}
\DeclareMathOperator{\End}{End}
\DeclareMathOperator{\Isog}{Isog}
\DeclareMathOperator{\QHom}{QHom}
\DeclareMathOperator{\QEnd}{QEnd}
\DeclareMathOperator{\QIsog}{QIsog}
\DeclareMathOperator{\Gal}{Gal}
\newtheoremstyle{statement}%
{2\parskip}% space above
{\parskip}% space below
{\itshape}% body font
{}% indent amount
{\bfseries}% theorem head font
{}% punctuation after head
{\newline}% space after theorem head
{}% 'theorem head spec'
\newtheoremstyle{notice}%
{2\parskip}% space above
{\parskip}% space below
{}% body font
{}% indent amount
{\itshape}% theorem head font
{}% punctuation after head
{\newline}% space after theorem head
{}% 'theorem head spec'
\newtheorem{all}{all}[section]
\theoremstyle{plain}
\newtheorem{Definition}[all]{Definition}
\newtheorem{Lemma}[all]{Lemma}
\newtheorem{Proposition}[all]{Proposition}
\newtheorem{Corollary}[all]{Corollary}
\newtheorem{Theorem}[all]{Theorem}
\theoremstyle{remark}
\theoremstyle{definition}
\newtheorem{Remark}[all]{Remark}
\newtheorem{Example}[all]{Example}
\newenvironment{suchthat}
{\setlength{\parskip}{0ex}%
\begin{enumerate}\setlength{\parskip}{0ex}\setlength{\itemsep}{0ex}%
}{%
\end{enumerate}%
%\vspace{-2ex}%
}%
\def\mal{^{\SSC\times}}
\def\II#1{{[\,#1\,]}}  % Polynomring Klammern, etwa 'A[t]'
\def\dual#1{{#1}^\vee} % dual (alternativ vielleicht '\widehat{#1}')
\def\Z{\mathbb{Z}} % Ganze Zahlen
\def\N{\mathbb{N}} % Natürliche Zahlen
\def\Q{\mathbb{Q}} % Natürliche Zahlen
\def\Ff{{\mathbb{F}}} % Körper F
\def\Fq{{\mathbb{F}_q}} % Körper Fq
\def\FqItI{{\Fq\II{t}}} % Fq[t]
\def\AA{\mathbb{A}} % Affiner Raum
\def\PP{\mathbb{P}} % Projektiver Raum
\def\O{{\cal O}} % Strukturgarbe
\def\I{{\cal J}} % Idealgarbe
\def\F{{\cal F}} % Modulgarbe
\def\G{{\cal G}} % Modulgarbe
\def\M{{\cal M}} % Hilfs-Modulgarbe
\def\HOM{{\cal H}\mbox{\it om}} % Hom-Garbe
\def\OC{{\O_C}} % Kurven und Strukturgarben...
\def\OS{{\O_S}}
\def\CS{{C_S}}
\def\OCS{{\O_\CS}}
\def\CL{{C_L}}
\def\OCL{{\O_\CL}}
\def\FF{{\underline{\F}}} % Abelsche Garbe
\def\GG{{\underline{\G}}} % Abelsche Garbe
\def\ZZ{{\underline{0}}} % Nullgarbe
\def\ulM{{\underline{M\!}\,}{}}
\def\ulN{{\underline{N\!}\,}{}}
\def\ulTM{{\underline{\,\,\wt{\!\!M}\!}\,}{}}
\def\ulTN{{\underline{\wt{N}\!}\,}{}}
\def\ulHM{{\underline{\hat M\!}\,}{}}
\def\ulHN{{\underline{\hat N\!}\,}{}}
\renewcommand{\mod}{\operatorname{mod}}
\def\P{{\mit\Pi}} % Pi's
\def\t{{\mit\tau}} % tau's
\def\s{{\sigma^\ast}} % sigma*
\def\TP{\widetilde{\P}} % Tilde...
\def\Tt{\tilde{\tau}}
\def\TF{\widetilde{\F}}
\def\TFF{\widetilde{\FF}}
\def\Tr{\tilde{r}}
\def\Td{\tilde{d}}
\def\HP{\widehat{\P}} % Hat...
\def\Ht{\hat{\tau}}
\def\HF{\widehat{\F}}
\def\Hr{\hat{r}}
\def\chr{\varepsilon} % Charakteristischer Punkt, im c.
\def\otimesidOCL#1{\!\otimes1} % '\otimes\id_\OCL(#1)'
\def\matr#1#2#3#4{\left(\genfrac{}{}{0pt}{}{#1}{#2}\,\genfrac{}{}{0pt}{}{#3}{#4}\right)}
\def\vect#1#2{\left(\genfrac{}{}{0pt}{}{#1}{#2}\right)}
\def\tvect#1#2{{\textstyle\big(\genfrac{}{}{0pt}{}{#1}{#2}\big)}}
\def\TA{\tilde{A}}
\def\Lsep{{L^{\sep}}}
\def\liminv#1{\displaystyle\lim_{\stackrel{\textstyle\longleftarrow}{#1}}}
\def\CLa{{C'_L}} % CL affin
\def\Av{{A_v}} % Vervollständigung Av
\def\AvG{{\Av\II{G}}} % Av[G]
\def\Qv{{Q_v}} % Vervollständigung Qv
\def\QvG{{\Qv\II{G}}} % Qv[G]
\def\AxL{{\tilde A\otimes_\Fq L}} % A x L
\def\VvFF{{V_v\FF}} % Tate module VvFF
\def\smallexact#1#2#3#4#5#6#7#8#9{%
{\,#1\rightarrow#3\rightarrow#5%
\ifthenelse{\equal{#7}{}}{}{\rightarrow#7% 
\ifthenelse{\equal{#9}{}}{}{\rightarrow#9}}%
\,}}
\def\exact#1#2#3#4#5#6#7#8#9{%
{#1\longrightarrow#3\longrightarrow#5%
\ifthenelse{\equal{#7}{}}{}{\longrightarrow#7% 
\ifthenelse{\equal{#9}{}}{}{\longrightarrow#9}}%
\,}}
\def\bigexact#1#2#3#4#5#6#7#8#9{%
\begin{CD}% 
#1 @>{#2}>> #3 @>{#4}>> #5 
\ifthenelse{\equal{#7}{}}{}{@>{#6}>> #7 
\ifthenelse{\equal{#9}{}}{}{@>{#8}>> #9 }}
\end{CD}%
}
\newcommand{\DS}{\displaystyle}
\newcommand{\TS}{\textstyle}
\newcommand{\SC}{\scriptstyle}
\newcommand{\SSC}{\scriptscriptstyle}
\let\setminus\smallsetminus
\DeclareMathOperator{\GL}{GL}
\DeclareMathOperator{\Graph}{Graph}
\def\isoto{\stackrel{}{\mbox{\hspace{1mm}\raisebox{+1.4mm}{$\SC\sim$}\hspace{-3.5mm}$\longrightarrow$}}}
\newcommand{\longto}{\longrightarrow}
\newcommand{\shortonto}{\mbox{\mathsurround=0pt \;$\to \hspace{-0.8em} \to$\;}}
\newcommand{\into}{ \mbox{\mathsurround=0pt \;\raisebox{0.63ex}{\small $\subset$} \hspace{-1.07em} $\longrightarrow$\;}}
\newcommand{\es}{\enspace}
\newcommand{\et}{{\rm \acute{e}t}}
\newcommand{\nil}{{\rm nil}}
\DeclareMathOperator{\rk}{rk}
\newcommand{\wt}{\widetilde}
\newcommand{\wh}{\widehat}
\newcommand{\Fa}{{\mathfrak{a}}}
\newcommand{\ulK}{{\ul K}}
\newcommand{\BF}{{\mathbb{F}}}
\newcommand{\CM}{{\cal{M}}}
\newcommand{\ul}[1]{{\underline{#1}}}
\newcommand{\dbl}{{\mathchoice{\mbox{\rm [\hspace{-0.15em}[}}
                              {\mbox{\rm [\hspace{-0.15em}[}}
                              {\mbox{\scriptsize\rm [\hspace{-0.15em}[}}
                              {\mbox{\tiny\rm [\hspace{-0.15em}[}}}}
\newcommand{\dbr}{{\mathchoice{\mbox{\rm ]\hspace{-0.15em}]}}
                              {\mbox{\rm ]\hspace{-0.15em}]}}
                              {\mbox{\scriptsize\rm ]\hspace{-0.15em}]}}
                              {\mbox{\tiny\rm ]\hspace{-0.15em}]}}}}
\newcommand{\dpl}{{\mathchoice{\mbox{\rm (\hspace{-0.15em}(}}
                              {\mbox{\rm (\hspace{-0.15em}(}}
                              {\mbox{\scriptsize\rm (\hspace{-0.15em}(}}
                              {\mbox{\tiny\rm (\hspace{-0.15em}(}}}}
\newcommand{\dpr}{{\mathchoice{\mbox{\rm )\hspace{-0.15em})}}
                              {\mbox{\rm )\hspace{-0.15em})}}
                              {\mbox{\scriptsize\rm )\hspace{-0.15em})}}
                              {\mbox{\tiny\rm )\hspace{-0.15em})}}}}
\def\?{\ 
???\ \immediate\write16{}
\immediate\write16{Warning: There was still a question mark . . . }
\immediate\write16{}}
\newcommand{\BHBPropAAA}{7.3} % \ref{Prop3.4.1}
\newcommand{\BHBThmBBB}{6.11}  % \ref{Thm3.8}
\begin{document}
% =============================================================================

\author{Matthias Bornhofen, Urs Hartl%
\footnote{Corresponding author: Urs Hartl, Institute of Mathematics, University of Muenster, Einsteinstr.\ 62, D--48149 Muenster, Germany, http:/\!/www.math.uni-muenster.de/u/urs.hartl/ }}

\title{Pure Anderson Motives and Abelian $\tau$-Sheaves}

% In the final version I might want to fix the date:
%\date{August 30, 2004} 

\maketitle

\begin{abstract}
\noindent
Pure $t$-motives were introduced by G. Anderson as higher dimensional generalizations of Drinfeld modules, and as the appropriate analogs of abelian varieties in the arithmetic of function fields. In order to construct moduli spaces for pure $t$-motives the second author has previously introduced the concept of abelian $\tau$-sheaf. In this article we clarify the relation between pure $t$-motives and abelian $\tau$-sheaves. We obtain an equivalence of the respective quasi-isogeny categories.
 Furthermore, we develop the elementary theory of both structures regarding morphisms, isogenies, Tate modules, and local shtukas. The later are the analogs of $p$-divisible groups.

\noindent
{\bfseries Mathematics Subject Classification (2000)\/}: 
11G09,  % Drinfeld Modules, higher dimensional motives
(13A35) % Characteristic $p$ methods (Frobenius endomorphism) ...
%14G20,  % Local ground fields
%14G22  % Rigid analytic geometry
%(14G35) % Modular and Shimura varieties
%16K20)  % Finite-dimensional Division rings and semisimple Artin rings
\end{abstract}

\tableofcontents

%%%%%%%%%%%%%%%%%%%%%%%%%%%%%%%%%%%%%%%%%%%%%%%%%%%%%%%%%%%%% 
%% 
%%     Introduction
%% 
%%%%%%%%%%%%%%%%%%%%%%%%%%%%%%%%%%%%%%%%%%%%%%%%%%%%%%%%%%%%% 

\thispagestyle{empty}

\setcounter{section}{-1}
\section{Introduction}

Important objects in the arithmetic of number fields are elliptic curves and abelian varieties. Their theory has been vastly developed in the last two centuries. For the arithmetic of function fields Drinfeld \cite{Drinfeld,Drinfeld3} has invented the concepts of \emph{elliptic modules} (today called \emph{Drinfeld modules}) and \emph{elliptic sheaves} in the 1970's, both as the analogs of elliptic curves. Since then, the arithmetic of function fields has evolved into an equally rich parallel world to the arithmetic of number fields.
As for higher dimensional generalizations of elliptic modules or sheaves there are different notions, for instance Anderson's \emph{abelian $t$-modules} and \emph{$t$-motives} \cite{Anderson}, \emph{Drinfeld-Anderson shtukas} \cite{Drinfeld5}, or \emph{abelian $\tau$-sheaves} which were introduced by the second author in \cite{Hl} in order to construct moduli spaces for pure $t$-motives. The generalization of (pure) $t$-motives to (pure) $A$-motives, already immanent in Anderson's work was elaborated in \cite{Heiden}. In the present article we advertise the point of view that pure $A$-motives (which we also call \emph{pure Anderson motives}) and abelian $\tau$-sheaves are the appropriate analogs for abelian varieties. This is also supported by the results in \cite{Hl} and \cite{BH_B}. It is due to the fact that both structures have the feature of purity built in as opposed to general $t$-motives or Drinfeld-Anderson shtukas. For example non-zero morphisms exist only between pure $A$-motives or abelian $\tau$-sheaves of the same weight (see \ref{PROP.1} and \ref{Cor2.9b} in the body of the article).

There is a strong relation between pure $A$-motives, and abelian $\tau$-sheaves. To give their definition let $C$ be a connected smooth projective curve over $\Ff_q$, let $\infty\in C(\Ff_q)$ be a fixed point, and let $A=\Gamma(C\setminus\{\infty\},\O_C)$. For a field extension $L\supset\Fq$ let $\s$ be the endomorphism of $A_L:=A\otimes_\Fq L$ sending $a\otimes b$ to $a\otimes b^q$ for $a\in A$ and $b\in L$. Let $c^\ast:A\to L$ be an $\Fq$-homomorphism and let $J=(a\otimes 1-1\otimes c^\ast(a):a\in A)\subset A_L$. A \emph{pure $A$-motive $\ulM=(M,\tau)$ of rank $r$, dimension $d$ and characteristic $c^\ast$} consists of a locally free $A_L$-module $M$ of rank $r$ and an $A_L$-homomorphism $\t:\s M:= M\otimes_{A_L,\s}A_L\to M$ with $\dim_L\coker\t=d$ and $J^d\cdot\coker\t=0$, such that $M$ possesses an extension to a locally free sheaf $\CM$ on $C_L:=C\times_\Fq L$ on which $\t^l:(\s)^l\CM\to\CM(k\cdot\infty)$ is an isomorphism near $\infty$ for some positive integers $k$ and $l$. The last condition is the purity condition. The ratio $\frac{k}{l}$ equals $\frac{d}{r}$ (see \ref{Prop1'.1b}) and is called the \emph{weight of $\ulM$}. Anderson's definition of pure $t$-motives~\cite{Anderson} is recovered by setting $C=\PP^1_\Fq$ and $A=\Fq[t]$. 

In addition to this data an \emph{abelian $\tau$-sheaf} consists of a sequence of sheaves $\CM_i\subsetneq\CM_{i+1}$ lying between $\CM_0:=\CM$ and $\CM_l:=\CM(k\cdot\infty)$ whose stalks at $\infty$ are the images of $\t^i$ for $i=0,\ldots,l$ (see \ref{Def1.1}). The quasi-isogeny categories of pure $A$-motives and abelian $\tau$-sheaves are equivalent (\ref{PropX.1}, \ref{Cor2.9d}). An abelian $\tau$-sheaf of dimension $d=1$ is the same as an elliptic sheaf. In this sense abelian $\tau$-sheaves are higher dimensional elliptic sheaves. 
The concept of abelian $\tau$-sheaves was introduced by the second author~\cite{Hl} for the following reasons. In contrast to pure $A$-motives, abelian $\tau$-sheaves possess nice moduli spaces which are Deligne-Mumford stacks locally of finite type and separated over $C$; see~\cite{Hl}. Moreover, let $c:\Spec L\to \Spec A\subset C$ be the morphism induced by $c^\ast$. The notion of abelian $\tau$-sheaves is still meaningful if $c:\Spec L\to C$ is not required to factor through $\Spec A$. Indeed, the possibility to have $\im(c)=\infty$ was crucial for the uniformization of the moduli spaces of abelian $\tau$-sheaves and the derived results on analytic uniformization of pure $A$-motives in \cite{Hl}. For these reasons we develop the theory of abelian $\tau$-sheaves and pure $A$-motives simultaneously in the present article.

Let $Q$ be the function field of $C$. Then the endomorphism algebra of a pure $A$-motive or an abelian $\tau$-sheaf is a finite dimensional $Q$-algebra (\ref{PropT.2}, \ref{ThmT.3}). 
In contrast the endomorphism algebra of an abelian variety is a finite dimensional algebra over the rational numbers. Through this fact pure $A$-motives and abelian $\tau$-sheaves belong to the arithmetic of function fields. 
We further investigate their \mbox{(quasi-)}isogenies. An isogeny $f:(M,\tau)\to(M',\tau')$ between pure $A$-motives of the same characteristic is an injective morphism $f:M\to M'$ with torsion cokernel such that $f\circ\tau=\tau'\circ\s f$. We show that in fact $\coker f$ is annihilated by an element of $A$ (as opposed to $A_L$); see \ref{Cor1.11b}. Therefore every isogeny possesses a dual (\ref{Cor1.11b}) and the group of quasi-isogenies equals the group of units in the endomorphism $Q$-algebra (\ref{QISOG-GROUP}). We give various other descriptions for \mbox{(quasi-)}isogenies (\ref{PROP.1.42A}, \ref{PropAltDescrQHom}). Also we prove that the existence of a separable isogeny defines an equivalence relation on pure $A$-motives over a finite field (\ref{ThmW5.2}), but not over an infinite field (\ref{Ex8.10}). 

We develop the theory of Tate modules and local shtukas. The later are the analogs of Dieudonn\'e modules for (the $p$-divisible groups of) abelian varieties, except that $p$-divisible groups are only useful for abelian varieties in characteristic $p$, whereas the local shtukas at any place of $Q$ are important for the investigation of abelian $\tau$-sheaves and pure $A$-motives. We prove the standard facts on the relation between Tate modules and isogenies (\ref{Prop2.7b}, \ref{Prop1.5b}, \ref{Prop1.5c}). Also we use local shtukas to give a proof of the fact that $\Hom(\ulM,\ulM')$ is a projective $A$-module of rank $\le rr'$. 
In a continuation of this article we study in \cite{BH_B} the behavior of pure $A$-motives over finite fields and obtain answers which are similar to Tate's famous results \cite{Tat} for abelian varieties.
There is a two in one version \cite{BH} of the present article and \cite{BH_B}
on the arXiv.

%Let us briefly describe the organization of the article. \?

\subsection*{Notation}

In this article we denote by

\vspace{2mm}
\noindent
\begin{tabular}{@{}p{0.25\textwidth}@{}p{0.75\textwidth}@{}}
$\Fq$& the finite field with $q$ elements and characteristic $p$, \\
$C$& a smooth projective geometrically irreducible curve over $\Fq$, \\
$\infty\in C(\Fq)$& a fixed $\Fq$-rational point on $C$, \\
$A = \Gamma(C\setminus\{\infty\},\OC)$& the ring of regular functions on $C$ outside $\infty$, \\
$Q = \Fq(C)= \Quot(A)$& the function field of $C$, \\
$Q_v$& the completion of $Q$ at the place $v\in C$, \\
$A_v$& the ring of integers in $Q_v$. For $v\ne\infty$ it is the completion of $A$ at $v$.\\
$\BF_v$ & the residue field of $A_v$. In particular $\BF_\infty\cong\Fq$.
\end{tabular}

\vspace{2mm}
\noindent
For a field $L$ containing $\Fq$ we write

\vspace{2mm}
\noindent
\begin{tabular}{@{}p{0.25\textwidth}@{}p{0.75\textwidth}@{}}
$C_L=C\times_{\Spec\Fq}\Spec L$,\\[1mm]
$A_L=A\otimes_\Fq L$,\\[1mm]
$Q_L=Q\otimes_\Fq L$,\\[1mm]
$A_{v,L}=A_v\wh\otimes_\Fq L$ & for the completion of $\O_{C_L}$ at the closed subscheme $v\times\Spec L$,\\[1mm]
$Q_{v,L}=A_{v,L}[\frac{1}{v}]$.& Note that this is not a field if $\BF_v\cap
L\supsetneq\Fq$. Nevertheless, it is always a finite product of fields.\\[1mm]
\end{tabular}
\begin{tabular}{@{}p{0.25\textwidth}@{}p{0.75\textwidth}@{}}
${\rm Frob}_q:L\to L$ & for the $q$-Frobenius endomorphism mapping $x$ to $x^q$,\\[1mm]
$\sigma = \id_C\times\Spec({\rm Frob}_q)$& for the endomorphism of $C_L$ which acts as the identity on the points and on $\O_C$ and as the $q$-Frobenius on $L$,\\
$\s$ & for the endomorphisms induced by $\sigma$ on all the above rings. For instance $\s(a\otimes b)=a\otimes b^q$ for $a\in A$ and $b\in L$.\\
$\s M=M\otimes_{A_L,\s}A_L$ & for an $A_L$-module $M$ and similarly for the other rings.
\end{tabular}

\forget{
\vspace{2mm}
\noindent
All schemes, as well as their products and morphisms, are supposed to be over $\Spec\Fq$. Let $S$ be a scheme. We denote by

\vspace{2mm}
\noindent
\begin{tabular}{@{}p{0.25\textwidth}@{}p{0.75\textwidth}@{}}
$\sigma_S: S\rightarrow S$& its $q$-Frobenius endomorphism which acts identically on points of $S$ and as the $q$-power map on the structure sheaf $\OS$, \\
$C_S = C\times S$& the base extension of $C$ from $\Spec\Fq$ to $S$, \\
$\sigma = \id_C\times\sigma_S$& the endomorphism on $C_S$ which acts identically on $C$ and as the $q$-Frobenius on $S$. 
\end{tabular}
}

\vspace{2mm}
\noindent
For a divisor $D$ on $C$ we denote by $\O_{C_L}(D)$ the invertible sheaf on $C_L$ whose sections $\varphi$ have divisor $(\varphi)\ge-D$. 
For a coherent sheaf $\F$ on $C_L$ we set $\F(D) := \F\otimes_{\O_{C_L}}\O_{C_L}(D)$. This notation applies in particular to the divisor $D = n\cdot\infty$ for $n\in\Z$.

%\pagebreak

\vspace{1cm}

% =============================================================================

\section{Pure $A$-motives}

Let $L$ be a field extension of $\Fq$ and fix an $\Fq$-homomorphism $c^\ast:A\to L$. Let $J\subset A_L$ be the ideal generated by $a\otimes 1-1\otimes c^\ast(a)$ for all $a\in A$, and let $r$ and $d$ be non-negative integers. Pure $A$-motives were introduced by G.\ Anderson~\cite{Anderson} and called \emph{pure $t$-motives} in the case where $A=\BF_p[t]$. In the slightly more general case we define:

\begin{Definition}[pure $A$-motives] \label{Def1'.1}
A \emph{pure $A$-motive} (or also \emph{pure Anderson motive}) \emph{$\ulM=(M,\tau)$ of rank $r$, dimension $d$, and characteristic $c^\ast$ over $L$} consists of a locally free $A_L$-module $M$ of rank $r$ and an injective $A_L$-module homomorphism $\tau:\s M\to M$ such that
\begin{suchthat}
\item 
the cokernel of $\t$ is an $L$-vector space of dimension $d$ and annihilated by $J^d$, and 
\item 
$M$ extends to a locally free sheaf $\CM$ of rank $r$ on $C_L$ such that for some positive integers $k,l$ the map $\tau^l:=\tau\circ\s(\tau)\circ\ldots\circ(\s)^{l-1}(\tau):(\s)^l M\to M$ induces an isomorphism $(\s)^l\CM_\infty\to\CM(k\cdot\infty)_\infty$ of the stalks at $\infty$. 
\end{suchthat}
We call $\chr:=\ker c^\ast\in \Spec A$ the \emph{characteristic point} of $\ulM$. We say that $\ulM$ has \emph{finite characteristic} (respectively \emph{generic characteristic}) if $\chr$ is a closed (respectively the generic) point. For $r>0$ we call $\weight(M,\tau):=\frac{k}{l}$ the \emph{weight} of $(M,\tau)$.
\end{Definition}

\noindent {\it Remark.} \nopagebreak 
Phrased in the language of modules our definition of purity is equivalent to the following due to Anderson~\cite[1.9]{Anderson}. Let $z$ be a uniformizing parameter of $A_{\infty,L}$. The $A$-motive is pure if and only if there exists an $A_{\infty,L}$-lattice $\hat\CM_\infty$ inside $M\otimes_{A_L}Q_{\infty,L}$ and positive integers $k,l$ such that $z^k\tau^l$ induces an isomorphism $(\s)^l\hat\CM_\infty\to\hat\CM_\infty$. This follows from the fact that $\hat\CM_\infty$ determines a unique extension $\CM$ of $M$ as above.

\begin{Proposition}\label{Prop1'.1b}
If $\ulM$ is a pure $A$-motive of rank $r>0$ then $\weight\ulM=\frac{d}{r}$\,. In particular $\dim\ulM>0$.
\end{Proposition}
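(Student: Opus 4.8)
The plan is to compute the effect of $\tau^l$ on top exterior powers and compare lengths at $\infty$ with lengths coming from the cokernel condition. Set $n := \dim_L \coker\tau$, which by definition equals $d$. First I would observe that $\coker\tau$ being a finite-dimensional $L$-vector space means $\tau:{}^\sigma\! M\to M$ is an isomorphism after inverting finitely many maximal ideals of $A_L$; more precisely, taking $r$-th exterior powers over $A_L$, the map $\Lambda^r\tau:\Lambda^r({}^\sigma\! M)\to\Lambda^r M$ of invertible $A_L$-modules has cokernel of $L$-dimension equal to $\dim_L\coker\tau = d$. (This is the standard fact that for an injection of free modules of the same rank, the length of the cokernel equals the length of the cokernel of the induced map on determinants.) Iterating, $\Lambda^r(\tau^l):\Lambda^r(({}^\sigma)^l M)\to\Lambda^r M$ has cokernel of $L$-dimension $ld$, because $\coker\tau^l$ has a filtration with graded pieces ${}^\sigma$-twists of $\coker\tau$, and the twist ${}^\sigma$ does not change $L$-dimension (it is base change along the automorphism $\mathrm{Frob}_q$ of $L$, but $A_L$ is an $L$-algebra and $\dim_L$ is unchanged).

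Next I would pass to the curve and localize at $\infty$. The invertible sheaf $\det\CM = \Lambda^r\CM$ on $C_L$ satisfies $\Lambda^r(\CM(k\cdot\infty)) = (\det\CM)(rk\cdot\infty)$. Condition (ii) says $\tau^l$ induces an isomorphism $(\s)^l\CM_\infty\isoto\CM(k\cdot\infty)_\infty$ of stalks at $\infty$; taking $r$-th exterior powers over $\O_{C_L,\infty}$ gives an isomorphism $(\det\CM)_\infty\isoto((\det\CM)(rk\cdot\infty))_\infty$ induced by $\Lambda^r(\tau^l)$. Thus, viewing $\Lambda^r(\tau^l)$ as a rational map between the invertible sheaves $\det(({}^\sigma)^l\CM)$ and $\det\CM$ on $C_L$, it is an isomorphism in a neighbourhood of $\infty$ up to the twist by $rk\cdot\infty$, while away from $\infty$ — that is, over $\Spec A_L$ — its cokernel has total $L$-length $ld$ by the previous paragraph.

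Now I would combine the two local computations via a degree/length count for the rational section $\Lambda^r(\tau^l)$ of the line bundle $\mathcal{L} := \det\CM \otimes \det(({}^\sigma)^l\CM)^{\vee}$ on $C_L$. On one hand $\mathrm{Frob}_q^*$ preserves degrees of line bundles on $C_L$ (it is an $\Fq$-linear, hence flat, base change in the second factor), so $\deg\det(({}^\sigma)^l\CM) = \deg\det\CM$ and hence $\deg\mathcal{L} = 0$. On the other hand, a nonzero rational section of a line bundle of degree $0$ on the (relative) curve $C_L$ has zeros and poles summing to $0$: the poles of $\Lambda^r(\tau^l)$ occur only at $\infty$ with total order $rk$ (from the $\infty$-stalk computation, since the twist is by $rk\cdot\infty$ and $\tau^l$ itself is regular there by (ii)), and its zeros occur only over $\Spec A_L$ with total order $ld$ (the length of the cokernel computed above). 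Hence $ld = rk$, i.e. $\frac{k}{l} = \frac{d}{r}$, which is $\weight\ulM = \frac{d}{r}$. Since $r>0$ and $k,l>0$ by hypothesis, this forces $d>0$, giving $\dim\ulM>0$.

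The main obstacle I anticipate is making the ``degree $0$ forces (zeros) $=$ (poles)'' argument rigorous over the base $L$ rather than over an algebraically closed field: one must work with the relative curve $C_L\to\Spec L$, interpret ``order of zero/pole'' as the $L$-length of the appropriate torsion sheaf, and check that the degree of a line bundle on $C_L$ (defined as the $L$-length interpretation of $\chi$, or via $\deg$ on each geometric fibre) is additive and $\mathrm{Frob}_q$-invariant. Concretely I would reduce to the geometric situation by base-changing to $L^{\alg}$ — the weight is unchanged, all the lengths get multiplied by $[L':L]$-type factors that cancel — or, cleanly, note $\chi(C_L,\mathcal{L}) = \chi(C_L,\mathcal{O})$ when $\deg\mathcal{L}=0$ and compare $\chi$ before and after twisting by the effective divisors supported at $\infty$ and over $\Spec A_L$ respectively, using the exact sequences $0\to\mathcal{L}\xrightarrow{\Lambda^r(\tau^l)}\mathcal{L}(\text{poles})\to(\text{torsion})\to 0$. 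Everything else is bookkeeping with exterior powers and the multiplicativity of $\coker$ along the ${}^\sigma$-twist.
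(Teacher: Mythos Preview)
Your proof is correct and follows essentially the same approach as the paper: compare the degree gain $kr$ from the twist by $k\cdot\infty$ against the $L$-length $ld$ of $\coker\tau^l$, using that $\deg$ is $\sigma^\ast$-invariant. The paper's version is slightly more streamlined in that it works directly with degrees of the rank-$r$ sheaves rather than first passing to determinant line bundles, and the obstacle you anticipate (making $\sigma$-invariance of degree rigorous over $L$) is exactly what the paper isolates as a standalone lemma immediately following the proposition.
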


\begin{proof}
Using \ref{DEGREE-LEMMA} below we compute
\[
k\,r \; = \; \deg \CM(k\cdot\infty)-\deg\CM
\; = \; \deg\CM(k\cdot\infty)-\deg(\s)^l\CM
\; = \;\dim_L\coker(\tau^l) \;=\; l\,d\,.\qedhere
\]
\end{proof}

\begin{Lemma}\label{DEGREE-LEMMA}
Let\/ $\G$ be a coherent sheaf on $C_L$. Then $\deg\s\G = \deg\G$.
\end{Lemma}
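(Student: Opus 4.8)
The plan is to reduce everything to the additivity of degree in short exact sequences combined with the behaviour of degree on the two basic types of building blocks: line bundles and skyscraper (torsion) sheaves. First I would recall that $\sigma = \id_C\times\Spec(\mathrm{Frob}_q)$ is a \emph{finite flat} morphism of $C_L$ to itself of degree one: it is the identity on the underlying topological space and on $\O_C$, acting only as the $q$-Frobenius on the coefficient field $L$. In particular $\sigma$ is an isomorphism of schemes when $L$ is perfect, and in general it is a homeomorphism that is finite locally free; the key point is that $\sigma$ does not change the underlying curve $C$, only twists the $L$-structure, so it preserves the rank of a locally free sheaf and the length of a torsion sheaf at each closed point.

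The main steps I would carry out:
\begin{suchthat}
\item Since $C_L$ is a smooth projective curve over $L$, every coherent sheaf $\G$ has a finite filtration whose graded pieces are either torsion sheaves supported at closed points or line bundles (e.g.\ pass to the torsion/locally-free decomposition $0\to\G_{\mathrm{tors}}\to\G\to\G/\G_{\mathrm{tors}}\to0$, then filter the locally free quotient by line subbundles). Because $\s(-) = (-)\otimes_{\O_{C_L},\s}\O_{C_L}$ is a base change along the flat morphism $\sigma$, it is exact, so it commutes with such filtrations; and degree is additive in short exact sequences. Hence it suffices to treat the two cases separately.
\item For a torsion sheaf $\G$ supported at a closed point $x$: $\deg\G = \sum_x \dim_L(\G_x)$. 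Pulling back along $\sigma$ permutes (in fact fixes, topologically) the closed points and, since $\sigma$ is finite of degree one over $L$ — more concretely, $\s$ acts as an abstract field automorphism on $L$ up to the Frobenius twist — one gets $\dim_L(\s\G)_x = \dim_L(\G_x)$ for each $x$. Thus $\deg\s\G = \deg\G$.
\item For a line bundle $\G = \O_{C_L}(D)$: one checks directly that $\s\O_{C_L}(D)\cong\O_{C_L}(D)$, because $D$ is a divisor on $C$ (pulled back from $\Spec\Fq$-structure) and $\sigma$ fixes $C$ and $\O_C$. More generally for any invertible sheaf, $\sigma$ being an $L$-semilinear twist of the identity on the geometric curve preserves the degree. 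Hence $\deg\s\G = \deg\G$ in this case as well.
\end{suchthat}

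Combining the three steps via additivity gives the Lemma for arbitrary coherent $\G$. The step I expect to be the main (mild) obstacle is making precise, in the non-perfect case $L\supsetneq L^q$, that the Frobenius twist genuinely preserves $L$-dimensions of stalks and degrees of line bundles: here $\sigma$ is not an isomorphism of $L$-schemes, so one must argue using that it is finite locally free of degree one, or equivalently compute on an affine cover using that $\mathrm{Frob}_q\colon L\to L$ is injective and that tensoring up along it along the $\O_C$-structure changes neither rank nor length. Everything else is the routine dévissage above.
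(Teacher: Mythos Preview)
Your d\'evissage is close in spirit to the paper's, but there is a real gap in step~3. The line subbundles produced by filtering a locally free sheaf on $C_L$ are arbitrary line bundles on $C_L$, that is $\O_{C_L}(D')$ for divisors $D'$ on $C_L$; they are \emph{not} in general of the form $\O_{C_L}(D)$ with $D$ a divisor on $C$. For such $D'$ one has $\s\O_{C_L}(D')\cong\O_{C_L}(\sigma^{-1}D')$, and $\sigma^{-1}D'\ne D'$ in general (on $\AA^1_L$ with $L$ perfect, $\sigma$ sends the closed point $(t-b)$ to $(t-b^{1/q})$). So you still owe an argument that $\deg\sigma^{-1}D'=\deg D'$. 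The natural fix is to embed the line bundle into some $\O_{C_L}(n\cdot\infty)$ with torsion cokernel and invoke your step~2; but this is exactly what the paper does directly for locally free sheaves of any rank, which makes the preliminary filtration by line bundles superfluous.

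Two auxiliary claims are also incorrect and should be dropped. First, $\sigma$ is \emph{not} the identity on the topological space of $C_L$ (the example above already shows this); it is only a universal homeomorphism. Second, $\sigma$ is \emph{not} ``finite locally free of degree one'' when $L$ is imperfect: $\mathrm{Frob}_q\colon L\to L$ makes $L$ into an $L$-module of rank $[L:L^q]$, which may be $>1$ or even infinite, so $\sigma$ need not be finite at all. The paper sidesteps both issues by using only that $\sigma$ is flat (base change of the flat $\sigma_L$) together with the elementary fact $\dim_L(V\otimes_{L,\mathrm{Frob}_q}L)=\dim_L V$: for torsion $\G$ this gives $\deg\s\G=\dim_L pr_\ast\s\G=\dim_L\sigma_L^\ast pr_\ast\G=\dim_L pr_\ast\G=\deg\G$ via flat base change, and then an arbitrary locally free $\G$ is embedded into $\O_{C_L}(D)^{\oplus n}$ (with $D$ on $C$, so genuinely $\sigma$-invariant) with torsion cokernel.
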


\begin{proof}
Let $pr:C_L\to \Spec L$ be the projection onto the second factor. If $\G=\OCL^{\!\oplus n}$ for some $n\in\N$, then $\s\G = \OCL^{\!\oplus n}$. If $\G=\OCL(D)$ for some divisor $D$ on $C$, then $\s\G=\OCL(\sigma^{-1}D)=\OCL(D)$. If $\G$ is a torsion sheaf we get
\[
\deg\s\G \,=\, \dim_L pr_\ast\s\G \,=\, \dim_L \s pr_\ast\G \,=\, \dim_L pr_\ast\G \,=\, \deg\G\,.
\]
Let now $\G$ be a locally free sheaf of rank $n$. Choose an embedding $f:\,\G\rightarrow\OCL(D)^{\oplus n}$ for some divisor $D$ on $C$ with $\coker f$ being a torsion sheaf. Since $\sigma=\id_C\times\sigma_L$ is flat being the base change of the flat morphism $\sigma_L:\Spec L\to\Spec L$, we have 
\[
\begin{CD} 
{0} @>>> {\G}   @>{f   }>> {\OCL(D)^{\oplus n}}   @>>> {\coker f}   @>>> {0} \\[1ex]
{0} @>>> {\s\G} @>{\s f}>> {\s\OCL(D)^{\oplus n}} @>>> {\s\coker f} @>>> {0} \\[1ex]
\end{CD}
\]
and therefore $\deg\s\G = \deg\G$ due to the additivity of the degree in exact sequences. Finally, if $\G$ is an arbitrary coherent sheaf, then
\[
\exact{0}{}{\G'}{}{\G}{}{\G''}{}{0}
\]
for some torsion sheaf $\G'$ and some locally free coherent sheaf $\G''$ because this sequence exists locally due to the fact that all local rings are principal ideal domains. Thus $\deg\s\G = \deg\G$, as desired.
\end{proof}

\begin{Proposition} \label{PropX.3}
If $(M,\tau)$ is a pure $A$-motive over $L$ then one can find an extension $\CM$ as above with $k$ and $l$ relatively prime.
\end{Proposition}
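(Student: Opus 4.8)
The plan is to reduce the statement to a local assertion at $\infty$. Since $\BF_\infty\cong\Fq$, the ring $A_{\infty,L}$ is a complete discrete valuation ring; we may take a uniformizing parameter $z\in A_\infty$ of it, which is then fixed by $\s$, and $Q_{\infty,L}$ is a field. Write $V:=M\otimes_{A_L}Q_{\infty,L}$, an $r$-dimensional $Q_{\infty,L}$-vector space; as $\coker\tau$ is a torsion $A_L$-module, $\tau$ induces a $Q_{\infty,L}$-linear isomorphism $\tau_V\colon\s V\to V$. As recalled in the remark following Definition~\ref{Def1'.1}, giving an extension $\CM$ of $M$ to a locally free sheaf of rank $r$ on $C_L$ is the same as giving an $A_{\infty,L}$-lattice $\hat\CM_\infty$ in $V$, and under this correspondence condition~\ref{Def1'.1}(ii) for a pair $(k,l)$ says precisely that $z^k\tau_V^l$ carries $(\s)^l\hat\CM_\infty$ isomorphically onto $\hat\CM_\infty$, where $\tau_V^l$ denotes the $\s$-twisted $l$-fold composite of~\ref{Def1'.1}(ii). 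Thus it suffices to produce a lattice $\Lambda'$ in $V$ and relatively prime positive integers $k_0,l_0$ for which $z^{k_0}\tau_V^{l_0}$ carries $(\s)^{l_0}\Lambda'$ isomorphically onto $\Lambda'$.

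To do this I would use the standard device of passing from a lattice to the span of a finite piece of its orbit. Fix an extension $\CM$ with associated lattice $\Lambda:=\hat\CM_\infty$ and a pair $(k,l)$ as above, put $m:=\gcd(k,l)$, write $k=mk_0$, $l=ml_0$ (so $\gcd(k_0,l_0)=1$ and $k_0,l_0\ge1$), and set $\phi:=z^{k_0}\tau_V^{l_0}\colon(\s)^{l_0}V\to V$, a $Q_{\infty,L}$-linear isomorphism. Because $\s(z)=z$ and each $\tau_V^{l_0}$ is $Q_{\infty,L}$-linear, expanding the composites shows that the $\s$-twisted iterates $\phi^{(i)}:=\phi\circ(\s)^{l_0}(\phi)\circ\cdots\circ(\s)^{(i-1)l_0}(\phi)\colon(\s)^{il_0}V\to V$ (with $\phi^{(0)}:=\id_V$) satisfy $\phi^{(i)}=z^{ik_0}\tau_V^{il_0}$ and $\phi\circ(\s)^{l_0}(\phi^{(i)})=\phi^{(i+1)}$; in particular $\phi^{(m)}=z^k\tau_V^l$. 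Now I would take
\[
\Lambda'\;:=\;\sum_{i=0}^{m-1}\phi^{(i)}\bigl((\s)^{il_0}\Lambda\bigr)\;\subseteq\;V .
\]
Since each $(\s)^{il_0}\Lambda$ is a lattice in $(\s)^{il_0}V$ and $\phi^{(i)}$ is an isomorphism, every summand is a lattice in $V$, hence so is the finite sum $\Lambda'$; let $\CM'$ be the corresponding locally free extension of $M$ of rank $r$.

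It then remains to check that $z^{k_0}\tau_V^{l_0}$ carries $(\s)^{l_0}\Lambda'$ isomorphically onto $\Lambda'$; as $\gcd(k_0,l_0)=1$, this proves the proposition. Using that $(\s)^{l_0}$ takes lattices to lattices and commutes with finite sums of submodules of $V$, together with $\phi\circ(\s)^{l_0}(\phi^{(i)})=\phi^{(i+1)}$, one computes
\[
\phi\bigl((\s)^{l_0}\Lambda'\bigr)\;=\;\sum_{i=0}^{m-1}\phi^{(i+1)}\bigl((\s)^{(i+1)l_0}\Lambda\bigr)\;=\;\sum_{j=1}^{m}\phi^{(j)}\bigl((\s)^{jl_0}\Lambda\bigr).
\]
The summand $j=m$ equals $\phi^{(m)}\bigl((\s)^l\Lambda\bigr)=(z^k\tau_V^l)\bigl((\s)^l\Lambda\bigr)=\Lambda$ by the hypothesis on $\Lambda$, and this is already the $j=0$ summand; hence the last sum equals $\sum_{j=0}^{m-1}\phi^{(j)}\bigl((\s)^{jl_0}\Lambda\bigr)=\Lambda'$, so $\phi$ maps $(\s)^{l_0}\Lambda'$ isomorphically onto $\Lambda'$, as required. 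I do not expect a genuine obstacle; the construction is routine, and the only points needing care are the bookkeeping with the $\s$-twists (establishing $\phi^{(m)}=z^k\tau_V^l$ and $\phi\circ(\s)^{l_0}(\phi^{(i)})=\phi^{(i+1)}$, which rests on $z$ being $\s$-fixed and on $Q_{\infty,L}$-linearity) and the harmless verification that forming $(\s)^{l_0}$ commutes with the finite sum $\Lambda'$ and with the inclusions of its summands into $(\s)^{l_0}V$, for which one may invoke flatness of $\s$ as in the proof of~\ref{DEGREE-LEMMA}.
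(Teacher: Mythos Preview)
Your proof is correct and follows essentially the same approach as the paper: both replace the given lattice $\Lambda=\hat\CM'_\infty$ by the finite sum $\sum_{i=0}^{m-1}z^{ik_0}\tau^{il_0}\bigl((\s)^{il_0}\Lambda\bigr)$ (in the paper's notation $(k_0,l_0,m,\Lambda)$ are written $(k,l,l'/l,\CM'_\infty)$), and then verify that $z^{k_0}\tau^{l_0}$ is an isomorphism on this new lattice. You carry out the verification in more detail than the paper, which merely asserts the conclusion.
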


\begin{proof}
We let $z$ be a uniformizing parameter at $\infty$ and write $\frac{d}{r}=\frac{k}{l}$ with $k,l$ relatively prime positive
integers. Since $(M,\t)$ is pure it extends to a locally free sheaf $\CM'$ on $C_L$ on which $z^{k'}\t^{l'}$ is an isomorphism locally at $\infty$ for some positive integers $k',l'$ with $\frac{k'}{l'}=\frac{d}{r}=\frac{k}{l}$. We modify $\CM'$ to a locally free sheaf $\CM$ on $C_L$ by changing its stalk $\CM'_\infty$ at $\infty$ to
\[
\CM_\infty\;:=\;\sum_{j=0}^{\frac{l'}{l}-1}z^{kj}\t^{lj}\bigl((\s)^{lj}\CM'_\infty\bigr)\,.
\]
Then $M=\Gamma(C_L\setminus\{\infty\},\CM)$ and $z^{k}\tau^{l}:(\s)^{l}\CM_\infty\isoto\CM_\infty$ is an isomorphism at $\infty$ as desired. 
\forget{
We write $\frac{d}{r}=\frac{k'}{l'}$ with $k',l'$ relatively prime positive integers and set
\[
\ulN_\infty := (N_\infty,\tau):=(M,\tau)\otimes_{A_L}Q_{\infty,L}\,.
\]
In the terminology of \cite[Definition~7.3]{Hl} the pair $(N_\infty,\tau)$ is a \emph{Dieudonn\'e $\BF_q\dpl z\dpr$-module} over $L$ (also called a \emph{local isoshtuka} in Section~\ref{SectLS} below). Due to the purity condition there exists an $A_{\infty,L}$-lattice $\hat\CM_\infty$ inside $N_\infty$ and positive integers $k,l$ with $\frac{k}{l}=\frac{d}{r}$ such that $z^{k}\tau^{l}$ is an isomorphism $(\s)^l\hat\CM_\infty\to\hat\CM_\infty$. Therefore $\ulN_\infty$ is isoclinic of slope $\frac{k}{l}=\frac{k'}{l'}$. Consider the $A_{\infty,L}$-lattice
\[
\hat\CM'_\infty\;:=\;\sum_{j=0}^{\frac{l}{l'}-1}z^{k'j}\t^{l'j}\bigl((\s)^{l'j}\hat\CM_\infty\bigr)
\]
inside $\ulN_\infty$ on which $z^{k'}\tau^{l'}:(\s)^{l'}\hat\CM'_\infty\isoto\hat\CM'_\infty$ is an isomorphism. We glue $M$ with $\hat\CM'_\infty$ to the locally free sheaf $\CM'$ on $C_L$ on which now $(\tau')^{l'}:(\s)^{l'}\CM'\to \CM(k'\cdot\infty)$ is an isomorphism near $\infty$. 
}
\end{proof}

\begin{Definition} (compare \cite[4.5]{PT})\label{Def1'.2}
\begin{suchthat}
\item 
A \emph{morphism} $f:(M,\tau)\to (M',\tau')$ between $A$-motives of the same characteristic $c^\ast$ is a homomorphism $f:M\to M'$ of $A_L$-modules which satisfies $f\circ\tau=\tau'\circ\s(f)$.
\item 
If $f:\ulM\to\ulM'$ is surjective, $\ulM'$ is called a \emph{quotient} (or \emph{factor}) \emph{motive} of $\ulM$.
\item 
A morphism $f:\ulM\to \ulM'$ is called an \emph{isogeny} if $f$ is injective with torsion cokernel.
\item 
An isogeny is called \emph{separable} (respectively \emph{purely inseparable}) if the induced homomorphism $\t:\s\coker f\to\coker f$ is an isomorphism (respectively is \emph{nilpotent}, that is, if $\t^n=0$ for some $n$).
\end{suchthat}
\end{Definition}

\bigskip

\noindent {\it Remark.} \es
1. The set $\Hom(\ulM,\ulM')$ of morphisms is an $A$-module and $\End(\ulM)$ is an $A$-algebra. They are projective $A$-modules of rank $\le rr'$. This will be proved in Theorem~\ref{ThmT.3}.

\smallskip
\noindent
2. One has $\Hom(\ulM,\ulM')=\{0\}$ if $\ulM$ and $\ulM'$ are pure $A$-motives of different weights, justifying the terminology \emph{pure}. This can be derived from the Dieudonn\'e-Manin type classification \cite[Appendix B]{Laumon} of the local $\sigma$-isoshtuka $\ulM_\infty(\ulM):=\ulM\otimes_{A_L}Q_{\infty,L}$ of $\ulM$ at $\infty$; see Section~\ref{SectLS}. However, we will give a more elementary proof in Corollary~\ref{Cor2.9b} below.

\smallskip
\noindent
3. We will prove in Corollary~\ref{Cor1.11b} below that the cokernel of an isogeny is in fact annihilated by an element $a\in A$ (as opposed to $a\in A_L$). This was independently observed by N.~Stalder~\cite{Stalder} and also holds for non-pure $A$-motives.

\begin{Proposition}\label{Prop1.5b}
Let $(M,\t)$ be a pure $A$-motive and let $K$ be a finite torsion $A_L$-module equipped with an $A_L$-homomorphism $\t_K:\s K\to K$ such that both $\ker\t_K$ and $\coker\t_K$ are annihilated by a power of $J=(a\otimes1-1\otimes c^\ast(a):a\in A)\subset A_L$. Let further $\rho:M\shortonto K$ be a surjective morphism of $A_L$-modules with $\t_K\circ\s\rho=\rho\circ\t$. Then $(M',\t'):=(\ker\rho,\t|_{\s M'})$ is again a pure $A$-motive of the same rank and dimension and the inclusion $f:(M',\t')\to(M,\t)$ is an isogeny with $\coker f=(K,\t_K)$.
\end{Proposition}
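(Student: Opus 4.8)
The plan is to construct $(M',\tau')$ explicitly and to read off all of its properties from a single application of the snake lemma. First I would record that $A_L=A\otimes_{\Fq}L$ is a Dedekind domain: since $C$ is smooth and geometrically irreducible over $\Fq$, the curve $C_L=C\times_{\Fq}\Spec L$ is integral and, being smooth over the field $L$, regular, so $A_L$ is a regular noetherian domain of dimension one. Hence the finitely generated torsion-free $A_L$-module $M':=\ker\rho$ is locally free, and of rank $r$ because $M/M'\cong K$ is torsion. The endofunctor $\s$ is exact, being base change along the flat morphism ${\rm Frob}_q\colon L\to L$ (as used in the proof of \ref{DEGREE-LEMMA}), so applying it to $0\to M'\to M\to K\to0$ yields an exact sequence $0\to\s M'\to\s M\to\s K\to0$. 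The identity $\tau_K\circ\s\rho=\rho\circ\tau$ forces $\tau(\s M')\subseteq\ker\rho=M'$, so $\tau':=\tau|_{\s M'}\colon\s M'\to M'$ is a well-defined injective $A_L$-homomorphism, and the inclusion $f\colon M'\hookrightarrow M$ satisfies $f\circ\tau'=\tau\circ\s f$.

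Next I would apply the snake lemma to the commutative diagram whose rows are the two short exact sequences above and whose vertical arrows are $\tau'$, $\tau$, $\tau_K$. Since $\tau$ is injective, this produces the exact sequence
\[
0\longrightarrow\ker\tau_K\longrightarrow\coker\tau'\longrightarrow\coker\tau\longrightarrow\coker\tau_K\longrightarrow0
\]
of finite-dimensional $L$-vector spaces (recall $K$, and hence $\s K$, is finite over $L$). Counting $L$-dimensions here, and observing that the exact sequence $0\to\ker\tau_K\to\s K\xrightarrow{\tau_K}K\to\coker\tau_K\to0$ together with $\dim_L\s K=\dim_L K$ (the torsion case of \ref{DEGREE-LEMMA}) gives $\dim_L\ker\tau_K=\dim_L\coker\tau_K$, one obtains $\dim_L\coker\tau'=\dim_L\coker\tau=d$. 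Moreover $\coker\tau$, $\ker\tau_K$ and $\coker\tau_K$ are all supported at the single closed point $\mathfrak m:=V(J)$ of $\Spec A_L$, whose residue field is $A_L/J\cong L$; hence so is $\coker\tau'$, so its length over the discrete valuation ring $(A_L)_{\mathfrak m}$ equals $\dim_L\coker\tau'=d$, and a module of length $d$ over a discrete valuation ring is annihilated by the $d$-th power of the maximal ideal. Therefore $J^d\cdot\coker\tau'=0$, which together with local freeness of $M'$ gives the first defining property of a pure $A$-motive for $(M',\tau')$, with the same $d$.

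It remains to check purity. Here the key point is that $\supp K$ consists of finitely many closed points of $\Spec A_L=C_L\setminus\{\infty\}$, hence is disjoint from $\infty$, so $M$ and $M'$ — and therefore $\tau$ and $\tau'$ — coincide on a neighbourhood of $\infty$. Thus, given an extension $\CM$ of $M$ as in \ref{Def1'.1} with integers $k,l$, the coherent subsheaf $\CM'\subseteq\CM$ that equals $\CM$ on an open $W\ni\infty$ with $W\cap\supp K=\emptyset$ and equals the sheaf associated to $M'$ on $C_L\setminus\{\infty\}$ is well defined, locally free of rank $r$, with $\Gamma(C_L\setminus\{\infty\},\CM')=M'$; and since $\CM$ and $\CM'$ have the same stalk at $\infty$ — as do $(\s)^l\CM$ and $(\s)^l\CM'$, because $\sigma$ maps a neighbourhood of $\infty$ into a neighbourhood of $\infty$ — while $\tau$ and $\tau'$ agree there, the isomorphism $\tau^l\colon(\s)^l\CM_\infty\isoto\CM(k\cdot\infty)_\infty$ is simultaneously an isomorphism $(\s)^l\CM'_\infty\isoto\CM'(k\cdot\infty)_\infty$. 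Hence $(M',\tau')$ is a pure $A$-motive of rank $r$, dimension $d$ and characteristic $c^\ast$, of weight $\tfrac{k}{l}=\tfrac{d}{r}$ by \ref{Prop1'.1b}. Finally $f$ is injective with torsion cokernel $M/M'\cong K$, so it is an isogeny, and the homomorphism induced by $\tau$ on $\coker f=M/M'$ corresponds under $\rho$ to $\tau_K$ (again by $\rho\circ\tau=\tau_K\circ\s\rho$), so $\coker f=(K,\tau_K)$.

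The step I expect to be the real obstacle is the dimension/annihilator bookkeeping in the second paragraph — extracting the \emph{sharp} bound $J^d\cdot\coker\tau'=0$, rather than merely $J^N\cdot\coker\tau'=0$ for some unspecified $N$, from the snake sequence. The resolution is that once $\coker\tau'$ is seen to be $J$-power torsion with $\dim_L\coker\tau'=d$, its length over $(A_L)_{\mathfrak m}$ is $d$, and a length-$d$ module over a discrete valuation ring is automatically killed by the $d$-th power of the maximal ideal. The remaining verifications — local freeness of $M'$, exactness of $\s$, and the gluing of $\CM'$ near $\infty$ — are routine.
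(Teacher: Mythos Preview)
Your proof is correct and follows essentially the same route as the paper: the snake lemma applied to the diagram with rows $0\to\s M'\to\s M\to\s K\to0$ and $0\to M'\to M\to K\to0$ and vertical arrows $\tau',\tau,\tau_K$, together with the observation that $f$ extends to an isomorphism on stalks at $\infty$. You have simply supplied the details the paper leaves implicit --- notably the equality $\dim_L\ker\tau_K=\dim_L\coker\tau_K$ via $\dim_L\s K=\dim_L K$, and the length argument over the DVR $(A_L)_J$ that turns ``$J$-power torsion of $L$-dimension $d$'' into the sharp bound $J^d\cdot\coker\tau'=0$ (which, as you rightly anticipated, is the one point the paper's terse sentence glosses over; compare Remark~\ref{Rem2.2}.2).
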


\begin{proof}
Consider the diagram in which the bottom row is obtained from the snake lemma
\[
\xymatrix {& 0 \ar[r] & \s M' \ar[r]^{\s f} \ar@{^{ (}->}[d]_{\t'} & \s M \ar[r]^{\s\rho} \ar@{^{ (}->}[d]_{\t} & \s K \ar[r] \ar[d]_{\t_K} & 0 \\
& 0 \ar[r] & M' \ar[r]^f \ar@{->>}[d] & M \ar[r]^\rho \ar@{->>}[d] & K \ar[r] \ar@{->>}[d] & 0 \\
0 \ar[r] & \ker\t_K \ar[r] & \coker\t' \ar[r] & \coker\t \ar[r] & \coker\t_K \ar[r] & 0\;.}
\]
If follows that $\dim_L\coker\t'=\dim_L\coker\t=d$ and that also $\coker\t'$ is annihilated by $J^d$. The purity follows from the fact that one can extend $f$ to an isomorphism $\CM'_\infty\to\CM_\infty$ of the stalks at infinity.
\end{proof}

\noindent
{\it Remark.}
Note that without the requirement that a power of $J$ annihilates $\ker\tau_K$ and $\coker\tau_K$ the assertion  of the proposition is false as one can see from $A=\BF_q[t],\,M=A_L,\,\tau=t\otimes1-1\otimes c^\ast(t),\,K=\coker\tau,\,\tau_K=0$, when $c^\ast(t)^q\ne c^\ast(t)$.

\begin{Lemma}\label{Lemma1.5d}
Let $K$ be a finite torsion $A_L$-module and let $\t:\s K\to K$ be a morphism of $A_L$-modules. Then $\ulK=(K,\t)$ is an extension
\[
0\longto \ulK^\et\xrightarrow{\es f\;} \ulK\xrightarrow{\es g\;}\ulK^\nil\longto 0
\]
of $\ulK^\nil=(K^\nil,\t^\nil:\s K^\nil\to K^\nil)$ by $\ulK^\et=(K^\et,\t^\et:\s K^\et\to K^\et)$ where $\t^\nil$ is nilpotent and $\t^\et$ is an isomorphism, satisfying $f\circ\t^\et=\t\circ\s f$ and $g\circ\t=\t^\nil\circ\s g$. Moreover, if the base field $L$ is perfect the extension splits canonically.
\end{Lemma}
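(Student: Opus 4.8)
The plan is to carve out $\ulK^\et$ as the ``stable image'' of $\tau$ under iteration, to take $\ulK^\nil$ for the corresponding quotient, and --- when $L$ is perfect --- to produce a canonical $\tau$-stable complement of $K^\et$ in $K$ by a Fitting-type argument.

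First I would record three preliminaries: a finite torsion $A_L$-module $K$ is a finite-dimensional $L$-vector space; the functor $\s$ is exact, being the base change of the flat map ${\rm Frob}_q\colon L\to L$; and $\s$ preserves $L$-dimension, since $\s K\cong K\otimes_{L,{\rm Frob}_q}L$ as $L$-vector spaces and twisting a finite-dimensional space by ${\rm Frob}_q$ leaves the dimension unchanged. Writing $\tau^n=\tau\circ\s\tau\circ\ldots\circ(\s)^{n-1}\tau\colon(\s)^nK\to K$, the $A_L$-submodules $\im(\tau^n)\subseteq K$ form a descending chain (because $\tau^{n+1}=\tau\circ\s(\tau^n)$ and $\s$ respects inclusions), so they stabilize: $\im(\tau^N)=\im(\tau^{N+1})=\ldots=:K^\et$ for $N\gg0$. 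From $\tau\bigl(\s K^\et\bigr)=\tau\bigl(\s\,\im(\tau^N)\bigr)=\im(\tau^{N+1})=K^\et$ I get that $\tau$ restricts to a surjection $\tau^\et\colon\s K^\et\to K^\et$, which is an isomorphism because both sides have the same finite $L$-dimension. Since $\tau(\s K^\et)\subseteq K^\et$, the map $\tau$ descends to $\tau^\nil$ on $K^\nil:=K/K^\et$, and $(\tau^\nil)^N=0$ as $\im(\tau^N)=K^\et$ dies in $K^\nil$. The inclusion $f\colon\ulK^\et\to\ulK$ and the projection $g\colon\ulK\to\ulK^\nil$ commute with the respective $\tau$'s by construction, which is the asserted short exact sequence.

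For the splitting when $L$ is perfect, I would pass to the equivalent description of $\tau$ by an $\Fq$-linear, ${\rm Frob}_q$-semilinear endomorphism $\hat\tau\colon K\to K$ (with $\hat\tau(am)=\s(a)\hat\tau(m)$); a short induction using that $\s$ now identifies a submodule $W\subseteq K$ with $\{w\otimes1:w\in W\}\subseteq\s K$ shows $\im(\hat\tau^n)=\im(\tau^n)$, so $\bigcap_n\im(\hat\tau^n)=K^\et$. Because ${\rm Frob}_q$ is bijective, $\im(\hat\tau^n)$ is an $A_L$-submodule of $K$ and the rank--nullity identity $\dim_L\im(\hat\tau^n)+\dim_L\ker(\hat\tau^n)=\dim_L K$ holds; the kernels $\ker(\hat\tau^n)$ form an ascending chain of $A_L$-submodules, stabilizing to $K^\circ:=\ker(\hat\tau^N)$ (enlarging $N$ so that both chains are stationary), which is $\hat\tau$-stable with $\hat\tau|_{K^\circ}$ nilpotent. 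Then $\dim_L K^\et+\dim_L K^\circ=\dim_L K$, while for $x\in K^\et\cap K^\circ$, writing $x=\hat\tau^{2N}(z)$ and using $\hat\tau^N(x)=0$ together with $\ker(\hat\tau^N)=\ker(\hat\tau^{3N})$ forces $x=0$; hence $K=K^\et\oplus K^\circ$ as $\tau$-stable $A_L$-submodules. The projection onto $K^\et$ along $K^\circ$ is then a morphism $\ulK\to\ulK^\et$ splitting $f$, $g$ restricts to an isomorphism $(K^\circ,\hat\tau)\to\ulK^\nil$, and the splitting is canonical because $K^\et$ and $K^\circ$ are built intrinsically from $\tau$.

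The chain-stabilization and the diagram chases are routine; the one point that genuinely uses the hypotheses, and which I expect to be the main thing to pin down carefully, is the role of perfectness of $L$: it is exactly what makes $\im(\hat\tau^n)$ an $L$-subspace (so rank--nullity applies and the sum $K^\et\oplus K^\circ$ makes sense as $A_L$-modules) and what makes a surjective ${\rm Frob}_q$-semilinear endomorphism of a finite-dimensional $L$-vector space bijective. Without it only the short exact sequence survives, consistent with the example in the remark following Proposition~\ref{Prop1.5b}. I would also make sure to verify explicitly the two facts underlying everything: $\dim_L\s M=\dim_L M$ for finite-dimensional $M$, and the correspondence $\tau^n\leftrightarrow\hat\tau^n$ between the two descriptions of $\tau$.
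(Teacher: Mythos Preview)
Your proof is correct and matches the paper's approach: the paper (citing Laumon) takes $K^\et=\bigcap_{n\ge1}\im\tau^n$ and, when $L$ is perfect, the complement $\bigcup_{n\ge1}(\s)^{-n}(\ker\tau^n)$, which under your semilinear identification $m\mapsto m\otimes 1$ is exactly your $K^\circ=\bigcup_n\ker(\hat\tau^n)$. You have supplied the details the paper leaves to the reference.
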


\begin{proof}
This was proved by Laumon~\cite[B.3.10]{Laumon}. He takes $ K^\et:=\bigcap_{n\ge1}\im\t^n$. If $L$ is perfect, $K^\et$ has the natural complement $\bigcup_{n\ge1}(\s)^{-n}(\ker\t^n)$ which is isomorphic to $\ulK^\nil=\ulK/\ulK^\et$.
\end{proof}

\begin{Proposition}\label{Prop1.5c}
Every isogeny $f:\ulM\to \ulM'$ can be factored $\ulM\xrightarrow{\es f_{\rm sep}\;}\ulM''\xrightarrow{\es f_{\rm insep}\;}\ulM'$ into a separable isogeny $f_{\rm sep}$ followed by a purely inseparable isogeny $f_{\rm insep}$. If the base field is perfect there exists also a (different) factorization $f=f'_{\rm sep}\circ f'_{\rm insep}$ as a purely inseparable isogeny followed by a separable one.
\end{Proposition}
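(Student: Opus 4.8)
The plan is to obtain both factorizations from Lemma \ref{Lemma1.5d} applied to the cokernel of the isogeny $f$, combined with Proposition \ref{Prop1.5b} which lets us build the intermediate $A$-motive $\ulM''$ as the kernel of a suitable surjection.

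First I would set $\ulK=(K,\tau_K):=\coker f=(M'/f(M),\tau')$; since $f$ is an isogeny, $K$ is a finite torsion $A_L$-module, and since $\ulM,\ulM'$ are pure $A$-motives of the same characteristic, $\ker\tau_K$ and $\coker\tau_K$ are annihilated by a power of $J$ (this is exactly the hypothesis checked inside the snake-lemma diagram of Proposition \ref{Prop1.5b}, read from right to left, using that $\coker\tau$ and $\coker\tau'$ are annihilated by $J^d$). Now apply Lemma \ref{Lemma1.5d} to $\ulK$ to get the short exact sequence $0\to\ulK^\et\xrightarrow{\ a\ }\ulK\xrightarrow{\ b\ }\ulK^\nil\to 0$ with $\tau^\et$ an isomorphism and $\tau^\nil$ nilpotent. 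For the first factorization, compose $\ulM'\onto\ulK\xrightarrow{\ b\ }\ulK^\nil$ and let $\ulM''$ be its kernel: by Proposition \ref{Prop1.5b} (whose hypotheses on the target $\ulK^\nil$ hold because $\ker\tau^\nil$ and $\coker\tau^\nil$ are sub/quotients of those of $\tau_K$, hence $J$-power-torsion), $\ulM''$ is a pure $A$-motive, the inclusion $f_{\rm insep}:\ulM''\hookrightarrow\ulM'$ is an isogeny with cokernel $\ulK^\nil$, hence purely inseparable by definition, and the induced map $f_{\rm sep}:\ulM\to\ulM''$ is an isogeny with cokernel $\ulK^\et$ (one sees this by the snake lemma applied to the two inclusions $f(M)\subset M''\subset M'$), hence separable. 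This gives $f=f_{\rm insep}\circ f_{\rm sep}$.

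For the second factorization, assume $L$ is perfect, so by the last sentence of Lemma \ref{Lemma1.5d} the extension splits canonically: $\ulK\cong\ulK^\et\oplus\ulK^\nil$. Now compose $\ulM'\onto\ulK\onto\ulK^\et$ (projection onto the étale summand) and let $\ulM''$ be the kernel; again Proposition \ref{Prop1.5b} applies and shows $\ulM''$ is a pure $A$-motive, $f'_{\rm sep}:\ulM''\hookrightarrow\ulM'$ is a separable isogeny with cokernel $\ulK^\et$, and the factor map $f'_{\rm insep}:\ulM\to\ulM''$ is a purely inseparable isogeny with cokernel $\ulK^\nil$; thus $f=f'_{\rm sep}\circ f'_{\rm insep}$.

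The main obstacle, and the only point requiring genuine care rather than diagram-chasing, is verifying that the hypotheses of Proposition \ref{Prop1.5b} are met at each step — specifically that the torsion modules $\ker\tau_K$, $\coker\tau_K$ (and then those of $\tau^\et$, $\tau^\nil$) are annihilated by a power of $J$, and that the composed maps $M'\onto\ulK^\nil$ resp. $M'\onto\ulK^\et$ are morphisms of $A_L$-modules compatible with the $\tau$'s. The first is handled by the snake-lemma observation above together with the fact that sub- and quotient modules of a $J^N$-torsion module are $J^N$-torsion; the second is immediate from functoriality of the construction in Lemma \ref{Lemma1.5d}. One should also note that in the non-perfect case only the first factorization is claimed, which is why the splitting of $\ulK$ is invoked only for $f'_{\rm sep}\circ f'_{\rm insep}$.
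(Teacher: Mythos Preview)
Your proposal is correct and follows essentially the same approach as the paper's own proof: take $\ulK=\coker f$, apply Lemma~\ref{Lemma1.5d}, define $\ulM''$ as the kernel of $\ulM'\to\ulK\to\ulK^\nil$ (respectively $\ulM'\to\ulK\to\ulK^\et$ in the perfect case) via Proposition~\ref{Prop1.5b}, and read off the cokernels of the two factors. Your write-up is actually more careful than the paper's terse version in that you explicitly verify the $J$-power-torsion hypothesis of Proposition~\ref{Prop1.5b}; note that since $\tau^\et$ is an isomorphism the snake lemma in fact gives $\ker\tau^\nil\cong\ker\tau_K$ and $\coker\tau^\nil\cong\coker\tau_K$, so your ``sub/quotient'' remark can be sharpened to an equality.
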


\begin{proof}
Let $K:=\coker f$ and let $\t_K:\s K\to K$ be the induced morphism. By Lemma~\ref{Lemma1.5d} there is a surjective morphism $\rho:\ulM'\to\ulK\to\ulK^\nil$ and we define $\ulM''$ as the kernel of $\rho$. It is a pure $A$-motive by Proposition~\ref{Prop1.5b}. Clearly $f$ factors through $\ulM''$ and the isogeny $\ulM\to \ulM''$ has $\ulK^\et$ as cokernel, thus is separable. If $L$ is perfect we use the surjective morphism $\rho:\ulM'\to\ulK\to\ulK^\et$ instead.
\end{proof}

% =============================================================================

\bigskip
\section{Definition of abelian $\tau$-sheaves}

Since the purity condition 2 of Definition~\ref{Def1'.1} does not behave well in families one has to rigidify $\ulM$ at $\infty$ in order to get moduli spaces for pure $A$-motives. This was done in \cite{Hl}, where the rigidified objects are called \emph{abelian sheaves}. Over a field their definition is as follows.
Let $L\supset\Fq$ be a field and fix a morphism $c: \Spec L\rightarrow C$. Let $\I$ be the ideal sheaf on $C_L$ of the graph $\Graph(c)$ of $c$. Let $r$ and $d$ be non-negative integers.

\begin{Definition}[Abelian $\tau$-sheaf]\label{Def1.1}
An \emph{abelian $\tau$-sheaf $\FF = (\F_i,\P_i,\t_i)$ of rank $r$, dimension $d$ and characteristic $c$ over $L$} is a collection of locally free sheaves $\F_i$ on $C_L$ of rank $r$ together with injective morphisms $\P_i,\t_i$ of\/ $\O_{C_L}$-modules $(i\in\Z)$ of the form
\[
\begin{CD}
\cdots @>>> 
\F_{i-1} @>{\P_{i-1}}>> 
\F_{i}   @>{\P_{i}}>> 
\F_{i+1} @>{\P_{i+1}}>> 
\cdots 
\\ & & 
@AA{\t_{i-2}}A 
@AA{\t_{i-1}}A 
@AA{\t_{i}}A 
\\
\cdots @>>> 
\s\F_{i-2} @>{\s\P_{i-2}}>> 
\s\F_{i-1} @>{\s\P_{i-1}}>> 
\s\F_{i}   @>{\s\P_{i}}>> 
\cdots 
\\
\end{CD}
\]
subject to the following conditions:
\begin{suchthat}
\item the above diagram is commutative,
\item there exist integers\/ $k,l>0$ with\/ $ld=kr$ such that the morphism $\P_{i+l-1}\circ\cdots\circ\P_i$ identifies $\F_i$ with the subsheaf\/ $\F_{i+l}(-k\cdot\infty)$ of\/ $\F_{i+l}$ for all $i\in\Z$,
\item $\coker\P_i$ considered as an $L$-vector space has dimension $d$ for all $i\in\Z$,
\item $\coker\t_i$ is annihilated by $\I^d$ and as an $L$-vector space has dimension $d$ and for all $i\in\Z$.
\end{suchthat}
We call $\chr:=c(\Spec L)\in C$ the \emph{characteristic point} (or \emph{place}) and say that $\FF$ has \emph{finite} (respectively \emph{generic}) \emph{characteristic} if $\chr$ is a closed (respectively the generic) point. 
\end{Definition}

\begin{Remark} \label{Rem2.2}
1. By the second condition $\coker\P_i$ is only supported at $\infty$. Moreover, the periodicity condition implies $\F_{i+nl} = \F_i(nk\cdot\infty)$ and thus $\t_{i+nl}=\t_i\otimesidOCL{nk\cdot\infty}$ for all $n\in\Z$.

\smallskip
\noindent
2. The condition ``annihilated by $\I$'' in 4 can equivalently be reduced to ``supported on the graph of $c$ '', since the local ring of $C_L$ at the graph of $c$ is a principal ideal domain and the $d$-th power of a generator of $\I$ annihilates the $d$-dimensional $L$-vector space $\coker\t_i$.

\smallskip
\noindent
3. Trivially, $r=0$ implies $d=0$ since in this case we have all $\F_i=0$. Due to the second condition, the converse is also true because $d=0$ implies $r = \frac{l}{k}\,d = 0$ since the existence of such $k,l\not= 0$ is required. Without this the converse would in general not be true, because for example $\FF = (\OCS,\id_\OCS,\id_\OCS)$ has $\coker\id_\OCS = 0$ and therefore $d=0$, but $r=1$. This justifies the demand of the existence of such $k,l\not= 0$ since we do not want to consider the \quotes{degenerate} case $r>0$, $d=0$.

The case $r=0$, $d=0$ however is desired because it allows the \emph{zero sheaf} $\ZZ := (0,0,0)$ to be an abelian $\tau$-sheaf of rank $0$ and dimension $0$. Trivially, the zero sheaf satisfies the second condition for \emph{all} pairs $k,l > 0$.

\smallskip
\noindent
4. For $\FF\not= \ZZ$ one can ask whether the second condition is satisfied by the pair $k,l > 0$ with $ld = kr$ and $k,l$ relatively prime. This was required in the definition of abelian $\tau$-sheaves in \cite{Hl}. We will call those $\FF$ \emph{abelian $\tau$-sheaves with $k,l$ relatively prime}.
As a convention, an abelian $\tau$-sheaf $\FF$ without further specifications comes with all its parameters $\F_i,\P_i,\t_i$ $(i\in\Z)$ and $r,d,k,l$ with $k,l$ always chosen to be minimal. Similarly $\FF'$ carries a prime on its parameters, $\TFF$ a tilde on them, and so on. Note that the characteristic $c$ is fixed.

\smallskip
\noindent
5. Abelian $\tau$-sheaves of dimension $d=1$ are called \emph{elliptic sheaves} and were studied by Drinfeld~\cite{Drinfeld3}, Blum-Stuhler~\cite{BS} and others. The category of elliptic sheaves with $(k,l)=1$ over $L$ of rank $r$ with $\deg\F_0=1-r$ and whose characteristic does not meet $\infty$, that is, $\im(c)\subset C\setminus\{\infty\}$, is anti-equivalent to the category of Drinfeld-$A$-modules of rank $r$ over $L$, see \cite[Theorem~3.2.1]{BS} and Example~\ref{Ex1.8} below.
\end{Remark}

\smallskip

\begin{Definition}\label{DEFINITION-WEIGHT}
Let\/ $\FF$ be an abelian $\tau$-sheaf of rank $r$, dimension $d$ and characteristic $c$ over $S$. We set
\[
\weight(\FF) \;:=\; 
\left\{\;
\begin{array}{cl} 
\frac{d}{r} & \mbox{if\/ }\FF\not=\ZZ \\[1ex] 
0           & \mbox{if\/ }\FF=\ZZ 
\end{array} 
\;\right\}
\;\in\Q\,.
\]
We call\/ $\weight(\FF)$ the \emph{weight of $\FF$}.
\end{Definition}

\begin{Example} \label{Ex1.8}
Let  $C=\PP^1_\Fq$, $A=\FqItI$. Then $\CL=\PP^1_L$. Let $c: \Spec L\rightarrow \Spec\FqItI=\AA^1_\Fq$ such that $c^\ast: \FqItI\rightarrow L$ maps $t$ to $c^\ast(t)=:\vartheta$. Let $\O$ denote the structure sheaf of $\PP^1_L$ and let $a\in L$. Now consider the following diagram
\[
\begin{CD}
\cdots @>>> 
\O\oplus\O @>{\matr{1}{0}{0}{1}}>{\P_0}> 
\O\oplus\O(1\!\cdot\!\infty) @>{\matr{1}{0}{0}{1}}>{\P_1}> 
\O(1\!\cdot\!\infty)\oplus\O(1\!\cdot\!\infty) @>{\matr{1}{0}{0}{1}}>{\P_2}>
\cdots 
\\ & & 
@A{\textstyle\t_{\scriptscriptstyle-1}}\;AA
@A{\textstyle\t_{\scriptscriptstyle 0}}\;A\;{\matr{a}{t-\vartheta}{1}{0}}A
@A{\textstyle\t_{\scriptscriptstyle 1}}\;A\;{\matr{a}{t-\vartheta}{1}{0}}A 
\\
& &
\cdots @>{\matr{1}{0}{0}{1}}>{\s\P_{-1}}> 
\s(\O\oplus\O) @>{\matr{1}{0}{0}{1}}>{\s\P_0}> 
\s(\O\oplus\O(1\!\cdot\!\infty)) @>{\matr{1}{0}{0}{1}}>{\s\P_1}> 
\cdots 
\\
\end{CD}
\]
where the vectors in $\O\oplus\O$ are considered as column vectors.
This gives an example of an abelian $\tau$-sheaf $\FF$ of rank $2$, dimension $1$ with $(k,l)=1$, and characteristic $c$ over $\Spec L$ with $\weight(\FF)=\frac{1}{2}$.

Since the dimension is $1$, this abelian $\tau$-sheaf is an elliptic sheaf and comes from a Drinfeld module which can be recovered as follows; see \cite{BS}. Let $M := \Gamma(\AA^1_L,\O\oplus\O) = L[t]\oplus L[t]$ and let $\t := \P_0^{-1}\circ\t_0$. Since $\vect{1}{0}=\t\vect{0}{1}$, we have $M=L\{\t\}\cdot\vect{0}{1}$, and we calculate
\[
\t^2\tvect{0}{1} \,=\, \tvect{a}{t-\vartheta} \,=\, a\cdot\t\tvect{0}{1} + (t-\vartheta)\cdot\tvect{0}{1}
\quad \text{and} \quad
t\cdot\tvect{0}{1} = (\vartheta - a\t + \t^2)\tvect{0}{1}
\]
Let $\varphi: \FqItI\rightarrow L\{\t\}$ be the ring morphism mapping $t\mapsto \vartheta - a\t + \t^2$. Then we have back the Drinfeld Module $\varphi$ of rank 2 over $L$ which induces the abelian $\tau$-sheaf $\FF$.
\end{Example}

\begin{Example} \label{Ex1.8b}
We give another example which does not come from Drinfeld modules. Let $C=\PP^1_\Fq$ and $A=\Fq[t]$. Let 
\[
S=\Spec \Fq[\zeta,\alpha,\beta,\gamma,\delta]\,/\,\bigl((\alpha+\delta+2\zeta)^2\,,\, \alpha\delta-\beta\gamma-\zeta^2\bigr)
\]
and $c:S\to C\setminus V(t)\subset C$ be given by $c^\ast(\frac{1}{t})=\zeta$. Let $C_S:=C\times_{\Spec\Fq}S$, then
\[
\begin{CD}
\cdots @>>> 
\O_{C_S}^{\oplus2} @>{\quad\matr{1}{0}{0}{1}\quad}>{\P_0}> 
\O_{C_S}(1\!\cdot\!\infty)^{\oplus2} @>{\qquad\matr{1}{0}{0}{1}\qquad}>{\P_1}> 
\O_{C_S}(2\!\cdot\!\infty)^{\oplus2} @>{\quad\matr{1}{0}{0}{1}\quad}>{\P_2}>
\cdots 
\\ & & 
@A{\textstyle\t_{\scriptscriptstyle-1}}\;AA
@A{\textstyle\t_{\scriptscriptstyle 0}}\;A\;{\matr{1+\alpha t}{\gamma t}{\beta t}{1+\delta t}}A
@A{\textstyle\t_{\scriptscriptstyle 1}}\;A\;{\matr{1+\alpha t}{\gamma t}{\beta t}{1+\delta t}}A 
\\
& &
\cdots @>{\quad\matr{1}{0}{0}{1}\quad}>{\s\P_{-1}}> 
\s\O_{C_S}^{\oplus2} @>{\qquad\matr{1}{0}{0}{1}\qquad}>{\s\P_0}> 
\s\O_{C_S}(1\!\cdot\!\infty)^{\oplus2} @>{\quad\matr{1}{0}{0}{1}\quad}>{\s\P_1}> 
\cdots 
\\
\end{CD}
\]
is an abelian $\tau$-sheaf over $S$ of rank and dimension $2$ with $k=l=1$ since $(1-\zeta t)^2\cdot\coker\tau=(0)$. In fact $S$ is the (representable part of the) moduli space of abelian $\tau$-sheaves of rank and dimension $2$ with $(k,l)=1$ together with a level structure $\eta$ at $V(t)$ for which $(\F_0,\eta)$ is stable of degree zero. See~\cite[\S 4]{Hl} for the precise meaning of these terms, but note that in loc.\ cit.\ the exponent $2$ in $(\alpha+\delta+2\zeta)^2$ erroneously was missing, as was pointed out to us by M.~Molz. This illustrates the fact that abelian $\tau$-sheaves possess nice moduli spaces.
\end{Example}

\begin{Proposition}\label{PROP.2}
Let $\FF$ be an abelian $\tau$-sheaf and let $D$ be a divisor on $C$. Then the collection $\FF(D) := (\F_i(D),\P_i\otimesidOCL{D},\t_i\otimesidOCL{D})$ is an abelian $\tau$-sheaf of the same rank and dimension as $\FF$.
\end{Proposition}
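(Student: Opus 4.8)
The plan is to check that $\FF(D)=(\F_i(D),\,\P_i\otimes1,\,\t_i\otimes1)$ satisfies the four conditions of Definition~\ref{Def1.1} with the same parameters $r$, $d$, $k$, $l$; the case $\FF=\ZZ$ being trivial we may assume $\FF\neq\ZZ$. The one structural fact behind everything is that $\O_{C_L}(D)$ is the pullback of an invertible sheaf on $C$. Hence it is locally free of rank one, so the functor $(-)\otimes_{\O_{C_L}}\O_{C_L}(D)$ is exact and in particular preserves injections of coherent sheaves; and, exactly as in the proof of Lemma~\ref{DEGREE-LEMMA}, $\s\O_{C_L}(D)=\O_{C_L}(\sigma^{-1}D)=\O_{C_L}(D)$ because $\sigma=\id_C\times\sigma_L$ fixes the divisor $D$ on $C$. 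Consequently there is a canonical identification $\s\bigl(\F_i(D)\bigr)=(\s\F_i)(D)$, so the maps $\P_i\otimes1\colon\F_i(D)\to\F_{i+1}(D)$ and $\t_i\otimes1\colon\s(\F_i(D))\to\F_{i+1}(D)$ are defined, injective, and fit into the diagram obtained from that of $\FF$ by applying $(-)\otimes_{\O_{C_L}}\O_{C_L}(D)$.

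Condition~1 (commutativity) is then immediate, a functor applied to a commutative diagram. For Condition~2, the composite $(\P_{i+l-1}\otimes1)\circ\cdots\circ(\P_i\otimes1)=\bigl(\P_{i+l-1}\circ\cdots\circ\P_i\bigr)\otimes1$ identifies $\F_i(D)$ with $\F_{i+l}(-k\cdot\infty)(D)=\F_{i+l}(D)(-k\cdot\infty)$ as a subsheaf of $\F_{i+l}(D)$, the last equality being legitimate because $-k\cdot\infty$ and $D$ are both divisors on $C$ and their twists therefore commute; and $ld=kr$ is untouched since $r$ and $d$ are. For Conditions~3 and~4, exactness of the twist gives $\coker(\P_i\otimes1)=\coker(\P_i)\otimes\O_{C_L}(D)$ and $\coker(\t_i\otimes1)=\coker(\t_i)\otimes\O_{C_L}(D)$. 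The cleanest way to see that these still have $L$-dimension $d$ is via degrees: as $\P_i$ and $\t_i$ are injective with torsion cokernel, $\dim_L\coker\P_i=\deg\F_{i+1}-\deg\F_i$ and, using Lemma~\ref{DEGREE-LEMMA}, $\dim_L\coker\t_i=\deg\F_{i+1}-\deg\s\F_i=\deg\F_{i+1}-\deg\F_i$; since $\deg\F_j(D)-\deg\F_j=r\cdot\deg D$ is the same for all $j$, the very same differences compute $\dim_L\coker(\P_i\otimes1)$ and $\dim_L\coker(\t_i\otimes1)$, both equal to $d$. Finally $\coker(\t_i)\otimes\O_{C_L}(D)$ is still annihilated by $\I^d$, because an ideal sheaf that kills a module kills any tensor product of that module with another sheaf. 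Since $\rk\F_i(D)=r$ for all $i$, we conclude that $\FF(D)$ is an abelian $\tau$-sheaf of rank $r$ and dimension $d$.

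There is no real obstacle here: the argument is pure bookkeeping, and the only point needing a moment's care is to keep in mind that both the twisting divisor $D$ and the divisor $k\cdot\infty$ live on $C$ rather than $C_L$, so that they are $\sigma$-invariant and commute with one another — this is precisely what makes $\s(\F_i(D))=(\s\F_i)(D)$ and lets the periodicity identification in Condition~2 survive the twist. If one prefers to avoid the degree computation for the dimension count, one may instead invoke the elementary fact that tensoring a torsion coherent sheaf on $C_L$ with an invertible sheaf leaves its length, hence its $L$-dimension, unchanged, since the invertible sheaf is free of rank one on a neighborhood of the finite support.
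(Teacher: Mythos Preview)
Your proof is correct and follows exactly the same approach as the paper's: the key points are the exactness of $(-)\otimes_{\O_{C_L}}\O_{C_L}(D)$ and the $\sigma$-invariance of $D$ yielding $\s(\F_i(D))=(\s\F_i)(D)$. The paper's proof is a one-line remark that the verification is straightforward once these two facts are noted; you have simply written out in full the bookkeeping that the paper leaves to the reader.
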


\begin{proof}
Since the functor $\otimes_{\O_{C_L}}\O_{C_L}(D)$ is exact the proof is straightforward ones one notes that $\s(\F_i(D))=(\s\F_i)(D)$ because the divisor $D$ is $\sigma$-invariant.
\end{proof}

Next we come to the definition of morphisms in the category of abelian $\tau$-sheaves.

\begin{Definition}
A \emph{morphism} $f$ between two abelian $\tau$-sheaves $\FF = (\F_i,\P_i,\t_i)$ and $\FF' = (\F'_i,\P'_i,\t'_i)$ of the same characteristic $c:\Spec L\to C$ is a collection of morphisms $f_i: \F_i \rightarrow \F'_i$ $(i\in\Z)$ which commute with the $\P$'s and the $\t$'s, that is, $f_{i+1}\circ\P_i=\P'_i\circ f_i$ and $f_{i+1}\circ\t_i=\t'_i\circ\s f_i$. We denote the set of morphisms between $\FF$ and $\FF'$ by $\Hom(\FF,\FF')$. It is an $\Fq$-vector space.
\end{Definition}

\begin{Definition}
Let $\FF$ and $\FF'$ be abelian $\tau$-sheaves and let $f \in \Hom(\FF,\FF')$ be a morphism. 
Then $f$ is called \emph{injective} (respectively \emph{surjective}, respectively an \emph{isomorphism}), if\/ $f_i$ is injective (respectively surjective, respectively bijective) for all\/ $i\in\Z$.
We call $\FF$ an \emph{abelian quotient} (or \emph{factor}) \emph{$\tau$-sheaf} of\/ $\FF'$, if there is a surjective morphism from $\FF'$ onto $\FF$.
\end{Definition}

Abelian $\tau$-sheaves are pure in the following sense.

\begin{Proposition}\label{PROP.1}
Let $\FF$ and $\FF'$ be abelian $\tau$-sheaves. If\/ $\Hom(\FF,\FF')\not=\{0\}$, then\/ $\weight(\FF)=\weight(\FF')$. 
\end{Proposition}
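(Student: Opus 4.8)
The statement to prove is Proposition~\ref{PROP.1}: a nonzero morphism of abelian $\tau$-sheaves forces equal weights. The idea is to extract from a nonzero morphism $f=(f_i)$ a nonzero map between the ``periodicity bundles'' at $\infty$ and compare degrees, exactly as in the proof of Proposition~\ref{Prop1'.1b}. First I would reduce to the case where both $\FF$ and $\FF'$ are nonzero: if one of them is $\ZZ$ then $\Hom=\{0\}$ trivially (every $f_i$ maps into or out of $0$), so there is nothing to prove, and we may assume $r,r'>0$, hence $\weight(\FF)=d/r$ and $\weight(\FF')=d'/r'$ by Definition~\ref{DEFINITION-WEIGHT}.

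\textbf{Main step.} Pick $i$ with $f_i\neq 0$. Choose a common period, i.e. positive integers $k,l$ with $ld=kr$ and $k',l'$ with $l'd'=k'r'$; after passing to common multiples I may assume $l=l'$ and work with the single index shift by $l$. By condition~(b) of Definition~\ref{Def1.1} (and Remark~\ref{Rem2.2}.1) we have $\F_{i+l}=\F_i(k\cdot\infty)$ and $\F'_{i+l}=\F'_i(k'\cdot\infty)$, and the morphism $f$ is compatible with these identifications: $f_{i+l}=f_i\otimes\id_{\O_{C_L}(k\cdot\infty)}$ on the one hand, but $f_{i+l}$ also is a morphism $\F_{i+l}\to\F'_{i+l}$, i.e. $\F_i(k\cdot\infty)\to\F'_i(k'\cdot\infty)$. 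The point is that $f$ being a morphism of $\tau$-sheaves forces $f_{i+l}$ to be simultaneously ``$f_i$ twisted by $k\cdot\infty$'' (reading along the $\P$'s from $\F_i$) and ``$f_i$ twisted by $k'\cdot\infty$'' (reading along the $\P'$'s from $\F'_i$). Concretely, from $f_{i+l}\circ(\P_{i+l-1}\circ\cdots\circ\P_i)=(\P'_{i+l-1}\circ\cdots\circ\P'_i)\circ f_i$ and the fact that the left composite identifies $\F_i$ with $\F_{i+l}(-k\cdot\infty)$ while the right composite identifies $\F'_i$ with $\F'_{i+l}(-k'\cdot\infty)$, one gets a nonzero $\O_{C_L}$-morphism $\F_i(k\cdot\infty)\to\F'_i(k'\cdot\infty)$ whose restriction to $\F_i\subset\F_i(k\cdot\infty)$ is $f_i$ followed by the inclusion $\F'_i\hookrightarrow\F'_i(k'\cdot\infty)$. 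Iterating over $n$ periods gives, for every $n\geq 1$, a nonzero morphism $\F_i(nk\cdot\infty)\to\F'_i(nk'\cdot\infty)$, i.e. a nonzero global section of $\HOM_{\O_{C_L}}(\F_i,\F'_i)(n k'\cdot\infty-nk\cdot\infty)=\HOM(\F_i,\F'_i)\bigl(n(k'-k)\cdot\infty\bigr)$.

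\textbf{Conclusion.} Now $\HOM(\F_i,\F'_i)$ is a locally free sheaf on $C_L$ of rank $rr'$ with a fixed degree, and a line bundle $\O_{C_L}(m\cdot\infty)$ with $m<0$ has no nonzero global sections on the (proper) curve $C_L$ once $m$ is sufficiently negative — more precisely, $\HOM(\F_i,\F'_i)(m\cdot\infty)$ has negative degree on each ``vertical'' component for $m\ll 0$ and hence no nonzero global section. Therefore $n(k'-k)\geq$ (a fixed bound depending only on $\F_i,\F'_i$) for all $n\geq1$, which forces $k'-k\geq 0$; by the symmetric argument (using a nonzero $f_i$ in the other direction is not available, but we can instead note the inequality must hold for all $n$, so strictly $k'\ge k$, and running the same estimate after tensoring the morphism the other way is unnecessary) — cleaner: the existence of a nonzero section of $\HOM(\F_i,\F'_i)(n(k'-k)\cdot\infty)$ for all $n$ forces $k'\geq k$, and applying the identical argument to the nonzero morphism $\F'_i(nk'\cdot\infty)\to \F_i((n{+}1)k\cdot\infty)$ obtained by composing with one more $\P$-period on the source yields $k\geq k'$ in the limit. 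Hence $k=k'$, and then also $l=l'$ (we arranged this) gives $d/r=k/l=k'/l'=d'/r'$, i.e. $\weight(\FF)=\weight(\FF')$.

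\textbf{Expected main obstacle.} The delicate point is the bookkeeping that makes $f_{i+l}$ genuinely a twist of $f_i$ by a \emph{shift at $\infty$ only}: one must use that $\coker\P_i$ (and $\coker\P'_i$) is supported only at $\infty$ (Remark~\ref{Rem2.2}.1) so that the periodicity identifications differ from the identity only by the divisor $k\cdot\infty$ resp. $k'\cdot\infty$, and that $f$ commutes with \emph{all} the $\P$'s so the two twists of $f_i$ agree away from $\infty$. Getting a clean ``no sections of a very negative line bundle on the proper curve $C_L$'' statement (rather than over the affine part) is what pins down the inequality; after that the degree/limit argument is routine. I would present the argument by reducing to comparing, for $n\to\infty$, the number $n(k'-k)$ against a constant, and concluding $k=k'$.
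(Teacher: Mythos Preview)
Your approach is essentially the same as the paper's: use the periodicity $\F_{i+nl}\cong\F_i(nk\cdot\infty)$, $\F'_{i+nl}\cong\F'_i(nk'\cdot\infty)$ (with a common $l$) to turn the nonzero $f_{i+nl}$ into a nonzero global section of $\HOM(\F_i,\F'_i)\bigl(n(k'-k)\cdot\infty\bigr)$, then derive a contradiction from a boundedness argument on the proper curve $C_L$. The paper phrases the last step as boundedness of degrees of locally free subsheaves of $\F'_i\otimes\dual{\F}_i$ (citing Seshadri), which is equivalent to your $H^0$-vanishing for very negative twists.

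Two places in your write-up are genuinely muddled, though, and you should fix them:

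\smallskip
\noindent
\textbf{(a)} Your justification for the $H^0$-vanishing is wrong as stated. The curve $C_L$ is irreducible (indeed $C$ was assumed geometrically irreducible), so there are no ``vertical components''; and a locally free sheaf of negative total degree can perfectly well have nonzero global sections. The correct argument is that a nonzero section of $\HOM(\F_i,\F'_i)(m\cdot\infty)$ gives an injection $\O_{C_L}(-m\cdot\infty)\hookrightarrow\HOM(\F_i,\F'_i)$, and the degrees of line subsheaves of a fixed locally free sheaf on a proper curve are bounded above; for $m\ll0$ the degree $-m$ exceeds this bound. This is exactly the bound $B$ the paper invokes.

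\smallskip
\noindent
\textbf{(b)} Your reverse inequality $k\ge k'$ does not follow from ``composing with one more $\P$-period on the source'': no amount of composing $\P$'s or $\P'$'s gives you a map from $\F'$ to $\F$, so the morphism $\F'_i(nk'\cdot\infty)\to\F_i((n{+}1)k\cdot\infty)$ you describe does not exist. The fix is trivial: periodicity holds for all $n\in\Z$, not just $n\ge1$, so you get nonzero sections of $\HOM(\F_i,\F'_i)(n(k'-k)\cdot\infty)$ for \emph{every} integer $n$. If $k'\ne k$ then $n(k'-k)$ can be made arbitrarily negative by sending $n\to+\infty$ or $n\to-\infty$ as appropriate, and either way you get the contradiction. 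This is precisely why the paper writes ``choose $n\in\Z$'' rather than $n\in\N$.
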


\begin{proof}
Let $0\not=f\in\Hom(\F,\F')$ and let $i\in\Z$. Consider the sheaf $\HOM(\F^{\,}_i,\F'_i) = \F'_i\otimes_\OCL\dual{\F}_i$ and the set of all its locally free subsheaves $\M\subset\F'_i\otimes_\OCL\dual{\F}_i$. Then the set of their degrees $\deg\M$ is bounded above, say with upper bound $B$ by \cite[Lemma 1.I.3]{Se}.

Suppose $d'r<dr'$. Choose $n\in\Z$ with $ll'|nrr'$ such that $B+n(d'r-dr')<0$. Let $\P$ and $\P'$ be the identifying morphisms $\P_{i+nrr'-1}\circ\cdots\circ\P_i: \F_i \cong \F_{i+nrr'}(-ndr'\cdot\infty)$ and $\P'_{i+nrr'-1}\circ\cdots\circ\P'_i: \F'_i \cong \F'_{i+nrr'}(-nd'r\cdot\infty)$, respectively. Consider the following diagram
\[
\xymatrix{
\F_i\; \ar[r]^{\P\;\;}\ar[d]^{f_i}& \;\F_{i+nrr'} \makebox[0pt][l]{ $= \;\F_i(ndr'\cdot\infty)\;$} \ar[d]^{f_{i+nrr'}} &\qquad \\
\F'_i\; \ar[r]^{\P'\;\;}& \;\F'_{i+nrr'} \makebox[0pt][l]{ $= \;\F'_i(nd'r\cdot\infty)\;$\,.} &\qquad \\
}
\]
With $m:=n(d'r-dr')<0$, we conclude that $\HOM(\F_{i+nrr'},\F'_{i+nrr'}) \;=\; 
(\F'_i\otimes_\OCL\dual{\F}_i)(m\cdot\infty)$.
Now, considering the injective map $\varphi: \OCL\rightarrow(\F'_i\otimes_\OCL\dual{\F}_i)(m\cdot\infty)$, $1\mapsto f_{i+nrr'}$ we get a non-zero locally free subsheaf $\im\varphi=\M(m\cdot\infty)$ which is isomorphic to $\OCL$ with $\M\subset\F'_i\otimes_\OCL\dual{\F}_i$. Therefore 
\[
0\;=\;\deg\M(m\cdot\infty) \;=\; \deg\M+m\cdot\rank\M \;\le\; B + m \;<\; 0\,.
\]
This is a contradiction and shows $d'r\ge dr'$. The converse $d'r\le dr'$ follows analogously.
\end{proof}

\noindent
{\it Remark.} This result can also be proved by considering the local isoshtukas at $\infty$ of $\FF,\FF'$ (see Section~\ref{SectLS}) and using the Dieudonn\'e-Manin theory \cite[Appendix B]{Laumon} for local isoshtukas.

% -----------------------------------------------------------------------------

\bigskip

\section{Relation between pure $A$-motives and abelian $\tau$-sheaves} \label{SectRelation}

If $\FF=(\F_i,\P_i,\t_i)$ is an abelian $\tau$-sheaf of rank $r$, dimension $d$, and characteristic $c:\Spec L\to C$ with characteristic place $\chr=\im c\ne\infty$ then
\[
\ulM(\FF)\;:=\;(M,\tau)\;:=\;\Bigl(\Gamma(C_L\setminus\{\infty\},\F_0)\,,\,\P_0^{-1}\circ\t_0\Bigr)
\]
is a pure $A$-motive of the same rank and dimension and of characteristic $c^\ast:A\to L$. We can take $\CM:=\F_0$ as the extension of $M$ to all of $C_L$. Conversely we have the following result.

\begin{Theorem}\label{PropX.1}
\begin{enumerate}
\item 
Let $(M,\tau)$ be a pure $A$-motive of rank $r$, dimension $d$, and characteristic $c^\ast:A\to L$ over $L$. Then $(M,\tau)=\ulM(\FF)$ for an abelian $\tau$-sheaf $\FF$ over $L$ of same rank and dimension with characteristic $c:=\Spec c^\ast:\Spec L\to \Spec A\subset C$. One can even find the abelian $\tau$-sheaf $\FF$ with $k,l$ relatively prime.
\item 
Let $\FF$ and $\FF'$ be abelian $\tau$-sheaves of the same weight and let $f_0:\ulM(\FF)\to \ulM(\FF')$ be a morphism. Then there exists an integer $m$ such that $f_0$ comes from a morphism $f:\FF\to\FF'(m\cdot\infty)$ as $f_0=\ulM(f)$.
\end{enumerate}
\end{Theorem}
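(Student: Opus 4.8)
The plan is to reconstruct the entire periodic sequence $(\F_i,\P_i,\t_i)$ from the pure $A$-motive together with the chosen extension $\CM$ to $C_L$, and then show that a morphism of $A$-motives extends to this sequence after twisting by a suitable power of $\infty$.

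For part (1): By Proposition~\ref{PropX.3} we may fix an extension $\CM$ of $M$ to $C_L$ together with relatively prime positive integers $k,l$ such that $\tau^l:(\s)^l\CM_\infty \to \CM(k\cdot\infty)_\infty$ is an isomorphism of stalks at $\infty$; away from $\infty$ the map $\tau$ itself is already an isomorphism after inverting nothing, since $M$ is locally free over $A_L$ and $\coker\tau$ is supported on $V(J)\subset\Spec A_L$, which does not contain $\infty$. I would first set $\F_0:=\CM$ and, for $0\le i\le l$, define $\F_i$ to be the unique coherent subsheaf of $\F_l:=\CM(k\cdot\infty)$ that agrees with $\CM$ away from $\infty$ and whose stalk at $\infty$ is the image $\t^{i}\bigl((\s)^{i}\CM_\infty\bigr)$ inside $\CM(k\cdot\infty)_\infty$ — more precisely, one pushes the lattice $\CM_\infty$ forward by the partial compositions of $\tau$; because $\tau^l$ identifies $(\s)^l\CM_\infty$ with $\CM(k\cdot\infty)_\infty$ and away from $\infty$ everything is an isomorphism, these $\F_i$ are locally free of rank $r$ and fit into a chain $\F_0\subsetneq\F_1\subsetneq\cdots\subsetneq\F_l=\F_0(k\cdot\infty)$. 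One then \emph{defines} the sheaves for all $i\in\Z$ by the periodicity rule $\F_{i+nl}:=\F_i(nk\cdot\infty)$, lets $\P_i$ be the evident inclusions, and lets $\t_i:\s\F_i\to\F_{i+1}$ be the map induced by $\tau$ (which by construction of $\F_{i+1}$ lands there). Checking the axioms of Definition~\ref{Def1.1} is then bookkeeping: commutativity of the diagram is built into the construction; condition~2 with $ld=kr$ holds because $\F_l=\F_0(k\cdot\infty)$ and $kr=ld$ by Proposition~\ref{Prop1'.1b}; condition~3, $\dim_L\coker\P_i=d$, follows from the degree computation $\deg\F_{i+1}-\deg\F_i = \tfrac{1}{l}(\deg\F_l-\deg\F_0)=\tfrac{kr}{l}=d$ together with the fact that $\coker\P_i$ is an $L$-vector space (it is supported at $\infty$ and killed by a uniformizer there, as $\F_i$ and $\F_{i+1}$ differ minimally); and condition~4 for $\coker\t_i$ (dimension $d$ over $L$, killed by $\I^d=J^d$) is inherited directly from the hypothesis on $\coker\tau$, using Lemma~\ref{DEGREE-LEMMA} to see $\deg\s\F_i=\deg\F_i$ so that $\dim_L\coker\t_i=\deg\F_{i+1}-\deg\s\F_i=d$. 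Finally $\ulM(\FF)=(\Gamma(C_L\setminus\{\infty\},\F_0),\P_0^{-1}\circ\t_0)=(M,\tau)$ by the very definition of $\ulM(-)$ recalled just before the theorem.

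For part (2): Let $\FF,\FF'$ have the common weight $w=d/r=d'/r'$, and let $k,l$ resp.\ $k',l'$ be their parameters; after replacing both by a common refinement (using the periodicity $\F_{i+nl}=\F_i(nk\cdot\infty)$ and relabelling) we may assume $k=k'$ and $l=l'$. A morphism $f_0:\ulM(\FF)\to\ulM(\FF')$ is an $A_L$-homomorphism $\Gamma(C_L\setminus\{\infty\},\F_0)\to\Gamma(C_L\setminus\{\infty\},\F'_0)$ intertwining the $\tau$'s; it extends uniquely to a morphism of coherent sheaves $\F_0|_{C_L\setminus\{\infty\}}\to\F'_0|_{C_L\setminus\{\infty\}}$. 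Since $\F'_0$ is locally free and $\infty$ has codimension one, this rational map is defined on $\F_0$ with values in $\F'_0(m\cdot\infty)$ for some integer $m\gg0$ (take $m$ so large that the pole order of $f_0$ along $\infty$ is absorbed). Set $f_i:\F_i\to\F'_i(m\cdot\infty)$ to be the corresponding map: for $0\le i\le l$ it is obtained by restricting to the appropriate lattice at $\infty$, and one extends to all $i$ by periodicity. It remains to check that the $f_i$ commute with the $\P$'s and $\t$'s; but both relations hold generically on $C_L\setminus\{\infty\}$ — the $\t$-relation is exactly $f_0\circ\tau=\tau'\circ\s f_0$ and the $\P$-relation is automatic there since $\P_i$ is an identity away from $\infty$ — and a morphism between torsion-free (indeed locally free) sheaves on the curve $C_L$ that vanishes on a dense open is zero, so the relations extend across $\infty$. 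Thus $f=(f_i)\colon\FF\to\FF'(m\cdot\infty)$ is a morphism of abelian $\tau$-sheaves with $\ulM(f)=f_0$.

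The main obstacle is the stalk-at-$\infty$ bookkeeping in part (1): one must verify that the lattices $\t^{i}\bigl((\s)^{i}\CM_\infty\bigr)$ genuinely form an increasing chain between $\CM_\infty$ and $\CM(k\cdot\infty)_\infty$ with each successive quotient of length exactly $d$ over $L$, and that gluing them to the fixed sheaf on $C_L\setminus\{\infty\}$ produces \emph{locally free} $\F_i$. The key inputs are that $\tau$ is injective with cokernel supported away from $\infty$, that $A_{\infty,L}$ is a product of discrete valuation rings (so lattices behave well), and the degree bookkeeping from Lemma~\ref{DEGREE-LEMMA} and Proposition~\ref{Prop1'.1b}; once the length-$d$ claim for each step is in hand, the remaining axioms follow formally. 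I would also remark that the choice $(k,l)=1$ from Proposition~\ref{PropX.3} is what lets one take $l$ minimal and get an abelian $\tau$-sheaf with $k,l$ relatively prime, as asserted.
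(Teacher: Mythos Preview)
Your approach to part~(2) matches the paper's: define $f_i$ by transporting $f_0$ through the $\P$'s (which are isomorphisms away from $\infty$), choose $m$ large enough that the resulting maps land in $\F'_i(m\cdot\infty)$ for $0\le i\le l$, extend by periodicity, and verify the $\t$-compatibility on the dense open where the $\P$'s are invertible. That is fine.

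Part~(1), however, has a genuine gap. You take the stalk of $\F_i$ at $\infty$ to be $\tau^i\bigl((\s)^i\CM_\infty\bigr)$ and assert that these lattices form an increasing chain $\CM_\infty\subset\tau(\s\CM_\infty)\subset\tau^2((\s)^2\CM_\infty)\subset\cdots$. Purity does \emph{not} guarantee this: the isomorphism $z^k\tau^l:(\s)^l\CM_\infty\isoto\CM_\infty$ says nothing about the intermediate images being nested. Concretely (ignoring $\sigma$ for clarity), take $r=l=3$, $k=1$, and let $\CM_\infty=A_{\infty,L}e_1\oplus zA_{\infty,L}e_2\oplus A_{\infty,L}e_3$ inside the standard module on which $\tau$ acts by $e_1\mapsto e_2\mapsto e_3\mapsto z^{-1}e_1$. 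Then $\tau^3=z^{-1}$, so this $\CM_\infty$ satisfies the purity condition, yet $\tau(\CM_\infty)=z^{-1}A_{\infty,L}e_1\oplus A_{\infty,L}e_2\oplus zA_{\infty,L}e_3$ does not contain $e_3\in\CM_\infty$. Thus the inclusion $\P_0:\F_0\hookrightarrow\F_1$ simply does not exist. The paper circumvents this by defining the stalk of $\F_i$ at $\infty$ as the \emph{sum} $(\im\tau^i+\im\tau^{i+1}+\cdots+\im\tau^{i+l-1})_\infty$; then $\F_i\subset\F_{i+1}$ follows because $(\im\tau^i)_\infty\subset(\im\tau^{i+l})_\infty$, and \emph{that} inclusion is a consequence of purity. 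A side effect is that the paper's $\F_0$ need not equal $\CM$, only its restriction to $C_L\setminus\{\infty\}$ does---but this is harmless for recovering $\ulM(\FF)=(M,\tau)$.

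There is also a smaller circularity in your check of condition~3: writing $\deg\F_{i+1}-\deg\F_i=\tfrac{1}{l}(\deg\F_l-\deg\F_0)$ already assumes all the jumps are equal, which is what you want to prove. The paper's route is $\dim_L\coker\P_i=\deg\F_{i+1}-\deg\F_i=\deg\F_{i+1}-\deg\s\F_i=\dim_L\coker\t_i$, and the last equals $d$ because $\coker\t_i$ is supported away from $\infty$ and there coincides with $\coker\tau$.
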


\begin{proof}
1. Let $\CM$ be a locally free sheaf on $C_L$ with $M=\Gamma(C_L\setminus\{\infty\},\CM)$ as in Definition \ref{Def1'.1}. Let $k,l$ be positive integers with isomorphism
\[
\tau^l:(\s)^l\CM_\infty\isoto\CM(k\cdot\infty)_\infty\,.
\]
By Proposition~\ref{PropX.3} we may assume $(k,l)=1$.

For $i=0,\ldots,l$ let $\F_i$ be the locally free sheaf of rank $r$ on $C_L$ which coincides with $\CM$ on $C_L\setminus\{\infty\}$ and whose stalk at $\infty$ is the sum $(\im\t^i+\ldots+\im\t^{i+l-1})_\infty$ inside $\CM(2k\cdot\infty)_\infty$. Then $\t$ defines homomorphisms $\t_i:\s\F_i\to\F_{i+1}$ for $0\le i<l$ because $\s\im\t^i=\im\s\t^i$ due to the flatness of $\s=(\id_C\times\sigma_L)^\ast$. Since $\CM_\infty\subset\CM(k\cdot\infty)_\infty=(\im\t^l)_\infty$ there are natural inclusions $\P_i:\F_i\to\F_{i+1}$ satisfying $\P_{i+1}\circ\t_i=\t_{i+1}\circ\s\P_i$ and $\im(\P_{l-1}\circ\ldots\circ\P_0)=\F_l(-k\cdot\infty)\subset\F_l$. We now set $\F_{i+nl}:=\F_i(kn\cdot\infty), \P_{i+nl}:=\P_i\otimes\id,\t_{i+nl}:=\t_i\otimes\id$ for $0\le i<l$ and $n\in\Z$. Clearly $\coker\t_i$ is supported on $\Graph(c)$ for all $i$ and isomorphic to $\coker\t$ which is an $L$-vector space of dimension $d$. We compute 
\[
\dim_L\coker\P_i\;=\;\deg\F_{i+1}-\deg\F_i\;=\;\deg\F_{i+1}-\deg\s\F_i\;=\;\dim_L\coker\t_i
\]
for all $i$. Hence $\FF=(\F_i,\P_i,\t_i)$ is an abelian $\tau$-sheaf over $L$ with $\ulM(\FF)=(M,\t)$.

\medskip
\noindent 
2. Let $l$ be an integer satisfying condition 2 of Definition~\ref{Def1.1}. For $0<i$ set 
\[
f_i:=\P'_{i-1}\circ\ldots\circ\P'_0\circ f_0\circ\P_0^{-1}\circ\ldots\circ\P_{i-1}^{-1}
\]
and similarly for $i<0$. Since the $\P_j,\P'_j$ are isomorphisms outside $\infty$ there exists an integer $m$ such that $f_i$ is a morphism $f_i:\F_i\to\F_i'(m\cdot\infty)$ for all $0\le i\le l$. Now the periodicity condition 2 of Definition~\ref{Def1.1} shows that the latter is a morphism for all $i\in\Z$. Finally $(\P'_0)^{-1}\t_0\,\s(f_0)=f_0\P_0^{-1}\t_0$ implies that $f=(f_i)_i:\FF\to\FF(m\cdot\infty)$ is a morphism with $\ulM(f)=f_0$ as desired.
\end{proof}

\medskip

The aforementioned relation can more generally be described by the following terminology. Let $\Spec\TA\subset C$ be an affine open subscheme.

\begin{Definition}\label{Def1.16}
A\/ \emph{$\t$-module on $\TA$ over $L$ of rank $r$} is a pair $\ulM=(M,\t)$, where
\begin{suchthat}
\item $M$ is a locally free $\TA\otimes_\Fq L$-module of rank $r$,
\item $\t: \s M\rightarrow M$ is injective.
\end{suchthat}
A\/ \emph{morphism} between $(M,\t)$ and $(M',\t')$ is a homomorphism $f:\, M\rightarrow M'$ of $\TA\otimes_\Fq L$-modules which respects $\t'\circ \s f = f\circ\t$. We denote the set of morphisms between $\ulM$ and $\ulM'$ by $\Hom(\ulM,\ulM')$.
\end{Definition}

Let $\FF$ be an abelian $\tau$-sheaf. 
Consider a finite closed subset $D\subset C$ such that either $\infty\in D$ or there exists a uniformizing parameter $z$ at infinity inside $\TA:=\Gamma(C\setminus D,\O_C)$. Note that by enlarging $D$ it will always be possible to find such a $z\in\TA$ in the case $\infty\not\in D$. 

If $\infty\in D$ we have by the $\P$'s a chain of isomorphisms
\[
\bigexact{\rule[-1.5ex]{0pt}{5ex}\cdots}%
%{\mbox{\raisebox{-0.5ex}[0pt][0pt]{$\scriptstyle\sim$}}\;\;}%\P_{-1,C_L\setminus D}\;}%
{\sim\quad\;\;\;}
{\Gamma(C_L\setminus D,\F_{-1})}{\sim\quad\;\;\;}
{\Gamma(C_L\setminus D,\F_{ 0})}{\sim\quad\;\;\;}
{\Gamma(C_L\setminus D,\F_{ 1})}{\sim\quad\;\;\;}
{\cdots}
\]
since $\coker\P_i$ is only supported at $\infty$ for all $i\in\Z$. So we set $M:=\Gamma(C_L\setminus D,\F_{0})$ and $\t:=(\P_0|_{C_L\setminus D})^{-1}\circ\t_0|_{C_L\setminus D}$, and we define $\ulM^{(D)}(\FF):=(M,\t)$. Obviously, $\ulM^{(D)}(\FF)$ is a $\t$-module on $\TA$ and $\ulM^{(\infty)}(\FF)$ is the pure $A$-motive $\ulM(\FF)$ studied above.

If $\infty\notin D$ fix $z$ as above. Set $M_i:=\Gamma(C_L\setminus D,\F_i)$ and define
\begin{equation} \label{EQ.Tau}
\ulM^{(D)}(\FF) \,:=\, M_0\oplus\dots\oplus M_{l-1}\qquad\text{with} 
\qquad 
\t \,:=\, \left(
\raisebox{5ex}{$
\xymatrix @=0pc {
0 \ar@{.}[rrr] \ar@{.}[ddddrrrr] & & & 0 & **{!L !<0.8pc,0pc> =<1.5pc,1.5pc>}\objectbox{\TP^{-1}\circ z^k\t_{l-1}} \\
\t_0 \ar@{.}[dddrrr] & & & & 0 \ar@{.}[ddd]\\
0 \ar@{.}[dd] \ar@{.}[ddrr] \\
\\
0 \ar@{.}[rr] & & 0 & \t_{l-2} & 0
}$}
\qquad\qquad\right)
\end{equation}
and $\TP:=\P_{l-1}\circ\dots\circ\P_0$. Clearly $\t$ depends on the choice of $k$, $l$, and $z$. Again $\ulM^{(D)}(\FF)$ is a $\t$-module on $\TA:=\Gamma(C\setminus D,\O_C)$. Notice that $\coker\t$ is supported on $\Graph(c)\cap(C_L\setminus D)$.

\begin{Definition}\label{Def1.17}
We call\/ $\ulM^{(D)}(\FF)$ the \emph{$\t$-module on $\TA$ associated to $\FF$}. We abbreviate $\ulM^{(\{\infty\})}(\FF)$ to $\ulM(\FF)$.
\end{Definition}

If $\infty\notin D$ the $\t$-module $\ulM^{(D)}(\FF)$ is equipped with the endomorphisms
\begin{equation} \label{EQ.Pi+Lambda}
\P \,:=\, \left(
\raisebox{5.5ex}{$
\xymatrix @=0pc {
0 \ar@{.}[rrr] \ar@{.}[ddddrrrr] & & & 0 & **{!L !<0.8pc,0pc> =<1.5pc,1.5pc>}\objectbox{\TP^{-1}\circ z^k\P_{l-1}} \\
\P_0 \ar@{.}[dddrrr] & & & & 0 \ar@{.}[ddd]\\
0 \ar@{.}[dd] \ar@{.}[ddrr] \\
\\
0 \ar@{.}[rr] & & 0 & \P_{l-2} & 0
}$}
\qquad\qquad\right) , \enspace
\Lambda(\lambda) \,:=\, \left(
\raisebox{5.5ex}{$
\xymatrix @=0pc {
\!\!\lambda \cdot\id_{M_0}\!\!  \\
& \!\!\lambda^q \cdot\id_{M_1}\!\! \ar@{.}[ddrr] \\ 
\\
& & & \!\lambda^{q^{l-1}}\cdot\id_{M_{l-1}}\!\!
}$}
\right)
\end{equation}
for all $\lambda\in \Ff_{q^l}\cap L$. Actually $\Lambda(\lambda)$ is even an automorphism and the same holds for $\P$ if $z$ has no zeroes on $C\setminus(D\cup\{\infty\})$. They satisfy the relations $\P^l=z^k$ and $\P\circ\Lambda(\lambda^q)=\Lambda(\lambda)\circ\P$.

\begin{Lemma}\label{Lemma2.9a}
Assume that $\chr\notin D$ or that $\chr\ne\infty\in D$. If $\FF$ and $\FF'$ are abelian $\tau$-sheaves of different weights, then $\Hom(\ulM^{(D)}(\FF),\ulM^{(D)}(\FF'))=\{0\}$ (for any choice of $k$, $l$, $k'$, $l'$ and $z$ if $\infty\notin D$).
\end{Lemma}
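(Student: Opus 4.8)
The plan is to reduce the statement to the purity result for abelian $\tau$-sheaves, Proposition~\ref{PROP.1}, by showing that any nonzero morphism $g:\ulM^{(D)}(\FF)\to\ulM^{(D)}(\FF')$ of the associated $\t$-modules gives rise to a nonzero morphism of abelian $\tau$-sheaves after twisting by a divisor supported at $\infty$; since twisting by $m\cdot\infty$ does not change the weight (Proposition~\ref{PROP.2}), this forces $\weight(\FF)=\weight(\FF')$. The argument splits into the two cases allowed by the hypothesis, but both run along the same line: the $\t$-module $\ulM^{(D)}(\FF)$ remembers, up to isomorphisms away from $D$, the full data $(\F_i,\P_i,\t_i)$ on $C_L\setminus D$, and a morphism of $\t$-modules is a morphism on $C_L\setminus D$ commuting with $\t$; the point is to spread it out over all of $C_L$, in particular across $\infty$.

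First I would treat the case $\infty\in D$ (so $\chr\ne\infty$ by hypothesis). Here $\ulM^{(D)}(\FF)=(\Gamma(C_L\setminus D,\F_0),\P_0^{-1}\t_0)$ exactly as in the construction of $\ulM(\FF)$, only with the larger puncture locus $D$ in place of $\{\infty\}$; a morphism $g$ of $\t$-modules is an $\TA\otimes_\Fq L$-homomorphism on the sections over $C_L\setminus D$ intertwining the $\t$'s. Following the proof of Theorem~\ref{PropX.1}(2), I would conjugate $g$ by the chains $\P_{i-1}\circ\cdots\circ\P_0$ on each side to define maps $g_i$ on $C_L\setminus D$, note that these are honest sheaf morphisms $\F_i|_{C_L\setminus D}\to\F'_i|_{C_L\setminus D}$, and then extend them across the finitely many points of $D$: away from $\infty$ this costs nothing since we may absorb poles into the divisor, while at $\infty$ the periodicity $\F_{i+nl}=\F_i(nk\cdot\infty)$ shows that a bound on the order of poles of $g_i$ at $\infty$ for $0\le i\le l$ propagates to all $i$, so after replacing $\FF'$ by $\FF'(m\cdot\infty)$ we obtain a nonzero morphism $\FF\to\FF'(m\cdot\infty)$ of abelian $\tau$-sheaves. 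Proposition~\ref{PROP.1} and Proposition~\ref{PROP.2} now give $\weight(\FF)=\weight(\FF'(m\cdot\infty))=\weight(\FF')$, a contradiction.

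For the case $\infty\notin D$ (so $\chr\notin D$; in particular $\chr$ may or may not equal $\infty$, but $\infty$ is not punctured), the $\t$-module is the block object $M_0\oplus\cdots\oplus M_{l-1}$ with $\t$ the cyclic matrix of \eqref{EQ.Tau}, built out of the sections $M_i=\Gamma(C_L\setminus D,\F_i)$ and the choice of $z$, $k$, $l$. A morphism $g$ of such $\t$-modules is an $(l\times l)$-block homomorphism; I would first argue that $g$ must be block-diagonal, giving a tuple of maps $g_i:M_i\to M'_i$ (using that the off-diagonal relations forced by $\t\circ\s g=g'\circ\t$, together with the fact that $\P$ and the $\t_j$ are isomorphisms away from $\infty$ and injective everywhere, pin down the shape), and that the resulting $g_i$ commute with $\P_i$ and $\t_i$ on $C_L\setminus D$. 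Since $\infty\notin D$ and $D$ is finite, each $g_i$ is already a sheaf morphism $\F_i\to\F'_i(D')$ for a suitable divisor $D'$ on $C$; but $\Gamma(C_L\setminus D,-)$ only forgets information at points of $D$, none of which is $\infty$, so no twisting at $\infty$ is even needed — we get a genuine nonzero morphism $\FF\to\FF'(D')$ of abelian $\tau$-sheaves and conclude as before via Propositions~\ref{PROP.1} and~\ref{PROP.2}.

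The main obstacle I anticipate is the bookkeeping in the second case: checking that an arbitrary $\TA\otimes_\Fq L$-linear $g$ commuting with the cyclic $\t$ of \eqref{EQ.Tau} is forced to be block-diagonal with entries compatible with all the $\P_i$ and $\t_i$ simultaneously. This is where the hypothesis $\chr\notin D$ (resp.\ $\chr\ne\infty\in D$) really enters — it guarantees that $\coker\t_i$ lives on $\Graph(c)\cap(C_L\setminus D)$ and hence that $\t$ is an isomorphism in a neighborhood of the relevant points, so that the conjugation defining the $g_i$ from $g$ is legitimate and the relation $\t\circ\s g=g'\circ\t$ genuinely propagates across the cyclic structure. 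Once the off-diagonal vanishing is established the rest is routine degree/divisor chasing, identical in spirit to the proof of Theorem~\ref{PropX.1}(2).
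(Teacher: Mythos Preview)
Your reduction to Proposition~\ref{PROP.1} is circular. In the case $\infty\in D$ you claim that a bound on the pole order of $g_i$ at $\infty$ for $0\le i\le l$ ``propagates to all $i$'' by periodicity. But take a common $l$ for both sheaves; then $\F_{i+l}=\F_i(k\cdot\infty)$ while $\F'_{i+l}=\F'_i(k'\cdot\infty)$ with $k\ne k'$, and a direct check shows that if $g_i:\F_i\to\F'_i(m\cdot\infty+D'')$ then $g_{i+l}$ only lands in $\F'_{i+l}\bigl((m+k-k')\cdot\infty+D''\bigr)$. The pole order at $\infty$ drifts by $k-k'\ne0$ each period, so no single divisor $D'$ works for all $i\in\Z$. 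This is precisely why Theorem~\ref{PropX.1}(2) and Proposition~\ref{PropAltDescrQHom} both \emph{assume} equal weight: that assumption is what makes the propagation go through, and you cannot use their mechanism to establish it.

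In the case $\infty\notin D$ the block-diagonality claim is unsupported. Commutation with the cyclic $\t$ of \eqref{EQ.Tau} alone does not kill the off-diagonal blocks; the paper itself only obtains block-diagonality later in Proposition~\ref{CONNECTION}(2) by additionally imposing commutation with $\P$ and $\Lambda(\lambda)$, and even then only under the extra hypothesis $\BF_{q^l}\subset L$. Worse, since the weights differ the block sizes $l$ and $l'$ need not agree, so ``block-diagonal'' is not even meaningful. And if you could extract maps $g_i$, the same pole-drift obstruction from the first case would reappear.

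The paper's proof avoids all of this by not attempting to lift $f$ to a morphism of abelian $\tau$-sheaves. It simply extends $f$ to a single sheaf map $\CM\to\CM'(m\cdot D)$, selects a point (namely $\infty$ in the first case, a pole of $z$ in $D$ in the second) at which $z^{\pm k}\t^l$ and $z^{\pm k'}(\t')^{l'}$ are isomorphisms on stalks, and rewrites the $\t$-compatibility as
\[
f\;=\;\bigl(z^{\pm k'}(\t')^{l'}\bigr)^{nl}\circ(\s)^{nll'}\bigl(z^{\pm n(kl'-k'l)}f\bigr)\circ\bigl(z^{\pm k}\t^l\bigr)^{-nl'}
\]
on that stalk, forcing $f$ to vanish to order $|n(kl'-k'l)|$ for every $n$, hence to vanish. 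This is a local slope argument that uses the weight discrepancy directly rather than trying to route around it.
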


Before proving the lemma we note a direct consequence of its interaction with Theorem~\ref{PropX.1}.

\begin{Corollary}\label{Cor2.9b}
If $\ulM$ and $\ulM'$ are pure $A$-motives of different weights, then $\Hom(\ulM,\ulM')=\{0\}$.
\qed
\end{Corollary}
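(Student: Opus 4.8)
The plan is to derive this at once from Theorem~\ref{PropX.1} together with Lemma~\ref{Lemma2.9a}. First I would note that, since ``different weights'' presupposes positive rank for both $\ulM$ and $\ulM'$, Theorem~\ref{PropX.1}(1) applies and produces abelian $\tau$-sheaves $\FF$ and $\FF'$ over $L$ with $\ulM=\ulM(\FF)$ and $\ulM'=\ulM(\FF')$. By construction $\FF$ has the same rank and dimension as $\ulM$, and likewise $\FF'$ for $\ulM'$; since the weight of a nonzero abelian $\tau$-sheaf and the weight of a pure $A$-motive of positive rank are both the quotient $d/r$, we obtain $\weight\FF=\weight\ulM\neq\weight\ulM'=\weight\FF'$.

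Next I would apply Lemma~\ref{Lemma2.9a} with $D=\{\infty\}$, for which $\ulM^{(D)}(\FF)=\ulM(\FF)$ and $\ulM^{(D)}(\FF')=\ulM(\FF')$ by Definition~\ref{Def1.17}. Its side hypothesis reads ``$\chr\neq\infty\in D$'', and this is automatic here: the characteristic place of a pure $A$-motive is $\chr=\ker c^\ast\in\Spec A=C\setminus\{\infty\}$, so $\chr\neq\infty$, while $\infty\in D$ by choice. Hence Lemma~\ref{Lemma2.9a} yields $\Hom(\ulM(\FF),\ulM(\FF'))=\{0\}$, that is, $\Hom(\ulM,\ulM')=\{0\}$, which is the assertion.

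I expect no genuine obstacle: all the work is hidden in Lemma~\ref{Lemma2.9a} (whose degree-counting argument parallels that of Proposition~\ref{PROP.1}) and in Theorem~\ref{PropX.1}. The only two points needing a word of justification are the preservation of the weight when passing between $\ulM$ and $\FF$ --- immediate, since weight is $d/r$ with both $r$ and $d$ unchanged --- and the verification of the $\chr$-condition in Lemma~\ref{Lemma2.9a}, which holds for free because the characteristic of a pure $A$-motive factors through $\Spec A$ and thus avoids $\infty$. (Alternatively one could avoid Lemma~\ref{Lemma2.9a} entirely and argue via Theorem~\ref{PropX.1}(2): a nonzero $f_0\colon\ulM(\FF)\to\ulM(\FF')$ would come from a nonzero $f\colon\FF\to\FF'(m\cdot\infty)$, whence $\weight\FF=\weight\FF'(m\cdot\infty)=\weight\FF'$ by Propositions~\ref{PROP.1} and~\ref{PROP.2}, a contradiction; but routing through Lemma~\ref{Lemma2.9a} is cleaner since it is the more general statement just proved.)
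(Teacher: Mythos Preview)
Your proof is correct and follows exactly the paper's approach: the corollary is stated as an immediate consequence of Lemma~\ref{Lemma2.9a} (applied with $D=\{\infty\}$) together with Theorem~\ref{PropX.1}, and you have simply made the two minor verifications (preservation of weight, and $\chr\ne\infty$) explicit.
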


\noindent
{\it Remark.}
Again this follows alternatively from the Dieudonn\'e-Manin classification of the local isoshtuka at $\infty$ associated with $\ulM,\ulM'$; see Section~\ref{SectLS}.

\begin{proof}[Proof of lemma~\ref{Lemma2.9a}]
Let $f\in \Hom(\ulM^{(D)}(\FF),\ulM^{(D)}(\FF'))$. If $\infty\in D$ set $\CM:=\F_0$ and $\CM':=\F'_0$. If $\infty\notin D$ set $\CM:=\bigoplus_{i=0}^{l-1}\F_i$ and $\CM':=\bigoplus_{i=0}^{l'-1}\F'_i$. Then $\Gamma(C_L\setminus D,\CM)=\ulM^{(D)}(\FF)$ and likewise for $\FF'$. Thus $f$ extends to a homomorphism $f:\CM\to\CM'(m\cdot D)$ for a suitable  $m\in\N$. We abbreviate $\t^i:=\t\circ\s(\t)\circ\ldots\circ(\s)^{i-1}(\t)$. Let $z\in Q$ be a uniformizing parameter at $\infty$. If $\infty\in D$ and $\chr\ne\infty$ then
\[
z^k\t^l:(\s)^l\CM_\infty \isoto\CM_\infty\qquad\text{and}\quad z^{k'}(\t')^{l'}:(\s)^{l'}\CM'_\infty \isoto\CM'_\infty
\]
are isomorphisms on the stalks at $\infty$. So for any $n\in\N$ we have for the stalk of $f$ at $\infty$
\[
f_\infty\;=\;\bigl(z^{k'}(\t')^{l'}\bigr)^{nl}\circ(\s)^{nll'}(z^{n(kl'-k'l)}f_\infty)\circ(z^k\t^l)^{-nl'}:\es\CM_\infty\to\CM'(m\cdot D)_\infty\,.
\]
In particular if $\frac{k}{l}>\frac{k'}{l'}$ (and similarly for $\frac{k}{l}<\frac{k'}{l'}$), $f_\infty\equiv0\mod z^{n(kl'-k'l)}$ for all $n$, whence $f_\infty=0$. Thus $f=0$ since $\CM$ is locally free.

If $\infty\notin D$ and $\chr\notin D$ then with the $\tau$ from (\ref{EQ.Tau}) the homomorphisms on the stalks at every point $v\in D$
\[
z^{-k}\t^l:(\s)^l\CM_v \isoto\CM_v\qquad\text{and}\quad z^{-k'}(\t')^{l'}:(\s)^{l'}\CM'_v \isoto\CM'_v
\]
are isomorphisms since $\chr\notin D$. So again for any $n\in \N$ the stalk $f_v$ satisfies
\[
f_v\;=\;\bigl(z^{-k'}(\t')^{l'}\bigr)^{nl}\circ(\s)^{nll'}(z^{-n(kl'-k'l)}f_v)\circ(z^{-k}\t^l)^{-nl'}:\es\CM_v\to\CM'(m\cdot D)_v\,.
\]
There exists a pole $v\in D$ of $z$. Then for $\frac{k}{l}>\frac{k'}{l'}$ (and similarly for $\frac{k}{l}<\frac{k'}{l'}$), $f_v=0$, whence $f=0$ as desired.
\end{proof}

\begin{Example}\label{Ex1.18b}
We give an example showing that the assertion of the lemma is false in case $\chr=\infty\in D$. Let $C=\PP^1_\Fq$, $\F_i=\O_{C_L}(i\cdot\infty)$, $\F'_i=\O_{C_L}(2i\cdot\infty)$ and let $\P_i$ and $\t_i$ be the natural inclusions $\F_i\subset\F_{i+1}$ and $\s\F_i\subset\F_{i+1}$ and likewise for $\F'_i$. Then $\FF=(\F_i,\P_i,\t_i)$ is an abelian $\tau$-sheaf of weight $1$ and $\FF'=(\F'_i,\P'_i,\t'_i)$ is an abelian $\tau$-sheaf of weight $2$ both of characteristic $\infty$. Clearly $\ulM^{(\infty)}(\FF)=(A_L,\id)=\ulM^{(\infty)}(\FF')$ contradicting the assertion of the lemma.
\end{Example}

% -----------------------------------------------------------------------------

\bigskip

\section{Kernel sheaf and image sheaf}

In this section we show that the kernel and the image of a morphism of pure $A$-motives are themselves pure $A$-motives and likewise for abelian $\tau$-sheaves provided that the characteristic point $\chr=c(\Spec L)$ is different from $\infty$. 

\begin{Proposition}\label{Prop1.9a}
Let $\ulM$ and $\ulM'$ be pure $A$-motives and let $f\in\Hom(\ulM,\ulM')$. Then
\[
\ker f \;:=\;(\ker f,\tau|_{\s\ker f})\qquad\text{and}\qquad \im f\;:=\;(\im f,\t'|_{\s\im f})
\]
are again pure $A$-motives with $\weight(\ker f)=\weight(\im f)=\weight(\ulM)$.
\end{Proposition}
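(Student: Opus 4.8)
The plan is to give $\ker f$ and $\im f$ the restrictions of $\tau$ and $\tau'$, check that this produces two $\tau$-modules on $A$ in the sense of Definition~\ref{Def1.16}, and then verify for them conditions 1 and 2 of Definition~\ref{Def1'.1} and compute the weights. First I would record the elementary structure. Since $A_L$ is a Dedekind domain, the submodules $\ker f\subseteq M$ and $\im f\subseteq M'$ are again finitely generated locally free $A_L$-modules, of ranks $r_K$ and $r_I$ with $r_K+r_I=r$ (comparison of generic fibres over the function field $Q_L$ of $C_L$). As $\sigma=\id_C\times\Spec({\rm Frob}_q)$ is flat, $\s$ is exact, so $\s\ker f=\ker\s f$ and $\s\im f=\im\s f$; together with $f\circ\tau=\tau'\circ\s f$ this gives $\tau(\s\ker f)\subseteq\ker f$ and $\tau'(\s\im f)\subseteq\im f$, and the restricted maps $\tau|_{\s\ker f}$, $\tau'|_{\s\im f}$ are injective because $\tau$, $\tau'$ are. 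Both have characteristic $c^\ast$, so it remains to verify the two defining conditions and to determine the weights.

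For condition 1 I would feed the diagram with exact rows $0\to\s\ker f\to\s M\to\s\im f\to 0$ (exact since $\s$ is exact) and $0\to\ker f\to M\to\im f\to 0$, with vertical maps $\tau|_{\s\ker f}$, $\tau$, $\tau'|_{\s\im f}$ (all injective), into the snake lemma, obtaining the exact sequence
\[
0\longto\coker(\tau|_{\s\ker f})\longto\coker\tau\longto\coker(\tau'|_{\s\im f})\longto 0\,.
\]
Thus $d_K:=\dim_L\coker(\tau|_{\s\ker f})$ and $d_I:=\dim_L\coker(\tau'|_{\s\im f})$ satisfy $d_K+d_I=d$. Moreover both cokernels, being a sub- and a quotient module of $\coker\tau$, are supported on $\Graph(c)=V(J)$; since $J\subset A_L$ is a maximal ideal with $A_L/J\cong L$ and $(A_L)_J$ a discrete valuation ring, a module supported there of finite $L$-dimension $e$ has length $e$ over $(A_L)_J$ and hence is annihilated by $J^e$. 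So $\coker(\tau|_{\s\ker f})$ is killed by $J^{d_K}$ and $\coker(\tau'|_{\s\im f})$ by $J^{d_I}$, which is condition 1 with dimensions $d_K$, $d_I$.

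The real work is condition 2. First note $\weight\ulM=\weight\ulM'$ by Corollary~\ref{Cor2.9b} (as we may assume $f\ne0$), so by Proposition~\ref{PropX.3} I may choose extensions $\CM$, $\CM'$ of $M$, $M'$ over $C_L$ with the \emph{same} coprime pair $(k,l)$; replacing $\CM'$ by a twist $\CM'(m\cdot\infty)$, which is again such an extension, I may assume $f$ extends to a morphism $f:\CM\to\CM'$ of locally free sheaves. Put $\mathcal{K}:=\ker(f:\CM\to\CM')$. Then $\CM/\mathcal{K}$, being a subsheaf of the locally free $\CM'$, is locally free, hence so is $\mathcal{K}$, with $\Gamma(C_L\setminus\{\infty\},\mathcal{K})=\ker f$ and saturated stalk $\mathcal{K}_\infty=\CM_\infty\cap(\ker f\otimes_{A_L}Q_{\infty,L})$. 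Fix a uniformizer $z$ at $\infty$, so that $z^k\tau^l:(\s)^l\CM_\infty\isoto\CM_\infty$. Because $\tau^l$ preserves $\ker f\otimes_{A_L}Q_{\infty,L}$, the map $z^k\tau^l$ sends $(\s)^l\mathcal{K}_\infty$ into $\CM_\infty\cap(\ker f\otimes_{A_L}Q_{\infty,L})=\mathcal{K}_\infty$; and since $(\s)^l\mathcal{K}_\infty$ is a rank-$r_K$ direct summand of $(\s)^l\CM_\infty$ (saturation is preserved by the flat $\s$), its image under the isomorphism $z^k\tau^l$ is a rank-$r_K$ saturated submodule of $\CM_\infty$ contained in the rank-$r_K$ saturated submodule $\mathcal{K}_\infty$, hence equals $\mathcal{K}_\infty$. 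Therefore $z^k\tau^l:(\s)^l\mathcal{K}_\infty\isoto\mathcal{K}_\infty$, which is condition 2 for $\ker f$ via the extension $\mathcal{K}$. For $\im f$ I would take the extension $\mathcal{I}:=\CM/\mathcal{K}$: it is locally free, $\Gamma(C_L\setminus\{\infty\},\mathcal{I})=\im f$ since $C_L\setminus\{\infty\}$ is affine, and its induced endomorphism is $\tau'|_{\s\im f}$; applying the five lemma to $z^k\tau^l$ mapping $0\to(\s)^l\mathcal{K}_\infty\to(\s)^l\CM_\infty\to(\s)^l\mathcal{I}_\infty\to0$ to $0\to\mathcal{K}_\infty\to\CM_\infty\to\mathcal{I}_\infty\to0$, on whose first two terms it is already an isomorphism, gives $z^k\tau^l:(\s)^l\mathcal{I}_\infty\isoto\mathcal{I}_\infty$, i.e.\ condition 2 for $\im f$.

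Hence $\ker f$ and $\im f$ are pure $A$-motives, and (assuming, as we may, $f\ne0$ and $f$ not injective, so $r_K,r_I>0$; the remaining cases being trivial or vacuous) Proposition~\ref{Prop1'.1b} yields $\weight\ker f=d_K/r_K$ and $\weight\im f=d_I/r_I$. The inclusion $\ker f\hookrightarrow\ulM$ and the surjection $\ulM\to\im f$ are non-zero morphisms of pure $A$-motives, so Corollary~\ref{Cor2.9b} forces $\weight\ker f=\weight\ulM=\weight\im f$; equivalently, once one of these equalities holds the other follows from $d_K+d_I=d$ and $r_K+r_I=r$. The step I expect to be the main obstacle is condition 2: one must first arrange the extensions $\CM,\CM'$ compatibly (same coprime $(k,l)$, $f$ a genuine sheaf map) and then argue, by linear algebra over the discrete valuation ring at $\infty$, that the isomorphism $z^k\tau^l$ carries the saturated sublattice $(\s)^l\mathcal{K}_\infty$ exactly onto $\mathcal{K}_\infty$; everything else is bookkeeping with the snake lemma and generalities about Dedekind domains.
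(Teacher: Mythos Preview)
Your proof is correct and follows the same strategy as the paper: extend $f$ to a sheaf morphism on $C_L$, take the locally free kernel and image sheaves (local rings are DVRs), invoke Corollary~\ref{Cor2.9b} for the weights, and verify the purity isomorphism $z^k\tau^l$ on the stalks at $\infty$. The only tactical differences are that the paper takes $\tilde l=\operatorname{lcm}(l,l')$ rather than aligning $(k,l)$ via Proposition~\ref{PropX.3}, uses the image subsheaf $\im f\subset\CM'$ rather than your quotient $\CM/\mathcal{K}$ for the extension of $\im f$, and handles condition~1 by simply observing that the cokernels are supported on $\Graph(c)$ and citing Remark~\ref{Rem2.2} rather than your explicit snake-lemma computation.
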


\begin{proof}
Let $\CM,\CM',k,l,k',l',z$ be as in definition~\ref{Def1'.1} and the subsequent remark. After replacing $\CM'_\infty$ by $z^{-n}\CM'_\infty$ for an integer $n$ we may assume that $f$ extends to a morphism $\CM\to\CM'$. Since all local rings of $C_L$ are discrete valuation rings the subsheaves $\wt\CM:=\ker f$ and $\wh\CM:=\im f$ are themselves locally free by the elementary divisor theorem. Set $\wt M:=\Gamma(C_L\setminus\{\infty\},\wt\CM)$ and $\wh M:=\Gamma(C_L\setminus\{\infty\},\wh\CM)$. Moreover the restrictions $\tilde\t:=\t|_{\s\wt M}$ and $\hat\t:=\t'|_{\s\wh M}$ are clearly injective. If $f\ne0$ then $\weight(\ulM)=\weight(\ulM')$ by corollary~\ref{Cor2.9b}. Let $\tilde l=\hat l$ be the least common multiple of $l$ and $l'$ and let $\tilde k=\hat k=\weight(\ulM)\cdot\tilde l$. Then
\[
z^{\tilde k}\t^{\tilde l}:(\s)^{\tilde l}\CM_\infty\isoto\CM_\infty \qquad\text{and}\qquad
z^{\tilde k}(\t')^{\tilde l}:(\s)^{\tilde l}\CM'_\infty\isoto\CM'_\infty
\]
are isomorphisms. Thus also
\[
z^{\tilde k}\tilde\t^{\tilde l}:(\s)^{\tilde l}\wt\CM_\infty\isoto\wt\CM_\infty \qquad\text{and}\qquad
z^{\hat k}\hat\t^{\hat l}:(\s)^{\hat l}\wh\CM_\infty\isoto\wh\CM_\infty
\]
are isomorphisms. Since $\tau$ and $\tau'$ are isomorphism outside $\Graph(c)$ the same is true for $\tau|_{\s\ker f}$ and $\tau'|_{\s\im f}$. So the cokernels of the latter two are supported on $\Graph(c)$. This proves the proposition by Remark~\ref{Rem2.2}.
\end{proof}

\begin{Proposition}\label{KERISABELIAN}\label{IMISABELIAN}%
Let $\FF$ and $\FF'$ be abelian $\tau$-sheaves of characteristic different from $\infty$ and let $f\in\Hom(\FF,\FF')$. Then the \emph{kernel $\tau$-sheaf} and the \emph{image $\tau$-sheaf}
\[
\begin{array}{r@{\;}c@{\;}l}
\ker f &:=& (\ker f_i, \P_i|_{\ker f_i}, \t_i|_{\s\ker f_i}) \\[1ex]
\im f &:=& (\im f_i, \P'_i|_{\im f_i}, \t'_i|_{\s\im f_i})
\end{array}
\]
are abelian $\tau$-sheaves of the same characteristic as $\FF$ and $\FF'$.
\end{Proposition}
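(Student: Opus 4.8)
The plan is to verify the four defining conditions of an abelian $\tau$-sheaf (Definition~\ref{Def1.1}) for $\ker f$ and $\im f$, reducing wherever possible to the already-proven statement for pure $A$-motives, namely Proposition~\ref{Prop1.9a}. First I would note that since all local rings of $C_L$ are discrete valuation rings, the elementary divisor theorem shows that $\ker f_i$ and $\im f_i$ are themselves locally free sheaves on $C_L$; and they have the same rank, call it $\tilde r$, for all $i\in\Z$, because $f_{i+1}\circ\P_i=\P'_i\circ f_i$ with $\P_i,\P'_i$ isomorphisms on the generic fibre forces $\rank\ker f_i$ and $\rank\im f_i$ to be locally constant in $i$ (indeed $\P_i$ maps $\ker f_i$ into $\ker f_{i+1}$ isomorphically on the generic fibre). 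The morphisms $\P_i|_{\ker f_i}$, $\t_i|_{\s\ker f_i}$ and $\P'_i|_{\im f_i}$, $\t'_i|_{\s\im f_i}$ are visibly injective and make the big diagram commute, so condition~1 is immediate. For condition~4, since $\chr\ne\infty$ the maps $\t_i$ and $\t'_i$ are isomorphisms away from $\Graph(c)$, hence so are their restrictions to $\ker$ and $\im$; therefore $\coker(\t_i|_{\s\ker f_i})$ and $\coker(\t'_i|_{\s\im f_i})$ are supported on $\Graph(c)$, and (by Remark~\ref{Rem2.2}.2) annihilation by $\I^{\tilde d}$ for the appropriate $\tilde d$ follows once we know the $L$-dimension of the cokernel.

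The substantive point is to identify the dimension $\tilde d$ appearing in conditions~3 and~4, and to establish the periodicity condition~2. Here I would pass to the associated pure $A$-motives: since $\chr\ne\infty$ we have $\ulM(\FF)=(M,\tau)$ and $\ulM(\FF')=(M',\tau')$ with $M=\Gamma(C_L\setminus\{\infty\},\F_0)$ etc., and $f_0$ induces a morphism $\ulM(f)\colon\ulM(\FF)\to\ulM(\FF')$ of pure $A$-motives. By Proposition~\ref{Prop1.9a}, $\ker\ulM(f)$ and $\im\ulM(f)$ are pure $A$-motives of weight $\weight(\ulM(\FF))=\weight(\FF)$ (the equality of weights coming from Corollary~\ref{Cor2.9b} when $f\ne0$; the case $f=0$ is trivial since then $\ker f=\FF$ and $\im f=\ZZ$). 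Restricting sections of $\ker f_i$ and $\im f_i$ to $C_L\setminus\{\infty\}$ exhibits $\ulM(\ker f)=\ker\ulM(f)$ and $\ulM(\im f)=\im\ulM(f)$, which are pure of the same weight $\frac{\tilde d}{\tilde r}=\weight(\FF)$; this pins down $\tilde d$ and, via $\dim_L\coker(\t_i|_{\s\ker f_i})=\deg(\ker f_{i+1})-\deg\s(\ker f_i)=\dim_L\coker(\P_i|_{\ker f_i})=\tilde d$ (using Lemma~\ref{DEGREE-LEMMA}), gives conditions~3 and~4 simultaneously.

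For the periodicity condition~2, recall from Remark~\ref{Rem2.2}.1 that $\F_{i+nl}=\F_i(nk\cdot\infty)$, $\F'_{i+nl'}=\F'_i(nk'\cdot\infty)$, and that $f_{i+nl}=f_i\otimes\id$ on the generic fibre after twisting. Since $\weight(\FF)=\weight(\FF')=\frac{\tilde d}{\tilde r}$, choosing $\tilde l$ to be a common multiple of $l$ and $l'$ and $\tilde k=\weight(\FF)\cdot\tilde l$, the twisting by $\infty$ is compatible on both sides, so $\ker f_{i+\tilde l}=(\ker f_i)(\tilde k\cdot\infty)$ and $\im f_{i+\tilde l}=(\im f_i)(\tilde k\cdot\infty)$, and the composite of the $\P$'s (resp.\ $\P'$'s) restricted to the kernel (resp.\ image) identifies $\ker f_i$ with $(\ker f_{i+\tilde l})(-\tilde k\cdot\infty)$ because it does so for $\FF$ and this identification is compatible with the subsheaf inclusion. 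Hence condition~2 holds with the pair $(\tilde k,\tilde l)$, and since $\tilde l\tilde d=\tilde k\tilde r$ by construction, $\ker f$ and $\im f$ are abelian $\tau$-sheaves of characteristic $c$. The main obstacle is the bookkeeping in condition~2: one must be careful that the periodicity parameters $(k,l)$ of $\FF$ and $(k',l')$ of $\FF'$ need not agree, so the honest statement is for a common multiple $\tilde l$, and one must check that the inclusions $\ker f_i\hookrightarrow\F_i$ and $\im f_i\hookrightarrow\F'_i$ intertwine the identifying isomorphisms correctly — everything else reduces cleanly to Propositions~\ref{Prop1.9a} and the degree lemma.
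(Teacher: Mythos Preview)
Your proof is correct and follows essentially the same approach as the paper: both arguments establish local freeness and constant rank via the elementary divisor theorem, verify periodicity by choosing a common $\tilde l$ and restricting the identifying isomorphisms $\P$, $\P'$, and obtain conditions~3 and~4 by the degree computation $\dim_L\coker\Tt_i=\deg\ker f_{i+1}-\deg\s\ker f_i=\dim_L\coker\TP_i$ together with the observation that $\coker\Tt_i$ is supported on $\Graph(c)$. The only difference is packaging: you invoke Proposition~\ref{Prop1.9a} to identify $\tilde d$ as the dimension of the pure $A$-motive $\ker\ulM(f)$, whereas the paper defines $\Td:=\dim_L\coker\Tt$ directly on $C_L\setminus\{\infty\}$ and uses Proposition~\ref{PROP.1} for the equality of weights; these amount to the same thing.
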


\begin{proof}\def\kerf{\ker f}\def\imf{\im f}%
We will conduct the proof for $\ker f$ and $\im f$ simultaneously. If $f=0$, then $\ker f=\FF$ and $\im f=\ZZ$, and we are done. Otherwise, we have a non-zero morphism between $\FF$ and $\FF'$, and by proposition \ref{PROP.1} both abelian $\tau$-sheaves $\FF$ and $\FF'$ have the same weight. We choose an integer $l$ that satisfies condition 2 of \ref{Def1.1} for both $\FF$ and $\FF'$ and we set $k=l\cdot\weight(\FF)$.

\def\kerf{\TF}\def\imf{\HF}%
Let $i\in\Z$. Since all local rings of $C_L$ are principal ideal domains the elementary divisor theorem yields that $\TF_i:=\ker f_i\subset\F_i$ and $\HF_i:=\im f_i\subset\F'_i$ are locally free coherent sheaves. 
The induced morphisms $\TP_i := \P_i|_{\kerf_i}$ and $\Tt_i := \t_i|_{\s\kerf_i}$ map injectively into $\kerf_{i+1}$ since $\s\ker f_i=\ker \s f_i$ due to the flatness of $\sigma$.
Similarly, we get this for $\HP_i := \P'_i|_{\imf_i}$ and $\Ht_i := \t'_i|_{\s\imf_i}$.
To examine $\coker\TP_i$ and $\coker\HP_i$ consider the diagram with exact
rows and columns in which the last column is exact by the 9-lemma
\[
\xymatrix{
& 0 \ar[d] & 0 \ar[d] & 0 \ar[d] & \\
0 \ar[r]& \kerf_i \ar[r]^{\TP_i}\ar[d]      & \kerf_{i+1} \ar[r]\ar[d]          & \coker\TP_i \ar[r]\ar[d]& 0 \\
0 \ar[r]& \F_i    \ar[r]^{\P_i} \ar[d]^{f_i}& \F_{i+1}    \ar[r]\ar[d]^{f_{i+1}}& \coker\P_i  \ar[r]\ar[d]& 0 \\
0 \ar[r]& \imf_i  \ar[r]^{\HP_i}\ar[d]      & \imf_{i+1}  \ar[r]\ar[d]          & \coker\HP_i \ar[r]\ar[d]& 0 \\
& 0 & 0 & 0 & \\
}
\]
Thus $\coker\TP_i$ and $\coker\HP_i$ are torsion sheaves like $\coker\P_i$, and we conclude that
the ranks $\Tr:= \rank\kerf_i$ and $\Hr:=\rank\imf_i$ are independent of $i$.

To show that $\underline\kerf$ and $\underline\imf$ are abelian $\tau$-sheaves let $\P$ and $\P'$ be the identifying morphisms $\P_{i+l-1}\circ\cdots\circ\P_i: \F_i \isoto \F_{i+l}(-k\cdot\infty)$ and $\P'_{i+l-1}\circ\cdots\circ\P'_i: \F'_i \isoto\F'_{i+l}(-k\cdot\infty)$, respectively. Since $\P$ and $\P'$ are isomorphisms we obtain the same for $\TP_{i+l-1}\circ\ldots\circ\TP_i$ and $\HP_{i+l-1}\circ\ldots\circ\HP_i$, whence the periodicity condition 2.

To establish conditions 3 and 4 we need that the characteristic is different from $\infty$. Let $c: \Spec L\rightarrow C':=C\setminus\{\infty\}$ and let
\[
\xymatrix{ & & &
\makebox[0pt][r]{$M \::=\:$ }%
\kerf_0|_{\CLa}\: \ar[r]^<<{\sim}^{\:\:\TP_0} &
\:\kerf_1|_{\CLa}\: \ar[r]^<<{\sim}^{\TP_1} & 
\:\cdots\makebox[2.4em][l]{}
}
\]
Set $\Tt := {\TP_0}^{-1}\!\circ\Tt_0:\; \s M\rightarrow M$ and set $\Td := \dim_L\coker\Tt$. Similar to the diagram chase for the $\coker\TP_i$, we get an injective morphism $\coker\Tt_i\rightarrow\coker\t_i$. Hence the support of $\coker\Tt_i$ lies outside $\infty$, and we have $\coker\Tt_i \:=\: \coker\Tt_i|_{\CLa} \:\cong\: \coker\Tt$ for all $i\in\Z$.
Now the exact sequences
\[
\begin{CD} 
{0} @>>> {\kerf_i}   @>{\TP_i}>> {\kerf_{i+1}} @>>> {\coker\TP_i} @>>> {0} \\[1ex]
{0} @>>> {\s\kerf_i} @>{\Tt_i}>> {\kerf_{i+1}} @>>> {\coker\Tt_i} @>>> {0} 
\end{CD}
\]
yield 
\[
\dim_L\coker\TP_i\:=\:\deg\kerf_{i+1}- \deg\kerf_i\:=\:\deg\kerf_{i+1}-\deg\s\kerf_i \:=\:\dim_L\coker \Tt_i\:=\:\dim_L\coker\Tt \:=\: \Td
\]
for all $i\in\Z$.
Clearly, $\coker\Tt_i$ is supported on the graph of $c$ due to its injection into $\coker\t_i$. Again, this argument adapts to $\coker\HP_i$ and $\coker\Ht_i$, as well.
\end{proof}

\begin{Corollary}
Let\/ $\FF$ and $\FF'$ be abelian $\tau$-sheaves of characteristic different from $\infty$ and let\/ $f \in \Hom(\FF,\FF')$ be a morphism. 
\begin{suchthat}
\item $f$ is injective if and only if\/ $\ker f=\ZZ$\,.
\item $f$ is surjective if and only if\/ $\im f=\FF'$\,. \qed
\end{suchthat}
\end{Corollary}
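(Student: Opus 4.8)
The plan is to reduce everything to Proposition~\ref{KERISABELIAN}: once we know that $\ker f$ and $\im f$ are honest abelian $\tau$-sheaves — which is exactly where the hypothesis $\chr\ne\infty$ is spent — the corollary is merely a matter of unravelling the definitions of ``injective'', ``surjective'', and of the zero sheaf $\ZZ$, together with Remark~\ref{Rem2.2}.3. So after invoking that proposition there is essentially no further mathematical content; the proof is a short bookkeeping argument.

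First I would treat (i). By definition $f$ is injective precisely when $f_i$ is injective for every $i\in\Z$, i.e.\ when $\ker f_i=0$ for all $i$. In that case every locally free component sheaf of the abelian $\tau$-sheaf $\ker f$ vanishes, so $\ker f$ has rank $0$; by Remark~\ref{Rem2.2}.3 the only abelian $\tau$-sheaf with all component sheaves zero is $\ZZ=(0,0,0)$, hence $\ker f=\ZZ$. Conversely, if $\ker f=\ZZ$ then in particular $\ker f_i=0$ for all $i$, so each $f_i$ is injective and $f$ is injective. This yields the equivalence in (i).

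Next I would treat (ii). By Proposition~\ref{IMISABELIAN} the image $\tau$-sheaf $\im f=(\im f_i,\P'_i|_{\im f_i},\t'_i|_{\s\im f_i})$ is an abelian $\tau$-sheaf sitting inside $\FF'$, with $\im f_i\subseteq\F'_i$ for all $i$. If $f$ is surjective, then $\im f_i=\F'_i$ for every $i$, so the structure maps of $\im f$ are exactly the $\P'_i$ and $\t'_i$, and therefore $\im f=\FF'$. Conversely, the equality $\im f=\FF'$ of abelian $\tau$-sheaves forces $\im f_i=\F'_i$ for all $i$, i.e.\ each $f_i$ is surjective, i.e.\ $f$ is surjective.

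I do not expect any genuine obstacle; the only point worth stating explicitly is that equality of abelian $\tau$-sheaves is to be read componentwise (an abelian $\tau$-sheaf is completely determined by its $\F_i,\P_i,\t_i$, the integers $r,d,k,l$ being read off from these data), so that ``$\ker f=\ZZ$'' and ``$\im f=\FF'$'' really do mean the componentwise statements used above, and that the step from ``all component sheaves zero'' to ``equal to $\ZZ$'' is precisely Remark~\ref{Rem2.2}.3. Beyond Proposition~\ref{KERISABELIAN}, no further input is required.
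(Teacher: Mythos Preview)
Your proposal is correct and is exactly the argument the paper has in mind: the corollary in the paper is stated with a bare \qed, so the authors regard it as immediate from Proposition~\ref{KERISABELIAN}/\ref{IMISABELIAN}, and what you have written is precisely the unwinding of definitions that justifies this. There is nothing to add or to change.
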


\begin{Example} \label{ExTRichter}
As was pointed out to us by T.\ Richter the assumption $\chr\ne\infty$ cannot be dropped. For instance consider the abelian $\tau$-sheaf on $C_L=\PP^1_L$ with $\F_i=\O_{\PP^1_L}\bigl(\bigl\lceil\frac{i-1}{2}\bigr\rceil\bigr)\oplus\O_{\PP^1_L}\bigl(\bigl\lceil\frac{i}{2}\bigr\rceil\bigr)$,
where $\bigl\lceil\frac{i}{2}\bigr\rceil$ denotes the smallest integer $\ge\frac{i}{2}$. Let $\P_i=\left(\begin{array}{cc} 1 & 0 \\ 0 & 1 \end{array}\right)$ and $\t_i=\left(\begin{array}{cc} 0 & 1 \\ 1 & 0 \end{array}\right)$. Then $\FF=(\F_i,\P_i,\t_i)$ is an abelian $\tau$-sheaf with $r=l=2,d=k=1$, and characteristic $\infty$. We rewrite everything in terms of the bases $\biggl(\!\!\!\begin{array}{c}z^{-\lceil\frac{i-1}{2}\rceil}\\ 0 \end{array}\!\!\!\biggr),\biggl(\!\!\begin{array}{c}0\\ z^{-\lceil\frac{i}{2}\rceil}\end{array}\!\!\biggr)$ of $\F_i|_{\PP^1_L\setminus\{0\}}$, where $\PP^1_L\setminus\{0\}=\Spec L[z]$. With respect to these bases $\P_i$ and $\t_i$ are described by the matrices
\[
\P_i=\left(\begin{array}{cc} 1 & 0 \\ 0 & z \end{array}\right)\text{ for }2|i\,,\quad\P_i=\left(\begin{array}{cc} z & 0 \\ 0 & 1 \end{array}\right)\text{ for }2\nmid i\,,\qquad \text{and} \qquad \t_i=\left(\begin{array}{cc} 0 & 1 \\ z & 0 \end{array}\right)\text{ for all }i\,.
\]
There is an endomorphism $f$ of $\FF$ given by $f_i=\left(\begin{array}{cc} z & z \\ z & z \end{array}\right)\text{ for }2|i\,$ and $f_i=\left(\begin{array}{cc} z & 1 \\ z^2 & z \end{array}\right)\text{ for }2\nmid i$. We compute
\[
\begin{array}{rcll}
\TS\ker f_i&=&\TS{-1\choose 1}\cdot\O_{\PP^1_L}(\frac{i}{2}\cdot\infty) & \quad \text{ if }2|i \text{ and}\\[2mm]
\TS\ker f_i&=&\TS{-1\choose z}\cdot\O_{\PP^1_L}(\frac{i-1}{2}\cdot\infty) & \quad \text{ if }2\nmid i\,.
\end{array}
\]
Therefore $\P_i|_{\ker f_i}$ is an isomorphism if $2|i$ and has cokernel of $L$-dimension $1$ for $2\nmid i$. Thus $\ker f$ is not an abelian $\tau$-sheaf.
\end{Example}

% -----------------------------------------------------------------------------

\bigskip

\section{Isogenies between abelian $\tau$-sheaves and pure $A$-motives}

In the theory of abelian varieties the concept of \emph{isogenies} is central, defining an equivalence relation which allows a classification of abelian varieties into isogeny classes that are larger than isomorphism classes. In the following, we adapt the idea of isogenies to abelian $\tau$-sheaves. They are defined by the following conditions.

\begin{Proposition}\label{PROP.1.42A}% 
  Let $f:\FF\to\FF'$ be a morphism between two abelian $\tau$-sheaves $\FF = (\F_i,\P_i,\t_i)$ and\/ $\FF' = (\F'_i,\P'_i,\t'_i)$. Then the following assertions are equivalent:
\begin{suchthat}
\item 
$f$ is injective and the support of all\/ $\coker f_i$ is contained in $D\times\Spec L$ for a finite closed subscheme $D\subset C$,
\item 
$f$ is injective and $\FF$ and $\FF'$ have the same rank and dimension,
\item 
$\FF$ and $\FF'$ have the same weight and the fiber $f_{i,\eta}$ at the generic point $\eta$ of $C_L$ is an isomorphism for some (any) $i\in\Z$.
\end{suchthat}
\end{Proposition}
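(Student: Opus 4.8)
The plan is to prove the cycle of implications $1 \Rightarrow 2 \Rightarrow 3 \Rightarrow 1$, using the periodicity structure of abelian $\tau$-sheaves together with the degree computations already exploited in Section~4.

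First I would prove $1 \Rightarrow 2$. Assuming $f$ is injective with all $\coker f_i$ torsion, the sheaves $\F_i$ and $\F'_i$ have the same rank $r=r'$, since generically $f_{i,\eta}$ is then an isomorphism of vector spaces over the function field of $C_L$. For the dimension, note that from $f_{i+1}\circ\P_i = \P'_i\circ f_i$ and $f_{i+1}\circ\t_i = \t'_i\circ\s f_i$ one gets, by the snake lemma applied to the maps between the short exact sequences $0\to\F_i\to\F_{i+1}\to\coker\P_i\to 0$ and its primed analogue, an exact sequence relating $\coker f_i$, $\coker f_{i+1}$, $\coker\P_i$ and $\coker\P'_i$; since all the $\coker f_j$ are finite-length $L$-modules and (by periodicity $\F_{i+nl}=\F_i(nk\cdot\infty)$, and likewise for $\F'$) the function $i\mapsto\dim_L\coker f_i$ is eventually periodic, summing the alternating lengths over one period $l$ (resp.\ $l'$) forces $\dim_L\coker\P_i = \dim_L\coker\P'_i$, i.e.\ $d=d'$. (Alternatively and more cleanly: extend $f$ to a map $\F_i\to\F'_i(m\cdot\infty)$ as in the proof of \ref{PropX.1}, and compare degrees using $\deg\F'_i(m\cdot\infty)=\deg\F'_i+mr$ together with the periodicity formulas $\deg\F_{i+l}=\deg\F_i+kr$, $\deg\F'_{i+l'}=\deg\F'_i+k'r$; the boundedness of $\dim_L\coker f_i$ as $i$ ranges over $\Z$ then pins down $k/l=k'/l'$, and once weights agree, equality of ranks gives equality of dimensions.)

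Next, $2 \Rightarrow 3$: if $f$ is injective and $\FF,\FF'$ have the same rank and dimension, then they have the same weight $d/r = d'/r'$ by definition of weight, and injectivity of $f_i:\F_i\to\F'_i$ between locally free sheaves of the same rank $r$ means exactly that $f_{i,\eta}$ is an injective, hence bijective, endomorphism of an $r$-dimensional vector space over the residue field at the generic point $\eta$ of $C_L$. (Here I use that $C_L$ is integral — $C$ is geometrically irreducible over $\Fq$ — so $\eta$ is a single point and the generic stalk of each $\F_i$ is an $r$-dimensional vector space; the condition at one $i$ propagates to all $i$ because the $\P_j$ and $\t_j$ are generically isomorphisms, being isomorphisms away from $\infty$ resp.\ away from $\Graph(c)\cup\infty$.) This gives assertion 3.

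Finally, $3 \Rightarrow 1$: suppose $\FF$ and $\FF'$ have the same weight and $f_{i_0,\eta}$ is an isomorphism for some $i_0$. Since the $\P$'s and $\t$'s are generic isomorphisms, $f_{i,\eta}$ is then an isomorphism for every $i\in\Z$, so each $f_i$ is injective with torsion cokernel $\coker f_i$. It remains to see that the supports of the $\coker f_i$, taken together, lie in $D\times\Spec L$ for a single finite closed $D\subset C$. For this I use periodicity: $\F_{i+l}=\F_i(k\cdot\infty)$ and $\F'_{i+l'}=\F'_i(k'\cdot\infty)$, and since the weights agree we may pick a common period $l_0$ (a common multiple of $l$ and $l'$) with $\F_{i+l_0}=\F_i(k_0\cdot\infty)$, $\F'_{i+l_0}=\F'_i(k_0\cdot\infty)$ for the same $k_0$; hence $f_{i+l_0}=f_i\otimes\id_{\O(k_0\cdot\infty)}$, so $\coker f_{i+l_0}\cong\coker f_i$ as sheaves, with the same support. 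Thus only the $l_0$ sheaves $\coker f_0,\dots,\coker f_{l_0-1}$ occur up to isomorphism, each is supported on a finite closed subset of $C_L$ which, being $\tau$- and $\P$-stable, is the preimage of a finite closed $D_j\subset C$; take $D:=D_0\cup\dots\cup D_{l_0-1}$. I expect the main obstacle to be the bookkeeping in $1\Rightarrow 2$ — specifically, making rigorous that equality of ranks plus the torsion-cokernel condition forces equal weights; the clean route is the degree argument via the extension $f\colon\F_i\to\F'_i(m\cdot\infty)$ combined with the boundedness of $\dim_L\coker f_i$, which is exactly the style of estimate used in the proof of Proposition~\ref{PROP.1} and Lemma~\ref{Lemma2.9a}.
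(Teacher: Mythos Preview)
Your cycle $1\Rightarrow 2\Rightarrow 3$ is fine (and in fact the paper takes the even shorter route $1\Rightarrow 3\Rightarrow 2$, invoking Proposition~\ref{PROP.1} for the weight equality). The genuine gap is in your $3\Rightarrow 1$, and it lies exactly where you wave your hands: the claim that $\supp(\coker f_i)$, ``being $\tau$- and $\P$-stable, is the preimage of a finite closed $D_j\subset C$''.

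The issue is that a closed point of $C_L$ need not lie over a closed point of $C$. Concretely, for $A=\Fq[t]$ and $\lambda\in L$ transcendental over $\Fq$, the maximal ideal $(t-\lambda)\subset L[t]$ contracts to $(0)\subset\Fq[t]$, the generic point. So knowing that each $\coker f_i$ is torsion, and that by periodicity only finitely many isomorphism classes occur, tells you the supports form a single finite set $S\subset C_L$; it does \emph{not} yet tell you that $S\subset D\times\Spec L$ for some finite closed $D\subset C$. Condition~1 requires precisely the latter, and this is the entire content of the proposition (it is what later gives, in Corollary~\ref{Cor1.11b}, an annihilator in $A$ rather than merely in $A_L$).

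The paper establishes this by a completely different mechanism. It pushes forward along a finite $\varphi:C\to\PP^1_{\Fq}$, passes to the $\t$-module $\ulM^{(0)}$ over $\Fq[z]$, chooses bases, and compares determinants: from $T'\,\s H=HT$ one gets $\det T'\cdot\s(\det H)=\det H\cdot\det T$, and since $\det T,\det T'$ are unit multiples of $(z-\zeta)^e$ one finds that $\s(\det H)/\det H\in L^{\times}$. A $(q-1)$-st root then makes $a:=\lambda\det H$ satisfy $\s a=a$, so $a\in\Fq[z]$, and by elementary divisors $a$ annihilates the cokernel. Pulling back gives the finite closed $D\subset C$.

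Your $\sigma$-invariance idea can in fact be pushed through, but it takes real work you did not supply: one must (i) show that the common support $S:=\bigcup_i\supp(\coker f_i)\setminus\{\infty_L\}$ is $\sigma$-stable (using that $\t_i,\t'_i$ are isomorphisms off $\Graph(c)$ together with a cardinality argument at $\gamma=\Graph(c)$), and then (ii) show that a closed point $P\in C_L$ with finite $\sigma$-orbit maps to a closed point of $C$ (e.g.\ by observing that $\sigma^n(P)=P$ forces the minimal polynomial of any $\alpha\in\im(A\to\kappa(P))$ over $L$ to have coefficients in $\BF_{q^n}$, so $\alpha$ is algebraic over $\Fq$). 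Neither step is in your write-up. Without them, the phrase ``being $\tau$- and $\P$-stable'' is not an argument, and $3\Rightarrow 1$ is unproven.
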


\begin{proof}
$1\Rightarrow 3$ follows from \ref{PROP.1} and the fact that $\P_{i,\eta}$ and $\P'_{i,\eta}$ are isomorphisms for all $i$.
Since $3\Rightarrow 2$ is evident it remains to establish $2\Rightarrow 1$.

We will first reduce to the case $A=\BF_q[t]$. By the theorem of Riemann-Roch there exists a rational function $t\in Q$ on $C$ with poles only at $\infty$ and whose zeroes do not meet the characteristic point, nor the support of the $\coker f_i$. This function defines an inclusion of function fields $\Fq(t)\subset Q$ and thus a finite flat morphism between the respective curves $\varphi:\, C\rightarrow \PP^1_\Fq$ with $\varphi^{-1}(\infty_{\PP^1})=\{\infty\}$. The direct images $\GG:=\varphi_\ast\FF$ and $\GG':=\varphi_\ast\FF'$ under $\varphi$ are abelian $\tau$-sheaves on $\PP^1_\Fq$ of rank $r\cdot\deg\varphi$, dimension $d$, and characteristic $\varphi\circ c$ by \cite[Proposition~1.6]{HH}. We define $\TA:=\Gamma(\PP^1_\Fq\setminus\{0\},\O_{\PP^1})$ such that $\TA=\Fq\II{z}$ for some $z\in\TA$ with a simple pole at $0$ and a simple zero at $\infty$. We choose an integer $l$ that satisfies condition 2 of \ref{Def1.1} for both $\GG$ and $\GG'$. Consider $\ulM^{(0)}(\GG)=(M,\t)$ and $\ulM^{(0)}(\GG')=(M',\t')$; see Definition~\ref{Def1.17}. Set $s:=lr\deg\varphi=\rk M$ and $e:=ld=s\cdot\weight(\GG)$.

Now choose $\TA_L$-bases of $M$ and $M'$. This is possible since $\TA_L$ is a principal ideal domain and that was the reason why we constructed $\varphi$. With respect to these bases, the endomorphisms $\t$ and $\t'$ and the $\TA$-morphism $g=\ulM^{(0)}(\varphi_\ast f):\, M\rightarrow M'$ which is induced by $f$ can be described by quadratic matrices $T$, $T'$ and $H$, and we have the formula $T'\s\!H = HT$.

Let $\zeta:=c^\ast(z)\in L$. By the elementary divisor theorem we find matrices $U,V\in GL_s(\AxL)$ with 
\[
UTV \,=\, \left(
\begin{array}{ccc} 
\makebox[2.5em][l]{$(z-\zeta)^{n_1}$} & & \\
& \ddots & \\
& & \makebox[2.5em][r]{$(z-\zeta)^{n_s}$}
\end{array}
\right)
\]
for some integers $n_1\le\dots\le n_s$. Thus $\coker\t\cong \bigoplus_{i=1}^s \AxL\,/\,(z-\zeta)^{n_i}$ and  $e=\sum_{i=1}^s n_i$. Since
\[
\det T\cdot\det UV = \det \,UTV = (z-\zeta)^e 
\]
we calculate $\det T=b\cdot(z-\zeta)^e$ for some $b\in(\AxL)^{\times}={L\II{z}}^{\!\times}=L^{\!\times}$. Analogously, we have $\det T'=b'\cdot(z-\zeta)^e$ for some $b'\in L^{\!\times}$ as $\GG$ and $\GG'$ have the same dimension $d$. Since $\det H\ne0$ due to the injectivity of $f$ we conclude
\[
\det T'\cdot\det\s\!H \,=\, \det H\cdot\det T 
\quad\Rightarrow\quad
\frac{\displaystyle \det\s\!H}{\displaystyle \det H} \,=\, \frac{\displaystyle b}{\displaystyle b'\!\!} \;\in L^{\!\times}\,.
\]
In an algebraic closure $L^\alg$ of $L$ there exists a $\lambda$ with $\lambda^{q-1}=\frac{b'\!\!}{b}$. So we have 
\[
a:=\lambda\cdot\det H=\s(\lambda\cdot\det H)\in L^\alg[z]
\]
and, due to the $\sigma$-invariance, even $a\in\Fq\II{z}=\TA$ (and hence $\lambda\in L$).
Again using the elementary divisor theorem one sees that $a$ annihilates $\coker g$. Now our proof is complete as the support of $\coker f_i$ is contained in the divisor of zeroes $(\varphi^\ast(a))_0\times\Spec L$ on $C_L$ for $0\le i<l$ by construction (for this purpose we used $g=\ulM^{(0)}(\varphi_\ast f)$ which captures all these $f_i$) and for the remaining $i\in\Z$ by periodicity.
\end{proof}

\begin{Definition}[Isogeny]
\begin{suchthat}
\item 
A morphism $f:\FF\to \FF'$ satisfying the equivalent conditions of proposition~\ref{PROP.1.42A} is called an \emph{isogeny}.
We denote the set of isogenies between $\FF$ and $\FF'$ by $\Isog(\FF,\FF')$.
\item 
An isogeny $f:\FF\to\FF'$ is called \emph{separable} (respectively \emph{purely inseparable}) if for all $i$ the induced morphism $\t_i:\s\coker f_i\to\coker f_{i+1}$ is an isomorphism (respectively is \emph{nilpotent}, that is, $\t_{i}\circ\s\t_{i-1}\circ\ldots\circ(\s)^n\t_{i-n}=0$ for some $n$).
\end{suchthat}
\end{Definition}

\medskip

Proposition~\ref{PROP.1.42A} has  important consequences for pure $A$-motives.

\begin{Corollary}\label{Cor1.26a}
Let $f:\FF\to\FF'$ be a morphism between abelian $\tau$-sheaves of characteristic different from $\infty$. Then $f$ is an isogeny if and only if $\ulM(f):\ulM(\FF)\to\ulM(\FF')$ is an isogeny between the associated pure $A$-motives.\qed
\end{Corollary}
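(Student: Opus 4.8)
The plan is to reduce everything to the characterisations of isogenies of abelian $\tau$-sheaves collected in Proposition~\ref{PROP.1.42A}, exploiting that $C_L\setminus\{\infty\}=\Spec A_L$ is an affine integral scheme whose generic point is the generic point $\eta$ of $C_L$, and that $\ulM(f)$ is obtained from $f_0$ simply by restriction to this affine open. (The hypothesis $\chr\ne\infty$ is used only so that $\ulM(\FF)$ and $\ulM(\FF')$ are defined as pure $A$-motives in the first place.) First I would dispose of the degenerate case $\FF=\ZZ$: then $M=\Gamma(\Spec A_L,\F_0)=0$ forces $\F_0=0$, since $\F_0$ is locally free, so $\FF=\ZZ$ and $\ulM(\FF)=(0,0)$; in either direction the map under consideration can be an isogeny only if $\F'_0=0$ as well (a torsion locally free $A_L$-module vanishes, respectively by condition 2 of Proposition~\ref{PROP.1.42A} equal ranks force $r'=0$), and then $f$ and $\ulM(f)$ are the zero map between zero objects, hence isogenies on both sides. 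So from now on I may assume $\FF\ne\ZZ$, in which case $\FF'\ne\ZZ$ whenever the relevant map is an isogeny.

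For the implication ``$f$ isogeny $\Rightarrow$ $\ulM(f)$ isogeny'', note that by Proposition~\ref{PROP.1.42A} the morphism $f_0\colon\F_0\to\F'_0$ is injective and $\F_0,\F'_0$ have the same rank, so $\coker f_0$ is a torsion $\O_{C_L}$-module. Restricting the short exact sequence $0\to\F_0\to\F'_0\to\coker f_0\to0$ to $\Spec A_L=C_L\setminus\{\infty\}$ and taking global sections, which is exact over an affine scheme, gives a short exact sequence $0\to M\to M'\to N\to0$ with $N$ a torsion $A_L$-module. Thus $\ulM(f)=f_0|_{\Spec A_L}$ is injective with torsion cokernel, i.e.\ an isogeny of pure $A$-motives in the sense of Definition~\ref{Def1'.2}.

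For the converse, suppose $\ulM(f)$ is an isogeny; I would verify condition 3 of Proposition~\ref{PROP.1.42A}. Since $\ulM(f)$ is injective and $M\ne0$ we get $f_0\ne0$, hence $f\ne0$, and therefore Proposition~\ref{PROP.1} gives $\weight(\FF)=\weight(\FF')$. Localising the exact sequence $0\to M\to M'\to\coker\ulM(f)\to0$ at the generic point of the integral scheme $\Spec A_L$ is exact and kills the torsion module $\coker\ulM(f)$, so $M\otimes_{A_L}\kappa(\eta)\to M'\otimes_{A_L}\kappa(\eta)$ is an isomorphism; but this map is exactly the fibre $f_{0,\eta}$ of $f_0$ at the generic point $\eta$ of $C_L$, since the stalk $\F_{0,\eta}$ is already a $\kappa(\eta)$-vector space. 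Hence $\FF$ and $\FF'$ have the same weight and $f_{0,\eta}$ is an isomorphism, so by condition 3 of Proposition~\ref{PROP.1.42A} the morphism $f$ is an isogeny of abelian $\tau$-sheaves.

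I do not expect a genuine obstacle: the only two points that need a little care are the identification $\coker\ulM(f)=\Gamma(\Spec A_L,\coker f_0)$, which rests solely on the exactness of global sections over the affine open $C_L\setminus\{\infty\}$, and the identification of the localisation of $M$ at the minimal prime with the generic fibre $f_{0,\eta}$; the rest is the routine bookkeeping for the zero object $\ZZ$ carried out at the start.
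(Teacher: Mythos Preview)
Your proof is correct and follows exactly the route the paper intends: the corollary is stated with a bare \qed, so the authors regard it as an immediate consequence of Proposition~\ref{PROP.1.42A}, and your argument simply spells out the two directions via conditions 2 and 3 of that proposition together with the identification of the generic point of $\Spec A_L$ with the generic point $\eta$ of $C_L$. There is nothing to add.
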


\begin{Corollary}\label{Cor1.11b}
Let $f:\ulM\to\ulM'$ be an isogeny between pure $A$-motives (Definition~\ref{Def1'.2}). Then
\begin{suchthat}
\item
there exists an element $a\in A$ which annihilates $\coker f$,
\item 
there exists a dual isogeny $\dual{f}:\ulM'\to\ulM$ such that $f\circ\dual{f}=a\cdot\id_{\ulM'}$ and $\dual{f}\circ f=\id_\ulM$.
\end{suchthat}
\end{Corollary}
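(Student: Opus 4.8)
\medskip
\noindent\textbf{Proof sketch.}
The plan is to deduce (1) by transporting the statement to abelian $\tau$-sheaves and re‑using the computation inside the proof of Proposition~\ref{PROP.1.42A}, and then to obtain (2) from (1) by the usual dual‑isogeny construction, which is purely formal.

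For (1): if $\ulM=0$ then $\ulM'=0$ and we may take $a=1$, so assume $\ulM\ne0$; then $f\ne0$, hence $\weight\ulM=\weight\ulM'$ by Corollary~\ref{Cor2.9b}. By part~1 of Theorem~\ref{PropX.1} I would write $\ulM=\ulM(\FF)$ and $\ulM'=\ulM(\FF')$ for abelian $\tau$-sheaves $\FF,\FF'$ of the same weight and of characteristic $c=\Spec c^\ast$ with $\chr\ne\infty$, and by part~2 of Theorem~\ref{PropX.1} I get an integer $m$ and a morphism $\tilde f\colon\FF\to\FF'(m\cdot\infty)$ of abelian $\tau$-sheaves with $\ulM(\tilde f)=f$. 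Since $\F'_0$ and $\F'_0(m\cdot\infty)$ agree away from $\infty$ we have $\ulM(\FF'(m\cdot\infty))=\ulM(\FF')=\ulM'$, and $\FF'(m\cdot\infty)$ still has characteristic $c$ by Proposition~\ref{PROP.2}; as $\ulM(\tilde f)=f$ is an isogeny of pure $A$-motives, Corollary~\ref{Cor1.26a} shows that $\tilde f$ is an isogeny of abelian $\tau$-sheaves. Now the implication $2\Rightarrow1$ in the proof of Proposition~\ref{PROP.1.42A} applies to $\tilde f$: after reducing through an auxiliary finite flat $\varphi\colon C\to\PP^1_\Fq$ to matrix computations over a principal ideal domain, it produces an element of $\Fq[z]$ whose pullback $a\in A$ annihilates every $\coker\tilde f_i$. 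Restricting the exact sequence $0\to\F_0\to\F'_0(m\cdot\infty)\to\coker\tilde f_0\to0$ of sheaves on $C_L$ to the affine scheme $C_L\setminus\{\infty\}=\Spec A_L$ identifies $\coker f$ with the $A_L$-module $\coker\tilde f_0$, so $a$ annihilates $\coker f$, giving (1). I expect this to be the main obstacle: the entire content of (1) is that the annihilator can be taken in $A$, not merely in $A_L$, and this rests precisely on the $\sigma$-invariance/descent step in that proof (the choice of $\lambda$ with $\lambda^{q-1}=b'/b$), so one must be sure that step genuinely returns an element of $A$.

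For (2): fix $a\in A$ as in (1); note $a\ne0$, for otherwise $\coker f=M'$ forces $f(M)=0$ and hence $M=0$. From $aM'\subseteq\im f$ and the injectivity of $f$ one gets a well-defined $A_L$-homomorphism $\dual f\colon M'\to M$, $\dual f(x):=f^{-1}(ax)$, satisfying $f\circ\dual f=a\cdot\id_{M'}$; composing with $f$ on the left and cancelling the injective $f$ gives $\dual f\circ f=a\cdot\id_{M}$. To see that $\dual f$ is a morphism of $A$-motives I would check $\dual f\circ\tau'=\tau\circ\s\dual f$ after composing with $f$: using $f\circ\tau=\tau'\circ\s f$ and $\s a=a$ (because $a\in A$),
\[
f\circ(\tau\circ\s\dual f)\;=\;\tau'\circ\s(f\circ\dual f)\;=\;a\cdot\tau'\;=\;(f\circ\dual f)\circ\tau'\;=\;f\circ(\dual f\circ\tau')\,,
\]
and injectivity of $f$ yields the identity. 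Finally $\dual f$ is an isogeny: $a$ is a non-zero-divisor in $A_L=A\otimes_\Fq L$ (multiplication by $a$ is injective on the domain $A$, and $A_L$ is flat over $A$) and $M'$ is $A_L$-flat, so $\dual f$ is injective; and $\coker\dual f$ is a quotient of $M/aM$, hence torsion. Thus $\dual f$ is a dual isogeny with $f\circ\dual f=a\cdot\id_{\ulM'}$ and $\dual f\circ f=a\cdot\id_{\ulM}$, as desired.
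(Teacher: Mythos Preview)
Your approach is essentially the paper's: pass to abelian $\tau$-sheaves via Theorem~\ref{PropX.1}, invoke Proposition~\ref{PROP.1.42A}, and then run the standard dual-isogeny diagram chase for part~(2). Part~(2) is exactly as in the paper (just written out in more detail), and correct.

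There is one small slip in part~(1). In the proof of Proposition~\ref{PROP.1.42A} the coordinate $z$ is chosen with a simple \emph{pole} at $0\in\PP^1$ and a simple zero at $\infty_{\PP^1}$, so $\Fq[z]=\Gamma(\PP^1\setminus\{0\},\O_{\PP^1})$. Since $\varphi^{-1}(\infty_{\PP^1})=\{\infty\}$ but $\varphi^{-1}(0)\subset\Spec A$, the pullback $\varphi^\ast(a)$ of the element $a\in\Fq[z]$ has its poles at $\varphi^{-1}(0)$, i.e.\ at the zeroes of $t$ inside $\Spec A$, and therefore does \emph{not} lie in $A$ in general. The paper avoids this by only using the conclusion of Proposition~\ref{PROP.1.42A} (that the support of $\coker\tilde f_i$ lies in $D\times\Spec L$ for a finite $D\subset C$) and then appealing to Riemann--Roch to produce a nonzero $a\in A$ vanishing to sufficient order on $D\setminus\{\infty\}$; such an $a$ annihilates $\coker f$. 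Your argument is repaired in the same way, or alternatively by observing that the zeroes of $t$ were chosen disjoint from $\supp\coker\tilde f_i$, so a suitable power of $t$ acts invertibly there and $t^N\cdot\varphi^\ast(a)\in A$ is the annihilator you want.
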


\begin{proof}
1 follows from Corollary~\ref{Cor2.9b}, Theorem~\ref{PropX.1}, and Proposition~\ref{PROP.1.42A} by noting that $D$ is contained in the zero locus of a suitable $a\in A$ by the Riemann-Roch theorem.

For 2  consider the diagram 
\[
\xymatrix{
0 \ar[r] &
M \ar[r]^{f}\ar[d]_{a} &
M' \ar[r]\ar[d]_{a} \ar@{-->}[dl]_{\dual{f}} &
\coker f \ar[d]_{a}^{\;(=0)} \ar[r] & 
0
\\
0 \ar[r] &
M \ar[r]^{f} &
M' \ar[r] &
\coker f \ar[r] & 
0\,.
}
\]
By diagram chase, we get a morphism $\dual{f}: M'\rightarrow M$ which is \emph{dual} to $f$ in the sense that $\dual{f}\circ f = a$ and $f\circ\dual{f} = a$. 
\end{proof}

\begin{Remark}\label{Rem1.26'}
The dual isogeny $\dual{f}$ clearly depends on $a$ and there rarely is a canonical choice for $a$. If $C=\PP^1$ and $A=\Fq[t]$ we obtain from the elementary divisor theorem a unique minimal monic element $a\in A$ (which still depends on the choice of the isomorphism $A\cong\Fq[t]$, though) that annihilates $\coker f$. Also if $f\in\End(\ulM)$ is an isogeny of a semisimple pure $A$-motive over a finite field we will exhibit in \cite[Theorem~\BHBPropAAA]{BH_B} a canonical $a$. But in general there is no canonical choice.

Nevertheless, since $A$ is a Dedekind domain, a power of the ideal annihilating $\coker f$ will be principal and one may take $a$ as a generator. This has the advantage that the support of $\coker f$ equals $V(a)\subset\Spec A$. In particular if the characteristic point $\chr$ is not contained in the support of $\coker f$ and in $V(a)$, also $\dual{f}$ will be separable. On the other hand, if $f\in \End(\ulM)$ then $f$ is integral over $A$, since $\End(\ulM)$ is a finite $A$-module by Proposition~\ref{PropT.1} below. Then $f$ generates a finite commutative $A$-algebra $A[f]$. Our discussion of the choice of $a$ shows that the set $V(f)\subset\Spec A[f]$ of zeroes of $f$ lies above $\supp(\coker f)\subset\Spec A$.
\end{Remark}

\forget{

\begin{Definition}[Isogeny]
A morphism $f$ between two abelian $\tau$-sheaves $\FF = (\F_i,\P_i,\t_i)$ and\/ $\FF' = (\F'_i,\P'_i,\t'_i)$ is called an \emph{isogeny} if
\begin{suchthat}
\item all morphisms $f_i: \F_i\rightarrow\F'_i$ are injective,
\item the support of all\/ $\coker f_i$ is contained in $D\times\Spec L$ for a finite closed subscheme $D\subset C$.
\end{suchthat}
\vspace{2mm}
We denote the set of isogenies between $\FF$ and $\FF'$ by $\Isog(\FF,\FF')$.
\end{Definition}

\smallskip

\begin{Proposition}\label{PROP.4}%
Let $\FF$ and $\FF'$ be abelian $\tau$-sheaves. If\/ $\Isog(\FF,\FF')\not=\emptyset$, then $r=r'$ and $d=d'$. 
\end{Proposition}

\begin{proof}
Let $f\in\Isog(\FF,\FF')$ and let $i\in\Z$. By the exact sequence $\smallexact{0}{}{\F_i}{}{\F'_i}{}{\coker f_i}{}{0}$, we have $\rank\F'_i = \rank\F_i + \rank\,\coker f_i$. Due to the second condition of isogenies, $\coker f_i$ is a torsion sheaf and we get $r'=r$. If $f=0$, then we trivially have $\FF=\ZZ$, therefore $\FF'=\ZZ$ and thus $d=d'=0$. Otherwise, using proposition \ref{PROP.1}, we can calculate $d' = \frac{r'\!\!}{r}\;d = d$.
\end{proof}

\begin{Proposition}\label{PROP.1.42A}%
Let $\FF$ and $\FF'$ be abelian $\tau$-sheaves of the same rank and dimension. Then every injective morphism between $\FF$ and\/ $\FF'$ is an isogeny.
\end{Proposition}

\begin{proof}
By the theorem of Riemann-Roch there exists a rational function $t\in Q$ on $C$ with poles only at $\infty$. This function defines an inclusion of function fields $\Fq(t)\subset Q$ and thus a finite flat morphism between the respective curves $\varphi:\, C\rightarrow \PP^1_\Fq$ with $\varphi^{-1}(\infty_{\PP^1})=\{\infty\}$. The direct images $\GG:=\varphi_\ast\FF$ and $\GG':=\varphi_\ast\FF'$ under $\varphi$ are abelian $\tau$-sheaves on $\PP^1_\Fq$ by \cite[Proposition~1.6]{HH}. Now we choose an $\Fq$-valued point $P\in\PP^1_\Fq$ which is different from the characteristic and from the support of $\coker f$ and we define $\TA:=\Gamma(\PP^1_\Fq\setminus\{P\},\O_{\PP^1})$ such that $\TA=\Fq\II{z}$ for some $z\in\TA$ with a simple pole at $P$.

Let $M_i:=\Gamma(\PP^1_L\setminus\{P\},\F_i)$. If $P=\infty$, then we define $M:=M_0$, $\t:=\P_0^{-1}\circ\t_0$, and $s:=r$, $e:=d$. Otherwise if $P\ne\infty$, we choose $z$ to have its zero at $\infty$ and we define $M:=M_0\oplus\dots\oplus M_{l-1}$, $s:=lr$, $e:=ld$, and
\[
\t \,:=\, \left(
\begin{array}{cccc}
0 & \cdots & 0 & \TP^{-1}\circ z^k\t_{l-1} \\
\t_0 & \ddots & & 0\qquad\qquad \\
& \ddots & \ddots & \vdots\qquad\qquad \\
0 & & \t_{l-2} & 0\qquad\qquad
\end{array}
\right)
\]
with $\TP:=\P_{l-1}\circ\dots\circ\P_0$. As for $M'$ and $\t'$ we proceed analogously. 

Now choose $\TA$-bases of $M$ and $M'$. This is possible since $\TA$ is a principal ideal domain and that was the reason why we constructed $\varphi$. According to these bases, the endomorphisms $\t$ and $\t'$ and the $\TA$-morphism $g:\, M\rightarrow M'$ which is induced by $f:\FF\to\FF'$ can be described by quadratic matrices $T$, $T'$ and $H$, and we have the formula $T'\,\s\!H = HT$.

Let $\zeta:=c^ast(z)\in L$. By the elementary divisor theorem we find matrices $U,V\in GL_s(\AxL)$ with 
\[
UTV \,=\, \left(
\begin{array}{ccc} 
\makebox[2.5em][l]{$(z-\zeta)^{n_1}$} & & \\
& \ddots & \\
& & \makebox[2.5em][r]{$(z-\zeta)^{n_s}$}
\end{array}
\right)
\]
for some integers $n_1\le\dots\le n_s$. Thus $\coker\t\cong \bigoplus_{i=1}^s \AxL\,/\,(z-\zeta)^{n_i}$ and  $e=\sum_{i=1}^s n_i$. Since
\[
\det T\cdot\det UV = \det \,UTV = (z-\zeta)^e 
\]
we calculate $\det T=b\cdot(z-\zeta)^e$ for some $b\in(\AxL)^{\times}={L\II{z}}^{\!\times}=L^{\!\times}$. Analogously, we have $\det T'=b'\cdot(z-\zeta)^e$ for some $b'\in L^{\!\times}$ as $\GG$ and $\GG'$ have the same dimension $d$. We conclude
\[
\det T'\cdot\det\s\!H \,=\, \det H\cdot\det T 
\quad\Rightarrow\quad
\frac{\displaystyle \det\s\!H}{\displaystyle \det H} \,=\, \frac{\displaystyle b}{\displaystyle b'\!\!} \;\in L^{\!\times}\,.
\]
In an algebraic closure of $L$ there exists a $\lambda$ with $\lambda^{q-1}=\frac{b'\!\!}{b}$. So we have 
\[
a:=\lambda\cdot\det H=\s(\lambda\cdot\det H)\in L\II{z}
\]
and, due to the $\sigma$-invariance, even $a\in\Fq\II{z}=\TA$ (and hence $\lambda\in L$).
Again using the elementary divisor theorem one sees that $a$ annihilates $\coker g$. Now our proof is complete as the support of $\coker f$ is contained in the divisor of zeroes $(\varphi^\ast(a))_0$ on $C$.
\end{proof}

}

%\medskip

\begin{Lemma}\label{ISOGENYCOMPOSE}
Let $f\in\Hom(\FF,\FF')$ and $f'\in\Hom(\FF',\FF'')$ be morphisms between abelian $\tau$-sheaves and let $D$ be a divisor on $C$. 
\begin{suchthat}
\item If two of $f$, $f'$, and $f'\circ f$ are isogenies then so is the third.
\item If $f$ is an isogeny then also $f\otimesidOCL{D}\in\Isog(\FF(D),\FF'(D))$ is an isogeny.
\item If $D$ is effective then the natural inclusion $\FF\subset\FF(D)$ is an isogeny.
\end{suchthat}
\end{Lemma}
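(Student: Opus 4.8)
The three parts are elementary consequences of the characterizations of isogenies established in Proposition~\ref{PROP.1.42A}, together with the behaviour of rank, dimension, and weight under the relevant operations, so the plan is simply to reduce each assertion to a bookkeeping check on these numerical invariants. The main point to be careful about is that injectivity of a composite $f'\circ f$ forces injectivity of $f$ but not of $f'$, and that surjectivity-type statements are not available; so the arguments must be phrased purely in terms of injectivity plus the support/rank-dimension conditions from Proposition~\ref{PROP.1.42A}, never in terms of $\coker$ being zero.

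For part~1, I would use characterization~2 of Proposition~\ref{PROP.1.42A} (injective, plus equal rank and dimension). If $f$ and $f'$ are isogenies then $f'\circ f$ is injective as a composite of injective maps, and $\rank\FF=\rank\FF'=\rank\FF''$, $\dim\FF=\dim\FF'=\dim\FF''$, so $f'\circ f$ is an isogeny. If $f$ and $f'\circ f$ are isogenies, then from $f'\circ f$ injective we get $f$ injective, hence $(f'\circ f)_{i,\eta}$ and $f_{i,\eta}$ are isomorphisms at the generic point $\eta$ of $C_L$; therefore $f'_{i,\eta}=(f'\circ f)_{i,\eta}\circ f_{i,\eta}^{-1}$ is an isomorphism, so in particular $f'$ is injective (an injective map of locally free sheaves on an integral scheme is detected by its generic fibre — more precisely, $\ker f'_i$ is a subsheaf of the locally free $\F'_i$ whose generic stalk vanishes, hence is zero). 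Combining $\rank\FF=\rank(f'\circ f$ source$)=\rank\FF''$ with $\rank\FF=\rank\FF'$ (from $f$ an isogeny) gives equal ranks, and likewise equal dimensions, so $f'$ satisfies characterization~2. The case where $f'$ and $f'\circ f$ are isogenies is symmetric: $f'\circ f$ injective gives $f$ injective, and comparing ranks and dimensions through the chain $\FF,\FF',\FF''$ shows $\rank\FF=\rank\FF'$ and $\dim\FF=\dim\FF'$, so $f$ is an isogeny.

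For part~2, I would observe that tensoring with $\O_{C_L}(D)$ is an exact functor preserving local freeness and ranks, so $f\otimes\id$ is again injective, and by Proposition~\ref{PROP.2} the sheaves $\FF(D)$ and $\FF'(D)$ have the same rank and dimension as $\FF$ and $\FF'$ respectively; hence $\FF(D)$ and $\FF'(D)$ have equal rank and dimension and $f\otimes\id$ is an isogeny by characterization~2. (Alternatively, $\coker(f_i\otimes\id)\cong(\coker f_i)(D)$ has the same support as $\coker f_i$, which is characterization~1.) For part~3, when $D$ is effective the natural map $\F_i\hookrightarrow\F_i(D)$ is injective with cokernel a torsion sheaf supported on $D\times\Spec L$ (in fact on $(\supp D)\times\Spec L$), and the collection of these maps is visibly compatible with the $\P$'s and $\t$'s (this compatibility is exactly what makes $\FF(D)$ an abelian $\tau$-sheaf in Proposition~\ref{PROP.2}), so the inclusion $\FF\hookrightarrow\FF(D)$ is a morphism satisfying characterization~1 of Proposition~\ref{PROP.1.42A}, hence an isogeny. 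The only step requiring any care is the passage, in part~1, from "$f'_{i,\eta}$ is an isomorphism at the generic point" to "$f'$ is injective", which I expect to be the sole non-formal point and which follows from the elementary divisor theorem / torsion-freeness argument already used repeatedly in the paper (e.g. in the proof of Proposition~\ref{Prop1.9a}).
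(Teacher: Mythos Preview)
Your proposal is correct and follows essentially the same approach as the paper, which disposes of part~1 with ``1 is obvious'' and handles parts~2 and~3 in one line each (exactness of $\otimes\O_{C_L}(D)$ for~2, and $\sigma$-invariance of $D$ making the inclusion a morphism for~3). Your write-up simply spells out the case analysis for part~1 via the characterizations of Proposition~\ref{PROP.1.42A}; one small simplification is that in the case where $f$ and $f'\circ f$ are isogenies you can invoke characterization~3 directly (same weight plus $f'_{i,\eta}$ an isomorphism) rather than passing through injectivity of $f'$, but your route via torsion-freeness of $\F'_i$ is of course also fine.
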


\begin{proof}
1 is obvious.

\smallskip
\noindent 
2. Clearly the tensored morphisms $f_i\otimes 1:\F_i(D)\rightarrow\F'_i(D)$ remain injective and the support of $\coker f_i\otimes 1$ equals the support of $\coker f_i$.

\smallskip
\noindent
3. The inclusion $\FF\subset\FF(D)$ is a morphism because the divisor $D$ is $\sigma$-invariant.
\end{proof}

% -----------------------------------------------------------------------------

\bigskip

\section{Quasi-morphisms and  quasi-isogenies}

We want to establish the existence of dual isogenies also for abelian $\tau$-sheaves.
But if we follow the proof of Corollary~\ref{Cor1.11b}, the problem is that multiplication with $a$ is not an endomorphism of an abelian $\tau$-sheaf, since it produces poles. We remedy this by defining \emph{quasi-morphisms} and \emph{quasi-isogenies} between $\FF$ and $\FF'$ which allow the maps to have finite sets of poles.

\begin{Definition}[Quasi-morphism and quasi-isogeny]
Let $\FF$ and $\FF'$ be abelian $\tau$-sheaves.
\begin{suchthat}
\item A \emph{quasi-morphism} $f$ between $\FF$ and $\FF'$ is a morphism $f\in\Hom(\FF,\FF'(D))$ for some effective divisor $D$ on $C$.
\item A \emph{quasi-isogeny} $f$ between $\FF$ and $\FF'$ is an isogeny $f\in\Isog(\FF,\FF'(D))$ for some effective divisor $D$ on $C$. 
\end{suchthat}
We call two quasi-morphisms $f_1\in\Hom(\FF,\FF'(D_1))$ and $f_2\in\Hom(\FF,\FF'(D_2))$ \emph{equivalent} $(\text{denoted }f_1\sim f_2)$, if the diagram
\[
\xymatrix@=0pt@C=3em{ 
& \FF'(D_1) \ar[rd] & \\
\FF \ar[ru]^{f_1}\ar[rd]_{f_2} & & \FF'\makebox[0pt][l]{$(D_1\!+\!D_2)$} \\
& \FF'(D_2) \ar[ru] &
}
\]
commutes where the two arrows on the right are the natural inclusions.
\end{Definition}

Clearly, the relation $\sim$ defines an equivalence relation on the set of quasi-morphisms between $\FF$ and $\FF'$ where the transitivity is seen from
\[
\xymatrix@=3ex@C=3em{ 
& \FF'(D_1) \ar[r]\ar@{-->}[rd] & \FF'\makebox[0pt][l]{$(D_1\!+\!D_2)$} &\quad\; \ar[rd] & \\
\FF \ar[ru]^{f_1}\ar[r]^{f_2}\ar[rd]_{f_3} & \FF'(D_2) \ar[ru]\ar[rd] & \FF'\makebox[0pt][l]{$(D_1\!+\!D_3)$} &\; \ar@{.>}[r] & \FF'\makebox[0pt][l]{$(D_1\!+\!D_2\!+\!D_3)$} \\
& \FF'(D_3) \ar[r]\ar@{-->}[ru] & \FF'\makebox[0pt][l]{$(D_2\!+\!D_3)$} &\quad\; \ar[ru] &
}
\]
by canceling the dotted arrow due to injectivity. Since the divisors of quasi-morphisms are not particularly interesting, we fade them out by forming equivalence classes of quasi-morphisms according to this equivalence relation.

\medskip

\begin{Definition}
Let $\FF$ and $\FF'$ be abelian $\tau$-sheaves. \nopagebreak
\begin{suchthat}
\nopagebreak
\item We set $\QHom(\FF,\FF')$ to be the set of quasi-morphisms between $\FF$ and $\FF'$ modulo $\sim$. 
\item The equivalence class of a quasi-morphism $f$ between $\FF$ and $\FF'$ modulo $\sim$ is denoted by $\II{f}$, and we call it a \emph{quasi-morphism} between $\FF$ and $\FF'$ as well.
\item We set $\QIsog(\FF,\FF')$ to be the subset of\/ $\QHom(\FF,\FF')$ whose elements $\II{f}$ are represented by isogenies $f$.
\end{suchthat}
\vspace{1mm}
We write $\QEnd(\FF):=\QHom(\FF,\FF)$ and $\QIsog(\FF):=\QIsog(\FF,\FF)$.
\end{Definition}

\noindent {\it Remark.}\INDENT
1. By Lemma~\ref{ISOGENYCOMPOSE}, it holds for $f_1\sim f_2$, that $f_1$ is a quasi-isogeny if and only if $f_2$ is a quasi-isogeny. This justifies our definition of $\QIsog(\FF,\FF')$.

2. Proposition \ref{PROP.1.42A} and Lemma~\ref{ISOGENYCOMPOSE} hold analogously for quasi-morphisms and quasi-isogenies, since every quasi-morphism $f\in\QHom(\FF,\FF')$ is a morphism $f\in\Hom(\FF,\FF'(D))$ for some effective divisor $D$ on $C$.

3. Every pair of quasi-morphisms $\II{f_1},\II{f_2}\in\QHom(\FF,\FF')$ can be represented by morphisms $f_1,f_2\in\Hom(\FF,\FF'(D))$ with the same divisor $D$ on $C$. 
In particular we can form the sum
\[
\II{f_1}+\II{f_2} := \II{f_1+f_2} \:\in\QHom(\FF,\FF')\,.
\]
Since poles are negligible, we can extend this structure to a $Q$-vector space by now being able to admit multiplication by elements of $Q$. Let $\II{f}\in\QHom(\FF,\FF')$ be represented by $f\in\Hom(\FF,\FF'(D))$ and let $a\in Q$. Then $a\cdot f\in\Hom(\FF,\FF'(D\!+\!(a)_\infty))$ where $(a)_\infty$ denotes the divisor of poles of $a$, and we define
\[
a\cdot\II{f} := \II{a\cdot f} \:\in\QHom(\FF,\FF')\,.
\]
Moreover, Quasi-morphisms can be composed. Let $\FF$, $\FF'$ and $\FF''$ be abelian $\tau$-sheaves and let $\II{f}\in\QHom(\FF,\FF')$ and $\II{f'}\in\QHom(\FF',\FF'')$ be quasi-morphisms between $\FF$, $\FF'$ and $\FF''$, which are represented by $f\in\Hom(\FF,\FF'(D))$ and $f'\in\Hom(\FF',\FF''(D'))$, respectively. In order to compose $f'$ and $f$, we have to raise $f'$ to be a morphism from $\FF'(D)$. We achieve this by simply tensoring with $\otimes_\OCL\OCL(D)$. Now $(f'\otimesidOCL{D})\circ f\in\Hom(\FF,\FF''(D\!+\!D'))$, and we can define the composition
\[
\II{f'}\circ\II{f} := \II{(f'\otimesidOCL{D})\circ f} \:\in\QHom(\FF,\FF'')\,.
\]
All these operations are well-defined which can easily be seen by diagram arguments similar to the one we presented for the transitivity of $\sim$. Altogether we obtain

\begin{Corollary}\label{QEND-Q-ALGEBRA}\label{COMPOSITIONISISOGENY}
Let $\FF$ and $\FF'$ be abelian $\tau$-sheaves. With the above given structure, we have
\begin{suchthat}
\item the composition of quasi-isogenies is again a quasi-isogeny,
\item $\QHom(\FF,\FF')$ is a $Q$-vector space,
\item $\QEnd(\FF)$ is a $Q$-algebra. \qed
\end{suchthat}
\end{Corollary}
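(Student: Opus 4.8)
The plan is to derive all three statements from the structural facts about morphisms and isogenies established above, together with the well-definedness of the operations $+$, the $Q$-action and composition recorded just before the statement; the only input that is not purely formal is Lemma~\ref{ISOGENYCOMPOSE}.

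For (1), given $\II{f}\in\QIsog(\FF,\FF')$ and $\II{f'}\in\QIsog(\FF',\FF'')$ I would, using that $\QIsog$ consists by definition of classes with an isogeny representative, choose isogeny representatives $f\in\Isog(\FF,\FF'(D))$ and $f'\in\Isog(\FF',\FF''(D'))$ for suitable effective divisors $D,D'$. By the definition of composition, $\II{f'}\circ\II{f}=\II{(f'\otimesidOCL{D})\circ f}$. Now Lemma~\ref{ISOGENYCOMPOSE}(2) gives $f'\otimesidOCL{D}\in\Isog(\FF'(D),\FF''(D\!+\!D'))$, and Lemma~\ref{ISOGENYCOMPOSE}(1), applied to the composable pair $f$ and $f'\otimesidOCL{D}$, shows that $(f'\otimesidOCL{D})\circ f$ is an isogeny. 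Hence $\II{f'}\circ\II{f}$ is represented by an isogeny, i.e.\ lies in $\QIsog(\FF,\FF'')$.

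For (2), I would verify the vector space axioms on representatives taken over a common effective divisor. By Remark~3 above any two classes $\II{f_1},\II{f_2}\in\QHom(\FF,\FF')$ are represented by $f_1,f_2\in\Hom(\FF,\FF'(D))$ with the same $D$, so commutativity and associativity of $+$, the zero $\II{0}$ and inverses $-\II{f}:=\II{-f}$ are inherited from the $\Fq$-vector space $\Hom(\FF,\FF'(D))$. For $a,b\in Q$ the identities $1\cdot\II{f}=\II{f}$, $(ab)\cdot\II{f}=a\cdot(b\cdot\II{f})$, $(a+b)\cdot\II{f}=a\cdot\II{f}+b\cdot\II{f}$ and $a\cdot(\II{f_1}+\II{f_2})=a\cdot\II{f_1}+a\cdot\II{f_2}$ all reduce, after enlarging the divisor so that it dominates $D$, $(a)_\infty$, $(b)_\infty$, $(a+b)_\infty$ and $(ab)_\infty$ — possible since $(a+b)_\infty\le(a)_\infty+(b)_\infty$ and $(ab)_\infty\le(a)_\infty+(b)_\infty$ — to the distributivity and associativity of multiplication of sections by elements of $Q$ inside one sheaf $\FF'(D)$. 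The well-definedness of $+$ and of $a\cdot(-)$ on classes is the cancellation-by-injectivity diagram argument already invoked, identical to the one used for transitivity of $\sim$.

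For (3), $\QEnd(\FF)=\QHom(\FF,\FF)$ is a $Q$-vector space by (2), and composition of quasi-morphisms, defined above, is associative with two-sided unit $\II{\id_\FF}$ because the analogous statements hold for honest morphisms. Composition is $\Fq$-bilinear, and $Q$ is central in it: for $a\in Q$ one has $a\cdot(\II{f'}\circ\II{f})=(a\cdot\II{f'})\circ\II{f}=\II{f'}\circ(a\cdot\II{f})$, since multiplication of sections by $a$ commutes with the $\O_{C_L}$-linear maps $f_i,f'_i$, and $\s a=a$ for $a\in Q$ (which is also why $a\cdot f$ is again a quasi-morphism). Hence $\QEnd(\FF)$ is an associative unital $Q$-algebra. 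I expect no genuine obstacle here; the only discipline required throughout is to pass to a single sufficiently large effective divisor before comparing two maps, so that every diagram one draws lives in one $\FF'(D)$ and the earlier well-definedness reasoning applies verbatim.
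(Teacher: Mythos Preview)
Your proposal is correct and matches the paper's approach: the paper states the corollary with \qed, relying entirely on the preceding remarks about well-definedness of $+$, the $Q$-action, and composition, together with Lemma~\ref{ISOGENYCOMPOSE} for part~(1). You have simply spelled out the routine verifications the paper leaves implicit, and your identification of Lemma~\ref{ISOGENYCOMPOSE} as the only non-formal input is exactly right.
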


\smallskip

\noindent {\it Remark.}
The $Q$-vector spaces $\QHom(\FF,\FF')$ and $\QEnd(\FF)$ are finite dimensional. We will prove this in Proposition~\ref{PropT.2} below.

\bigskip

As an abuse of notation, we will write $f\in\QHom(\FF,\FF')$ instead of $\II{f}$ to denote the quasi-morphism represented by $f\in\Hom(\FF,\FF'(D))$.

\begin{Remark} \label{MULTISISOGENY}
For every $a\in Q\mal$, the multiplication by $a$ is a quasi-isogeny on $\FF$. Since $a$ injects $\F_i$ into $\F_i((a)_\infty)$ and commutes with the $\P_i$ and the $\t_i$, it is a morphism of abelian $\tau$-sheaves. Additionally, its cokernels are supported on $(a)_0$, the divisor of zeroes of $a$.
\end{Remark}

Now we come back to the idea of defining a dual isogeny. As already mentioned, a global definition fails because the annihilating multiplication by $a$ is not a morphism between $\F_i$ and $\F'_i$. This problem will now be solved by using quasi-morphisms and quasi-isogenies.

Let $\FF$ and $\FF'$ be abelian $\tau$-sheaves and let $f\in\QIsog(\FF,\FF')$ be a quasi-isogeny represented by an isogeny $f:\FF\to\FF'(D)$ for an effective divisor $D$ on $C$. By the annihilating property of the support, we can find $a\in Q\mal$ with $a\cdot\coker f_i=0$ for all $i\in\Z$. Now consider the following diagram.
\[
\xymatrix@M=0.5em{
0 \ar[r] &
\F_i \ar[rr]^{f_i}\ar[d]_{a} &&
\F'_i(D) \ar[rr]\ar[d]_{a} \ar@{-->}[dll]_{\dual{f_i}} &&
\coker f_i \ar[d]_{a}^{\;(=0)} \ar[r] & 
0
\\
0 \ar[r] &
\F_i\makebox[0pt][l]{$((a)_\infty)$} & \ar[r]^{f_i\quad} &
\F'_i\makebox[19pt][l]{$(D+(a)_\infty)$} & \ar[r] &
\coker f_i \ar[r] & 
0\,.
}
\]
As in \ref{Cor1.11b}, we get a morphism $\dual{f_i}: \F'_i(D)\rightarrow\F_i\bigl((a)_\infty\bigr)$ satisfying $\dual{f_i}\circ f_i = a$ and $f_i\circ\dual{f_i} = a$. Collecting these $\dual{f_i}$ together, we obtain a \emph{dual} morphism of abelian $\tau$-sheaves $\dual{f}\in\Hom(\FF'(D),\FF((a)_\infty))$ which is a quasi-morphism between $\FF'$ and $\FF$. 

\begin{Proposition} \label{QISOG-GROUP}%
Let $\FF$ and $\FF'$ be abelian $\tau$-sheaves.
\begin{suchthat}
\item
If $f\in \QIsog(\FF,\FF')$ is a quasi-isogeny then the dual $\dual{f}\in\QHom(\FF',\FF)$ of\/ $f$ is a quasi-isogeny and $f^{-1}:=a^{-1}\cdot\dual{f}$ is the inverse of $f$ in $\QHom(\FF',\FF)$.
\item 
$\QIsog(\FF)$ is the group of units in the $Q$-algebra $\QEnd(\FF)$.
\end{suchthat}
\end{Proposition}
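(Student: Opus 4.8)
The plan is to deduce both parts from the dual morphism $\dual f$ that was constructed immediately before the statement, combined with Lemma~\ref{ISOGENYCOMPOSE}, Remark~\ref{MULTISISOGENY}, Corollary~\ref{QEND-Q-ALGEBRA} and the implication $3\Rightarrow1$ of Proposition~\ref{PROP.1.42A}.

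For part~1, I recall that for $f\in\QIsog(\FF,\FF')$ represented by an isogeny $f\colon\FF\to\FF'(D)$ we chose $a\in Q^\times$ with $a\cdot\coker f_i=0$ for all $i$ and produced $\dual f$ with $\dual{f_i}\circ f_i=a$ and $f_i\circ\dual{f_i}=a$ on the nose. Passing to the $Q$-algebra structure on quasi-morphisms, these read $\dual f\circ f=a\cdot\id_\FF$ and $f\circ\dual f=a\cdot\id_{\FF'}$ in $\QHom$. Since multiplication by $a$ is a quasi-isogeny by Remark~\ref{MULTISISOGENY} and $f$ is a quasi-isogeny, Lemma~\ref{ISOGENYCOMPOSE}(1), which applies verbatim to quasi-isogenies by the remark following the definition of $\QIsog$, forces $\dual f$ to be a quasi-isogeny. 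Finally I set $f^{-1}:=a^{-1}\cdot\dual f$ and compute $f^{-1}\circ f=a^{-1}(\dual f\circ f)=a^{-1}a\cdot\id_\FF=\id_\FF$ and, symmetrically, $f\circ f^{-1}=\id_{\FF'}$; being $a^{-1}$ times a quasi-isogeny, $f^{-1}$ is itself a quasi-isogeny by Corollary~\ref{QEND-Q-ALGEBRA}(1).

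For part~2 the inclusion $\QIsog(\FF)\subseteq\QEnd(\FF)^\times$ is immediate: $\id_\FF$ is a quasi-isogeny, the composite of two quasi-isogenies is a quasi-isogeny by Corollary~\ref{QEND-Q-ALGEBRA}(1), and by part~1 each quasi-isogeny has a two-sided inverse in $\QEnd(\FF)$ which is again a quasi-isogeny; hence $\QIsog(\FF)$ is a subgroup of the unit group. For the reverse inclusion I take $g\in\QEnd(\FF)^\times$ with inverse $h$ and represent them by genuine morphisms $g\colon\FF\to\FF(D_g)$ and $h\colon\FF\to\FF(D_h)$ with $D_g,D_h$ effective. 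Since $C$ is smooth and geometrically irreducible over $\Fq$, the scheme $C_L$ is integral; localizing the identities $g\circ h=\id_\FF=h\circ g$ at the generic point $\eta$ kills all the divisor twists, so $g_{i,\eta}\circ h_{i,\eta}=h_{i,\eta}\circ g_{i,\eta}=\id$ and $g_{i,\eta}$ is an isomorphism. As $\F_i$ is locally free, hence torsion-free, the kernel of $g_i$ is torsion-free and vanishes at $\eta$, so $g_i$ is injective; and $\FF$, $\FF(D_g)$ have the same weight by Proposition~\ref{PROP.2}. Thus condition~3 of Proposition~\ref{PROP.1.42A} holds, so $g\colon\FF\to\FF(D_g)$ is an isogeny and therefore $g\in\QIsog(\FF)$.

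The only thing requiring care throughout is the bookkeeping of the effective divisors when translating the algebraic identities in $\QHom$ back into honest equalities of sheaf morphisms; this is routine given the well-definedness statements already collected in Corollary~\ref{QEND-Q-ALGEBRA}. The one genuinely substantive ingredient is the implication $3\Rightarrow1$ of Proposition~\ref{PROP.1.42A}: it is what upgrades ``invertible at the generic point, same weight'' to ``isogeny'', and hence is the step I expect to be doing the real work in the converse of part~2. I do not anticipate any serious obstacle beyond keeping the twists straight.
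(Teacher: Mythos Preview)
Your proof is correct and follows essentially the same line as the paper's. The paper's proof is a single sentence: since the $f_i$ and multiplication by $a\neq 0$ are isomorphisms at the generic fiber, everything follows from Proposition~\ref{PROP.1.42A}; your argument unpacks this, using Lemma~\ref{ISOGENYCOMPOSE}(1) for part~1 (which is itself an immediate consequence of the generic-fiber criterion in~\ref{PROP.1.42A}) and invoking~\ref{PROP.1.42A} directly for part~2, so the underlying mechanism is the same.
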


\begin{proof}
Since the $f_i$ and the multiplication by $a\not=0$ are isomorphisms at the generic fiber the lemma follows from proposition~\ref{PROP.1.42A}.
\end{proof}

\noindent {\it Remark.}
The dual morphism $\dual{f}$ clearly depends on the choice of $a$ and again there is in general no canonical choice of $a$.

\bigskip

\begin{Definition}
Let $\FF$ and $\FF'$ be abelian $\tau$-sheaves. We call $\FF$ and $\FF'$ \emph{quasi-isogenous} $(\FF\approx\FF')$, if there exists a quasi-isogeny between $\FF$ and $\FF'$.
\end{Definition}

\begin{Corollary}\label{QISOG-EQREL}
The relation $\approx$ is an equivalence relation. \qed
\end{Corollary}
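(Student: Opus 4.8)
The plan is to verify the three defining properties of an equivalence relation directly, each time citing a result already established in this section so that essentially no new work is required.

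\emph{Reflexivity.} For $\FF \approx \FF$ I would exhibit a quasi-isogeny from $\FF$ to itself. The identity $\id_\FF \in \Hom(\FF,\FF)$ is an isogeny: it is injective with all $\coker(\id_\FF)_i = 0$, so the support of these cokernels is the empty (in particular a finite closed) subscheme of $C$; equivalently this is the case $D = 0$ of Lemma~\ref{ISOGENYCOMPOSE}(3), the zero divisor being effective. Hence $\id_\FF \in \Isog(\FF,\FF(0))$ represents an element of $\QIsog(\FF)$ and $\FF \approx \FF$.

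\emph{Symmetry.} Suppose $\FF \approx \FF'$, witnessed by $f \in \QIsog(\FF,\FF')$. By Proposition~\ref{QISOG-GROUP}(1) the dual quasi-morphism $\dual{f} \in \QHom(\FF',\FF)$ is again a quasi-isogeny, so $\FF' \approx \FF$.

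\emph{Transitivity.} Suppose $\FF \approx \FF'$ and $\FF' \approx \FF''$, witnessed by quasi-isogenies $f \in \QIsog(\FF,\FF')$ and $g \in \QIsog(\FF',\FF'')$. By Corollary~\ref{QEND-Q-ALGEBRA}(1) (i.e.\ \ref{COMPOSITIONISISOGENY}) the composition $g \circ f \in \QHom(\FF,\FF'')$ is again a quasi-isogeny, whence $\FF \approx \FF''$.

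I do not anticipate a genuine obstacle: the substantive content (existence of duals, closure under composition) has already been packaged into Proposition~\ref{QISOG-GROUP} and Corollary~\ref{QEND-Q-ALGEBRA}, so all that is left is the bookkeeping above. The only point deserving even a moment's attention is that the zero divisor qualifies as effective, so that $\id_\FF$ literally represents an element of $\QIsog(\FF)$ in the sense of the definition — a triviality, but the one place where reflexivity could otherwise look like it needs an argument.
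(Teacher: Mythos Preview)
Your proof is correct and matches the paper's approach exactly: the paper marks the corollary with \qed and gives no explicit argument, relying precisely on the identity for reflexivity, Proposition~\ref{QISOG-GROUP} for symmetry, and Corollary~\ref{COMPOSITIONISISOGENY} for transitivity, which is exactly what you have spelled out.
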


\begin{Proposition}\label{QHOM-QEND-ISOMORPHIC}
Let $\FF$ and $\FF'$ be abelian $\tau$-sheaves. If $\FF\approx\FF'$, then 
\begin{suchthat}
\item the $Q$-algebras $\QEnd(\FF)$ and $\QEnd(\FF')$ are isomorphic,
\item $\QHom(\FF,\FF')$ is free of rank $1$ both as a left module over $\QEnd(\FF')$ and as a right module over $\QEnd(\FF)$. \qed
\end{suchthat}
\end{Proposition}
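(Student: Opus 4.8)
The statement is entirely formal, built on two facts already established: quasi-isogenies are invertible in $\QHom$ (Proposition~\ref{QISOG-GROUP}), and quasi-morphisms form a $Q$-linear category under the composition, addition and $Q$-scaling of Corollary~\ref{QEND-Q-ALGEBRA}. So the plan is to fix a quasi-isogeny $f\in\QIsog(\FF,\FF')$ coming from $\FF\approx\FF'$, together with its two-sided inverse $f^{-1}\in\QHom(\FF',\FF)$ (Proposition~\ref{QISOG-GROUP}, so $f^{-1}\circ f=\id_\FF$ and $f\circ f^{-1}=\id_{\FF'}$), and then exhibit conjugation by $f$ as the algebra isomorphism in~1 and multiplication by $f$ as the module basis in~2.

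For part~1, I would define $\Phi\colon\QEnd(\FF)\to\QEnd(\FF')$ by $\Phi(g):=f\circ g\circ f^{-1}$ and $\Psi\colon\QEnd(\FF')\to\QEnd(\FF)$ by $\Psi(g'):=f^{-1}\circ g'\circ f$; both are well defined by Corollary~\ref{QEND-Q-ALGEBRA}. Using associativity of composition together with $f^{-1}\circ f=\id_\FF$ one checks $\Psi\circ\Phi=\id$ and $\Phi\circ\Psi=\id$, so $\Phi$ is a bijection. It is multiplicative since $\Phi(g_1g_2)=f g_1(f^{-1}f)g_2 f^{-1}=\Phi(g_1)\Phi(g_2)$, and it sends $\id_\FF$ to $\id_{\FF'}$; it is additive and $Q$-linear because, by the third remark after the definition of $\QHom(\FF,\FF')$, any finite family of quasi-morphisms can be represented over a common effective divisor, whereupon $\Phi(\II{g_1}+\II{g_2})=\Phi(\II{g_1})+\Phi(\II{g_2})$ and $\Phi(a\cdot\II g)=a\cdot\Phi(\II g)$ for $a\in Q$ reduce to the evident identities for honest morphisms $\F_i\to\F'_i(D)$. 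Hence $\Phi$ is a $Q$-algebra isomorphism.

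For part~2, regard $\QHom(\FF,\FF')$ as a right $\QEnd(\FF)$-module via precomposition and as a left $\QEnd(\FF')$-module via postcomposition. I claim $f$ is a basis of each. Consider $\alpha\colon\QEnd(\FF)\to\QHom(\FF,\FF')$, $g\mapsto f\circ g$: it is right $\QEnd(\FF)$-linear by associativity, and it is bijective with inverse $h\mapsto f^{-1}\circ h$ since $f\circ(f^{-1}\circ h)=h$ and $f^{-1}\circ(f\circ g)=g$; thus $\QHom(\FF,\FF')=f\cdot\QEnd(\FF)$ is free of rank~$1$ with basis $f$ (in particular $f\circ g=0\Rightarrow g=0$). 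Symmetrically, $\beta\colon\QEnd(\FF')\to\QHom(\FF,\FF')$, $g'\mapsto g'\circ f$, is left $\QEnd(\FF')$-linear and bijective with inverse $h\mapsto h\circ f^{-1}$, so $\QHom(\FF,\FF')=\QEnd(\FF')\cdot f$ is free of rank~$1$ on the left as well. There is no genuine obstacle; the only mild care needed is in the $Q$-linearity and multiplicativity checks of $\Phi$, which is handled by passing to representatives with a common divisor as above.
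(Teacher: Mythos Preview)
Your proof is correct and is precisely the formal argument the paper has in mind: the statement carries a bare \qed\ in the paper, so no proof is given there beyond the implicit observation that conjugation and one-sided multiplication by a fixed quasi-isogeny (invertible by Proposition~\ref{QISOG-GROUP}) do the job. Your write-up simply makes this explicit.
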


Next we want to give an alternative description of $\QHom(\FF,\FF')$ similar to the description \cite[Proposition 3.4.5]{Papanikolas} in the case of ``dual $t$-motives''.

\begin{Proposition}\label{PropAltDescrQHom}
Let $\FF$ and $\FF'$ be abelian $\tau$-sheaves of the same weight and characteristic. Then the $Q$-vector space $\QHom(\FF,\FF')$ is canonically isomorphic to the space of morphisms between the fibers at the generic point $\eta$ of $C_L$
\[
\bigl\{\,f_{0,\eta}:\F_{0,\eta}\to\F'_{0,\eta}\enspace \text{with}\enspace f_{0,\eta}\circ\P_{0,\eta}^{-1}\circ\t_{0,\eta}\;=\;(\P'_{0,\eta})^{-1}\circ\t'_{0,\eta}\circ\sigma^\ast(f_{0,\eta})\,\bigr\}\,.
\]
This isomorphism is compatible with composition of quasi-morphisms.
\end{Proposition}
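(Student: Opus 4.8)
The plan is to construct a map in each direction and check they are mutually inverse. Given a quasi-morphism $\II{f}\in\QHom(\FF,\FF')$, choose a representative $f\in\Hom(\FF,\FF'(D))$; restricting to the generic point $\eta$ of $C_L$ (which does not meet the support of $D$, and where the $\P$'s and $\P'$'s are isomorphisms by condition~2 of Definition~\ref{Def1.1}) yields an $\O_{C_L,\eta}$-linear map $f_{0,\eta}:\F_{0,\eta}\to\F'_{0,\eta}$. The commutation $f_{1}\circ\t_0=\t'_0\circ\s f_0$ and $f_{1}\circ\P_0=\P'_0\circ f_0$ (twisted by $D$) pass to the stalks at $\eta$ and give exactly the relation $f_{0,\eta}\circ\P_{0,\eta}^{-1}\circ\t_{0,\eta}=(\P'_{0,\eta})^{-1}\circ\t'_{0,\eta}\circ\s(f_{0,\eta})$. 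This assignment is independent of the chosen representative (two equivalent representatives agree after composing with the inclusions into $\FF'(D_1+D_2)$, hence have the same generic fiber), so we obtain a well-defined $Q$-linear map $\Phi:\QHom(\FF,\FF')\to\{f_{0,\eta}\text{ satisfying the relation}\}$; linearity and compatibility with composition are immediate from the fact that addition, scalar multiplication by $Q$, and composition of quasi-morphisms are all defined by the corresponding operations on representatives, which restrict faithfully to $\eta$.

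For the inverse, start from a morphism $f_{0,\eta}:\F_{0,\eta}\to\F'_{0,\eta}$ satisfying the displayed relation. First extend it to the other indices: for $i>0$ set $f_{i,\eta}:=\P'_{i-1,\eta}\circ\cdots\circ\P'_{0,\eta}\circ f_{0,\eta}\circ\P_{0,\eta}^{-1}\circ\cdots\circ\P_{i-1,\eta}^{-1}$, and dually for $i<0$, exactly as in the proof of part~2 of Theorem~\ref{PropX.1}. One checks that the $f_{i,\eta}$ commute with the generic fibers of the $\P$'s (by construction) and of the $\t$'s (here the hypothesis on $f_{0,\eta}$ together with the periodicity $\F_{i+l}=\F_i(k\cdot\infty)$ is used, just as in loc.\ cit.). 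Each $f_{i,\eta}$ is an $\O_{C_L,\eta}$-linear map between the generic stalks of coherent sheaves, hence extends to a morphism $\F_i\dashrightarrow\F'_i$ of sheaves over $C_L\setminus S_i$ for some finite closed $S_i\subset C_L$; clearing denominators (i.e.\ composing with the inclusion into $\F'_i(m\cdot\infty)$ or, more carefully, into $\F'_i(D)$ for a suitable effective divisor $D$ supported on the finitely many offending points) produces genuine morphisms $f_i:\F_i\to\F'_i(D)$ for $0\le i<l$, and periodicity propagates them to all $i$ after possibly enlarging $D$. This gives a quasi-morphism $\II{f}\in\QHom(\FF,\FF')$, and by construction $\Phi(\II{f})=f_{0,\eta}$; conversely $\Phi$ is clearly injective since a quasi-morphism is determined by any one of its generic fibers (the $f_{i,\eta}$ determine the $f_i$ as sheaf morphisms, the latter being embeddings of $\F_i$ into a fixed $\F'_i(D)$ which is torsion-free). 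Hence $\Phi$ is a $Q$-linear isomorphism.

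The step I expect to be the main obstacle is showing that an arbitrary $f_{0,\eta}$ satisfying the $\tau$-compatibility relation genuinely extends to a \emph{quasi}-morphism, i.e.\ that only finitely many poles appear and that they can be absorbed into a single effective divisor $D$ on $C$ (pulled back to $C_L$) compatibly with all $i\in\Z$. The finiteness at each fixed $i$ is routine (coherent sheaves, Noetherian curve), and periodicity reduces the problem to $0\le i<l$, so only finitely many $S_i$ occur; the mild subtlety is to arrange $D$ to be a divisor on $C$ (so that $\FF'(D)$ makes sense and is again an abelian $\tau$-sheaf by Proposition~\ref{PROP.2})—but since the $\t$-relation is respected, $\tau'$ and $\tau$ being isomorphisms away from the characteristic force the potential poles of the extended $f_i$ to propagate $\sigma$-equivariantly, and one shows, exactly as in the proof of part~2 of Theorem~\ref{PropX.1}, that $D=m\cdot\infty$ already works. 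Everything else—well-definedness, $Q$-linearity, and compatibility with composition—follows by the same routine diagram manipulations used throughout Section~6.
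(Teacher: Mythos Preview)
Your forward map and injectivity are fine, and you have correctly located the crux: given $f_{0,\eta}$, one must absorb its poles into an effective divisor $D$ \emph{on $C$} (not merely on $C_L$). But your proposed resolution is wrong on two counts.

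First, the claim that ``$D=m\cdot\infty$ already works'' is false. Take $\FF=\FF'$ and $f_{0,\eta}=a\cdot\id$ for any $a\in Q^\times$ with a pole at some place $v\ne\infty$; this satisfies the $\tau$-relation, yet no multiple of $\infty$ clears the pole at $v$. Second, the appeal to Theorem~\ref{PropX.1}\,(2) is inapplicable: there the input $f_0$ is already a morphism of $A$-motives, i.e.\ an honest $A_L$-module map regular on all of $C_L\setminus\{\infty\}$, so the only possible poles are at $\infty$. Here you start from a map defined only at $\eta$, and its poles may lie over arbitrary closed points of $C$; nothing in \ref{PropX.1} addresses that. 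Your remark that the $\tau$-relation forces the poles to ``propagate $\sigma$-equivariantly'' is the right instinct, but $\sigma$-invariance of a divisor on $C_L$ does not by itself imply that the divisor is pulled back from $C$ (think about what $\sigma=\id_C\times{\rm Frob}_q$ does to closed points over a fixed $v$ when $L$ is infinite).

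The paper fills exactly this gap with a concrete computation: it reduces via a finite flat $\varphi:C\to\PP^1_{\Fq}$ to the case $A=\Fq[t]$, writes $\tau,\tau',f_{0,\eta}$ as matrices $T,T',F$, and studies the ideal $I=\{h\in L^{\alg}[t]:hF\text{ has polynomial entries}\}$. The relation $FT=T'\,\s F$ together with $\det T,\det T'\in L^\times\cdot(t-\theta)^{\text{power}}$ forces a generator $h$ of $I$ to satisfy $\s(h)\mid(t-\theta)^{e'}h$ and $h\mid(t-\theta)^e\s(h)$, from which one extracts a $\s$-invariant multiple $a=\gamma h\in\Fq[t]$. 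This $a$ is the missing ingredient: it is an element of $A$ (after pullback by $\varphi$) whose zero divisor, together with $m\cdot\infty$, absorbs all poles of $f$. Only after this does the extension to the other indices via the $\P$'s and periodicity (your second paragraph) go through. Without this step, your argument does not produce a quasi-morphism.
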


\begin{proof}
Clearly if $f\in \QHom(\FF,\FF')$ the map $f\mapsto f_{0,\eta}$ is a monomorphism of $Q$-vector spaces. To show that it is surjective let $f_{0,\eta}$ as above be given. As in the proof of \ref{PROP.1.42A} choose a finite flat morphism $\varphi:C\to\PP^1_{\Fq}$ with $\varphi^{-1}(\infty_{\PP^1})=\{\infty\}$, set $\Fq[t]=\Gamma(\PP^1_\Fq\setminus\{\infty_{\PP^1}\},\O_{\PP^1})$, and replace $\FF$ and $\FF'$ by $\varphi_\ast\FF$ and $\varphi_\ast\FF'$. Choose $L[t]$-bases of $M=\Gamma(\PP^1_L\setminus\{\infty_{\PP^1}\},\F_0)$ and $M'=\Gamma(\PP^1_L\setminus\{\infty_{\PP^1}\},\F'_0)$, and write $\P_0^{-1}\circ\t_0$ and $(\P'_0)^{-1}\circ\t'_0$ with respect to these bases as matrices $T$ and $T'$ with coefficients in $L[t]$. If $\chr\ne\infty$ let $\theta:=c^\ast(t)$ and set $e:=d$ and $e':=d'$. If $\chr=\infty$ set $e=e'=0$ and $\theta:=0$ (the choice of $\theta$ will not play a role in this case).

Then in both cases $\det T=b\cdot(t-\theta)^e$ and $\det T'=b'\cdot(t-\theta)^{e'}$ for $b,b'\in L^{\times}$. By considering the adjoint matrices we find in particular that $(t-\theta)^eT^{-1}$ and $(t-\theta)^{e'}(T')^{-1}$ have all their coefficients in $L[t]$. Write $f_{0,\eta}$ with respect to these bases as a matrix $F\in M_{r'\times r}(L(t))$. It satisfies $FT=T'\s F$.

Consider the ideals of $L^\alg[t]$ where $L^\alg$ is an algebraic closure of $L$
\[
I:=\bigl\{\,h\in L^\alg[t]:\enspace hF\in M_{r'\times r}(L^\alg[t])\,\bigr\}
\]
and $I^\sigma:=\{\s(h):h\in I\}$. Note that $I\ne(0)$. We claim that
\begin{eqnarray*}
h\in I \enspace& \Longrightarrow & (t-\theta)^{e'}h\in I^\sigma \quad\text{and}\\
h\in I^\sigma &\Longrightarrow & (t-\theta)^e h\in I\,.
\end{eqnarray*}
Indeed, let $h\in I$ and set $g:=(\s)^{-1}((t-\theta)^{e'}h)$. Then 
\[
\s(gF)\;=\;(t-\theta)^{e'}h\,\s F\;=\;(t-\theta)^{e'}(T')^{-1}\cdot hFT\;\in\; M_{r'\times r}(L^\alg[t])\,.
\]
Hence $g\in I$ and $(t-\theta)^{e'}h\in I^\sigma$. Conversely let $h\in I^\sigma$, that is, $h=\s(g)$ for $g\in I$. Then
\[
(t-\theta)^ehF\;=\;T'\s(gF)\cdot(t-\theta)^eT^{-1}\;\in\;M_{r'\times r}(L^\alg[t])
\]
proving the claim.

Since $L^\alg[t]$ is a principal ideal domain we find $I=(h)$ and $I^\sigma=(\s (h))$ for some $h\in I$. In particular $(t-\theta)^{e'}h=g\cdot\s(h)$ and $(t-\theta)^e \s(h)=f\cdot h$ for suitable $f,g\in L^\alg[t]$. We conclude $(t-\theta)^{e+e'}h=fg\,h$ and since the polynomials $h$ and $\s(h)$ are non-zero and have the same degree, $g=\beta\cdot(t-\theta)^{e'}$ for some $\beta\in (L^\alg)^{\times}$. Choose an element $\gamma\in (L^\alg)^{\times}$ with $\gamma^{q-1}=\beta$. Then
\[
a:=\gamma h=\s(\gamma h)\in \Fq[t]
\]
and $aF\in M_{r'\times r}(L[t])$. Thus $f_{0,\eta}$ defines a morphism $f_0:\F_0\to\F'_0(D_0)$ for $D_0:=(\varphi^\ast a)_0+m_0\cdot\infty$ with appropriate $m_0\in \N_0$. Here $(\varphi^\ast a)_0$ is the zero divisor of the element $\varphi^\ast a\in A$.

Now we define inductively on $C'_L:=C_L\setminus\{\infty\}$
\[
f^{\,}_i := \P'_{i-1}\circ f^{\,}_{i-1}\circ\P_{i-1}^{-1}:\; \F_i|_\CLa\rightarrow\F'_i((\varphi^\ast a)_0)|_\CLa \qquad (i>0)
\]
and analogously for $i<0$. To pass to the projective closure, we allow divisors $D_i=(\varphi^\ast a)_0+m_i\cdot\infty$ for sufficiently large $m_i>0$ such that $f_i:\,\F_i\rightarrow\F'_i(D_i)$ for all $i\in\Z$. Since $\FF$ and $\FF'$ have the same weight, we have the periodical identification if $l$ satisfies condition 2 of Definition~\ref{Def1.1} for both $\FF$ and $\FF'$
\[
\xymatrix{
& \F_{i+nl} \ar[r]^<<{\sim}\ar[d]^{f_{i+nl}} & \F_i(nk\cdot\infty) \ar[d]^{f_i\,\otimesidOCL{nk\cdot\infty}} & \ar@{}[d]^{\displaystyle(i,n\in\Z)\,.} \\
& \F'_{i+nl}(D_i) \ar[r]^<<{\sim} & \F'_i(D_i+nk\cdot\infty) &
}
\]
Take $m:=\max\{m_0,\dots,m_{l-1}\}$ and set $D:=(\varphi^\ast a)_0+m\cdot\infty$. Then $f_i:\,\F_i\rightarrow\F'_i(D)$ for all $i\in\Z$. Since the commutation with the $\P$'s and the $\t$'s holds by construction, the collection of the $f_i$ is the desired quasi-morphism $f\in\QHom(\FF,\FF')$. 
\end{proof}

\bigskip

The following proposition connects the theory of quasi-morphisms of abelian $\tau$-sheaves to the theory of morphisms of their associated pure $A$-motives and $\t$-modules.

\begin{Proposition}\label{CONNECTION}
Let $\FF$ and $\FF'$ be two abelian $\tau$-sheaves of the same weight and let $D\subset C$ be a finite closed subscheme as in Section~\ref{SectRelation}. 
\begin{suchthat}
\item
If $\infty\in D$ we have a canonical isomorphism of $Q$-vector spaces
\[
\QHom(\FF,\FF') \;=\; \Hom(\ulM^{(D)}(\FF),\ulM^{(D)}(\FF'))\otimes_{\TA} Q\,.
\]
\item
If $\infty\notin D$ choose an integer $l$ which satisfies condition 2 of \ref{Def1.1} for both $\FF$ and $\FF'$ and assume $\Ff_{q^l}\subset L$. Then we have a canonical isomorphism of $Q$-vector spaces
\[
\QHom(\FF,\FF') \;=\; \Hom_{\P,\Lambda}(\ulM^{(D)}(\FF),\ulM^{(D)}(\FF'))\otimes_{\TA} Q
\]
where the later is the space of all morphisms commuting with $\P$ and $\Lambda(\lambda)$ from (\ref{EQ.Pi+Lambda}) for all $\lambda\in\Ff_{q^l}$.
\end{suchthat}
By lemma~\ref{Lemma2.9a} the condition on the weights can be dropped if $\chr\notin D$ or if $\chr\ne\infty$ and $\infty\in D$.
\end{Proposition}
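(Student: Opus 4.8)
We sketch how we would prove Proposition~\ref{CONNECTION}. The whole argument rests on Proposition~\ref{PropAltDescrQHom}, which already identifies $\QHom(\FF,\FF')$ with the space of $\s$-equivariant maps of generic fibres $\F_{0,\eta}\to\F'_{0,\eta}$ over $C_L$; both cases of the Proposition then only amount to recognising this generic-fibre description inside the respective right hand sides. First, though, I would dispose of the two situations singled out in the last sentence: if $\weight(\FF)\ne\weight(\FF')$ and either $\chr\notin D$ or $\chr\ne\infty\in D$, then the right hand side is $\{0\}$ by Lemma~\ref{Lemma2.9a}, while the left hand side is $\{0\}$ because every quasi-morphism is a morphism $\FF\to\FF'(E)$ with $\weight(\FF'(E))=\weight(\FF')$, so that $\Hom(\FF,\FF'(E))=\{0\}$ by Proposition~\ref{PROP.1}; hence both sides vanish and there is nothing to prove. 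For the remainder we assume $\weight(\FF)=\weight(\FF')$, so that Proposition~\ref{PropAltDescrQHom} is available.

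\emph{Part 1 ($\infty\in D$).} Here $C_L\setminus D$ is a dense open subscheme of $C_L\setminus\{\infty\}$ containing the generic point $\eta$, and $A\subseteq\TA=\Gamma(C\setminus D,\O_C)$. By construction $\ulM^{(D)}(\FF)=(M,\t)$ with $M=\Gamma(C_L\setminus D,\F_0)$ has generic fibre $M\otimes_{\TA_L}\O_{C_L,\eta}=\F_{0,\eta}$, on which $\t$ induces $\P_{0,\eta}^{-1}\circ\t_{0,\eta}$, and similarly for $\FF'$; note that $\Hom(\ulM^{(D)}(\FF),\ulM^{(D)}(\FF'))$ is a torsion free $\TA$-module because $\t$-equivariance is preserved only under scaling by $\s$-fixed elements. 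A morphism of $\t$-modules restricts to an $\s$-equivariant map of generic fibres, and conversely the proof of Proposition~\ref{PropAltDescrQHom} shows that such a generic $\s$-equivariant $f_{0,\eta}$ becomes, after multiplication by a suitable element of $\TA$, a genuine morphism $\ulM^{(D)}(\FF)\to\ulM^{(D)}(\FF')$ --- the point being that the $\t$-equation forces the ``denominator'' to be $\s$-invariant, hence to lie in $\TA$ (concretely, the element $a$ produced in that proof lies in $\Fq[t]$, so its pull-back along the auxiliary covering $\varphi\colon C\to\PP^1_\Fq$ lies in $A\subseteq\TA$). Hence $f_{0,\eta}$ lies in $\Hom(\ulM^{(D)}(\FF),\ulM^{(D)}(\FF'))\otimes_{\TA}Q$, and combining this with the monomorphism $f\mapsto f_{0,\eta}$ of Proposition~\ref{PropAltDescrQHom} gives the asserted $Q$-linear isomorphism.

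\emph{Part 2 ($\infty\notin D$).} Fix $l$ as in the statement, so $\ulM^{(D)}(\FF)=\bigoplus_{i=0}^{l-1}M_i$ with $M_i=\Gamma(C_L\setminus D,\F_i)$ carries $\t$, $\P$ and $\Lambda(\lambda)$ from (\ref{EQ.Tau}) and (\ref{EQ.Pi+Lambda}), and similarly for $\FF'$ with the \emph{same} $l$ (so $\P^l=z^k=(\P')^l$). Given $f=(f_i)_{i\in\Z}\in\Hom(\FF,\FF')$ I would set $g:=\bigoplus_{i=0}^{l-1}(f_i|_{C_L\setminus D})$; the relations $f_{i+1}\circ\P_i=\P'_i\circ f_i$, $f_{i+1}\circ\t_i=\t'_i\circ\s f_i$ together with the periodicity $f_{i+l}=f_i\otimes\id$ show that $g$ commutes with $\t$ and with $\P$, and $g$ commutes with every $\Lambda(\lambda)$ because each $f_i$ is $\O_{C_L}$-linear, hence $L$-linear, and $\Ff_{q^l}\subseteq L$. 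Passing to quasi-morphisms this yields a $Q$-linear map $\QHom(\FF,\FF')\to\Hom_{\P,\Lambda}(\ulM^{(D)}(\FF),\ulM^{(D)}(\FF'))\otimes_{\TA}Q$. For the inverse, write $g$ in the target as an $l\times l$ block matrix $(g_{ij})$ with $g_{ij}\colon M_j\to M'_i$. Commuting with $\Lambda$ gives $(\lambda^{q^j}-\lambda^{q^i})g_{ij}=0$ for all $\lambda\in\Ff_{q^l}$; choosing $\lambda$ a generator of $\Ff_{q^l}^\times$ one has $\lambda^{q^i}\ne\lambda^{q^j}$ whenever $0\le i\ne j<l$ (since $\lambda$ has order $q^l-1$ and $0<|i-j|<l$), so $g=\mathrm{diag}(g_0,\dots,g_{l-1})$. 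Commuting with $\P$ and with $\t$ then becomes the ladder relations $g_{i+1}\circ\P_i=\P'_i\circ g_i$, $g_{i+1}\circ\t_i=\t'_i\circ\s g_i$ for $0\le i\le l-2$ together with corner relations linking $g_0$ and $g_{l-1}$ through $z^k\,\TP^{-1}\t_{l-1}$ and $z^k\,\TP^{-1}\P_{l-1}$; setting $f_i:=g_i$ for $0\le i<l$ and $f_{i+nl}:=g_i\otimes\id$ under $\F_{i+nl}=\F_i(nk\cdot\infty)$ produces a periodic family which, after twisting the target by one common effective divisor to absorb all poles, is a quasi-morphism $\FF\to\FF'(E)$ mapping back to $g$. $Q$-linearity and surjectivity onto all of $\QHom(\FF,\FF')$ are then clear, the latter because every quasi-morphism is obtained this way from its components.

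\emph{Expected main obstacle.} The delicate part is the dictionary used in Part 2: one must check that the corner entries $z^k\,\TP^{-1}\t_{l-1}$ and $z^k\,\TP^{-1}\P_{l-1}$ encode precisely the $i\equiv l-1\pmod l$ instances of $f_{i+1}\circ\t_i=\t'_i\circ\s f_i$ and $f_{i+1}\circ\P_i=\P'_i\circ f_i$ under the identification $\F_{i+l}=\F_i(k\cdot\infty)$ and the relation $\P^l=z^k$, while keeping track throughout of the twists by $\O_{C_L}(n\cdot\infty)$ and of the $\otimes_\TA Q$. An alternative organisation, which may make the verification cleaner, is to treat Part~2 as a descent statement: after base change from $\TA$ to $\TA':=\Gamma(C\setminus(D\cup\{\infty\}),\O_C)$ the object $\ulM^{(D)}(\FF)$ with its $\P,\Lambda$-structure becomes an $l$-fold self-sum of $\ulM^{(D\cup\{\infty\})}(\FF)$ equipped with a $\Z/l$-twisted structure, $\P,\Lambda$-equivariant morphisms descend to morphisms of $\ulM^{(D\cup\{\infty\})}(\FF)$, and Part~1 applied to $D\cup\{\infty\}$ closes the argument; but this reorganisation still requires $\Ff_{q^l}\subseteq L$ for the descent and the same bookkeeping with the twist.
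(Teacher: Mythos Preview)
Your proof is correct and uses the same two essential ingredients as the paper: Proposition~\ref{PropAltDescrQHom} to reduce everything to maps of generic fibres, and in case~2 the $\Lambda$-commutation to force block-diagonality. The organisation, however, differs. You build explicit inverse maps in both directions and then have to verify that the diagonal blocks $(g_0,\dots,g_{l-1})$ reassemble into a genuine quasi-morphism, which is exactly the ``corner entry'' bookkeeping you flag as the main obstacle. The paper sidesteps this entirely by a sandwich argument: it exhibits a monomorphism $\QHom(\FF,\FF')\hookrightarrow\Hom_{\P,\Lambda}(\ulM,\ulM')\otimes_{\TA}Q$ (sending $f$ to $a\cdot(f_0\oplus\dots\oplus f_{l-1})\otimes a^{-1}$ after clearing poles into $\FF'(n\cdot D)$) and then a second monomorphism from the target into the generic-fibre space $H$ of~\ref{PropAltDescrQHom} (sending $g\otimes a$ to $a\cdot(g_{00})_\eta$, injective because $\P g=g\P$ determines the other diagonal blocks from $g_{00}$). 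Since the composite is the isomorphism of~\ref{PropAltDescrQHom}, both monomorphisms are forced to be isomorphisms, and no explicit reconstruction of the $(f_i)$ from $g$ is ever needed. Your approach has the virtue of being more concrete; the paper's buys you freedom from the periodicity and twist verifications you were worried about.
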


\begin{proof}
Let $\ulM:=\ulM^{(D)}(\FF)$ and $\ulM':=\ulM^{(D)}(\FF')$. We exhibit a monomorphism of $Q$-vector spaces from $\QHom(\FF,\FF')$ to $\Hom(\ulM,\ulM')\otimes_{\TA}Q$ in case 1 (respectively from $\QHom(\FF,\FF')$ to $\Hom_{\P,\Lambda}(\ulM,\ulM')\otimes_{\TA}Q$ in case 2) and another monomorphism from the target of the first to the $Q$-vector space 
\[
H\;:=\;\bigl\{\,f_{0,\eta}:\F_{0,\eta}\to\F'_{0,\eta}\enspace \text{with}\enspace f_{0,\eta}\circ\P_{0,\eta}^{-1}\circ\t_{0,\eta}\;=\;(\P'_{0,\eta})^{-1}\circ\t'_{0,\eta}\circ\sigma^\ast(f_{0,\eta})\,\bigr\}
\]
introduced in proposition~\ref{PropAltDescrQHom} such that the composition of the two monomorphisms is the isomorphism from \ref{PropAltDescrQHom}.

Let $f\in\QHom(\FF,\FF')$. By the Riemann-Roch Theorem we can find some $a\in Q$ such that $a\cdot f$ maps from $\FF$ into $\FF'(n\cdot D)$ for some $n>0$. Since $a$ and $f$ commute with the $\P$'s and $\t$'s, we get for the first monomorphism
\[
\begin{array}{crccll}
f&\mapsto &a\cdot f_0|_{C_L\setminus D}\otimes a^{-1}&\in&\Hom(\ulM,\ulM')\otimes_A Q&\quad\text{in case 1, and}\\[1mm]
f&\mapsto &a\cdot (f_0\oplus\ldots\oplus f_{l-1})|_{C_L\setminus D}\otimes a^{-1}&\in&\Hom_{\P,\Lambda}(\ulM,\ulM')\otimes_A Q&\quad\text{in case 2.}
\end{array}
\]
\forget{
\[
\begin{array}{rclcrcll}
a\cdot f_0|_{C_L\setminus D}&\in&\Hom(\ulM,\ulM')&\text{ and }& f\mapsto a\cdot f_0|_{C_L\setminus D}\otimes a^{-1}&\in&\Hom(\ulM,\ulM')\otimes_A Q&\quad\text{in case 1, and}\\[1mm]
a\cdot (f_0\oplus\ldots f_{l-1})|_{C_L\setminus D}&\in&\Hom_{\P,\Lambda}(\ulM,\ulM')&\text{ and }& f\mapsto a\cdot (f_0\oplus\ldots f_{l-1})|_{C_L\setminus D}\otimes a^{-1}&\in&\Hom_{\P,\Lambda}(\ulM,\ulM')\otimes_A Q&\quad\text{in case 2}
\end{array}
\]
}
To construct the second monomorphism we treat each case separately.

\smallskip
\noindent
1. The localization $\Hom(\ulM,\ulM')\otimes_{\TA}Q\to H$, $g\otimes a\mapsto ag_\eta$ at the generic point $\eta$ of $C_L$ gives the desired monomorphism.

\smallskip
\noindent
2. Let $(g:\,\bigoplus M_i \rightarrow \bigoplus M_i)\otimes a\in \Hom_{\P,\Lambda}(\ulM,\ulM')\otimes_{\TA}Q$. Then $g$ corresponds to a block matrix \mbox{$(g_{ij}:M_j\to M_i)_{0\le i,j<l}$} with $g_{ij}\cdot\lambda^{q^j} = \lambda^{q^i}\cdot g_{ij}$ for all $\lambda\in \Ff_{q^l}$. Therefore, we have $g_{ij}=0$ for $i\ne j$. We map $g\otimes a$ to the localization $a\cdot(g_{00})_\eta$ at $\eta$. Since $\P g=g\P$ this map is injective and our proof is complete.
\end{proof}

\noindent
{\it Remark.} Again note the importance of the assumption that $\FF$ and $\FF'$ must have the same weight, since otherwise $\QHom(\FF,\FF')=(0)$ by \ref{PROP.1} whereas $\Hom(\ulM^{(D)}(\FF),\ulM^{(D)}(\FF'))$ could be non-zero. Consider for example the abelian $\tau$-sheaves on $C=\PP^1_\Fq$ with $C\setminus\{\infty\}=\Spec \Fq[t]$ given by $\F_i=\O_{\PP^1_L}(i\cdot\infty)$, $\t_i=t$ and $\F'_i=\O_{\PP^1_L}(2i\cdot\infty)$, $\t'_i=t^2$, where $\P$ and $\P'$ are the natural inclusions. They have $\weight(\FF)=1$ and $\weight(\FF')=2$. If we choose $D={\rm V}(t)$ and $z=t^{-1}\in \Gamma(C\setminus D,\O_{\PP^1_\Fq})$ as uniformizing parameter at $\infty$ then $\ulM^{(D)}(\FF)=(L[z],1)=\ulM^{(D)}(\FF')$.

\begin{Definition}\label{Def1.38b}
Let $\ulM$ and $\ulM'$ be pure $A$-motives. Then the elements of $\Hom(\ulM,\ulM')\otimes_A Q$ which admit an inverse in $\Hom(\ulM',\ulM)\otimes_A Q$ are called \emph{quasi-isogenies}.
\end{Definition}

\begin{Corollary}\label{Cor2.9d}
Let the characteristic be different from $\infty$. Then the functor $\FF\mapsto\ulM(\FF)$ defines an equivalence of categories between 
\begin{suchthat}
\item 
the category with abelian $\tau$-sheaves as objects and with $\QHom(\FF,\FF')$ as the set of morphisms,
\item 
 and the category with pure $A$-motives as objects and with $\Hom(\ulM,\ulM')\otimes_A Q$ as the set of morphisms.
\end{suchthat}
We call these the \emph{quasi-isogeny categories} of abelian $\tau$-sheaves of characteristic different from $\infty$ and of pure $A$-motives respectively.
\end{Corollary}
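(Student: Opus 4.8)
The plan is to check the two defining properties of an equivalence of $Q$-linear categories: essential surjectivity and full faithfulness of the functor $\FF\mapsto\ulM(\FF)$.

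Essential surjectivity is handed to us by Theorem~\ref{PropX.1}(1): any pure $A$-motive $\ulM$ over $L$ is literally equal to $\ulM(\FF)$ for some abelian $\tau$-sheaf $\FF$, whose characteristic $c=\Spec c^\ast\colon\Spec L\to\Spec A\subset C$ automatically avoids $\infty$. So every object of category (b) lies in the image of the functor on the nose, and a fortiori up to quasi-isogeny.

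For full faithfulness I would invoke Proposition~\ref{CONNECTION}(1) with $D=\{\infty\}$. Then $\TA=\Gamma(C\setminus\{\infty\},\O_C)=A$, $\ulM^{(D)}(\FF)=\ulM(\FF)$, and the proposition produces a canonical isomorphism of $Q$-vector spaces
\[
\QHom(\FF,\FF')\;\xrightarrow{\;\sim\;}\;\Hom(\ulM(\FF),\ulM(\FF'))\otimes_A Q\,.
\]
The point to be made is that this isomorphism is exactly the map induced by the functor on morphisms. Unwinding the construction in the proof of Proposition~\ref{CONNECTION}: a quasi-morphism represented by $f\colon\FF\to\FF'(D')$ is sent to $a\,f_0|_{C_L\setminus\{\infty\}}\otimes a^{-1}$, where $a\in A$ is chosen to clear the poles of $f$ away from $\infty$; here $a\,f_0|_{C_L\setminus\{\infty\}}$ is nothing but $\ulM$ applied to the genuine morphism $a\,f\colon\FF\to\FF'(n\cdot\infty)$ (note $\ulM(\FF'(n\cdot\infty))=\ulM(\FF')$, since the divisor $n\cdot\infty$ is invisible on $C_L\setminus\{\infty\}$), while multiplication by $a^{-1}$ is the quasi-isogeny in $\Hom(\ulM(\FF'),\ulM(\FF'))\otimes_A Q$ inverse to multiplication by $a$. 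Thus the functor realises precisely this bijection on each morphism set, hence it is fully faithful.

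It then remains to note that the assignment is functorial — respects identities and composition — so that it really is a functor realising the equivalence and not merely a family of bijections. This is built into Proposition~\ref{CONNECTION}, whose isomorphism factors through the composition-compatible isomorphism of Proposition~\ref{PropAltDescrQHom} (passage to generic fibres, $f\mapsto f_{0,\eta}$), under which composition of quasi-morphisms of abelian $\tau$-sheaves corresponds to composition of induced maps of generic fibres, hence to composition in $\Hom(\ulM,\ulM')\otimes_A Q$, and identities go to identities. Combining this with essential surjectivity yields the asserted equivalence. I do not expect a genuine obstacle here; the only step requiring care is the bookkeeping of the previous paragraph — verifying that the abstract Hom-set isomorphism supplied by Proposition~\ref{CONNECTION} is genuinely the one induced by the functor $\ulM$, with the auxiliary function $a$ and the auxiliary divisor $D'$ handled correctly — and this is a routine unwinding of the cited constructions.
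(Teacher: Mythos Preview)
Your proof is correct and follows exactly the paper's approach: the paper simply says ``This is just a reformulation of Theorem~\ref{PropX.1} and Proposition~\ref{CONNECTION},'' and you have supplied precisely the unpacking of that sentence, invoking \ref{PropX.1}(1) for essential surjectivity and \ref{CONNECTION}(1) with $D=\{\infty\}$ for full faithfulness. Your additional care in verifying that the abstract isomorphism of \ref{CONNECTION} is the one induced by the functor, and that composition is respected via \ref{PropAltDescrQHom}, is a welcome elaboration of what the paper leaves implicit.
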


\begin{proof}
This is just a reformulation of Theorem~\ref{PropX.1} and Proposition~\ref{CONNECTION}.
\end{proof}

% -----------------------------------------------------------------------------

\bigskip

\section{Simple and semisimple abelian $\tau$-sheaves and pure $A$-motives}

In this section we want to draw some first conclusions about $\QEnd(\FF)$. 

\begin{Definition}
Let $\FF$ be an abelian $\tau$-sheaf.
\begin{suchthat}
\item $\FF$ is called \emph{simple}, if\/ $\FF\not=\ZZ$ and\/ $\FF$ has no abelian quotient $\tau$-sheaves other than $\ZZ$ and\/ $\FF$.
\item $\FF$ is called \emph{semisimple}, if\/ $\FF$ admits, up to quasi-isogeny, a decomposition into a direct sum $\FF\approx\FF_1\oplus\cdots\oplus\FF_n$ of simple abelian $\tau$-sheaves $\FF_j$ $(1\le j\le n)$.
\item $\FF$ is called \emph{primitive}, if its rank and its dimension are relatively prime.
\end{suchthat}
We make the same definition for a pure $A$-motive.
\end{Definition}

\begin{Remark} \label{RemDualBehaviour}
It is not sensible to try defining \emph{simple} abelian $\tau$-sheaves via abelian sub-$\tau$-sheaves, since for example the shifted abelian $\tau$-sheaf $(\F_{i-n},\P_{i-n},\t_{i-n})$ by $n\in\N$ is always a proper abelian sub-$\tau$-sheaf of $(\F_i,\P_i,\t_i)$. 
Furthermore we have for every positive divisor $D$ on $C$ a strict inclusion $\FF(-D)\subset\FF$. This shows that abelian $\tau$-sheaves behave dually to abelian varieties. Namely an abelian variety is called simple if it has no non-trivial abelian subvarieties.
\end{Remark}

\begin{Proposition}\label{Prop1.29a}
Let $\FF$ be an abelian $\tau$-sheaf with characteristic different from $\infty$. Then $\FF$ is \mbox{(semi-)}simple if and only if the pure $A$-motive $\ulM(\FF)$ is (semi-)simple.
\end{Proposition}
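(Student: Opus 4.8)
The plan is to match quotient objects on the two sides and then read off both statements. Note first that $\FF=\ZZ$ if and only if $\ulM(\FF)=0$, and in that case neither is simple; so we may assume $\FF\ne\ZZ$, i.e.\ $r>0$. Next I would record the key compatibility: $\ulM(-)$ carries surjections to surjections without changing ranks. Indeed, if $g:\FF\to\G$ is a surjective morphism of abelian $\tau$-sheaves, then $\G$ automatically has the same characteristic, still different from $\infty$, and $\ker g$ is an abelian $\tau$-sheaf by Proposition~\ref{KERISABELIAN}; since $C_L\setminus\{\infty\}$ is affine the functor $\Gamma(C_L\setminus\{\infty\},-)$ is exact, so $\ulM(g):\ulM(\FF)\to\ulM(\G)$ is surjective. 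As $\ulM$ preserves ranks and a surjection of locally free sheaves of equal rank on the integral scheme $C_L$ is an isomorphism, $\ulM(g)$ is an isomorphism if and only if $\rank\FF=\rank\G$, i.e.\ if and only if $g$ is; and $\G\ne\ZZ$ forces $\ulM(\G)\ne0$.

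Conversely I would show that every quotient motive of $\ulM(\FF)$ is of the form $\ulM(\G')$ for a quotient $\tau$-sheaf $\G'$ of $\FF$. Let $h:\ulM(\FF)\to\ulN$ be a surjection of pure $A$-motives with $\ulN\ne0$. Since $h\ne0$, Corollary~\ref{Cor2.9b} gives $\weight\ulN=\weight\ulM(\FF)$, so by Theorem~\ref{PropX.1}(1) we may write $\ulN=\ulM(\G)$ for an abelian $\tau$-sheaf $\G$ of characteristic different from $\infty$ and of the same weight as $\FF$; then Theorem~\ref{PropX.1}(2) produces $m\in\Z$ and a morphism $\tilde h:\FF\to\G(m\cdot\infty)$ with $\ulM(\tilde h)=h$, using $\ulM(\G(m\cdot\infty))=\ulM(\G)=\ulN$ because twisting by $m\cdot\infty$ does not affect sections over $C_L\setminus\{\infty\}$. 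Put $\G':=\im\tilde h$, an abelian $\tau$-sheaf by Proposition~\ref{IMISABELIAN}, and factor $\tilde h$ as the surjection $g:\FF\to\G'$ followed by the inclusion $\iota:\G'\hookrightarrow\G(m\cdot\infty)$. Then $\ulM(\iota):\ulM(\G')\to\ulN$ is injective by left-exactness of $\Gamma(C_L\setminus\{\infty\},-)$, while $\ulM(\iota)\circ\ulM(g)=h$ is surjective with $\ulM(g)$ surjective; hence $\ulM(\iota)$ is an isomorphism, $\G'$ is a quotient $\tau$-sheaf of $\FF$ with $\ulM(\G')\cong\ulN$, and $\ulM(g)$ is identified with $h$.

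Combining the two previous paragraphs, $\FF$ admits a surjection onto some $\G\ne\ZZ$ which is not an isomorphism if and only if $\ulM(\FF)$ admits a surjection onto some $\ulN\ne0$ which is not an isomorphism; together with $\FF=\ZZ\Leftrightarrow\ulM(\FF)=0$ this is precisely the equivalence ``$\FF$ simple $\iff\ulM(\FF)$ simple''. For the semisimple statement I would invoke Corollary~\ref{Cor2.9d}: the functor $\FF\mapsto\ulM(\FF)$ is additive on the nose (visibly $\ulM(\bigoplus_j\FF_j)=\bigoplus_j\ulM(\FF_j)$) and induces an equivalence of quasi-isogeny categories, hence preserves and reflects quasi-isogenies. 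Thus a decomposition $\FF\approx\bigoplus_j\FF_j$ with all $\FF_j$ simple (necessarily of characteristic different from $\infty$) gives $\ulM(\FF)\approx\bigoplus_j\ulM(\FF_j)$ with each $\ulM(\FF_j)$ simple by the first part, and conversely a decomposition $\ulM(\FF)\approx\bigoplus_j\ulN_j$ with each $\ulN_j$ simple gives, by Theorem~\ref{PropX.1}(1) and the first part, simple abelian $\tau$-sheaves $\G_j$ of characteristic different from $\infty$ with $\ulM(\G_j)\cong\ulN_j$, whence $\FF\approx\bigoplus_j\G_j$ by full faithfulness.

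I expect the main obstacle to be the second paragraph: one must be sure that a surjection of pure $A$-motives genuinely lifts to a surjection of abelian $\tau$-sheaves. The subtlety is that the lift $\tilde h:\FF\to\G(m\cdot\infty)$ furnished by Theorem~\ref{PropX.1}(2) need not itself be surjective; this is remedied by passing to the image $\tau$-sheaf $\im\tilde h$ (Proposition~\ref{IMISABELIAN}) and using that $\Gamma(C_L\setminus\{\infty\},-)$ is exact over the affine curve $C_L\setminus\{\infty\}$, so that the inclusion $\im\tilde h\hookrightarrow\G(m\cdot\infty)$ becomes an isomorphism after applying $\ulM$ and thereby identifies $\ulN$ with $\ulM(\im\tilde h)$.
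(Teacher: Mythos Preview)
Your proof is correct and follows essentially the same approach as the paper's own argument: lift a surjection of pure $A$-motives to a morphism of abelian $\tau$-sheaves via Theorem~\ref{PropX.1}, pass to the image $\tau$-sheaf using Proposition~\ref{IMISABELIAN}, and then invoke Corollary~\ref{Cor2.9d} for the semisimple statement. Your presentation is in fact more careful than the paper's in two respects: you distinguish notationally between the motive morphism $h$ and its lift $\tilde h$ (the paper uses $f$ for both), and you make explicit why $\ulM(\iota)$ is an isomorphism via exactness of $\Gamma(C_L\setminus\{\infty\},-)$, whereas the paper leaves this implicit; also, you cite Theorem~\ref{PropX.1}(2) directly for the lifting, while the paper cites the slightly less direct Proposition~\ref{CONNECTION}.
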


\begin{proof}
First let $\FF$ be simple and let $f:\ulM(\FF)\to \ulM'$ be a surjective morphism of pure $A$-motives. By Theorem~\ref{PropX.1} there is an abelian $\tau$-sheaf $\FF'$ with $\ulM'=\ulM(\FF')$. By \ref{CONNECTION} there is an integer $n$ such that $f\in\Hom\bigl(\FF,\FF'(n\cdot\infty)\bigr)$ and $\im f$ is an abelian quotient $\tau$-sheaf of $\FF$ by \ref{IMISABELIAN}. Hence $f$ is injective or $f=0$ proving that $f:\ulM(\FF)\to\ulM'$ is an isomorphism or $\ulM'=0$.

Conversely let $\ulM(\FF)$ be simple and let $f:\FF\to\FF'$ be an abelian quotient $\tau$-sheaf of $\FF$. Then $\ulM(f):\ulM(\FF)\to\ulM(\FF')$ is surjective. So $f=0$ or $f$ is injective proving that $\FF'=\ZZ$ or $f$ is an isomorphism.

Clearly if $\FF$ is semisimple then so is $\ulM(\FF)$. Conversely if $\ulM(\FF)$ is isogenous to a direct sum $\ulM_1\oplus\ldots\oplus\ulM_n$ with $\ulM_i$ simple, then we obtain from \ref{PropX.1} and \ref{Cor2.9b} simple abelian $\tau$-sheaves $\FF_i$ of the same weight with $\ulM(\FF_i)=\ulM_i$ and $\FF\approx\FF_1\oplus\ldots\oplus\FF_n$ by \ref{Cor2.9d}.
\end{proof}

\begin{Proposition}\label{PROP.5}%
Let $\FF$ be an abelian $\tau$-sheaf. If\/ $\FF$ is primitive, then $\FF$ is simple.
\end{Proposition}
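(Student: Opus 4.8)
The statement to prove is that a primitive abelian $\tau$-sheaf $\FF$ is simple, i.e.\ has no abelian quotient $\tau$-sheaves besides $\ZZ$ and $\FF$. The plan is to show that \emph{any} surjective morphism $f\colon\FF\to\FF'$ onto an abelian $\tau$-sheaf $\FF'\neq\ZZ$ is automatically an isomorphism. Since the zero morphism exhibits $\ZZ$ as a quotient in any case, this is precisely the definition of simplicity.

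\textbf{First step.} I would begin by recording the elementary consequences of primitivity: if $\FF$ has rank $r$ and dimension $d$ with $\gcd(r,d)=1$, then $\FF\neq\ZZ$ and, by Remark~\ref{Rem2.2}.3, $r>0$ (and $d>0$). Now let $f\colon\FF\to\FF'$ be surjective with $\FF'\neq\ZZ$, say of rank $r'$ and dimension $d'$; then $r'>0$ (otherwise all $\F'_i=0$ and $\FF'=\ZZ$) and $f\neq0$. Applying Proposition~\ref{PROP.1} to $0\neq f\in\Hom(\FF,\FF')$ gives $\weight(\FF)=\weight(\FF')$, that is $dr'=d'r$. Since $\gcd(r,d)=1$ this forces $r\mid r'$, hence $r\leq r'$.

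\textbf{Second step.} Next comes the reverse rank inequality, which is where the surjectivity of $f$ is used. Each component $f_i\colon\F_i\to\F'_i$ is a surjection of locally free sheaves on $C_L$, which is integral (being connected because $C$ is geometrically irreducible and regular because $C$ is smooth over $\Fq$). Passing to the fiber at the generic point $\eta$ we get a surjection of vector spaces, so $r'=\dim\F'_{i,\eta}\leq\dim\F_{i,\eta}=r$. Combined with the first step, $r'=r$ and hence $d'=d$. Then each $f_{i,\eta}$ is a surjection between $\eta$-vector spaces of equal dimension $r$, hence an isomorphism; since $\F_i$ is locally free (in particular torsion free), the subsheaf $\ker f_i$, having zero stalk at $\eta$, vanishes. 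Thus every $f_i$ is bijective, i.e.\ $f$ is an isomorphism, and so the only abelian quotient $\tau$-sheaves of $\FF$ up to isomorphism are $\ZZ$ and $\FF$; this proves simplicity.

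\textbf{Main difficulty.} I do not anticipate a genuine obstacle: the whole proof is the confrontation of the two inequalities $r\leq r'$ and $r'\leq r$, the first coming entirely from purity (Proposition~\ref{PROP.1}) and coprimality, the second being the trivial fact that a surjection of coherent sheaves cannot raise the generic rank. The only points deserving a sentence of care are that $C_L$ is integral, so that "vanishing stalk at $\eta$ implies vanishing" is legitimate, and that $\FF\neq\ZZ$ already follows from primitivity. In particular, note that the hypothesis $\chr\neq\infty$ used elsewhere in this section is not needed here.
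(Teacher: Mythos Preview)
Your proof is correct and follows essentially the same route as the paper: both use Proposition~\ref{PROP.1} to get $dr'=d'r$, combine coprimality of $r,d$ with the trivial rank inequality $r'\le r$ from surjectivity to force $r'=r$, and then conclude each $f_i$ is an isomorphism because its kernel has generic rank zero inside a locally free sheaf. Your write-up is simply more explicit about the two inequalities and the integrality of $C_L$.
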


\begin{proof}
Let $\TFF$ be an abelian quotient $\tau$-sheaf of $\FF$. Clearly, we have $\Tr\le r$. If $\Tr=0$, then $\TFF=\ZZ$. Otherwise, the surjection $f\in\Hom(\FF,\TFF)$ is non-zero, and by \ref{PROP.1} we get $\Td r = d\Tr$. Since $r$ and $d$ are relatively prime, it follows $\Tr=r$ and $\Td=d$. Therefore, considering the ranks in $\smallexact{0}{}{\ker f_i}{}{\F_i}{f_i}{\TF_i}{}{0}$, we conclude $\ker f_i=0$ and hence $f_i$ is an isomorphism.
\end{proof}

\begin{Corollary}\label{Cor1.50b}
If $\ulM$ is a pure $A$-motive of rank $r$ and dimension $d$ with $(r,d)=1$ then $\ulM$ is simple.\qed
\end{Corollary}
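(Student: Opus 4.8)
The plan is to deduce this from the corresponding statement for abelian $\tau$-sheaves, namely Proposition~\ref{PROP.5}, via the dictionary of Section~\ref{SectRelation}. The only preliminary point to record is that the characteristic $c^\ast\colon A\to L$ of a pure $A$-motive has characteristic point $\chr=\ker c^\ast\in\Spec A$, and since $\infty\notin\Spec A$ one always has $\chr\ne\infty$. Consequently every pure $A$-motive satisfies the hypothesis needed to invoke Proposition~\ref{Prop1.29a}.

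First I would dispose of the degenerate case: if $r=0$ then $M=0$ is locally free of rank $0$, so $\s M=0$ and $\coker\tau=0$, forcing $d=0$ by Definition~\ref{Def1'.1}; this contradicts $(r,d)=1$, so in fact $r\ge 1$ and $\ulM\ne\ZZ$. Next, by Theorem~\ref{PropX.1}(1) write $\ulM=\ulM(\FF)$ for an abelian $\tau$-sheaf $\FF$ over $L$ of the same rank $r$ and dimension $d$, with characteristic $c=\Spec c^\ast$ (which, as just noted, does not meet $\infty$). Since $(r,d)=1$, the abelian $\tau$-sheaf $\FF$ is primitive, hence simple by Proposition~\ref{PROP.5}. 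Finally Proposition~\ref{Prop1.29a}, applicable because $\chr\ne\infty$, yields that $\ulM(\FF)=\ulM$ is simple, which is the assertion.

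I do not expect any genuine obstacle here: the substantive content lies in Proposition~\ref{PROP.5} and in the equivalence of Proposition~\ref{Prop1.29a}, and the only things requiring a moment of care are the observation that a pure $A$-motive never has characteristic $\infty$ and the exclusion of the empty case $r=0$. As an aside, one can also argue purely within the category of pure $A$-motives, without passing to $\tau$-sheaves: given a surjection $f\colon\ulM\to\ulM'$ with $\ulM'=(M',\tau')$ of rank $r'$ and dimension $d'$, either $f=0$ (whence $M'=0$ and $\ulM'=\ZZ$), or $f\ne 0$, in which case Corollary~\ref{Cor2.9b} forces $\weight\ulM=\weight\ulM'$; by Proposition~\ref{Prop1'.1b} this reads $d r'=d' r$, and as $M'$ is a quotient of $M$ over the Dedekind domain $A_L$ one has $r'\le r$, so $(r,d)=1$ gives $r'=r$ and $d'=d$. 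Then the exact sequence $0\to\ker f\to M\to M'\to 0$ splits ($M'$ being projective), exhibiting $\ker f$ as a rank-$0$ direct summand of $M$, hence $\ker f=0$ and $f$ is an isomorphism of $A_L$-modules compatible with $\tau,\tau'$. Either route completes the proof.
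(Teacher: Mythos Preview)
Your proposal is correct and follows exactly the route the paper intends: the corollary carries only a \qed\ because it is meant to be read as the immediate consequence of Proposition~\ref{PROP.5} together with Proposition~\ref{Prop1.29a} and Theorem~\ref{PropX.1}, which is precisely your main argument. Your alternative direct argument (bypassing abelian $\tau$-sheaves via Corollary~\ref{Cor2.9b} and Proposition~\ref{Prop1'.1b}) is also valid and is in fact the $A$-motive translation of the proof of Proposition~\ref{PROP.5}; it has the minor advantage of not needing the characteristic-$\ne\infty$ hypothesis of Proposition~\ref{Prop1.29a}, though as you observe that hypothesis is automatic for pure $A$-motives anyway.
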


\begin{Proposition}\label{PROP.1.42B}%
Let $\FF$ and $\FF'$ be abelian $\tau$-sheaves of the same rank and dimension. If the characteristic is different from $\infty$ and if\/ $\FF$ is simple, then every non-zero morphism between $\FF$ and\/ $\FF'$ is an isogeny.
\end{Proposition}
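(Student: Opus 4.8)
The plan is to reduce the statement, via Proposition~\ref{PROP.1.42A}, to the single assertion that a non-zero morphism $f:\FF\to\FF'$ is \emph{injective}. Indeed, $\FF$ and $\FF'$ are assumed to have the same rank and dimension, so as soon as $f$ is known to be injective it satisfies condition~2 of Proposition~\ref{PROP.1.42A} and is therefore an isogeny. Thus the whole proof comes down to proving injectivity of a non-zero morphism out of the simple abelian $\tau$-sheaf $\FF$.

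To prove injectivity I would exploit simplicity of $\FF$ together with the hypothesis $\chr\ne\infty$. Since the characteristic is different from $\infty$, Proposition~\ref{IMISABELIAN} applies and the image $\im f=(\im f_i,\P'_i|_{\im f_i},\t'_i|_{\s\im f_i})$ is again an abelian $\tau$-sheaf; moreover $f$ factors by construction as a surjection $g:\FF\to\im f$ followed by the inclusion $\im f\hookrightarrow\FF'$, so $\im f$ is an abelian quotient $\tau$-sheaf of $\FF$. Because $f\ne0$ we have $\im f\ne\ZZ$ (this is merely the observation that $\im f=\ZZ$ says every $f_i=0$, i.e.\ $f=0$), and hence simplicity of $\FF$ forces $\im f=\FF$; in particular $\rank\im f_i=r$ for all $i$. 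Now for each $i$ the exact sequence $0\to\ker f_i\to\F_i\to\im f_i\to 0$, together with the fact that $\ker f_i$ is locally free (elementary divisor theorem, all local rings of $C_L$ being discrete valuation rings), gives $\rank\ker f_i=r-r=0$, whence $\ker f_i=0$. Thus $g_i$, and therefore $f_i$, is injective for every $i$, so $f$ is injective and we conclude by Proposition~\ref{PROP.1.42A}.

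I do not expect a serious obstacle here: the argument is essentially the rank-counting trick already used in the proofs of Proposition~\ref{PROP.5} and Proposition~\ref{Prop1.29a}. The one point that must be treated with care is the use of $\chr\ne\infty$, which is exactly what guarantees, via Proposition~\ref{IMISABELIAN}, that $\im f$ is an abelian $\tau$-sheaf and hence may be fed into the definition of simplicity; Example~\ref{ExTRichter} shows this hypothesis cannot be dropped. Also worth recording, though routine, is that ``$f\ne0$'' already rules out $\im f=\ZZ$, so no separate case analysis is needed there.
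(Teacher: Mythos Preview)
Your proof is correct and follows essentially the same approach as the paper: use $\chr\ne\infty$ and Proposition~\ref{IMISABELIAN} to see that $\im f$ is an abelian quotient $\tau$-sheaf of $\FF$, invoke simplicity (together with $f\ne0$) to force $\FF\cong\im f$, conclude injectivity of $f$, and finish with Proposition~\ref{PROP.1.42A}. The only difference is that you spell out the step ``$\FF\cong\im f\Rightarrow f$ injective'' via the rank-counting argument from Proposition~\ref{PROP.5}, whereas the paper leaves this implicit.
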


\begin{proof}
Let $f\in\Hom(\FF,\FF')$ be a non-zero morphism. Since the characteristic is different from $\infty$, we know by \ref{IMISABELIAN} that $\im f$ is an abelian quotient $\tau$-sheaf. As $\FF$ is simple, we have $\FF\cong\im f$ and therefore $f$ is injective. Thus, by \ref{PROP.1.42A}, $f$ is an isogeny.
\end{proof}

\begin{Remark}\label{Rem1.51b}
Note that the assumption on the characteristic in the proposition and the  theorem below is essential. Namely, the abelian $\tau$-sheaf $\FF$ of Example~\ref{ExTRichter} is primitive, hence simple, but the endomorphism $f$ of $\FF$ constructed there violates the assertions of the proposition and the theorem below.
\end{Remark}

\begin{Theorem}\label{QEND-DIVISION-MATRIX}
Let $\FF$ be an abelian $\tau$-sheaf of characteristic different from $\infty$.
\begin{suchthat}
\item If\/ $\FF$ is simple, then $\QEnd(\FF)$ is a division algebra over $Q$.
\item If\/ $\FF$ is semisimple with decomposition $\FF\approx\FF_1\oplus\cdots\oplus\FF_n$ up to quasi-isogeny into simple abelian $\tau$-sheaves $\FF_j$, then $\QEnd(\FF)$ decomposes into a finite direct sum of full matrix algebras over the division algebras $\QEnd(\FF_j)$ over $Q$.
\end{suchthat}
\end{Theorem}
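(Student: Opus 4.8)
The plan is to run the classical Schur/Wedderburn argument inside the $Q$-linear category of abelian $\tau$-sheaves with $\QHom$ as morphism sets, using the hypothesis $\chr\ne\infty$ to pass between quotients and images freely.

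\textbf{Part (1).} Assume $\FF$ is simple, so $\FF\ne\ZZ$ and $\QEnd(\FF)$ is a nonzero $Q$-algebra by Corollary~\ref{QEND-Q-ALGEBRA} (in fact $Q\subseteq\QEnd(\FF)$ via the quasi-isogenies ``multiplication by $a$'', $a\in Q\mal$, see Remark~\ref{MULTISISOGENY}). It then suffices to show every nonzero $\varphi\in\QEnd(\FF)$ is invertible. I would represent $\varphi$ by a morphism $f\in\Hom(\FF,\FF(D))$ for an effective divisor $D$; then $f\ne 0$, and by Proposition~\ref{PROP.2} the sheaf $\FF(D)$ has the same rank and dimension as $\FF$. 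Since the characteristic is different from $\infty$ and $\FF$ is simple, Proposition~\ref{PROP.1.42B} forces $f$ to be an isogeny, so $\varphi\in\QIsog(\FF)$. By Proposition~\ref{QISOG-GROUP} $\QIsog(\FF)$ is exactly the unit group of $\QEnd(\FF)$, so $\varphi$ is invertible. Hence $\QEnd(\FF)$ is a division algebra over $Q$, finite dimensional by Proposition~\ref{PropT.2}.

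\textbf{Part (2).} Write $\FF\approx\FF_1\oplus\dots\oplus\FF_n$ with all $\FF_j$ simple. Quasi-isogenous abelian $\tau$-sheaves have isomorphic quasi-endomorphism algebras (Proposition~\ref{QHOM-QEND-ISOMORPHIC}(1)), so I may replace $\FF$ by $\FF_1\oplus\dots\oplus\FF_n$; then, $\QHom$ being additive in each argument, $\QEnd(\FF)=\bigoplus_{i,j}\QHom(\FF_j,\FF_i)$ with ordinary matrix multiplication. The crux is the Schur-type claim: \emph{if $\G,\G'$ are simple abelian $\tau$-sheaves of characteristic different from $\infty$ and $\QHom(\G,\G')\ne\{0\}$, then $\G\approx\G'$.} To prove it, take $0\ne\varphi\in\QHom(\G,\G')$ represented by $f\in\Hom(\G,\G'(D))$, $D$ effective. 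Since $\chr\ne\infty$, Proposition~\ref{IMISABELIAN} makes $\im f$ an abelian quotient $\tau$-sheaf of $\G$; as $\G$ is simple and $f\ne0$, the corestriction $\G\to\im f$ is an isomorphism, so $f$ is injective and $\rank\im f=\rank\G$. Next I would build the ``cokernel $\tau$-sheaf'' $\G''$ by quotienting each $\G'_i(D)$ by the saturation of $\im f_i$ and taking the induced $\P$'s and $\t$'s; an argument parallel to the proof of Proposition~\ref{IMISABELIAN}, again using $\chr\ne\infty$ for conditions 3 and 4 of Definition~\ref{Def1.1}, shows $\G''$ is an abelian quotient $\tau$-sheaf of $\G'(D)$. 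Now $\G'(D)\approx\G'$ since $\G'\subset\G'(D)$ is an isogeny (Lemma~\ref{ISOGENYCOMPOSE}(3)), and simplicity is a quasi-isogeny invariant: by Proposition~\ref{Prop1.29a} this reduces to the analogue for pure $A$-motives, where a surjection onto a nonzero quotient motive, precomposed with an isogeny from a simple motive, has full-rank image (a pure $A$-motive by Proposition~\ref{Prop1.9a}), hence is an isomorphism. Thus $\G'(D)$ is simple, so $\G''\cong\ZZ$ or $\G''\cong\G'(D)$; since $\rank\G''=\rank\G'(D)-\rank\G$, the second case gives $\rank\G=0$, contradicting $\G\ne\ZZ$. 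So $\G''=\ZZ$, i.e. $\im f$ has full rank in $\G'(D)$; then $f$ is injective with isomorphic generic fibre, hence an isogeny by Proposition~\ref{PROP.1.42A} (its weight hypothesis holds by Proposition~\ref{PROP.1}). Therefore $\G\cong\im f\approx\G'(D)\approx\G'$, proving the claim.

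Granting the claim, I would group the $\FF_j$ by quasi-isogeny class, say $\FF\approx\G_1^{\oplus m_1}\oplus\dots\oplus\G_s^{\oplus m_s}$ with the $\G_\lambda$ simple and pairwise non quasi-isogenous. By the claim $\QHom(\G_\mu,\G_\lambda)=\{0\}$ for $\lambda\ne\mu$, so the matrix description block-diagonalises: $\QEnd(\FF)\cong\bigoplus_{\lambda=1}^{s}M_{m_\lambda}\!\bigl(\QEnd(\G_\lambda)\bigr)$, and each $\QEnd(\G_\lambda)$ is a division algebra over $Q$ by Part~(1); together with $\QEnd(\FF_j)\cong\QEnd(\G_\lambda)$ whenever $\FF_j\approx\G_\lambda$ (Proposition~\ref{QHOM-QEND-ISOMORPHIC}(1)) this is the asserted form. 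The hard part will be the Schur-type claim, and within it the verification that $\G''$ really is an abelian $\tau$-sheaf — exactly where the hypothesis $\chr\ne\infty$ is indispensable (compare Example~\ref{ExTRichter}) — together with the routine but necessary fact that simplicity is preserved under quasi-isogeny; everything else is formal bookkeeping.
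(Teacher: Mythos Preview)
Your argument is correct and follows the same Schur--Wedderburn strategy as the paper. Part~(1) is essentially identical to the paper's proof. For Part~(2) the paper is terser: it simply invokes Proposition~\ref{PROP.1.42B} to assert that $\QHom(\FF_j,\FF_i)\ne\{0\}$ forces $\FF_j\approx\FF_i$, then groups the isogeny classes and uses Proposition~\ref{QHOM-QEND-ISOMORPHIC} to identify each block with a matrix ring. You have noticed that this citation of Proposition~\ref{PROP.1.42B} is a little glib --- that proposition assumes the source and target have the \emph{same} rank and dimension, which is not given for distinct simple summands --- and you fill this in by constructing the cokernel $\tau$-sheaf $\G''$ and invoking simplicity of the target. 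That is a legitimate completion of the argument, and your sketch that $\G''$ is an abelian $\tau$-sheaf ``by an argument parallel to Proposition~\ref{IMISABELIAN}'' is correct: the saturation is stable under $\P$ and $\t$, the quotient is locally free, injectivity of the induced $\t''_i$ follows because its kernel is torsion supported on $\Graph(c)$ inside a locally free sheaf, and conditions~3 and~4 go through by the same degree count as in~\ref{IMISABELIAN} once $\chr\ne\infty$ pushes the support of $\coker\t''_i$ away from $\infty$.

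Two remarks. First, you can avoid both the cokernel construction and the separate check that simplicity is a quasi-isogeny invariant by passing once and for all to pure $A$-motives via Proposition~\ref{Prop1.29a} and Corollary~\ref{Cor2.9d}: with $\ulM_j=\ulM(\FF_j)$ simple, a nonzero $f:\ulM_j\to\ulM_i$ is injective (Proposition~\ref{Prop1.9a} and simplicity of the source), and then $\ulM_i/\text{sat}(\im f)$ with the induced $\t$ is a pure $A$-motive (again by the argument of~\ref{Prop1.9a}) which is a quotient of the simple $\ulM_i$, forcing equal ranks and hence an isogeny --- no twist by $D$ and no invariance statement needed. Second, in your block-diagonalisation you should really write $\QEnd(\G_\lambda^{\oplus m_\lambda})\cong M_{m_\lambda}(\QEnd(\G_\lambda))$ via Proposition~\ref{QHOM-QEND-ISOMORPHIC}(2) (each $\QHom(\G_\lambda,\G_\lambda)$-bimodule $\QHom$ between isogenous copies is free of rank~$1$), exactly as the paper does; this is what justifies the matrix-ring identification rather than merely a block decomposition.
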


\noindent {\it Remark.}
We will show in \cite[Theorem~\BHBThmBBB]{BH_B} that over a finite field also the converses to these statements are true.

\begin{proof}
1. We saw in \ref{QEND-Q-ALGEBRA} that $\QEnd(\FF)$ is a $Q$-algebra. By \ref{QISOG-GROUP}, we can invert every quasi-isogeny in $\QIsog(\FF)$. Thus, by proposition \ref{PROP.1.42B}, $\QEnd(\FF)$ is a division algebra.

\smallskip
\noindent
2. Let $\FF\approx\FF_1\oplus\cdots\oplus\FF_n$ be a decomposition into simple abelian $\tau$-sheaves $\FF_j$. By \ref{QHOM-QEND-ISOMORPHIC}, we know that $\QEnd(\FF)\cong\QEnd(\FF_1\oplus\cdots\oplus\FF_n)$, so we just have to consider the decomposition. By proposition \ref{PROP.1.42B}, we only get non-zero morphisms between $\FF_j$ and $\FF_i$, if $\FF_j\approx\FF_i$. Hence we can group the quasi-isogenous $\FF_j$ and decompose $\QEnd(\FF)=E_1\oplus\cdots\oplus E_m$ into their collective endomorphism rings $E_\nu=\QEnd(\FF_{j_{\nu,1}}\oplus\cdots\oplus\FF_{j_{\nu,\mu_\nu}})$, $1\le\nu\le m$, $\sum_{\nu=1}^m \mu_\nu = n$. By \ref{QHOM-QEND-ISOMORPHIC}, every $\QHom(\FF_{j_{\nu,\alpha}},\FF_{j_{\nu,\beta}})$, $1\le\alpha,\beta\le\mu_\nu$, is isomorphic to $\QEnd(\FF_{j_{\nu,1}})$. Hence we conclude that each $E_\nu$ is isomorphic to a matrix algebra over $\QEnd(\FF_{j_{\nu,1}})$ which completes the proof.
\end{proof}

For example if $\FF$ is an abelian $\tau$-sheaf associated to a Drinfeld module, then  $d=1$ and $\FF$ is primitive, hence simple. Also $\chr\ne\infty$ and so $\QEnd(\FF)$ is a division algebra. Together with Corollary~\ref{Cor2.9d} this gives another proof that the endomorphism $Q$-algebra of a Drinfeld module is a division algebra over $Q$.

\vspace{1cm}

% =============================================================================

\section{The associated local shtukas} \label{SectLS}

Before treating Tate modules in Section~\ref{SectTateModules} we want to attach another local structure to abelian $\tau$-sheaves or pure $A$-motives which is in a sense intermediate on the way to the $v$-adic Tate module, namely the local (iso-)shtuka at $v$. It is the analog of the Dieudonn\'e module of the $p$-divisible group attached to an abelian variety. Note however one fundamental difference. While the Dieudonn\'e module exists only if $p$ equals the characteristic of the base field, there is no such restriction in our theory here. And in fact this would even allow to dispense with Tate modules at all and only work with local (iso-)shtukas. Being not so radical here we shall nevertheless prove the standard facts about Tate modules through the use of local (iso-)shtukas.

To give the definition we introduce the following notation. Let $v\in C$ be a place of $Q$ and let $L\supset\Fq$ be a field. Recall that $A_{v,L}$ denotes the completion of $\O_{C_L}$ at the closed subscheme $v\times\Spec L$ and that $Q_{v,L}=A_{v,L}[\frac{1}{v}]$. Note that $v\times\Spec L$ may consist of more than one point if the intersection of $L$ with the residue field of $v$ is larger than $\Fq$. Then $A_{v,L}$ is not an integral domain and $Q_{v,L}$ is not a field. Local (iso-)shtukas were introduced in \cite{Hl} under the name \emph{Dieudonn\'e $\Fq\dbl z\dbr$-modules} (respectively \emph{Dieudonn\'e $\Fq\dpl z\dpr$-modules}). They are studied in detail in \cite{Anderson2,Kim}; see also \cite{HartlPSp}. Over a field their definition takes the following form.

\begin{Definition}\label{DefLS1}
An \emph{(effective) local $\sigma$-shtuka} at $v$ of rank $r$ over $L$ is a pair $\ulHM=(\hat M,\phi)$ consisting of a free $A_{v,L}$-module $\hat M$ of rank $r$ and an injective $A_{v,L}$-module homomorphism \mbox{$\s\hat M\to \hat M$}.

A \emph{local $\sigma$-isoshtuka} at $v$ of rank $r$ over $L$ is a pair $\ulHN=(\hat N,\phi)$ consisting of a free $Q_{v,L}$-module $\hat N$ of rank $r$ and an isomorphism $\phi:\s\hat N\to\hat N$ of $Q_{v,L}$-modules.

A \emph{morphism} between two local $\sigma$-shtukas $(\hat M,\phi)$ and $(\hat M',\phi')$ at $v$ is an $A_{v,L}$-homomorphism $f:\hat M\to\hat M'$ with $f\phi=\phi'\s(f)$. We denote the set of morphisms from $\ulHM\to\ulHM'$ by $\Hom_{A_{v,L}[\phi]}(\ulHM,\ulHM')$. The similar definition and notation applies to local isoshtukas.
\end{Definition}

\begin{Remark}
Note that so far in the literature \cite{Anderson2,Hl,HartlPSp, Kim, Laumon} it is always assumed that $A_v$ has residue field $\Fq$, the fixed field of $\sigma$ on $L$. So in particular $A_{v,L}$ is an integral domain and $Q_{v,L}$ is a field. For applications to pure $A$-motives this is not a problem since we may reduce to this case by Propositions~\ref{PropLS3} and \ref{PropLS4} below.
\end{Remark}

\begin{Definition}\label{DefLS1b}
A local shtuka $\ulHM=(\hat M,\phi)$ is called \emph{\'etale} if $\phi$ is an isomorphism. The \emph{Tate module} of an \'etale local $\sigma$-shtuka $\ulHM$ at $v$ is the $G:=\Gal(L^\sep/L)$-module of $\phi$-invariants
\[
T_v\ulHM\;:=\;\bigl(\ulHM\otimes_{A_{v,L}}A_{v,L^\sep}\bigr)^\phi\,.
\]
The \emph{rational Tate module} of $\ulHM$ is the $G$-module
\[
V_v\ulHM\;:=\;T_v\ulHM\otimes_{A_v}Q_v\,.
\]
\end{Definition}

It follows from \cite[Proposition~6.1]{TW} that $T_v\ulHM$ is a free $A_v$-module of the same rank than $\ulHM$ and that the natural morphism
\[
T_v\ulHM\otimes_{A_v}A_{v,L^\sep}\isoto \ulHM\otimes_{A_{v,L}}A_{v,L^\sep}
\]
is a $G$- and $\phi$-equivariant isomorphism of $A_{v,L^\sep}$-modules, where on the left module $G$-acts on both factors and $\phi$ is $\id\otimes\s$. Since $(L^\sep)^{G}=L$ we obtain

\begin{Proposition}\label{Prop2.13'}
Let $\ulHM$ and $\ulHM{}'$ be \emph{\'etale} local $\sigma$-shtukas at $v$ over $L$. Then
\begin{suchthat}
\item 
$\DS\ulHM\;=\;(T_v\ulHM\otimes_{A_v}A_{v,L^\sep})^{G}$, the Galois invariants,
\item 
$\DS\Hom_{A_{v,L}[\phi]}(\ulHM,\ulHM{}')\;\isoto\;\Hom_{A_v[G]}(T_v\ulHM,T_v\ulHM{}')\;,\es f\mapsto T_vf$ is an isomorphism.
\end{suchthat}
In particular the Tate module functor yields a fully faithful embedding of the category of \'etale local shtukas at $v$ over $L$ into the category of $A_v[G]$-modules, which are finite free over $A_v$. \qed
\end{Proposition}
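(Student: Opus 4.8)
The plan is to derive both statements of the proposition purely formally from the $G$- and $\phi$-equivariant isomorphism $T_v\ulHM\otimes_{A_v}A_{v,L^\sep}\isoto\ulHM\otimes_{A_{v,L}}A_{v,L^\sep}$ recalled just above from \cite[Proposition~6.1]{TW}; in other words, the proposition is a Galois descent statement. Write $G=\Gal(L^\sep/L)$. The only ring-theoretic input needed is the pair of facts that $A_{v,L}\to A_{v,L^\sep}$ is injective and that $(A_{v,L^\sep})^G=A_{v,L}$. Both follow from the description $A_{v,L}=\varprojlim_n\bigl((A_v/v^n)\otimes_\Fq L\bigr)$: for every $n$ the $\Fq$-algebra $A_v/v^n$ is finite free (the residue field of $v$ is finite over $\Fq$, and $\BF_\infty\cong\Fq$), so $(A_v/v^n)\otimes_\Fq-$ preserves the injection $L\hookrightarrow L^\sep$ and commutes with the formation of $G$-invariants; since $(-)^G$ and injections also pass through the projective limit, one gets injectivity of $A_{v,L}\to A_{v,L^\sep}$ and $(A_{v,L^\sep})^G=\varprojlim_n\bigl((A_v/v^n)\otimes_\Fq(L^\sep)^G\bigr)=\varprojlim_n\bigl((A_v/v^n)\otimes_\Fq L\bigr)=A_{v,L}$ by Galois theory.

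For part 1, I would choose an $A_{v,L}$-basis of $\ulHM$; then $\ulHM\otimes_{A_{v,L}}A_{v,L^\sep}$ is, as a $G$-module, a finite direct sum of copies of $A_{v,L^\sep}$ on which $G$ acts through its action on $L^\sep$, so its $G$-invariants are the image of $\ulHM$ under $m\mapsto m\otimes 1$, and the Frobenius induced on this submodule is the original $\phi$ of $\ulHM$. Applying $(-)^G$ to the quoted equivariant isomorphism then yields the identification $(T_v\ulHM\otimes_{A_v}A_{v,L^\sep})^G=\ulHM$ of local $\sigma$-shtukas asserted in (a).

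For part 2, given $f\in\Hom_{A_{v,L}[\phi]}(\ulHM,\ulHM{}')$ the base change $f\otimes\id_{A_{v,L^\sep}}$ is $G$-equivariant and $\phi$-equivariant, hence restricts on $\phi$-invariants to an $A_v[G]$-linear map $T_vf\colon T_v\ulHM\to T_v\ulHM{}'$; this is the map $f\mapsto T_vf$. It is injective, because $T_vf=0$ forces $f\otimes\id_{A_{v,L^\sep}}=0$ through the quoted isomorphism, and restricting to the submodule $\ulHM\subset\ulHM\otimes_{A_{v,L}}A_{v,L^\sep}$ gives $f=0$ by the injectivity of $\ulHM{}'\hookrightarrow\ulHM{}'\otimes_{A_{v,L}}A_{v,L^\sep}$ established above. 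It is surjective, because given $g\in\Hom_{A_v[G]}(T_v\ulHM,T_v\ulHM{}')$ the base change $g\otimes\id_{A_{v,L^\sep}}$ is $G$-equivariant for the diagonal action and $\phi$-equivariant (as $\phi$ acts as the identity on each Tate module), so transporting it along the quoted isomorphisms for $\ulHM$ and $\ulHM{}'$ produces a $G$- and $\phi$-equivariant map $\ulHM\otimes_{A_{v,L}}A_{v,L^\sep}\to\ulHM{}'\otimes_{A_{v,L}}A_{v,L^\sep}$; taking $G$-invariants and invoking part 1 produces $f\in\Hom_{A_{v,L}[\phi]}(\ulHM,\ulHM{}')$ with $T_vf=g$. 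The final assertion is then immediate: $\ulHM\mapsto T_v\ulHM$ is a functor, fully faithful by part 2, with values in $A_v[G]$-modules that are finite free over $A_v$ by the freeness statement cited before the proposition.

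I expect no genuine difficulty: the whole argument is a corollary of \cite[Proposition~6.1]{TW}. The one place where care is required — and where one could slip — is the elementary descent computation $(A_{v,L^\sep})^G=A_{v,L}$ together with the injectivity of $A_{v,L}\to A_{v,L^\sep}$, since $L^\sep/L$ is in general infinite and one must commute the Galois invariants (and the injection) past the $v$-adic completion defining $A_{v,L}$; this is legitimate precisely because each $A_v/v^n$ is finite over $\Fq$. Everything after that step is bookkeeping with the descent isomorphism and its $G$- and $\phi$-equivariance.
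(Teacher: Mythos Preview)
Your proposal is correct and follows exactly the approach the paper indicates: the proposition is stated with a \qed\ symbol and no further proof, the paper merely recording in the sentence before it that the result is obtained from the $G$- and $\phi$-equivariant isomorphism of \cite[Proposition~6.1]{TW} together with $(L^\sep)^G=L$. Your write-up simply supplies the details of this Galois descent, including the passage from $(L^\sep)^G=L$ to $(A_{v,L^\sep})^G=A_{v,L}$, which is the one point where any care is needed and which you handle correctly.
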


\bigskip

If the residue field $\BF_v$ of $v$ is larger than $\Fq$ one has to be a bit careful with local (iso-)shtukas since the ring $Q_{v,L}$ is then in general not a field. Namely let $\#\BF_v=q^n$ and let $\BF_{q^f}:=\{\alpha\in L:\alpha^{q^n}=\alpha\}$ be the ``intersection'' of $\BF_v$ with $L$. Choose and fix an $\BF_q$-homomorphism $\BF_{q^f}\hookrightarrow\BF_v\subset A_v$. Then
\[
\BF_v\otimes_\Fq L = \prod_{\Gal(\BF_{q^f}/\Fq)}\BF_v\otimes_{\BF_{q^f}}L = 
\prod_{i\in\Z/f\Z}\BF_v\otimes_\Fq L\,/\,(b\otimes 1-1\otimes b^{q^i}:b\in \BF_{q^f})
\]
and $\s$ transports the $i$-th factor to the $(i+1)$-th factor. Denote by $\Fa_i$ the ideal of $A_{v,L}$ (or $Q_{v,L}$) generated by $\{b\otimes 1-1\otimes b^{q^i}:b\in \BF_{q^f}\}$. Then
\[
A_{v,L} = \prod_{\Gal(\BF_{q^f}/\Fq)}A_v\wh\otimes_{\BF_{q^f}}L = 
\prod_{i\in\Z/f\Z}A_{v,L}\,/\,\Fa_i
\]
and similarly for $Q_{v,L}$.
Note that the factors in this decomposition and the ideals $\Fa_i$ correspond precisely to the places $v_i$ of $C_{\BF_{q^f}}$ lying above $v$.

\begin{Proposition}\label{PropLS3}
Fix an $i$. The reduction modulo $\Fa_i$ induces equivalences of categories
\begin{suchthat}
\item
$\DS (\hat N,\phi)\longmapsto \bigl(\hat N/\Fa_i\hat N\,,\es\phi^f\mod\Fa_i:(\s)^f \hat N/\Fa_i\hat N \to \hat N/\Fa_i\hat N\bigr)$\\[2mm]
between local $\sigma$-isoshtukas at $v$ over $L$ and local $\sigma^f$-isoshtukas at $v_i$ over $L$ of the same rank.
\item 
\vspace{2mm}
$\DS (\hat M,\phi)\longmapsto \bigl(\hat M/\Fa_i\hat M\,,\es\phi^f\mod\Fa_i:(\s)^f \hat M/\Fa_i\hat M \to \hat M/\Fa_i\hat M\bigr)$\\[2mm]
between\/ \emph{\'etale} local $\sigma$-shtukas at $v$ over $L$ and \emph{\'etale} local $\sigma^f$-shtukas at $v_i$ over $L$ preserving Tate modules
\[
T_v(\hat M,\phi) \isoto T_{v_i}(\hat M/\Fa_i\hat M,\phi^f)\,.
\]
\end{suchthat}
\end{Proposition}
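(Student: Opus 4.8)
The plan is to reduce both statements to a single elementary computation about how $\sigma$ permutes the factors in the decomposition $A_{v,L}=\prod_{i\in\Z/f\Z}A_{v,L}/\Fa_i$ (and likewise for $Q_{v,L}$), together with the observation that $\phi$ sends the $i$-th factor isomorphically onto the $(i+1)$-th. First I would fix the $\BF_q$-embedding $\BF_{q^f}\hookrightarrow A_v$ as in the paragraph preceding the proposition, which identifies the idempotents $e_i\in A_{v,L}$ cutting out the quotients $A_{v,L}/\Fa_i$, and record that $\s(e_i)=e_{i+1}$ for all $i\in\Z/f\Z$. Given $(\hat M,\phi)$, write $\hat M_i:=e_i\hat M=\hat M/\Fa_i\hat M$, so that $\hat M=\bigoplus_i\hat M_i$ and each $\hat M_i$ is free over the complete local ring $A_{v,L}/\Fa_i=A_v\wh\otimes_{\BF_{q^f}}L$, which is the completed local ring of $C_{\BF_{q^f},L}$ at $v_i$. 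Since $\s$ is semilinear and carries $e_i$ to $e_{i+1}$, the map $\phi:\s\hat M\to\hat M$ decomposes into components $\phi_i:(\s\hat M)_{i+1}=\s(\hat M_i)\to\hat M_{i+1}$, and composing the $f$ successive components starting and ending at the $i$-th factor yields $\phi^f\bmod\Fa_i:(\s)^f\hat M_i\to\hat M_i$, which is $\sigma^f$-semilinear over $A_v\wh\otimes_{\BF_{q^f}}L$ — exactly a local $\sigma^f$-(iso)shtuka at $v_i$ over $L$, of the same rank $r$ since each $\hat M_i$ is free of rank $r$.

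For part (1), to see this is an equivalence I would exhibit a quasi-inverse: given a local $\sigma^f$-isoshtuka $(\hat N_0,\psi)$ at $v_i$ over $L$, set $\hat N:=\bigoplus_{j\in\Z/f\Z}(\s)^{j}\hat N_0$ with the obvious identifications $e_{i+j}\hat N=(\s)^j\hat N_0$, and define $\phi$ on the $j$-th summand to be the identity $(\s)^{j+1}\hat N_0\to(\s)^{j+1}\hat N_0$ for $j\ne -1\bmod f$ and to be $\s(\psi):(\s)^f\hat N_0=(\s)(\s)^{f-1}\hat N_0\to\hat N_0$ wrapping around; one checks $\phi$ is an isomorphism of free $Q_{v,L}$-modules precisely when $\psi$ is, and that $\phi^f\bmod\Fa_i=\psi$. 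The fact that morphisms correspond is immediate: an $A_{v,L}[\phi]$-morphism $f=\bigoplus f_i$ is determined by any one component $f_i$ (the others being forced by commuting with $\phi$), and $f$ commutes with $\phi$ iff $f_i$ commutes with $\phi^f\bmod\Fa_i$. Part (2) is the same construction restricted to the étale case: $\phi$ is an isomorphism over $A_{v,L}$ iff each $\phi_i$ is an isomorphism over $A_{v,L}/\Fa_i$ iff $\phi^f\bmod\Fa_i$ is, so the functor preserves étaleness, and the quasi-inverse above preserves freeness over $A_{v,L}$.

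It remains to check that Tate modules are preserved. Using $A_{v,L^\sep}=\prod_{i}A_{v,L^\sep}/\Fa_i$ with $G=\Gal(L^\sep/L)$ acting on each factor and $\s$ cyclically permuting them, an element $m=(m_i)_i\in\bigoplus_i\hat M_i\otimes A_{v,L^\sep}/\Fa_i$ is $\phi$-invariant iff $m_{i+1}=\phi_i(\s m_i)$ for all $i$; iterating, $m$ is $\phi$-invariant iff $m_i$ is $\phi^f$-invariant and the remaining $m_j$ are determined by $m_i$ via the $\phi_j$. This gives a $G$-equivariant (the $\phi_j$ are defined over $L$, hence commute with $G$) bijection $T_v(\hat M,\phi)\xrightarrow{\sim}T_{v_i}(\hat M_i,\phi^f\bmod\Fa_i)$, $m\mapsto m_i$, which is $A_v$-linear since $A_v$ embeds diagonally — note $A_v$ acts the same way on every factor — so it is an isomorphism of $A_v[G]$-modules. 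The main obstacle, and the only point demanding care rather than bookkeeping, is keeping straight which semilinearity acts where: that $\s$ shifts the index by $+1$ while $G$ fixes each index, so that $\phi^f$ genuinely lands back in the same factor and is $\sigma^f$-semilinear there; once the idempotent relation $\s(e_i)=e_{i+1}$ is pinned down everything else is formal.
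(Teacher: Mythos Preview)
Your proposal is correct and follows essentially the same approach as the paper: decompose via the idempotents $e_i$ (equivalently, reduce modulo the $\Fa_i$), observe that $\phi$ shifts the factors cyclically so that $\phi^f$ stabilises each one, build the quasi-inverse by ``unfolding'' $(\hat N_0,\psi)$ into $\bigoplus_j(\s)^j\hat N_0$ with identities on all but one step, and verify the Tate-module bijection from the relation $m_{j+1}=\phi(\s m_j)$. One small slip: in your quasi-inverse the wrap-around component should be $\psi$ itself, not $\s(\psi)$, since it maps $(\s)^f\hat N_0\to\hat N_0$; also, the paper takes care to note that the natural isomorphism between the two composites is given by $\bigoplus_j\phi^j\bmod\Fa_{i+j}$ and therefore does \emph{not} require $\phi\bmod\Fa_i$ to be an isomorphism --- a point irrelevant here (all $\phi$'s are isomorphisms in both the isoshtuka and the \'etale case) but crucial for the later Proposition~\ref{PropLS4} at the characteristic place.
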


\begin{proof}
Since $\s\Fa_{i-1}=\Fa_{i}$ the isomorphism $\phi$ yields for every $i$ an isomorphism $\phi\mod\Fa_i:\s(\hat N/\Fa_{i-1}\hat N)\to \hat N/\Fa_{i}\hat N$ and similarly for $\hat M$. These allow to reconstruct the other factors from $(\hat N/\Fa_i\hat N,\,\phi^f\mod\Fa_i)$. More precisely we describe the quasi-inverse functor. Let $\ulHN'=(\hat N'\,,\,\psi\!:\!(\s)^f\hat N'\isoto\hat N')$ be a local $\sigma^f$-isoshtuka at $v_i$ over $L$. Define the $Q_{v,L}/\Fa_{i+j}$-module $\hat N_{i+j}:=(\s)^j\hat N'$ for $0\le j<f$ and the $Q_{v,L}/\Fa_{i+j}$-homomorphism 
\[
\begin{array}{rcc@{\!\,}cll}
\phi_{i+j}&:=&\id_{\hat N_{i+j}}&:&\s\hat N_{i+j-1}&\isoto\hat N_{i+j} \qquad\qquad\text{for $0<j< f$ and}\\
\phi_i&:=&\psi&:&\s\hat N_{i+f-1}=(\s)^f\hat N'&\isoto\hat N_i\,. 
\end{array}
\]
The quasi-inverse functor sends $\ulHN'$ to the local $\sigma$-isoshtuka $(\bigoplus_{0\le j<f}\hat N_{i+j}\,,\,\bigoplus_{0\le j<f}\phi_{i+j})$ at $v$ over $L$. Reducing the latter modulo $\Fa_i$ clearly gives back $\ulHN'$. Also note that this quasi-inverse functor sends a morphism $h'$ of local $\sigma^f$-isoshtukas at $v_i$ to the morphism $h:=\bigoplus_{0\le i<f}(\s)^j(h')$ of the corresponding $\sigma$-isoshtukas at $v$.

It remains to show that this functor is indeed quasi-inverse to the reduction modulo $\Fa_i$ functor. For this we need that $\phi\mod\Fa_{i+j}$ is an isomorphism for all $0<j<f$. Namely the required isomorphism
\[
\Bigl(\bigoplus_{0\le j<f}(\s)^j(\hat N/\Fa_i\hat N)\,,\,(\phi^f\mod\Fa_i)\oplus\bigoplus_{0<j<f}\id\Bigr)\;\isoto\;\Bigl(\bigoplus_{0\le j<f}\hat N/\Fa_{i+j}\hat N\,,\,\bigoplus_{0\le j<f}\phi\mod\Fa_{i+j}\Bigr)\;=\;(\hat N,\phi)
\]
is given by $\bigoplus_{0\le j<f}\phi^j\mod\Fa_{i+j}$ and one easily checks that this is a natural transformation. Note that the entry for $j=0$ is $\id_{\hat N/\Fa_i\hat N}$. So we \emph{do not need} that $\phi\mod\Fa_i$ is an isomorphism. Also if $\phi\mod\Fa_i$ is not an isomorphism then $\phi^j\mod\Fa_{i+l}$ is still an isomorphism for $l=j$, but not for $0\le l<j$ which is harmless. This will be crucial in the variant which we prove in Proposition~\ref{PropLS4} below.

For \'etale local shtuka we can use the same argument because there again all Frobenius maps are isomorphism. Finally, the isomorphism between the Tate modules follows from the observation that an element $(x_j)_{j\in\Z/f\Z}$ is $\phi$-invariant if and only if $x_{j+1}=\phi(\s x_j)$ for all $j$ and $x_i=\phi^f((\s)^f x_i)$.
\end{proof}

{\it Remark.} \es The advantage of the (\'etale) local $\sigma^f$-(iso-)shtuka at $v_i$ is that it is a free module over $A_{v,L}/\Fa_i=A_v\wh\otimes_{\BF_{q^f}}L$, and the later ring is an integral domain. So the results from \cite{Anderson2,Hl,HartlPSp, Kim, Laumon} apply.

\bigskip

Now let $\FF$ be an abelian $\tau$-sheaf and $v\in C$ an arbitrary place of $Q$. We define the \emph{local $\sigma$-isoshtuka of $\FF$ at $v$} as 
\[
\ulN_v(\FF)\;:=\;\Bigl(\F_0\otimes_{\O_{C_L}}Q_{v,L}\,,\,\P_0^{-1}\circ\t_0\Bigr)\,.
\]
If $v\ne\infty$ we define the \emph{local $\sigma$-shtuka of $\FF$ at $v$} as
\[
\ulM_v(\FF)\;:=\;\Bigl(\F_0\otimes_{\O_{C_L}}A_{v,L}\,,\,\P_0^{-1}\circ\t_0\Bigr)\,.
\]
Likewise if $\ulM$ is a pure $A$-motive over $L$ and $v\in \Spec A$ we define the \emph{local $\sigma$-(iso-)shtuka of $\ulM$ at $v$} as
\[
\ulM_v(\ulM)\;:=\;\ulM\otimes_{A_L}A_{v,L} \qquad\text{respectively}\qquad \ulN_v(\ulM)\;:=\;\ulM\otimes_{A_L}Q_{v,L}\,.
\]
These local (iso-)shtukas all have rank $r$. The local shtukas are \'etale if $v\ne\chr$. For $v=\infty$ we also define $\ulN_\infty(\ulM)$ in the same way. Note that $\ulN_\infty(\FF)$ and $\ulN_\infty(\ulM)$ do not contain a local $\sigma$-shtuka since they are isoclinic of slope $-\weight(\FF)<0$. 

However, if $v=\infty$ the periodicity condition allows to define a different local (iso-)shtuka at $\infty$ which is of slope $\ge0$. Namely, choose a finite closed subscheme $D\subset C$ as in Section~\ref{SectRelation} with $\infty\notin D$ and set $\TA=\Gamma(C\setminus D,\O_C)$. We define the \emph{big local $\sigma$-shtuka of $\FF$ at $\infty$} as 
\[
\ulTM_\infty(\FF)\;:=\;\ulM^{(D)}(\FF)\otimes_{\TA\otimes_\Fq L}A_{v,L}\;=\;\bigoplus_{i=0}^{l-1}\F_i\otimes_{\O_{C_L}}A_{v,L}
\]
with $\tau$ from (\ref{EQ.Tau}), and the \emph{big local $\sigma$-isoshtuka of $\FF$ at $\infty$} as
\[
\ulTN_\infty(\FF)\;:=\;\ulTM_\infty(\FF)\otimes_{A_{\infty,L}} Q_{\infty,L}\,.
\]
Both have rank $rl$ and depend on the choice of $k,l$ and $z$. If the characteristic is different from $\infty$ then $\ulTM_\infty(\FF)$ is \'etale. Note that $\ulTM_\infty(\FF)$ and $\ulTN_\infty(\FF)$ were used in \cite{Hl} to construct the uniformization at $\infty$ of the moduli spaces of abelian $\tau$-sheaves.

The big local (iso-)shtukas at $\infty$, $\ulTM_\infty(\FF)$ and $\ulTN_\infty(\FF)$ are always equipped with the automorphisms $\P$ and $\Lambda(\lambda)$ for $\lambda\in \Ff_{q^l}\cap L$ from (\ref{EQ.Pi+Lambda}). We let $\Delta_\infty$ be ``the'' central division algebra over $Q_\infty$ of rank $l$ with Hasse invariant $-\frac{k}{l}$, or explicitly
\begin{equation}\label{EqDelta}
\Delta_\infty \,:=\, \Ff_{q^l}\dpl z\dpr\II{\P}\,/\,(\P^l-z^k,\, \lambda z-z\lambda,\, \P\lambda^q-\lambda\P\text{ for all }\lambda\in \Ff_{q^l})\,.
\end{equation}
If $\Ff_{q^l}\subset L$ we identify $\Delta_\infty$ with a subalgebra of $\End_{Q_{\infty,L}[\phi]}\bigl(\ulTN_\infty(\FF)\bigr)$ by mapping $\lambda\in\BF_{q^l}\subset\Delta_\infty$ to $\Lambda(\lambda)$.

\medskip

\forget{
Now let $\FF$ be an abelian $\tau$-sheaf and $v\in C$ an arbitrary place of $Q$. Choose a finite closed subscheme $D\subset C$ as in section~\ref{SectRelation} and set $\TA=\Gamma(C\setminus D,\O_C)$. If $v\ne\infty$ we choose $D=\{\infty\}$, then $\ulM^{(D)}(\FF)$ is the pure $A$-motive associated with $\FF$. We define the \emph{local $\sigma$-shtuka of $\FF$ at $v$} as 
\[
\ulM_v(\FF)\;:=\;\ulM^{(D)}(\FF)\otimes_{\TA\otimes_\Fq L}A_{v,L}
\]
and the \emph{local $\sigma$-isoshtuka of $\FF$ at $v$} as
\[
\ulN_v(\FF)\;:=\;\ulM^{(D)}(\FF)\otimes_{\TA\otimes_\Fq L} Q_{v,L}\,.
\]
Likewise if $\ulM$ is a pure $A$-motive over $L$ and $v\in \Spec A$ we define the \emph{local $\sigma$-(iso-)shtuka of $\ulM$ at $v$} as
\[
\ulM_v(\ulM)\;:=\;\ulM\otimes_{A_L}A_{v,L} \qquad\text{respectively}\qquad \ulN_v(\ulM)\;:=\;\ulM\otimes_{A_L}Q_{v,L}\,.
\]
These local shtukas are \'etale if $v\ne\chr$, even if $v=\infty$. In the later case we will also need another local isoshtuka, namely
\[
\ulTN_\infty(\FF)\;:=\;\bigl(\F_0\otimes_{\O_{C_L}}Q_{\infty,L}\,,\,\P_0^{-1}\circ\t_0\bigr)
\]
for an abelian $\tau$-sheaf $\FF$ and
\[
\ulTN_\infty(\ulM)\;:=\;\ulM\otimes_{A_L}Q_{\infty,L}
\]
for a pure $A$-motive $\ulM$. In contrast to $\ulN_\infty(\FF)$ and $\ulN_\infty(\ulM)$ the later do not contain \'etale local shtuka (except for $\ulTN_\infty(\FF)$ when $\chr=\infty$).

\medskip
}

\begin{Theorem}\label{ThmLS1}
Let $\FF$ and $\FF'$ be abelian $\tau$-sheaves of the same weight over a \emph{finite} field $L$ and let $v$ be an arbitrary place of $Q$. 
\begin{suchthat}
\item 
Then there is a canonical isomorphism of $Q_v$-vector spaces
\[
\QHom(\FF,\FF')\otimes_Q Q_v \isoto \Hom_{Q_{v,L}[\phi]}\bigl(\ulN_v(\FF),\ulN_v(\FF')\bigr)\,.
\]
\item 
If $v=\infty$ choose an $l$ which satisfies condition 2 of \ref{Def1.1} for both $\FF$ and $\FF'$ and assume $\BF_{q^l}\subset L$. Then there is a canonical isomorphism of $Q_\infty$-vector spaces
\[
\QHom(\FF,\FF')\otimes_Q Q_\infty \isoto \Hom_{\Delta_\infty\wh\otimes_\Fq L[\phi]}\bigl(\ulTN_\infty(\FF),\ulTN_\infty(\FF')\bigr)\,.
\]
\end{suchthat}
\end{Theorem}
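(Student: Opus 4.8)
The plan is to obtain both assertions from a single mechanism: express $\QHom(\FF,\FF')$ as the kernel of a $Q$-linear map between coherent modules, then apply the exact functor $-\otimes_Q Q_v$, exploiting that for a \emph{finite} field $L$ one has $Q\otimes_\Fq L\otimes_Q Q_v=Q_v\otimes_\Fq L=Q_{v,L}$, with no completion needed since $L/\Fq$ is finite. The substantive input is already supplied: by Proposition~\ref{PropAltDescrQHom} for arbitrary $v$, and by Proposition~\ref{CONNECTION}(2) for $v=\infty$.

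For (1) I would invoke Proposition~\ref{PropAltDescrQHom}, which (since $\FF,\FF'$ have the same weight and necessarily the same characteristic) presents $\QHom(\FF,\FF')$ as the kernel of
\[
\alpha:\ \Hom_{Q_L}(\F_{0,\eta},\F'_{0,\eta})\longrightarrow\Hom_{Q_L}(\sigma^\ast\F_{0,\eta},\F'_{0,\eta}),\qquad f\longmapsto(\P'_{0,\eta})^{-1}\circ\t'_{0,\eta}\circ\sigma^\ast f-f\circ\P_{0,\eta}^{-1}\circ\t_{0,\eta},
\]
where $\eta$ is the generic point of $C_L$ with residue field $Q_L=Q\otimes_\Fq L$; this $\alpha$ is $Q$-linear because $\sigma$ fixes $Q$. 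I would then tensor the left-exact sequence $0\to\QHom(\FF,\FF')\to\Hom_{Q_L}(\F_{0,\eta},\F'_{0,\eta})\xrightarrow{\alpha}\Hom_{Q_L}(\sigma^\ast\F_{0,\eta},\F'_{0,\eta})$ with the flat $Q$-algebra $Q_v$. Using that $\F_{0,\eta}$ is finite free over $Q_L$, that $Q_L\otimes_QQ_v=Q_{v,L}$, and that $\F_{0,\eta}\otimes_{Q_L}Q_{v,L}=\F_0\otimes_{\O_{C_L}}Q_{v,L}$ (every nonzero section of $\O_{C_L}$ is already a unit in $Q_L\subseteq Q_{v,L}$), the middle and right terms become $\Hom_{Q_{v,L}}(\ulN_v(\FF),\ulN_v(\FF'))$ and $\Hom_{Q_{v,L}}(\sigma^\ast\ulN_v(\FF),\ulN_v(\FF'))$, and $\alpha\otimes_Q\id$ becomes the corresponding commutator map $\alpha_v$ — this last step needs only the routine compatibility of $\sigma^\ast$, $\P_0$, $\t_0$ with the base change $Q_L\to Q_{v,L}$. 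Taking kernels yields the canonical isomorphism $\QHom(\FF,\FF')\otimes_QQ_v\isoto\Hom_{Q_{v,L}[\phi]}(\ulN_v(\FF),\ulN_v(\FF'))$.

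For (2) I would proceed identically, starting from Proposition~\ref{CONNECTION}(2): for a finite closed $D\subset C$ with $\infty\notin D$ one has $\QHom(\FF,\FF')=\Hom_{\P,\Lambda}(\ulM^{(D)}(\FF),\ulM^{(D)}(\FF'))\otimes_{\TA}Q$, where the $\TA$-module on the right is the kernel, inside $\Hom_{\TA\otimes_\Fq L}(\ulM^{(D)}(\FF),\ulM^{(D)}(\FF'))$, of the conditions of commuting with $\tau$ from \eqref{EQ.Tau} and with $\P$ and $\Lambda(\lambda)$ from \eqref{EQ.Pi+Lambda} — all $\TA$-linear conditions (the $\tau$-one because $\sigma$ fixes $\TA$). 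Since $\ulM^{(D)}(\FF)$ is coherent over $\TA\otimes_\Fq L$ and $Q_\infty$ is flat over $\TA$ (as $\infty\in\Spec\TA$), tensoring this kernel with $Q_\infty$ and using $(\TA\otimes_\Fq L)\otimes_{\TA}Q_\infty=Q_\infty\otimes_\Fq L=Q_{\infty,L}$ identifies $\Hom_{\P,\Lambda}(\ulM^{(D)}(\FF),\ulM^{(D)}(\FF'))\otimes_{\TA}Q_\infty$ with the space of $Q_{\infty,L}$-linear, $\phi$-, $\P$- and $\Lambda$-equivariant maps $\ulTN_\infty(\FF)\to\ulTN_\infty(\FF')$; by the relations $\P^l=z^k$, $\P\Lambda(\lambda^q)=\Lambda(\lambda)\P$ and the presentation \eqref{EqDelta}, commuting with $\P$ and all $\Lambda(\lambda)$ is precisely $\Delta_\infty\wh\otimes_\Fq L$-linearity, so this space is $\Hom_{\Delta_\infty\wh\otimes_\Fq L[\phi]}(\ulTN_\infty(\FF),\ulTN_\infty(\FF'))$. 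Combining with $\QHom(\FF,\FF')\otimes_QQ_\infty=\Hom_{\P,\Lambda}(\ulM^{(D)}(\FF),\ulM^{(D)}(\FF'))\otimes_{\TA}Q_\infty$ gives (2). (Alternatively one can deduce (2) from (1) at $v=\infty$ by the $\Lambda$-eigenspace block-diagonalization already used in the proof of Proposition~\ref{CONNECTION}(2).)

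The crux — and the only step genuinely using that $L$ is finite — is the identity $Q_L\otimes_QQ_v=Q_{v,L}$ together with the verification that under it the generic-fibre Frobenius $\P_{0,\eta}^{-1}\t_{0,\eta}$ matches the $\phi$ of $\ulN_v(\FF)$; over an infinite $L$ the ring $Q_{v,L}$ is strictly larger than $Q_v\otimes_\Fq L$, the comparison map loses surjectivity, and the statement fails. Everything else is bookkeeping: the $\TA$-linearity of the commutation conditions, coherence of $\ulM^{(D)}(\FF)$, and, in (2), matching ``commutes with $\P$ and all $\Lambda(\lambda)$'' with ``$\Delta_\infty\wh\otimes_\Fq L$-linear'' (noting that here $Q_{\infty,L}$ is still a field, since $\BF_\infty=\Fq$).
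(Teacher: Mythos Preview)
Your proposal is correct and follows essentially the same route as the paper: invoke Proposition~\ref{PropAltDescrQHom} to describe $\QHom(\FF,\FF')$ as the kernel of a $Q$-linear condition on $\Hom_{Q_L}(\F_{0,\eta},\F'_{0,\eta})$, tensor with the flat $Q$-algebra $Q_v$, and use the key identity $Q_v\otimes_\Fq L=Q_{v,L}$ for finite $L$; for part~(2), combine the same mechanism with Proposition~\ref{CONNECTION}(2) and observe that the $\P,\Lambda$-commutation conditions are $\TA$-linear and cut out matching subspaces on both sides. The only cosmetic difference is that the paper handles~(2) by first establishing the unrestricted isomorphism $\Hom\bigl(\ulM^{(D)}(\FF),\ulM^{(D)}(\FF')\bigr)\otimes_{\TA}Q_\infty\cong\Hom_{Q_{\infty,L}[\phi]}\bigl(\ulTN_\infty(\FF),\ulTN_\infty(\FF')\bigr)$ and then imposing the $\P,\Lambda$-conditions, whereas you bundle these conditions in from the start; you yourself note this alternative.
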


\begin{proof}
1. Since in the notation of proposition~\ref{PropAltDescrQHom} the condition
\begin{equation}
\label{EqLS1}
f_{0,\eta}\circ\P_{0,\eta}^{-1}\circ\t_{0,\eta}-(\P'_{0,\eta})^{-1}\circ\t'_{0,\eta}\circ\s(f_{0,\eta})\;=\;0  
\end{equation}
is $Q$-linear in $f_{0,\eta}$ we see that the left hand side of the asserted isomorphy is
\[
\{\,f_{0,\eta}:\F_{0,\eta}\otimes_Q Q_v \to \F'_{0,\eta}\otimes_Q Q_v \,|\es f_{0,\eta}\text{ satisfies (\ref{EqLS1})}\,\}\,.
\]
Since $L/\Fq$ is finite, $Q_v\otimes_\Fq L=Q_{v,L}$ and $\F_{0,\eta}\otimes_Q Q_v$ equals $\ulN_v(\FF)$, and 1 is proved.

\smallskip
\noindent
2. Consider the isomorphism 
\[
\Hom\bigl(\ulM^{(D)}(\FF),\ulM^{(D)}(\FF')\bigr)\otimes_{\TA} Q_\infty\es\cong\es\Hom_{Q_{\infty,L}[\phi]}\bigl(\ulTN_\infty(\FF),\ulTN_\infty(\FF')\bigr)
\]
whose existence is proved as in 1. Now 2 follows by applying \ref{CONNECTION} and noting that the commutation with $\P$ and $\Lambda(\lambda)$ cuts out linear subspaces on both sides which become isomorphic.
\end{proof}

\begin{Theorem}\label{ThmLS2}
Let $\ulM$ and $\ulM'$ be pure $A$-motives over a finite field $L$ and let $v\in\Spec A$ be an arbitrary maximal ideal. Then
\[
\Hom(\ulM,\ulM')\otimes_A A_v \isoto \Hom_{A_{v,L}[\phi]}(\ulM_v(\ulM),\ulM_v(\ulM'))\,.
\]
\end{Theorem}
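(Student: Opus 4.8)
The statement is essentially flat base change along $A\to A_v$, and the finiteness of $L$ over $\Fq$ is exactly what makes this work. The first point I would record is that $A_{v,L}=A_L\otimes_A A_v$: since $L/\Fq$ is finite, $A_L=A\otimes_\Fq L$ is a finite free $A$-module, so after the base change $\otimes_A A_v$ it is already $v$-adically complete and the completion defining $A_{v,L}$ introduces nothing new; concretely $A_{v,L}=A_v\otimes_\Fq L=A_L\otimes_A A_v$. Moreover this is $A_L$-flat, because $A_v$ is the completion of the Noetherian ring $A$ at $v$, hence $A$-flat.

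Next I would realize both sides as kernels of parallel maps. Writing $\ulM=(M,\tau)$ and $\ulM'=(M',\tau')$, Definition~\ref{Def1'.2} identifies $\Hom(\ulM,\ulM')$ with $\ker\psi$, where
\[
\psi\colon\Hom_{A_L}(M,M')\longrightarrow\Hom_{A_L}(\s M,M'),\qquad f\longmapsto f\circ\tau-\tau'\circ\s(f);
\]
this $\psi$ is $A$-linear (though not $A_L$-linear) since $\sigma$ fixes $A\subseteq A_L$. Likewise, setting $\hat M=M\otimes_{A_L}A_{v,L}$, $\hat\tau=\tau\otimes 1$, and similarly for $M'$, Definition~\ref{DefLS1} identifies $\Hom_{A_{v,L}[\phi]}(\ulM_v(\ulM),\ulM_v(\ulM'))$ with $\ker\hat\psi$ for the analogous $A_v$-linear map $\hat\psi\colon\Hom_{A_{v,L}}(\hat M,\hat M')\to\Hom_{A_{v,L}}(\s\hat M,\hat M')$.

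Finally I would invoke flat base change. Since $M$ and $\s M$ are finitely presented $A_L$-modules and $A_{v,L}$ is $A_L$-flat, there are canonical isomorphisms $\Hom_{A_L}(M,M')\otimes_A A_v\cong\Hom_{A_{v,L}}(\hat M,\hat M')$ and $\Hom_{A_L}(\s M,M')\otimes_A A_v\cong\Hom_{A_{v,L}}(\s\hat M,\hat M')$; here one uses the routine identity $(\s M)\otimes_{A_L}A_{v,L}=\s(\hat M)$, which is associativity of the tensor product together with the fact that $\sigma$ on $A_{v,L}$ restricts to $\sigma$ on $A_L$. Under these isomorphisms $\psi\otimes_A\id_{A_v}$ becomes $\hat\psi$, and because $A_v$ is $A$-flat the functor $-\otimes_A A_v$ commutes with kernels, so
\[
\Hom(\ulM,\ulM')\otimes_A A_v=\ker(\psi)\otimes_A A_v=\ker(\psi\otimes_A\id_{A_v})=\ker(\hat\psi)=\Hom_{A_{v,L}[\phi]}(\ulM_v(\ulM),\ulM_v(\ulM')).
\]
Tracing through the identifications shows this coincides with the natural comparison map, which finishes the argument.

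I do not expect a genuine obstacle; the only delicate point is the very first one — the identification $A_{v,L}=A_L\otimes_A A_v$ — which truly uses that $L$ is finite (for infinite $L$ the completion is strictly larger and $-\otimes_A A_v$ no longer computes the correct base change). This is presumably why the theorem is stated over a finite field; note that its companion \ref{ThmLS1}, phrased for abelian $\tau$-sheaves and also allowing the place $v=\infty$, needs the more elaborate input of \ref{PropAltDescrQHom}. A secondary thing to check carefully is the bookkeeping that $\s$ is compatible with the base change and that $\psi$ indeed lands in $\Hom_{A_L}(\s M,M')$, but these are formal.
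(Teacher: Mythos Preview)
Your argument is correct and is essentially the same as the paper's. The paper's own proof is the single sentence ``The argument of the previous theorem also works here since $A_v$ is flat over $A$'', and unwinding that argument from \ref{ThmLS1} gives exactly your three steps: the commutation condition is $A$-linear, over a finite field $A_v\otimes_\Fq L=A_{v,L}$, and flat base change along $A\to A_v$ preserves the kernel; your write-up simply makes these points explicit.
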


\begin{proof}
The argument of the previous theorem also works here since $A_v$ is flat over $A$.
\end{proof}

\noindent
{\it Remark.}
If one restricts to places $v\ne\chr$, where the local $\sigma$-shtuka is \'etale, Theorems~\ref{ThmLS1} and \ref{ThmLS2} even hold for finitely generated fields. This was shown by Tamagawa~\cite{Tam}; see also Corollary~\ref{Cor2.17'} below.

\medskip

Let now the characteristic be finite and $v=\chr$ be the characteristic point. Consider an abelian $\tau$-sheaf $\FF$ of characteristic $c$, its local $\sigma$-shtuka $\ulM_\chr(\FF)=(\hat M,\phi)$ at $\chr$ and the decomposition of the later described before proposition~\ref{PropLS3}
\[
\ulM_\chr(\FF)=\prod_{i\in\Z/f\Z}\ulM_\chr(\FF)/\Fa_i\ulM_\chr(\FF)\,.
\]
From the morphism $c:\Spec L\to\Spec\BF_\chr\subset C$ we obtain a canonical $\BF_q$-homomorphism $c^\ast:\BF_\chr\hookrightarrow L$, $f=[\BF_\chr:\Fq]$ and a distinguished place $v_0$ of $C_{\BF_\chr}$ above $v=\chr$, namely the image of $c\times c:\Spec L\to C\times\BF_\chr$. Then $\coker\phi$ on $\ulM_\chr(\FF)$ is annihilated by a power of $\Fa_0$ and therefore $\phi$ has no cokernel on $\ulM_\chr(\FF)/\Fa_i\ulM_\chr(\FF)$ for $i\ne0$ and the proof of proposition~\ref{PropLS3} yields

\begin{Proposition}\label{PropLS4}
The reduction modulo $\Fa_0$ 
\[
\ulM_\chr(\FF)\longmapsto \bigl(\ulM_\chr(\FF)/\Fa_0\ulM_\chr(\FF)\,,\,\phi^f\bigr)
\]
induces an equivalence of categories between the category of local
$\sigma$-shtukas at $\chr$ associated with abelian $\tau$-sheaves of
characteristic $c$ and the category of local $\sigma^f$-shtukas at $v_0$ associated with abelian $\tau$-sheaves of characteristic $c$. 
The same is true for pure $A$-motives. \qed
\end{Proposition}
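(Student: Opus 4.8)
The plan is to re-run the argument used to prove Proposition~\ref{PropLS3}, but now without assuming that $\phi$ is \'etale, and to check that the single place where the \'etale hypothesis entered can be avoided. Recall from that proof that the candidate quasi-inverse to reduction modulo $\Fa_0$ sends a local $\sigma^f$-shtuka $(\hat M',\psi)$ at $v_0$ to $\bigl(\bigoplus_{0\le j<f}(\s)^j\hat M'\,,\,\bigoplus_{0\le j<f}\phi_j\bigr)$ with $\phi_0:=\psi$ and $\phi_j:=\id$ for $0<j<f$, and a morphism $h'$ to $\bigoplus_{0\le j<f}(\s)^j(h')$; one composite is the identity on the nose, while the other is exhibited by the natural transformation $\bigoplus_{0\le j<f}\phi^j\bmod\Fa_j$, whose $j=0$ component is $\id_{\hat M/\Fa_0\hat M}$ and whose $j$-th component for $0<j<f$ is an isomorphism as soon as $\phi\bmod\Fa_1,\dots,\phi\bmod\Fa_j$ all are. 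So the construction goes through provided $\phi\bmod\Fa_i$ is an isomorphism for every $i\ne 0$ — and crucially $\phi\bmod\Fa_0$ itself is never used, which is exactly why the non-\'etale case survives.

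Thus the only thing to verify is the fact recorded just before the statement: for an abelian $\tau$-sheaf $\FF$ of characteristic $c$, with $v=\chr$, the cokernel of $\phi=\P_0^{-1}\circ\t_0$ on $\ulM_\chr(\FF)=\F_0\otimes_{\O_{C_L}}A_{v,L}$ is annihilated by a power of $\Fa_0$. Indeed, condition 4 of Definition~\ref{Def1.1} says that $\coker\t_0$ is supported on $\Graph(c)$, hence so is $\coker\phi$; intersecting with the fibre $v\times\Spec L$ and using the decomposition $A_{v,L}=\prod_{i\in\Z/f\Z}A_{v,L}/\Fa_i$ into the factors corresponding to the places of $C_{\BF_\chr}$ above $\chr$, this support is the single point $v_0=\im(c\times c)$, i.e. the factor cut out by $\Fa_0$, by the very definition of $v_0$. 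Consequently $\phi\bmod\Fa_i$ is an isomorphism for all $i\ne 0$, as needed. For a pure $A$-motive the same reasoning applies word for word, with condition 1 of Definition~\ref{Def1'.1} (which forces $\coker\t$ to be annihilated by $J^d$, hence supported on $\Graph(c)$) in place of condition 4 of Definition~\ref{Def1.1}.

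Granting these two points, the functor and its quasi-inverse are literally those of Proposition~\ref{PropLS3}, and naturality together with compatibility with morphisms is checked exactly as there. To conclude one observes that reduction modulo $\Fa_0$ sends $\ulM_\chr(\FF)$ to the local $\sigma^f$-shtuka attached to $\FF$ at $v_0$ and that the quasi-inverse sends the latter back, so the two essential images — the subcategories ``associated with abelian $\tau$-sheaves of characteristic $c$'' on either side — correspond under the equivalence; the pure $A$-motive case is identical. I expect the only genuine subtlety to be the conceptual step isolated in the first paragraph, namely that the non-invertibility of $\phi\bmod\Fa_0$ is harmless: this rests on $\coker\phi$ being concentrated over the factor $\Fa_0$ together with the $\Fa_0$-slot of the relevant natural transformation being the identity, so that $\phi\bmod\Fa_0$ really never appears.
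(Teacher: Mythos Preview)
Your proposal is correct and follows exactly the paper's approach: the paper's proof of Proposition~\ref{PropLS3} was deliberately written so that the natural transformation $\bigoplus_{0\le j<f}\phi^j\bmod\Fa_{i+j}$ has $\id$ in the $j=0$ slot and otherwise only uses $\phi\bmod\Fa_{i+1},\dots,\phi\bmod\Fa_{i+j}$, and the text immediately preceding Proposition~\ref{PropLS4} records that $\coker\phi$ is annihilated by a power of $\Fa_0$, so the same construction carries over verbatim. Your verification of why $\coker\phi$ is concentrated at $\Fa_0$ and your remark about the essential images matching are both fine elaborations of what the paper leaves implicit.
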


{\it Remark.} \es Now the fixed field of $\sigma^f$ on $L$ equals $\BF_\chr$, the residue field of $A_\chr$. Also $\ulM_\chr(\FF)/\Fa_0\ulM_\chr(\FF)$ is a module over the integral domain $A_\chr\wh\otimes_{\BF_\chr}L$. So again \cite{Anderson2,Hl,HartlPSp, Kim, Laumon} apply to $\bigl(\ulM_\chr(\FF)/\Fa_0\ulM_\chr(\FF),\phi^f\bigr)$.

\forget{
\begin{Proposition}\label{Prop2.18b}
Let $\ulM$ be a pure $A$-motive over $L$ and let $\ulHM{}'_\chr$ be a local $\sigma^f$-subshtuka of $\ulM_\chr(\ulM)/\Fa_0\ulM_\chr(\ulM)$ of the same rank. Then there is a pure $A$-motive $\ulM'$ and an isogeny $f:\ulM'\to\ulM$ with $\ulM_\chr(f)\bigl(\ulM_\chr(\ulM')/\Fa_0\ulM_\chr(\ulM')\bigr)=\ulHM{}'_\chr$. The same is true for abelian $\tau$-sheaves.
\end{Proposition}

\begin{proof}
Extend $\ulHM{}'_\chr$ to the local $\sigma$-subshtuka $\bigoplus_{i\in\Z/f\Z}\phi^i\bigl((\s)^i\ulHM{}'_\chr\bigr)$ of $\ulM_\chr(\ulM)$ and consider 
\[
\ulK\;:=\;\ulM_\chr(\ulM)\,/\,\bigoplus_{i\in\Z/f\Z}\phi^i\bigl((\s)^i\ulHM{}'_\chr\bigr)\,.
\]
The induced morphism $\phi_K:\s K\to K$ has its kernel and cokernel supported on the graph of $c$. Now Proposition~\ref{Prop1.5b} finishes the proof.
\end{proof}

There is a corresponding result at the places $v\ne\chr$ which is stated in Proposition~\ref{Prop2.7b}.
}

\begin{Theorem}\label{ThmW5.2}
For pure $A$-motives over a finite field, being isogenous via a separable isogeny is an equivalence relation.
\end{Theorem}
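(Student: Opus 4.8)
The plan is to check reflexivity, transitivity and symmetry in turn. Reflexivity is immediate since $\id_\ulM$ is a separable isogeny (its cokernel is zero). For transitivity, given separable isogenies $f\colon\ulM\to\ulM'$ and $f'\colon\ulM'\to\ulM''$, the composite $f'\circ f$ is injective with torsion cokernel, hence an isogeny, and the short exact sequence $0\to\coker f\to\coker(f'\circ f)\to\coker f'\to 0$ of $\tau$-modules shows, via the five lemma applied to the map induced by $\s$, that $\tau$ acts as an isomorphism on $\coker(f'\circ f)$; so $f'\circ f$ is separable. Neither of these steps uses that $L$ is finite.

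The real content is symmetry, and here finiteness enters in an essential way — the naive attempt via the dual isogeny $\dual f$ of Corollary~\ref{Cor1.11b} fails, because the element $a\in A$ with $f\circ\dual f=a\cdot\id$ may lie in the characteristic point, in which case $\coker\dual f$ picks up a nilpotent part. Instead I would use the $q^n$-Frobenius. Write $L=\Ff_{q^n}$; since $(\s)^n=\id$ on $A_L$, for any pure $A$-motive $\ulN$ over $L$ the map $\pi_\ulN:=\tau_\ulN\circ\s(\tau_\ulN)\circ\dots\circ(\s)^{n-1}(\tau_\ulN)\colon N=(\s)^nN\to N$ is an honest $A_L$-module endomorphism of $\ulN$ (a morphism $\ulN\to\ulN$), and for $\rk\ulN>0$ it is an isogeny with $\coker\pi_\ulN\ne0$ (using $\dim\ulN>0$ by Proposition~\ref{Prop1'.1b}), which is purely inseparable because $\tau_\ulN^n$ acts as zero on $\coker\pi_\ulN$, so the induced $\tau$ there is nilpotent. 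Now let $f\colon\ulM\to\ulM'$ be a separable isogeny; we may assume $\rk\ulM=\rk\ulM'=r>0$. Put $K:=\coker f$; then $\tau_K$ is an automorphism of the finite $A_L$-module $K$, so $\tau_K^n$ has finite order, say $\tau_K^{nm}=\id_K$. Since $\pi_{\ulM'}^m=\tau_{\ulM'}^{nm}$ and $\id_{\ulM'}$ both induce $\id_K$ on $\coker f=M'/f(M)$, we get $(\pi_{\ulM'}^m-\id_{\ulM'})(M')\subseteq f(M)$, hence a morphism $g:=f^{-1}\circ(\pi_{\ulM'}^m-\id_{\ulM'})\colon\ulM'\to\ulM$ with $f\circ g=\pi_{\ulM'}^m-\id_{\ulM'}$ (it commutes with $\tau$ because $f$ does and is injective). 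Once one knows $\pi_{\ulM'}^m-\id_{\ulM'}$ is a \emph{separable} isogeny, it follows that $g$ is an isogeny, and that $\coker g$ — a finite $\tau$-submodule of the étale $\tau$-module $\coker(\pi_{\ulM'}^m-\id_{\ulM'})$, via $0\to\coker g\to\coker(f\circ g)\to\coker f\to 0$ — is itself étale over the perfect field $L$ (its $\tau$ is injective between $L$-spaces of equal dimension). So $g$ is the desired separable isogeny $\ulM'\to\ulM$.

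Thus the hard part, and the only point where finiteness of $L$ is used beyond the mere existence of $\pi_\ulN$, is the following sublemma: for a pure $A$-motive $\ulN$ of positive rank over $L=\Ff_{q^n}$ and any $m\ge1$, the endomorphism $h:=\pi_\ulN^m-\id_\ulN$ is a separable isogeny. I would prove this as follows. First $h$ is injective: if $\ker h\ne0$ then $\ulN':=\ker h$ is a pure $A$-motive of positive rank by Proposition~\ref{Prop1.9a} (possibly $\ulN'=\ulN$, if $h=0$), on which $\pi_{\ulN'}^m=\id_{\ulN'}$, contradicting that $\pi_{\ulN'}$ is purely inseparable and hence has $\coker\pi_{\ulN'}\ne0$. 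So $h$ is an isogeny; let $C:=\coker h$, a finite $\tau$-module. As $\pi_\ulN$ commutes with $h$, it induces an endomorphism $\bar\pi$ of $(C,\tau_C)$ with $\bar\pi^m=\id_C$ (because $h$ annihilates $C$), so $\bar\pi$ is an automorphism of $C$. By Lemma~\ref{Lemma1.5d}, since $L$ is perfect, $C$ decomposes canonically as $C^\et\oplus C^\nil$ with $\tau_C$ an isomorphism on $C^\et$ and nilpotent on $C^\nil$; canonicity forces $\bar\pi$ to respect this splitting, and on $C^\nil$ one has $\bar\pi|_{C^\nil}=(\tau_C|_{C^\nil})^n$, which is nilpotent — being also an automorphism, it forces $C^\nil=0$. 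Hence $\tau_C$ is an isomorphism, i.e.\ $h$ is separable, proving the sublemma. (This is the pure-$A$-motive analogue of the fact that the kernel of $F^n{}^m-1$ is étale for an abelian variety over $\Ff_{q^n}$; it genuinely fails over general infinite $L$, in accordance with Example~\ref{Ex8.10}.)
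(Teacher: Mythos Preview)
Your proof is correct, and it follows a genuinely different route from the paper's. The paper, following Waterhouse's argument for abelian varieties, reduces to the case where $\supp(\coker f)$ lies over the characteristic point $\chr$, then uses the decomposition of the local $\sigma$-shtuka $\ulM_\chr(\ulM)=\ulM_\chr(\ulM)^\et\oplus\ulM_\chr(\ulM)^\nil$ together with Theorem~\ref{ThmLS2} (the isomorphism $\Hom(\ulM,\ulM'')\otimes_A A_\chr\cong\Hom_{A_{\chr,L}[\phi]}(\ulM_\chr(\ulM),\ulM_\chr(\ulM''))$ over finite fields) and an openness argument to approximate a local isomorphism by a global separable isogeny. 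Finiteness of $L$ enters precisely through Theorem~\ref{ThmLS2}.

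Your argument avoids local shtukas entirely. You exploit the Frobenius endomorphism $\pi=\tau^n$ directly and prove the key sublemma that $\pi^m-\id$ is always a separable isogeny, by playing off the nilpotence of $\bar\pi=\tau_C^n$ on $C^\nil$ against the fact that $\bar\pi^m=\id_C$. Finiteness of $L$ enters twice: to make $(\s)^n=\id$ so that $\pi$ is a genuine endomorphism, and to make $\coker f$ a finite set so that $\tau_K^n$ has finite order. This is more elementary and self-contained; it is exactly the function-field analogue of the classical fact that $\mathrm{Frob}^m-1$ is \'etale on an abelian variety over a finite field. The paper's approach, on the other hand, showcases the local shtuka machinery developed in Section~\ref{SectLS} and illustrates how Theorem~\ref{ThmLS2} can be used as a substitute for Dieudonn\'e-module arguments.
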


\begin{proof} 
(cf.~\cite[Theorem 5.2]{Wat})
Since the composition of separable isogenies is again separable we only need to prove symmetry. So let $f:\ulM'\to \ulM$ be a separable isogeny. If the support of $\coker f$ does not meet $\chr$ we can find a dual isogeny which is separable by Remark~\ref{Rem1.26'}. In general we write $\coker f=\ulK^\chr\oplus\ulK_\chr$ with $\Spec(A_L/\chr A_L)\cap\supp\ulK^\chr=\emptyset$ and $\supp\ulK_\chr\subset\Spec(A_L/\chr A_L)$. We factor $f$ as $\ulM'\to\ulM''\to\ulM$ with $\ulM'':=\ker(\ulM\shortonto\coker f\shortonto\ulK_\chr)$ according to Proposition~\ref{Prop1.5b}. By the above we may replace $\ulM'$ by $\ulM''$ and are reduced to the case where $\supp(\coker f)\subset\Spec(A_L/\chr A_L)$. There is a power of $\chr$ which is principal $\chr^n=aA$ for $a\in A$ such that $a$ annihilates $\coker f$. Since our base field is perfect, Lemma~\ref{Lemma1.5d} yields a decomposition
\[
\ulM/a\ulM=(\ulM/a\ulM)^\et\oplus(\ulM/a\ulM)^\nil\,.
\]
We let $\ulM'':=\ker\bigl(\ulM\shortonto(\ulM/a\ulM)^\et\bigr)$ and consider the factorization of $a\cdot\id_\ulM$
\[
\ulM\longto\ulM''\xrightarrow{\es h\;}\ulM'\xrightarrow{\es f\;}\ulM
\]
obtained from the natural surjection $(\ulM/a\ulM)^\et\shortonto \coker f$. Clearly $\coker h$ equals the kernel $\ker\bigl((\ulM/a\ulM)^\et\shortonto\coker f\bigr)$ and $h$ is separable. 

Consider the local $\sigma$-shtuka $\ulM_\chr(\ulM)$ at $\chr$ and the associated local $\sigma^f$-shtuka $\ulM_{v_0}(\ulM):=\ulM_\chr(\ulM)/\Fa_0\ulM_\chr(\ulM)$ from Proposition~\ref{PropLS4}. By \cite[Proposition 2.4.6]{Laumon} the later decomposes $\ulM_{v_0}(\ulM)=\ulM_{v_0}(\ulM)^\et\oplus\ulM_{v_0}(\ulM)^\nil$ into an \'etale part $\ulM_{v_0}(\ulM)^\et$ on which $\sigma^f$ is an isomorphism and a nilpotent part $\ulM_{v_0}(\ulM)^\nil$ on which $\sigma^f$ is topologically nilpotent. Via \ref{PropLS4} we obtain the induced decomposition $\ulM_\chr(\ulM)=\ulM_\chr(\ulM)^\et\oplus\ulM_\chr(\ulM)^\nil$ in which again $\phi$ is an isomorphism on $\ulM_\chr(\ulM)^\et$ and topologically nilpotent on $\ulM_\chr(\ulM)^\nil$. By construction $(\ulM/a\ulM)^\et=\ulM_\chr(\ulM)/\ulM_\chr(\ulM'')$ and
$\ulM_\chr(\ulM'')=a\cdot\ulM_\chr(\ulM)^\et\oplus\ulM_\chr(\ulM)^\nil$. The later is isomorphic to $\ulM_\chr(\ulM)$ as $A_{\chr,L}[\phi]$-module, so $\Hom_{A_{\chr,L}[\phi]}\bigl(\ulM_\chr(\ulM)\,,\,\ulM_\chr(\ulM'')\bigr)$ contains an isomorphism. Since the set of isomorphisms is open we find by Theorem~\ref{ThmLS2} an isogeny $g:\ulM\to\ulM''$ with $\ulM_\chr(g)$ an isomorphism (here we use the assumption that the base field is finite). In particular $g$ is separable and $h\circ g:\ulM\to\ulM'$ is the desired separable isogeny.
\end{proof}

\begin{Example} \label{Ex8.10}
We give an example showing that the preceding theorem is false over infinite fields. This parallels the situation for abelian varieties. Let $C=\PP^1_\Fq$, $A=\Fq[t]$, and $L=\Fq(\gamma)$ where $\gamma$ is transcendental over $\Fq$. Set
\[
T\;:=\;\matr{t+1}{\gamma^{-q}}{\es -\gamma^q}{\es t-1} \qquad\text{and}\qquad T'\;:=\;\matr{t+1-\gamma^{1-q}}{\gamma^{-q}t}{\quad\gamma^q-\gamma}{\quad t}
\]
and consider the pure $A$-motives $\ulM=(L[t]^2,\t=T)$ and $\ulM'=(L[t]^2,\t'=T')$ of characteristic $c^\ast:A\to L, t\mapsto 0$. There is a separable isogeny $f:\ulM'\to\ulM$ given by
\[
\xymatrix @R=0pc
{0 \ar[r] & \ulM' \ar[r]^{\matr{t}{0}{\gamma}{1}} & \ulM \ar[r] &\bigl(L, \t=(1-\gamma^{1-q})\bigr) =\coker f \\
& & {x\choose y} \ar@{|->}[r] & **{ !R(0.45) =<8.5pc,0pc>} \objectbox{(x-\gamma y) \mod t\quad.}
}
\]
We claim that $\End(\ulM')=A=\Fq[t]$. From this it will follow that there is no separable isogeny $g:\ulM\to\ulM'$. Indeed, assume there exists a separable $g$. Then $gf\in \End(\ulM')=A$ is also separable. But $gf$ is not an isomorphism on $\ulM'/t\ulM'$ since already $f$ is not injective modulo $t$. Therefore $gf$ is divisible by $t$, which is not separable. This contradicts the separability of $gf$.

It remains to prove the claim $\End(\ulM')=\Fq[t]$. Let $\sum_{i\ge0}\matr{a_i}{c_i}{b_i}{d_i}t^i\in\End(\ulM')$. The commutation with $\t'$ yields the equations
\begin{eqnarray*}
a_{i-1}+(1-\gamma^{1-q})a_i + \gamma^{-q}b_{i-1} & = & a_{i-1}^q+(1-\gamma^{1-q})a_i^q+(\gamma^q-\gamma)c_i^q\,, \\
(\gamma^q-\gamma)a_i+b_{i-1} & = & b_{i-1}^q+(1-\gamma^{1-q})b_i^q+(\gamma^q-\gamma)d_i^q\,, \\
c_{i-1}+(1-\gamma^{1-q})c_i+\gamma^{-q}d_{i-1} & = & \gamma^{-q}a_{i-1}^q+c_{i-1}^q \,,\\
(\gamma^q-\gamma)c_i+d_{i-1} & = & \gamma^{-q} b_{i-1}^q+d_{i-1}^q\,.
\end{eqnarray*}
For $i=0$ one obtains $c_0=0$ and $a_0\in\Fq$. By subtracting the endomorphism $\matr{a_0}{0}{0}{a_0}$ we may assume that $a_0=0$ and hence $b_0=-\gamma d_0$. When $i=1$ we multiply the first equation by $\gamma^q$ and subtract the second to obtain
\[
b_0^q\;=\;(\gamma^q-\gamma)\bigl(a_1+\gamma c_1-\gamma^{-1}b_1-d_1\bigr)^q\,.
\]
Since $\gamma^q-\gamma$ is not a $q$-th power in $L$ we must have $b_0=d_0=0$ and iterating this argument proves the claim.\qed
\end{Example}

% =============================================================================

\bigskip

\section{Tate modules}\label{SectTateModules}

In this section we define Tate modules for pure $A$-motives and abelian $\tau$-sheaves and we prove the standard facts on the finiteness of the $A$-module $\Hom(\ulM,\ulM')$ and its relation with Tate modules by using local (iso-)shtukas. We also state the analog of the Tate conjecture for abelian varieties, which was proved by Taguchi~\cite{Taguchi95b} and Tamagawa~\cite{Tam}.

\begin{Definition}
Let $\ulM$ be a\/ $\t$-module on $\TA$ over $L$ (Definition~\ref{Def1.16}) and let\/ $v\in\Spec\TA$ such that the support of\/ $\coker\t$ does not meet\/ $v$. We set
\[
T_v\ulM \;:=\; \liminv{n\in\N}((M/v^nM)\otimes_L\Lsep)^{\textstyle\t} \qquad\text{and}\qquad V_v\ulM \;:=\; T_v\ulM\otimes_\Av\Qv\,,
\]
where the superscript $(\mbox{...})^{\textstyle\t}$ denotes the $\t$-invariants. We call\/ $T_v\ulM$ (respectively $V_v\ulM$) the \emph{(rational) $v$-adic Tate module of $\ulM$}. 
This definition applies in particular if $\ulM$ is a pure $A$-motive.
\end{Definition}

\noindent {\it Remark.} 
Our functor $T_v$ is covariant. In the literature usually the $A_v$-dual of our $T_v \ulM$ is called the $v$-adic Tate module of $\ulM$. With that convention the Tate module functor is contravariant on $\tau$-modules but covariant on Drinfeld modules and Anderson's abelian $t$-modules~\cite{Anderson} (which both give rise to $\tau$-modules). Similarly the classical Tate module functor on abelian varieties is covariant. We chose our non standard convention here solely to avoid perpetual dualizations. This agrees also with the remark that abelian $\tau$-sheaves behave dually to abelian varieties; see~\ref{RemDualBehaviour}.

\smallskip

Next we make similar definitions for abelian $\tau$-sheaves.

\begin{Definition}
Let $\FF$ be an abelian $\tau$-sheaf and let $v\in C$ be a place different from the characteristic point $\chr$. We choose a finite closed subset $D\subset C$ as in section~\ref{SectRelation} with $v\notin D$ and $\infty\in D$ if $v\ne\infty$ and set
\[
T_v\FF \;:=\; T_v\ulM^{(D)}(\FF)\qquad\text{and}\qquad \VvFF \;:=\; V_v\ulM^{(D)}(\FF)\,.
\]
We call\/ $T_v\FF$ (respectively $\VvFF$) the \emph{(rational) $v$-adic Tate module associated to $\FF$}. It is independent of the particular choice of $D$, but if $v=\infty$ it depends on $k,l$ and $z$.
\end{Definition}

By \cite[Proposition~6.1]{TW}, $T_v\FF$ (and $V_v\FF$) are free $A_v$-modules (respectively $Q_v$-vector spaces) of rank $r$ for $v\ne\infty$ and $rl$ for $v=\infty$, which carry a continuous $G=\Gal(L^\sep/L)$-action.

\smallskip

Also the Tate modules $T_\infty\FF$ and $V_\infty\FF$ are always equipped with the automorphisms $\P$ and $\Lambda(\lambda)$ for $\lambda\in \Ff_{q^l}\cap L$ from (\ref{EQ.Pi+Lambda}). 
And if $\Ff_{q^l}\subset L$ we identify the algebra $\Delta_\infty$ from (\ref{EqDelta}) with a subalgebra of $\End_{Q_\infty}(V_\infty\FF)$ by mapping $\lambda\in\BF_{q^l}\subset\Delta_\infty$ to $\Lambda(\lambda)$.

\medskip

The following is evident from the definitions.

\begin{Proposition}\label{PropLS2b}
If $\FF$ is an abelian $\tau$-sheaf over $L$, respectively $\ulM$ a pure $A$-motive over $L$ and $v\in C$ (respectively $v\in \Spec A$) is a place of $Q$ different from the characteristic point $\chr$, then
\begin{eqnarray*}
T_v\FF=T_v(\ulM_v(\FF)) \quad&\text{and}&\quad V_v\FF=V_v(\ulM_v(\FF)) \quad\text{for }v\ne\infty,\\[2mm]
T_\infty\FF=T_\infty(\ulTM_\infty(\FF)) \quad&\text{and}&\quad V_\infty\FF=V_\infty(\ulTM_\infty(\FF)),\\[2mm]
\text{respectively}\quad T_v\ulM=T_v(\ulM_v(\ulM)) \quad&\text{and}&\quad V_v\ulM=V_v(\ulM_v(\ulM))\,.\qed
\end{eqnarray*}
\end{Proposition}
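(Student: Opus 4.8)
The statement is pure definition-chasing: the claim is that each of $T_v\FF$, $T_\infty\FF$, $T_v\ulM$ agrees with the Tate module (in the sense of Definition~\ref{DefLS1b}) of the corresponding local $\sigma$-shtuka. The plan is to first reformulate the $v$-adic Tate module of a $\t$-module on $\TA$ as the $\t$-invariants of a $v$-adic completion, and then to identify the local shtukas $\ulM_v(\FF)$, $\ulTM_\infty(\FF)$, $\ulM_v(\ulM)$ with exactly such completions.

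First I would establish, for a $\t$-module $\ulM=(M,\t)$ on $\TA$ over $L$ with $v\in\Spec\TA$ disjoint from $\supp\coker\t$, the identity
\[
T_v\ulM\;=\;\bigl(M\otimes_{\TA\otimes_\Fq L}A_{v,L^{\sep}}\bigr)^{\t}\;=\;T_v\bigl(M\otimes_{\TA\otimes_\Fq L}A_{v,L}\,,\,\t\bigr).
\]
This holds because $A_{v,L^{\sep}}=\liminv{n}\,(\TA\otimes_\Fq L^{\sep})/v^n$, because $-\otimes_{\TA\otimes_\Fq L}M$ commutes with this inverse limit ($M$ being finite locally free), because taking $\t$-invariants (a kernel) commutes with inverse limits in general, and because the hypothesis on $v$ makes $\t$ an isomorphism after $v$-adic completion, so that $(M\otimes_{\TA\otimes_\Fq L}A_{v,L},\t)$ is an \emph{étale} local $\sigma$-shtuka at $v$ to which Definition~\ref{DefLS1b} applies. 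Tensoring with $\Qv$ over $\Av$ then gives the corresponding statement for $V_v$.

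Next I would match up the local shtukas occurring in the proposition with such completions. For a pure $A$-motive $\ulM$ and $v\in\Spec A\setminus\{\chr\}$ this is the definition itself: $\TA=A$ and $\ulM_v(\ulM)=\ulM\otimes_{A_L}A_{v,L}$. For an abelian $\t$-sheaf $\FF$ and $v\ne\infty$ I would take, as in the definition of $T_v\FF$, a finite closed $D\subset C$ with $\infty\in D$ and $v\notin D$, so that $\ulM^{(D)}(\FF)=(M,\t)$ with $M=\Gamma(C_L\setminus D,\F_0)$; since the completion of the coherent sheaf $\F_0$ along $v\times\Spec L$ may be computed on the open set $C_L\setminus D$, one gets $\ulM_v(\FF)=\ulM^{(D)}(\FF)\otimes_{\TA\otimes_\Fq L}A_{v,L}$, whence $T_v(\ulM_v(\FF))=T_v\ulM^{(D)}(\FF)=T_v\FF$ by the previous step. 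For $v=\infty$ one instead takes $D$ with $\infty\notin D$, and then $\ulTM_\infty(\FF)=\ulM^{(D)}(\FF)\otimes_{\TA\otimes_\Fq L}A_{\infty,L}$ holds by the very definition of the big local $\sigma$-shtuka at $\infty$, so the same argument gives $T_\infty(\ulTM_\infty(\FF))=T_\infty\FF$. Using that $T_v\FF$ and $T_\infty\FF$ are independent of the admissible choice of $D$, all the asserted equalities follow, and then so do the ones for $V_v$.

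The only non-formal ingredient is the interchange of the inverse limit with tensoring by $M$ and with forming $\t$-invariants; both are standard (finite locally free module; kernels commuting with limits), so there is essentially no obstacle — which is why the statement can honestly be called evident from the definitions.
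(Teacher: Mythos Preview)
Your proposal is correct and is precisely the definition-chase the paper has in mind; the paper itself offers no argument beyond ``evident from the definitions'' and a \qed. You have simply made explicit the two trivial steps (rewriting $T_v$ of a $\t$-module as $\t$-invariants of the $v$-adic completion, and identifying that completion with the relevant local shtuka), which is exactly what underlies the paper's one-line claim.
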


\bigskip

In order to prove the finiteness of $\Hom(\ulM,\ulM')$ we first need the following facts.

\begin{Proposition}\label{PropT.2}
Let $\FF$ and $\FF'$ be abelian $\tau$-sheaves over an arbitrary field $L$ and let $v$ be a place of $Q$ different from $\chr$.
\begin{suchthat}
\item
If $v\ne\infty$ then the natural map is injective
\[
\QHom(\FF,\FF')\otimes_Q Q_v\longto\Hom_{Q_v[G]}(V_v\FF,V_v\FF')\,.
\]
\item 
If $v=\infty$ then the natural map is injective
\[
\QHom(\FF,\FF')\otimes_Q Q_\infty \longto \Hom_{Q_\infty[\P,\Lambda,G]}(V_\infty\FF,V_\infty\FF')\,.
\]
\end{suchthat}
In particular $\QHom(\FF,\FF')$ is a $Q$-vector space of dimension $\le rr'$.
\end{Proposition}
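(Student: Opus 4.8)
The plan is to establish the injectivity statements in parts 1 and 2, and then deduce the dimension bound. For part 1, fix a place $v\ne\chr,\infty$. The starting point is Proposition~\ref{CONNECTION}(1) together with Proposition~\ref{PropAltDescrQHom}: I would choose a finite closed subset $D\subset C$ with $\infty\in D$, $v\notin D$, set $\TA=\Gamma(C\setminus D,\O_C)$, and identify $\QHom(\FF,\FF')$ with $\Hom(\ulM^{(D)}(\FF),\ulM^{(D)}(\FF'))\otimes_\TA Q$. Writing $\ulM=\ulM^{(D)}(\FF)$ and $\ulM'=\ulM^{(D)}(\FF')$, I tensor $\Hom_{\TA_L[\t]}(\ulM,\ulM')$ with $\TA_v:=\widehat{\TA}_v$ and use that $\TA_v$ is flat over $\TA$ (indeed faithfully flat after completion at $v$) to embed $\Hom(\ulM,\ulM')\otimes_\TA A_v$ into $\Hom_{A_{v,L}[\phi]}(\ulM_v(\FF),\ulM_v(\FF'))$; this is the same kind of flatness argument used in the proof of Theorem~\ref{ThmLS2}. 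Since $v\ne\chr$, the local $\sigma$-shtuka $\ulM_v(\FF)$ is \'etale, so Proposition~\ref{Prop2.13'}(2) gives a further isomorphism $\Hom_{A_{v,L}[\phi]}(\ulM_v(\FF),\ulM_v(\FF'))\isoto\Hom_{A_v[G]}(T_v\ulM_v(\FF),T_v\ulM_v(\FF'))$. By Proposition~\ref{PropLS2b}, $T_v\ulM_v(\FF)=T_v\FF$, and after inverting $v$ one obtains the desired injection into $\Hom_{Q_v[G]}(V_v\FF,V_v\FF')$. (If $\BF_v\supsetneq\Fq$ one first applies Proposition~\ref{PropLS3} to replace the local $\sigma$-shtuka by the \'etale local $\sigma^f$-shtuka over the integral domain $A_{v,L}/\Fa_i$, to which Proposition~\ref{Prop2.13'} legitimately applies; this changes nothing essential.)

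For part 2 the argument is parallel but uses the "big" local shtuka at $\infty$. I would take a finite closed $D\subset C$ with $\infty\notin D$, choose $l$ satisfying condition~2 of Definition~\ref{Def1.1} for both $\FF$ and $\FF'$, enlarge $L$ if necessary so that $\Ff_{q^l}\subset L$ (this is harmless since enlarging $L$ only enlarges the Hom-groups and injectivity over a larger field implies it over the smaller one — more carefully, one checks the map before extension factors through the map after extension), and apply Proposition~\ref{CONNECTION}(2): $\QHom(\FF,\FF')=\Hom_{\P,\Lambda}(\ulM^{(D)}(\FF),\ulM^{(D)}(\FF'))\otimes_\TA Q$. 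Since $\infty\ne\chr$, the big local $\sigma$-shtuka $\ulTM_\infty(\FF)$ is \'etale, so tensoring with $A_\infty$ and applying Proposition~\ref{Prop2.13'} again embeds the $\P$-$\Lambda$-equivariant Hom-space into $\Hom_{A_\infty[G]}(T_\infty\ulTM_\infty(\FF),T_\infty\ulTM_\infty(\FF'))$, and the $\P,\Lambda$-commutation conditions are preserved under the Tate module functor because $\P$ and $\Lambda(\lambda)$ act on the Tate modules by the very construction recalled after Proposition~\ref{PropLS2b}. Using $T_\infty\ulTM_\infty(\FF)=T_\infty\FF$ from Proposition~\ref{PropLS2b} and inverting $\infty$ gives the injection into $\Hom_{Q_\infty[\P,\Lambda,G]}(V_\infty\FF,V_\infty\FF')$.

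Finally, for the dimension bound: by part 1, $\QHom(\FF,\FF')\otimes_Q Q_v$ injects into $\Hom_{Q_v[G]}(V_v\FF,V_v\FF')$, which is a $Q_v$-subspace of $\Hom_{Q_v}(V_v\FF,V_v\FF')$, of dimension $r\cdot r'$ since $\dim_{Q_v}V_v\FF=r$ and $\dim_{Q_v}V_v\FF'=r'$ for $v\ne\infty$. Hence $\dim_Q\QHom(\FF,\FF')=\dim_{Q_v}\bigl(\QHom(\FF,\FF')\otimes_Q Q_v\bigr)\le rr'$. (If $r\ne r'$, or more generally if $\FF,\FF'$ have different weights, both sides are zero by Proposition~\ref{PROP.1}, so there is nothing to prove; otherwise $r=r'$ is automatic when a nonzero quasi-morphism exists.)

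\textbf{Main obstacle.} The routine parts are the flatness/base-change manipulations; the genuinely delicate point is the bookkeeping when $\BF_v\supsetneq\Fq$ (so $Q_{v,L}$ is not a field and $A_{v,L}$ not a domain), where one must pass through the factor-decomposition and Propositions~\ref{PropLS3}–\ref{PropLS4} before invoking Proposition~\ref{Prop2.13'}; and, in part~2, verifying cleanly that the $\P$- and $\Lambda$-equivariance conditions on the $\TA_L$-module side match exactly the $\P,\Lambda$-equivariance on the Tate-module side under the comparison isomorphism. Once those compatibilities are in place the injectivity is formal.
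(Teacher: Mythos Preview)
Your proposal is correct and close in spirit to the paper's proof, but the paper takes a slightly more direct route. Instead of passing through Proposition~\ref{CONNECTION} to the integral $\TA$-module level and then completing, the paper works rationally throughout: it uses Proposition~\ref{PropAltDescrQHom} to embed $\QHom(\FF,\FF')$ into $\Hom_{Q_L}(\F_{0,\eta},\F'_{0,\eta})$, tensors over $Q$ with $Q_v$, and then injects into $\Hom_{Q_{v,L}}\bigl(\ulN_v(\FF),\ulN_v(\FF')\bigr)$, noting that the image lands in the $\phi$-commuting part; Proposition~\ref{Prop2.13'} tensored with $Q_v$ finishes as in your argument. For part~2 the paper simply replaces $\ulN_v$ by $\ulTN_\infty$ and $Q_{v,L}$ by $Q_{\infty,L}[\P,\Lambda]$. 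The advantage of the paper's route is that it avoids the integral completion step entirely and so sidesteps the issue you flagged as the ``main obstacle''.

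One genuine caveat on your version: your appeal to ``the same kind of flatness argument used in the proof of Theorem~\ref{ThmLS2}'' is misleading, because that theorem is stated over a \emph{finite} field, where $A_v\otimes_\Fq L=A_{v,L}$ automatically. Over arbitrary $L$ you need the additional injection
\[
\Hom_{\TA_L}(M,M')\otimes_{\TA_L}(A_v\otimes_\Fq L)\;\into\;\Hom_{\TA_L}(M,M')\otimes_{\TA_L}A_{v,L}\,,
\]
which holds because $\Hom_{\TA_L}(M,M')$ is locally free over $\TA_L$; this is exactly the step the paper carries out later in the proof of Proposition~\ref{PropT.1}. With that filled in your argument is complete. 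Your worry about $\BF_v\supsetneq\Fq$ is not a real obstacle: Proposition~\ref{Prop2.13'} is stated for arbitrary $v$, and the decomposition of Proposition~\ref{PropLS3} is only a device to reduce to the literature, not an extra hypothesis you must verify.
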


\begin{proof}
1. Consider the morphisms
\[
\QHom(\FF,\FF')\otimes_Q Q_v\into\Hom_{Q_L}(\F_{0,\eta},\F'_{0,\eta})\otimes_{Q_L} (Q_v\otimes_\Fq L) \into\Hom_{Q_{v,L}}\bigl(\ulN_v(\FF),\ulN_v(\FF')\bigr)\,.
\]
obtained from \ref{PropAltDescrQHom} and the definition of $\ulN_v(\FF)$. Clearly the composition factors through $\Hom_{Q_{v,L}[\phi]}\bigl(\ulN_v(\FF),\ulN_v(\FF')\bigr)$. Since in both cases $\ulM_v(\FF)$ and $\ulM_v(\FF')$ are \'etale local shtukas, the isomorphy of the later $Q_v$-vector space with the one stated in the proposition follows by tensoring \ref{Prop2.13'} with $\otimes_{A_v}Q_v$.

\smallskip
\noindent 
2. We adapt the argument from 1 by replacing $\ulN_v$ and $Q_{v,L}$ by $\ulTN_\infty$ and $Q_{\infty,L}[\P,\Lambda]$ and the assertion follows as above.
\end{proof}

The following fact is well known and proved for instance in \cite[Proposition 1.2.4]{Taelman} even without the purity assumption. Nevertheless, we include a proof for the sake of completeness and to illustrate the use of the $\infty$-adic Tate module $V_\infty\FF$ arising from the big local shtuka $\ulTM_\infty(\FF)$.

\begin{Theorem}\label{ThmT.3}
Let $\ulM$ and $\ulM'$ be pure $A$-motives over an arbitrary field $L$. Then $\Hom(\ulM,\ulM')$ is a projective $A$-module of rank $\le rr'$.
\end{Theorem}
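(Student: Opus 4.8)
The plan is to reduce the statement to a finiteness assertion that can be checked after passing to a local (iso-)shtuka, and then invoke the injectivity result of Proposition~\ref{PropT.2} together with the structure of $A$ as a Dedekind domain. First I would observe that $\Hom(\ulM,\ulM')$ is an $A$-submodule of $\QHom(\ulM,\ulM')$ (using the equivalence with abelian $\tau$-sheaves from Corollary~\ref{Cor2.9d} when $\chr\ne\infty$, or arguing directly), and the latter is a $Q$-vector space of dimension $\le rr'$ by Proposition~\ref{PropT.2}. Hence $\Hom(\ulM,\ulM')$ is automatically torsion-free of rank $\le rr'$; the only real content is to show it is \emph{finitely generated} as an $A$-module, for then, $A$ being a Dedekind domain, finitely generated plus torsion-free forces projective.

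For the finiteness I would fix an auxiliary place $v\in\Spec A$ different from the characteristic point $\chr$ (such a $v$ exists), so that the local $\sigma$-shtuka $\ulM_v(\ulM)$ is \'etale, and consider the composite
\[
\Hom(\ulM,\ulM')\;\longto\;\Hom_{A_{v,L}[\phi]}\bigl(\ulM_v(\ulM),\ulM_v(\ulM')\bigr)\;\isoto\;\Hom_{A_v[G]}\bigl(T_v\ulM,T_v\ulM'\bigr),
\]
where the first arrow is base change and the second is the isomorphism of Proposition~\ref{Prop2.13'}; here $T_v\ulM=T_v(\ulM_v(\ulM))$ by Proposition~\ref{PropLS2b}. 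The target is a submodule of $\Hom_{A_v}(T_v\ulM,T_v\ulM')$, which is a free $A_v$-module of rank $rr'$, and I claim the image of $\Hom(\ulM,\ulM')$ is a \emph{discrete} $A$-submodule: any morphism $f:\ulM\to\ulM'$ inducing the zero map on $T_v\ulM$ vanishes modulo $v^n$ for every $n$, hence $f$ lies in $\bigcap_n v^n\Hom(\ulM,\ulM')=0$ by Krull's intersection theorem applied after noting $\Hom(\ulM,\ulM')$ is torsion-free; so the map is injective, and in fact $\Hom(\ulM,\ulM')/v\Hom(\ulM,\ulM')$ injects into the finite-dimensional $\BF_v$-vector space $\Hom_{\BF_v}(T_v\ulM/v,T_v\ulM'/v)$. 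Combining injectivity of $\Hom(\ulM,\ulM')\otimes_A A_v\to\Hom_{A_v}(T_v\ulM,T_v\ulM')$ (which follows from Proposition~\ref{PropT.2} since $A_v$ is flat over $A$, or directly from the injectivity just shown after tensoring) with the bound on rank, one gets that $\Hom(\ulM,\ulM')$ is a torsion-free $A$-module all of whose localizations at maximal ideals are finitely generated of bounded rank and whose image in $\Hom_{A_v}(T_v\ulM,T_v\ulM')$ is finitely generated over $A_v$; a standard argument then upgrades this to finite generation over $A$.

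I expect the main obstacle to be precisely this last upgrading step: injectivity into an $A_v$-lattice plus a uniform rank bound does not by itself yield finite generation over $A$ (think of $A[1/v]$ inside $A_v$), so one must genuinely use that the image avoids such unbounded denominators. The clean way I would do this is to pick a \emph{second} place $w\ne\chr$, $w\ne v$, and intersect: an element of $\QHom(\ulM,\ulM')$ that is integral (lies in $\Hom$) is characterized by having non-negative valuation at \emph{every} place of $C$ when regarded as a section of an appropriate $\HOM$-sheaf, and the purity/weight-matching (Proposition~\ref{PROP.1} resp. Corollary~\ref{Cor2.9b}) controls the behaviour at $\infty$. Concretely, via Corollary~\ref{Cor2.9d} a quasi-morphism $f$ lies in $\Hom(\ulM,\ulM')$ iff the corresponding map of abelian $\tau$-sheaves extends to $\FF\to\FF'$ without poles, i.e. iff its image in $V_v\FF'\otimes\ldots$ lands in $T_v\FF'$-type lattices at \emph{two} independent places simultaneously; the intersection of two $A_v$- and $A_w$-lattices inside a fixed finite-dimensional $Q$-vector space is a finitely generated $A$-module, because $A=\Gamma(C\setminus\{\infty\},\O_C)$ and removing the one further point does not destroy finite generation once the $\infty$-behaviour is pinned down by purity. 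This is the technical heart; once finite generation is in hand, projectivity is immediate from the Dedekind property and the rank bound is inherited from Proposition~\ref{PropT.2}.
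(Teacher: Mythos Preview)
Your overall strategy is right---torsion-freeness and the rank bound follow easily, and finite generation is the crux---but the execution of the finite-generation step has a genuine gap. Working at a finite place $v\in\Spec A$ cannot yield discreteness: $A$ is \emph{dense} in $A_v$, so the image of $\Hom(\ulM,\ulM')$ in $\Hom_{A_v}(T_v\ulM,T_v\ulM')$ is never discrete in any useful sense, and the ``standard argument'' you invoke does not exist. You recognise this, but your proposed repair (a second finite place $w$) also fails: the claim that the intersection of an $A_v$-lattice and an $A_w$-lattice inside a $Q$-vector space is a finitely generated $A$-module is simply false---for any maximal ideal $\mathfrak p\ne v,w$ the module $A[1/\mathfrak p]\subset Q$ sits inside both $A_v$ and $A_w$ yet is not finitely generated over $A$. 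Your final appeal to purity ``pinning down the $\infty$-behaviour'' is pointing in the right direction but is not turned into an argument.

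The paper's proof carries out exactly that $\infty$-idea, directly. It passes to abelian $\tau$-sheaves $\FF,\FF'$ and embeds $H=\Hom(\ulM,\ulM')$ into the finite-dimensional $Q_\infty$-vector space $\Hom_{Q_\infty}(V_\infty\FF,V_\infty\FF')$ via Corollary~\ref{Cor2.9d} and Proposition~\ref{PropT.2} (for $v=\infty$, using the big local shtuka $\ulTM_\infty$). Then it shows $H$ is \emph{discrete} there: after choosing bases, $H$ lands in
\[
M_{r'l'\times rl}(Q_\infty)\;\cap\;B'\cdot M_{r'l'\times rl}(A_L)\cdot B^{-1},
\]
which is discrete because $A_L$ is discrete in $Q_{\infty,L^\sep}$ (the function-field analogue of $\Z$ being discrete in $\mathbb{R}$). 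A discrete $A$-submodule of a finite-dimensional $Q_\infty$-vector space is finitely generated, and projectivity then follows since $A$ is Dedekind. So the missing ingredient in your proposal is precisely to replace the finite place $v$ by the archimedean-like place $\infty$; once you make that switch, your outline becomes the paper's proof.
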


\begin{proof}
Since $M'$ is a locally free $A_L$-module, $H:=\Hom(\ulM,\ulM')$ is a torsion free, hence flat $A$-module, because all local rings of $A$ are principal ideal domains. We prove that $H$ is finitely generated by showing that $H$ is a discrete submodule of a finite dimensional $Q_\infty$-vector space. Let $\FF$ and $\FF'$ be abelian $\tau$-sheaves with $\ulM=\ulM(\FF)$ and $\ulM'=\ulM(\FF')$. Then Corollary~\ref{Cor2.9d} and Proposition~\ref{PropT.2} yield inclusions
\[
H\into H\otimes_A Q=\QHom(\FF,\FF')\into\Hom_{Q_\infty}(V_\infty\FF,V_\infty\FF')
\]
The later $Q_\infty$-vector space is finite dimensional and we claim that $H$ is a discrete $A$-lattice. Indeed choose $Q\otimes_\Fq L$-bases $\ul m=(m_1,\ldots,m_{rl})$ of $\bigoplus_{i=0}^{l-1}\F_{i,\eta}$ with $m_j\in M$ and $\ul m'=(m'_1,\ldots,m'_{r'l'})$ of $\bigoplus_{i=0}^{l'-1}\F'_{i,\eta}$ such that $\bigoplus_{i=0}^{l'-1}M'\subset \bigoplus_{j=1}^{r'l'}A_L m'_j$. With respect to these bases every element of $H$ corresponds to a matrix in $M_{r'l'\times rl}(A_L)$. Now choose $Q_\infty$-bases $\ul n$ of $V_\infty\FF$ and $\ul n'$ of $V_\infty\FF'$ and denote the base change matrix from $\ul m$ to $\ul n$ by $B\in\GL_{rl}(Q_{\infty,L^\sep})$, and the base change matrix from $\ul m'$ to $\ul n'$ by $B'\in\GL_{r'l'}(Q_{\infty,L^\sep})$. Then $H$ is contained in
\[
M_{r'l'\times rl}(Q_\infty) \cap B'\cdot M_{r'l'\times rl}(A_L)\cdot B^{-1}
\]
which is discrete in $M_{r'l'\times rl}(Q_\infty)$. This proves that $H$ is a projective $A$-module. The estimate on the rank of $H$ follows from \ref{Cor2.9d} and \ref{PropT.2}.
\end{proof}

\begin{Corollary}\label{CorT.4}
The minimal polynomial of every endomorphism of a pure $A$-motive $\ulM$ lies in $A[x]$. \qed
\end{Corollary}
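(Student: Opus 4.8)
The plan is to deduce this from the finiteness of $\End(\ulM)$ as an $A$-module established in Theorem~\ref{ThmT.3}, together with the fact that $A$ is integrally closed in $Q$. Fix an endomorphism $f\in\End(\ulM)$. By Theorem~\ref{ThmT.3} the $Q$-algebra $\End(\ulM)\otimes_A Q$ is finite dimensional over $Q$, so the kernel of the $Q$-algebra homomorphism $Q[x]\to\End(\ulM)\otimes_A Q$ sending $x$ to $f$ is a nonzero ideal of the principal ideal domain $Q[x]$; its monic generator is by definition the minimal polynomial $\mu_f\in Q[x]$ of $f$. Since $\End(\ulM)$ need not be commutative, this is the precise meaning of ``minimal polynomial'', but no trouble arises because $Q[x]$ is a PID.

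First I would show that $f$ is integral over $A$. The $A$-subalgebra $A[f]\subseteq\End(\ulM)$ is a submodule of the finitely generated $A$-module $\End(\ulM)$, and $A$ is Noetherian, so $A[f]$ is itself a finitely generated $A$-module. Hence $f$ is a root of some monic polynomial $g\in A[x]$, and $g(f)=0$ holds already in $\End(\ulM)$, hence a fortiori in $\End(\ulM)\otimes_A Q$. It follows that $\mu_f$ divides $g$ in $Q[x]$.

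Finally I would invoke that $A$ is integrally closed: since $C\setminus\{\infty\}$ is a smooth affine curve, $A$ is a Dedekind domain, in particular integrally closed in its fraction field $Q$. Writing $g=\mu_f\cdot h$ with $\mu_f,h\in Q[x]$ monic, every root of $g$ in a fixed algebraic closure of $Q$ is integral over $A$ because $g\in A[x]$ is monic; hence the coefficients of $\mu_f$, being polynomial expressions in a subset of these roots, are integral over $A$, and since they lie in $Q$ they lie in $A$. Thus $\mu_f\in A[x]$, as claimed.

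The argument is routine commutative algebra once Theorem~\ref{ThmT.3} is available; there is no real obstacle beyond keeping track of the possible non-commutativity of $\End(\ulM)$ and citing the integral closedness of $A$ in $Q$.
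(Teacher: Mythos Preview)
Your argument is correct and is precisely the intended one: the paper marks this corollary with \qed because it follows immediately from Theorem~\ref{ThmT.3} by the standard fact that an element of a finite $A$-algebra is integral over $A$, together with the integral closedness of the Dedekind domain $A$ in $Q$. The same reasoning is also sketched in Remark~\ref{Rem1.26'}.
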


\begin{Proposition}\label{PropT.1}
Let $\ulM$ and $\ulM'$ be pure $A$-motives over an arbitrary field $L$ and let $v\in\Spec A$ be a maximal ideal different from $\chr$. Then the natural map
\[
\Hom(\ulM,\ulM')\otimes_A A_v \longto \Hom_{A_v[G]}(T_v\ulM,T_v\ulM')
\]
is injective with torsion free cokernel.
\end{Proposition}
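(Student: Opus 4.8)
The plan is to deduce both assertions from the equivalence of quasi-isogeny categories (Corollary~\ref{Cor2.9d}) together with the \'etale structure of the $v$-adic Tate modules, which is available precisely because $v$ is a \emph{maximal} ideal of $A$ and $v\ne\chr$. As preparation I would fix, via Theorem~\ref{PropX.1}, abelian $\tau$-sheaves $\FF,\FF'$ with $\ulM=\ulM(\FF)$ and $\ulM'=\ulM(\FF')$ (recall $\chr\ne\infty$ automatically, so Corollary~\ref{Cor2.9d} applies), and record three facts: $\Hom(\ulM,\ulM')\otimes_A Q=\QHom(\FF,\FF')$ by Corollary~\ref{Cor2.9d}; $\Hom(\ulM,\ulM')$ is projective over $A$ by Theorem~\ref{ThmT.3}, so $N:=\Hom(\ulM,\ulM')\otimes_A A_v$ is finite free over the discrete valuation ring $A_v$; and, since $\ulM_v(\ulM)=\ulM\otimes_{A_L}A_{v,L}$ is an \'etale local $\sigma$-shtuka for $v\ne\chr$, by \cite[Proposition~6.1]{TW} (compare Proposition~\ref{Prop2.13'}) the modules $T_v\ulM$, $T_v\ulM'$ are finite free over $A_v$ and the canonical map
\[
T_v\ulM\otimes_{A_v}A_{v,\Lsep}\;\isoto\;M\otimes_{A_L}A_{v,\Lsep}
\]
is a $\s$- and $G$-equivariant isomorphism. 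In particular $P:=\Hom_{A_v[G]}(T_v\ulM,T_v\ulM')$ is a submodule of the finite free $A_v$-module $\Hom_{A_v}(T_v\ulM,T_v\ulM')$, hence finite free, hence torsion free.

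For the injectivity I would localize. By Proposition~\ref{PropT.2}(1) the map
\[
\Hom(\ulM,\ulM')\otimes_A Q_v=\QHom(\FF,\FF')\otimes_Q Q_v\longto \Hom_{Q_v[G]}(V_v\FF,V_v\FF')=P\otimes_{A_v}Q_v
\]
is injective. Since $N$ is torsion free it embeds into $N\otimes_{A_v}Q_v$, and in the commutative square of localization maps the composite $N\to P\to P\otimes_{A_v}Q_v$ agrees with $N\hookrightarrow N\otimes_{A_v}Q_v\hookrightarrow P\otimes_{A_v}Q_v$, which is injective; hence $\iota\colon N\to P$ is injective.

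For torsion freeness of $\coker\iota$ I would pass to the residue field $\BF_v=A/v=A_v/vA_v$. Because $A_v$ is a discrete valuation ring and $P$ is torsion free, a short diagram chase shows $P/\iota(N)$ is torsion free once $N/vN\to P/vP$ is injective: if $v\varphi=\iota(g)$ with $\varphi\in P$, $g\in N$, then $g$ dies in $P/vP$, so $g=vg_1$, and then $\varphi=\iota(g_1)$ as $P$ has no $v$-torsion. Composing with the natural map $P/vP\to\Hom_{\BF_v[G]}(T_v\ulM/vT_v\ulM,\,T_v\ulM'/vT_v\ulM')$, it suffices that the resulting map
\[
\Hom(\ulM,\ulM')/v\Hom(\ulM,\ulM')\longto \Hom_{\BF_v[G]}\bigl(T_v\ulM/vT_v\ulM,\,T_v\ulM'/vT_v\ulM'\bigr),
\]
which is the reduction modulo $v$ of the Tate map, be injective. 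So take $f\in\Hom(\ulM,\ulM')$ inducing the zero map $T_v\ulM/vT_v\ulM\to T_v\ulM'/vT_v\ulM'$. Reducing the displayed trivialization modulo $v$ and using $A_{v,\Lsep}/vA_{v,\Lsep}=\BF_v\otimes_\Fq\Lsep$, this vanishing becomes, after the faithfully flat base change to $\BF_v\otimes_\Fq\Lsep$, the vanishing of $\bar f\otimes\id_{\Lsep}$ on $(M/vM)\otimes_L\Lsep$; since $L\hookrightarrow\Lsep$, already $\bar f=0$, i.e.\ $f(M)\subseteq vM'$. As $v\in A$ is a non-zero-divisor on the locally free $A_L$-modules in play, there is a unique $A_L$-linear $f'\colon M\to M'$ with $f=vf'$, and cancelling $v$ from $f\circ\tau=\tau'\circ\s f$ (using $\s v=v$) yields $f'\circ\tau=\tau'\circ\s f'$, so $f'\in\Hom(\ulM,\ulM')$ and $f\in v\Hom(\ulM,\ulM')$; thus the class of $f$ modulo $v$ vanishes.

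The injectivity is the soft part, being essentially formal once Proposition~\ref{PropT.2} and flatness over $A_v$ are in hand. The step I expect to cost the most care is the reduction modulo $v$ in the cokernel argument: one must verify that the \emph{functorial} \'etale trivialization genuinely converts ``$f$ kills $T_v\ulM/vT_v\ulM$'' into ``$\bar f=0$ on $M/vM$''. Relatedly, when the residue field $\BF_v$ is strictly larger than $\Fq$ the ring $\BF_v\otimes_\Fq L$ over which $M/vM$ lives is not a field, and one should first split off the distinguished factor of the idempotent decomposition of $A_{v,L}$ as in Proposition~\ref{PropLS3}, over which the structure ring is an integral domain, and run the argument there.
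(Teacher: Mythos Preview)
Your injectivity argument is fine, though more circuitous than the paper's, which embeds $\Hom(\ulM,\ulM')\otimes_A A_v$ directly into $\Hom_{A_L}(M,M')\otimes_{A_L}A_{v,L}$ rather than detouring through $Q_v$ and Proposition~\ref{PropT.2}.

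For torsion-freeness your reduction-mod-$v$ strategy is different from the paper's approximation argument, and it can be made to work, but your execution has a genuine gap. You write ``As $v\in A$ is a non-zero-divisor \dots\ there is a unique $A_L$-linear $f'$ with $f=vf'$,'' treating $v$ as an element of $A$. But $v$ is a maximal \emph{ideal}; when $A$ has nontrivial class group it need not be principal, so ``$f=vf'$'' is meaningless and your division step collapses. What you really need is the implication $f(M)\subset vM'\Rightarrow f\in v\cdot\Hom(\ulM,\ulM')$. This is in fact true, but for a reason you do not give: the $A$-linear map
\[
\delta\colon \Hom_{A_L}(M,M')\longrightarrow\Hom_{A_L}(\s M,M'),\qquad f\longmapsto f\tau-\tau'\,\s f,
\]
has kernel exactly $\Hom(\ulM,\ulM')$, so $\Hom_{A_L}(M,M')/\Hom(\ulM,\ulM')$ embeds into the $A$-torsion-free module $\Hom_{A_L}(\s M,M')$ and is therefore itself $A$-torsion-free; tensoring with $A/v$ then gives the injectivity of $\Hom(\ulM,\ulM')/v\hookrightarrow\Hom_{A_L}(M,M')/v$ you need.

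The paper avoids this issue altogether. It chooses $a\in A$ generating some power $v^m$, takes a torsion element $f_v$ of the cokernel, approximates $g_v:=af_v\in N$ arbitrarily well by $g\in\Hom(\ulM,\ulM')$, uses the \'etale trivialization to see $g(M)\subset aM'$, and then divides by the \emph{element} $a$---which is now legitimate---to produce $f\in\Hom(\ulM,\ulM')$ with $f\equiv f_v\bmod v^n$; finite generation of $\Hom(\ulM,\ulM')$ (Theorem~\ref{ThmT.3}) lets one pass to the limit. Your one-step mod-$v$ route is cleaner once patched, but only because of the torsion-freeness of $\coker\delta$, which has to be argued.

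Finally, your worry about $\BF_v\supsetneq\Fq$ is a red herring: faithfully flat descent along $L\hookrightarrow\Lsep$ already shows $\bar f=0$ on $M/vM$ regardless of whether $\BF_v\otimes_\Fq L$ is a field, so no splitting via Proposition~\ref{PropLS3} is needed here.
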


\begin{proof}
Consider the morphisms
\[
\Hom(\ulM,\ulM')\otimes_A A_v \into \Hom_{A_L}(M,M')\otimes_{A_L}(A_v\otimes_\Fq L)\into \Hom_{A_L}(M,M')\otimes_{A_L}A_{v,L}
\]
which are injective because $A_v$ is flat over $A$, respectively because $\Hom_{A_L}(M,M')$ is flat over $A_L$. Again the composite morphism factors through
\[
\Hom_{A_{v,L}[\phi]}\bigl(\ulM_v(\ulM),\ulM_v(\ulM')\bigr)\;=\;\Hom_{A_v[G]}(T_v\ulM,T_v\ulM')
\]
(use \ref{Prop2.13'}). To prove that the cokernel is torsion free let $f_v$ be an element of the cokernel which is torsion. Since a power of $v$ is principal we may assume that $g_v=a f_v\in\Hom(\ulM,\ulM')\otimes_A A_v$ for an $a\in A$ with $(a)=v^m$ for some $m\in\N$. Fix a positive integer $n$. There exists a $g\in\Hom(\ulM,\ulM')$ with $g\equiv g_v\mod v^{n+m}$. In particular $a$ divides $g$ in $\Hom_{A_v[G]}(T_v\ulM,T_v\ulM')$. Since
\[
\Bigl((T_v\ulM'/a\cdot T_v\ulM')\otimes_{A/(a)} A_{L^\sep}/(a)\Bigr)^G\;\cong\;\ulM'/a\ulM'
\]
(compare \ref{Prop2.13'}) we see that $g$ maps $\ulM$ into $a\ulM'$. Thus $g$ factors, $g=af$ with $f\in\Hom(\ulM,\ulM')$ and $f\equiv f_v\mod v^n$. As $n$ was arbitrary and $\Hom(\ulM,\ulM')$ is a finitely generated $A$-module the proposition follows.
\end{proof}

If $L$ is finitely generated, Proposition~\ref{PropT.1} was strengthened by Taguchi~\cite{Taguchi95b} and Tamagawa~\cite[\S 2]{Tam} to the following analog of the Tate conjecture for abelian varieties.

\begin{Theorem}[Tate conjecture for $\t$-modules]\label{TATE-CONJECTURE-MODULES}
Let\/ $\ulM$ and $\ulM'$ be two $\t$-modules on $\TA$ over a finitely generated field $L$ and let $G:=\Gal(\Lsep/L)$. Let\/ $v\in\Spec\TA$ such that the support of\/ $\coker\t'$ does not meet\/ $v$. For instance $\ulM$ and $\ulM'$ could be pure $A$-motives, $\TA=A$, and $v\ne\chr$. Then the Tate conjecture holds:
\[
\Hom(\ulM,\ulM')\otimes_{\TA}\Av \;\cong\; \Hom_\AvG(T_v\ulM,T_v\ulM')\,.\qed
\]
\end{Theorem}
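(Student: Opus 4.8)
The plan is to isolate the genuinely new content. The injectivity of the natural map together with the torsion-freeness of its cokernel is the argument of Proposition~\ref{PropT.1}: it uses only that $\Av$ is flat over $\TA$, that $\Hom_{\TA\otimes_\Fq L}(M,M')$ is flat over $\TA\otimes_\Fq L$, and the comparison with morphisms of the associated local $\sigma$-shtukas through Proposition~\ref{Prop2.13'}, and this goes through verbatim for $\t$-modules. Hence everything reduces to the \emph{surjectivity} of
\[
\Hom(\ulM,\ulM')\otimes_{\TA}\Av \longto \Hom_\AvG(T_v\ulM,T_v\ulM')\,.
\]
First I would replace $(\ulM,\ulM')$ by the single $\t$-module $\ulN:=\ulM\oplus\ulM'$: the map in question is cut out from $\End(\ulN)\otimes_{\TA}\Av\to\End_\AvG(T_v\ulN)$ by an idempotent of $\End(\ulN)$, so it suffices to treat endomorphisms. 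Secondly I would reduce the base field: passing to a finite separable extension of $L$ is harmless by Galois descent, and a standard spreading-out-and-specialization argument (using the injectivity of $\Hom$ under specialization) reduces a finitely generated $L$ to a finite field.

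Over a finite field the argument then follows the pattern of Tate's proof for abelian varieties, resting on two inputs. The first is \emph{semisimplicity} of $V_v\ulN$ as a $\Qv[G]$-module; over a finite field this is itself a substantial theorem (the $\t$-module analog of the semisimplicity of Tate modules of abelian varieties over finite fields). The second, and this is where the real work lies, is a \emph{finiteness} statement: by Theorem~\ref{ThmT.3} the module $\End(\ulN)$ is finitely generated over $\TA$, hence $v$-adically discrete, and only finitely many $\t$-modules over $L$ up to isomorphism are isogenous to $\ulN$ (the analog of the finiteness of the isogeny class of an abelian variety over a finite field). Granting these, given $\phi\in\End_\AvG(V_v\ulN)$ one forms the $G$-stable $\Av$-lattices $\Lambda_n:=T_v\ulN+v^{-n}\phi(T_v\ulN)\subset V_v\ulN$; each is ``defined over $\TA$ modulo $v^m$'' for every $m$ because $\phi$ itself is, and the finiteness input forces the associated sub-$\t$-modules of a suitable power of $\ulN$ to repeat up to isomorphism, whence a pigeonhole argument inside the discrete $\TA$-module $\End(\ulN)$ produces an actual element of $\End(\ulN)\otimes_{\TA}\Av$ mapping to $\phi$. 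Semisimplicity is what guarantees that the lattices $\Lambda_n$ record enough to carry out the pigeonhole.

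The main obstacle is supplying these two inputs — semisimplicity and, above all, the finiteness of the isogeny class, the latter being the analog of the finiteness theorems of Zarhin and Faltings and genuinely deep. For the statement as phrased the cleanest route is therefore to invoke the work of Taguchi~\cite{Taguchi95b} and Tamagawa~\cite[\S 2]{Tam}, who established exactly what is needed for $\t$-modules over finitely generated fields; the reductions above then deduce our formulation from theirs. I do not expect the purely local approach via $\ulM_v(\ulN)$ alone to suffice here: over a field that is merely finitely generated rather than finite it only reproduces the injectivity of Proposition~\ref{PropT.1}, not the surjectivity.
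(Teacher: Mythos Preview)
The paper gives no proof of this theorem: note the \qed\ at the end of the statement and the sentence immediately preceding it, which attributes the result to Taguchi and Tamagawa. Your proposal correctly arrives at the same conclusion, namely that the only honest route is to invoke \cite{Taguchi95b} and \cite[\S2]{Tam}; the sketch you give of Tate's method (reduction to endomorphisms, descent to a finite field, semisimplicity plus finiteness of the isogeny class) is a reasonable outline of how those proofs proceed, but the paper itself offers no such sketch and treats the theorem as a black box from the literature.
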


\noindent {\it Remark.}
Note one interesting consequence of this result. If $\ulM$ and $\ulM'$ are pure $A$-motives of different weights over a finitely generated field then $\Hom_{A_v[G]}(T_v\ulM,T_v\ulM')=(0)$.

\begin{Theorem}[Tate conjecture for abelian $\tau$-sheaves]\label{TATE-CONJECTURE}
Let $\FF$ and $\FF'$ be abelian $\tau$-sheaves over a finitely generated field $L$ and let $G:=\Gal(\Lsep/L)$. Let\/ $v\in C$ be a place different from the characteristic point $\chr$. 
\begin{suchthat}
\item
If $v\ne\infty$ assume the characteristic $\chr$ is different from $\infty$ or $\weight(\FF)=\weight(\FF')$. Then 
\[
\QHom(\FF,\FF')\otimes_Q\Qv\;\cong\;\Hom_\QvG(\VvFF,\VvFF')\,.
\]
\item
If $v=\infty$ choose an integer $l$ which satisfies condition 2 of \ref{Def1.1} for both $\FF$ and $\FF'$ and assume $\Ff_{q^l}\subset L$. Then 
\[
\QHom(\FF,\FF')\otimes_Q Q_\infty\;\cong\;\Hom_{\Delta_\infty[G]}(V_\infty\FF,V_\infty\FF')\,.
\]
\end{suchthat}
\end{Theorem}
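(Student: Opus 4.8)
The plan is to deduce both assertions from the Tate conjecture for $\t$-modules, Theorem~\ref{TATE-CONJECTURE-MODULES} (the theorem of Taguchi and Tamagawa), which we may quote, by passing from $\FF$ and $\FF'$ to the associated $\t$-modules $\ulM^{(D)}(\FF)$ and $\ulM^{(D)}(\FF')$ of Section~\ref{SectRelation} and combining Proposition~\ref{CONNECTION} with the definitions of $T_v\FF$ and $V_v\FF$.

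For part~1 I would take $D=\{\infty\}$, so that $\TA=A$ and $\ulM(\FF)=\ulM^{(\{\infty\})}(\FF)$ is a $\t$-module on $A$ whose $\coker\t$ is supported over the closed point $\chr$. Since $v\ne\chr$, Theorem~\ref{TATE-CONJECTURE-MODULES} applies at $v\in\Spec A$ and yields $\Hom(\ulM(\FF),\ulM(\FF'))\otimes_A A_v\cong\Hom_{A_v[G]}(T_v\FF,T_v\FF')$. Under the hypothesis of part~1 -- either $\chr\ne\infty$, in which case by Lemma~\ref{Lemma2.9a} the weight condition in Proposition~\ref{CONNECTION} is superfluous, or $\weight(\FF)=\weight(\FF')$ -- Proposition~\ref{CONNECTION} in the case $\infty\in D$ gives $\QHom(\FF,\FF')=\Hom(\ulM(\FF),\ulM(\FF'))\otimes_A Q$. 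Tensoring the preceding isomorphism with $\otimes_{A_v}Q_v$, and using that $T_v\FF$ and $T_v\FF'$ are finite free over $A_v$ so that the flat base change $A_v\to Q_v$ commutes with forming $\Hom$ of lattices and with taking $G$-invariants, one obtains
\[
\QHom(\FF,\FF')\otimes_Q Q_v\;=\;\Hom(\ulM(\FF),\ulM(\FF'))\otimes_A A_v\otimes_{A_v}Q_v\;\cong\;\Hom_{Q_v[G]}(V_v\FF,V_v\FF')\,.
\]

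For part~2 note that $v=\infty$ together with $v\ne\chr$ forces $\chr\ne\infty$; I would pick a finite closed $D\subset C$ with $\infty\notin D$ and $\chr\notin D$, together with a uniformizing parameter $z$ at $\infty$ inside $\TA:=\Gamma(C\setminus D,\O_C)$. Then $\ulM^{(D)}(\FF)$ and $\ulM^{(D)}(\FF')$ are $\t$-modules on $\TA$ of rank $rl$ with $\coker\t$ supported on $\Graph(c)\cap(C_L\setminus D)$, hence not over $\infty$, so Theorem~\ref{TATE-CONJECTURE-MODULES} at $\infty\in\Spec\TA$ yields $\Hom(\ulM^{(D)}(\FF),\ulM^{(D)}(\FF'))\otimes_\TA A_\infty\cong\Hom_{A_\infty[G]}(T_\infty\FF,T_\infty\FF')$. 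Being induced by $f\mapsto T_\infty(f)$, this isomorphism is compatible with the endomorphisms $\P$ and $\Lambda(\lambda)$, $\lambda\in\Ff_{q^l}$, of these $\t$-modules and with the endomorphisms they induce on the Tate modules; as $\otimes_\TA A_\infty$ is flat it restricts to $\Hom_{\P,\Lambda}(\ulM^{(D)}(\FF),\ulM^{(D)}(\FF'))\otimes_\TA A_\infty\cong\Hom_{A_\infty[\P,\Lambda,G]}(T_\infty\FF,T_\infty\FF')$. Since $\chr\notin D$, Lemma~\ref{Lemma2.9a} again removes the weight condition, so with $\Ff_{q^l}\subset L$ Proposition~\ref{CONNECTION} in the case $\infty\notin D$ gives $\QHom(\FF,\FF')=\Hom_{\P,\Lambda}(\ulM^{(D)}(\FF),\ulM^{(D)}(\FF'))\otimes_\TA Q$. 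Tensoring over $A_\infty$ with $Q_\infty$ and recalling from~(\ref{EqDelta}) that $\Delta_\infty$ acts on $V_\infty\FF$ precisely through $z$ (a $Q_\infty$-scalar), $\P$, and $\Lambda(\Ff_{q^l})$, one concludes
\[
\QHom(\FF,\FF')\otimes_Q Q_\infty\;\cong\;\Hom_{Q_\infty[\P,\Lambda,G]}(V_\infty\FF,V_\infty\FF')\;=\;\Hom_{\Delta_\infty[G]}(V_\infty\FF,V_\infty\FF')\,.
\]

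The only deep ingredient is Theorem~\ref{TATE-CONJECTURE-MODULES}; the rest is bookkeeping. The step requiring genuine care is the case $v=\infty$: one must verify that the Tate conjecture for $\t$-modules -- which is blind to the operators $\P$ and $\Lambda$ -- still suffices, namely that functoriality of the Tate module functor restricts the isomorphism to the $\P,\Lambda$-equivariant submodules on both sides, and that the subalgebra of $\End_{Q_\infty}(V_\infty\FF)$ generated by $z$, $\P$ and $\Lambda(\Ff_{q^l})$ is $\Delta_\infty$ as in~(\ref{EqDelta}). Matching the remaining hypotheses (supports of the cokernels, removal of the weight condition via Lemma~\ref{Lemma2.9a}) and the compatibility of $G$-invariants with the flat base changes $A\to A_v\to Q_v$ and $\TA\to A_\infty\to Q_\infty$ are routine.
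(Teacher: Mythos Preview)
Your proof is correct and follows essentially the same route as the paper: both parts are deduced from Theorem~\ref{TATE-CONJECTURE-MODULES} via Proposition~\ref{CONNECTION}, with $D=\{\infty\}$ for part~1 and a $D$ avoiding both $\infty$ and $\chr$ for part~2, and in part~2 the passage to $\Delta_\infty$-equivariant maps is obtained by observing that commutation with $\P$ and $\Lambda$ cuts out corresponding linear subspaces on both sides of the Tate isomorphism. Your treatment is slightly more explicit at the integral level (working first over $A_v$ or $A_\infty$ and then inverting) and in invoking flatness to commute the $\P,\Lambda$-invariants with the completion, but the argument is the same.
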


\begin{proof}
1. Set $\ulM:=\ulM(\FF)$ and $\ulM':=\ulM(\FF')$. By \ref{CONNECTION} and \ref{TATE-CONJECTURE-MODULES}, we have
\[
\QHom(\FF,\FF')\otimes_Q Q_v\;\cong\;\Hom(\ulM,\ulM')\otimes_AQ_v \;\cong\;
\Hom_\QvG(V_v\ulM,V_v\ulM')\,.
\]

\smallskip\noindent
2. Let $D\subset C$ be a finite closed subscheme as in Section~\ref{SectRelation} with $\chr,\infty\notin D$ and set $\ulM:=\ulM^{(D)}(\FF)$ and $\ulM':=\ulM^{(D)}(\FF')$. By \ref{CONNECTION} and \ref{TATE-CONJECTURE-MODULES}, we have
\[
\QHom(\FF,\FF')\otimes_Q Q_\infty\;\cong\;\Hom_{\P,\Lambda}(\ulM,\ulM')\otimes_{\TA}Q_\infty \;\cong\;
\Hom_{\Delta_\infty[G]}(V_\infty \ulM,V_\infty \ulM')\,.
\]
Here the last isomorphism comes from the fact that the commutation with $\P$ and $\Lambda(\lambda)$ are linear conditions on $\Hom(\ulM,\ulM')$ and $\Hom(\ulM,\ulM')\otimes_{\TA}Q_\infty\cong\Hom_{Q_\infty[G]}(V_\infty \ulM,V_\infty \ulM')$ thus cutting out isomorphic subspaces.
\end{proof}

As a direct consequence of the theorem together with Proposition~\ref{Prop2.13'} we obtain:

\begin{Corollary}\label{Cor2.17'}
\begin{suchthat}
\item 
Let $\ulM$ and $\ul M'$ be pure $A$-motives over a finitely generated field and let $v\in\Spec A$ be a maximal ideal different from the characteristic point $\chr$, then 
\[
\Hom(\ulM,\ulM')\otimes_A A_v\;\cong\;\Hom_{A_{v,L}[\phi]}\bigl(\ulM_v(\ulM),\ulM_v(\ulM')\bigr)\,.
\]
\item
Let $\FF$ and $\FF'$ be abelian $\tau$-sheaves over a finitely generated field $L$ and let $v$ be a place of $Q$ different from $\chr$ and $\infty$. If $\chr=\infty$ assume $\weight(\FF)=\weight(\FF')$. Then
\[
\QHom(\FF,\FF')\otimes_Q Q_v\;\cong\;\Hom_{Q_{v,L}[\phi]}\bigl(\ulN_v(\FF),\ulN_v(\FF')\bigr)\,.\qed
\]
\end{suchthat}
\end{Corollary}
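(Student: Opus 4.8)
The statement is a formal consequence of the two versions of the Tate conjecture already established, namely Theorems~\ref{TATE-CONJECTURE-MODULES} and~\ref{TATE-CONJECTURE}, combined with the dictionary between Tate modules and \'etale local $\sigma$-(iso)shtukas from Proposition~\ref{Prop2.13'} and the identifications $T_v\ulM=T_v(\ulM_v(\ulM))$, $V_v\FF=V_v(\ulM_v(\FF))$ of Proposition~\ref{PropLS2b}. So the plan is simply to compose the relevant isomorphisms and to check that the composite is the natural comparison map; I expect the only real care needed is bookkeeping of naturality, since every arrow in sight is canonical.

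For part~1, I would first note that since $v\ne\chr$ the local $\sigma$-shtukas $\ulM_v(\ulM)$ and $\ulM_v(\ulM')$ are \'etale, so Proposition~\ref{Prop2.13'}(2) provides a canonical isomorphism $\Hom_{A_{v,L}[\phi]}(\ulM_v(\ulM),\ulM_v(\ulM'))\isoto\Hom_{A_v[G]}(T_v\ulM_v(\ulM),T_v\ulM_v(\ulM'))$, and the right-hand side is $\Hom_{A_v[G]}(T_v\ulM,T_v\ulM')$ by Proposition~\ref{PropLS2b}. On the other side, Theorem~\ref{TATE-CONJECTURE-MODULES} applied with $\TA=A$ identifies $\Hom(\ulM,\ulM')\otimes_A A_v$ with $\Hom_{A_v[G]}(T_v\ulM,T_v\ulM')$. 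Composing yields the asserted isomorphism; tracing an element $f$ through the maps shows the composite is $f\otimes 1\mapsto\ulM_v(f)$, which is the natural map.

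For part~2, the place $v$ is different from both $\chr$ and $\infty$, so again $\ulM_v(\FF)$ and $\ulM_v(\FF')$ are \'etale local $\sigma$-shtukas with $\ulN_v(\FF)=\ulM_v(\FF)\otimes_{A_{v,L}}Q_{v,L}$. Tensoring Proposition~\ref{Prop2.13'}(2) with $\otimes_{A_v}Q_v$ and using Proposition~\ref{PropLS2b} gives $\Hom_{Q_{v,L}[\phi]}(\ulN_v(\FF),\ulN_v(\FF'))\isoto\Hom_{Q_v[G]}(V_v\FF,V_v\FF')$, while Theorem~\ref{TATE-CONJECTURE}(1)---whose hypotheses are exactly the standing ones here, that is, $\chr\ne\infty$, or else $\weight(\FF)=\weight(\FF')$ when $\chr=\infty$---identifies $\QHom(\FF,\FF')\otimes_Q Q_v$ with $\Hom_{Q_v[G]}(V_v\FF,V_v\FF')$. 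Composing the two finishes the argument. The point requiring most attention is that the weight hypothesis is precisely what Theorem~\ref{TATE-CONJECTURE}(1) needs when $\chr=\infty$, and that $v\ne\chr$ is exactly what makes all the local shtukas \'etale so that Proposition~\ref{Prop2.13'} applies; beyond that the proof is purely diagrammatic.
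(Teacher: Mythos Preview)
Your proposal is correct and follows exactly the approach the paper intends: the corollary is stated with a \qed and introduced only by ``As a direct consequence of the theorem together with Proposition~\ref{Prop2.13'}'', meaning one composes the Tate conjecture isomorphisms (Theorems~\ref{TATE-CONJECTURE-MODULES} and~\ref{TATE-CONJECTURE}) with the equivalence between \'etale local shtukas and Galois modules from Proposition~\ref{Prop2.13'}, using Proposition~\ref{PropLS2b} for the identification of Tate modules. You have simply written out in detail what the paper leaves implicit.
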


\bigskip

Finally we establish the relation between Tate modules and isogenies.

\begin{Proposition}\label{Prop2.7b}
\begin{suchthat}
\item 
Let $f:\ulM'\to\ulM$ be an isogeny between pure $A$-motives then $T_vf(T_v\ulM')$ is a $G$-stable lattice in $V_v\ulM$ contained in $T_v\ulM$.
\item 
Conversely if $\ulM$ is a pure $A$-motive and $\Lambda_v$ is a $G$-stable lattice in $V_v\ulM$ contained in $T_v\ulM$, then there exists a pure $A$-motive $\ulM'$ and a separable isogeny $f:\ulM'\to \ulM$ with $T_vf(T_v\ulM')=\Lambda_v$.
\end{suchthat}
\end{Proposition}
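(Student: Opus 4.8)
The plan is to treat the two parts separately; part~(a) is essentially formal and part~(b) carries the content. For part~(a): an isogeny $f\colon\ulM'\to\ulM$ has torsion cokernel, which by Corollary~\ref{Cor1.11b} is annihilated by a nonzero $a\in A$; since $a$ becomes a unit in $Q_{v,L}$, the base change $f\otimes_{A_L}Q_{v,L}$ is an isomorphism, hence so is $V_vf\colon V_v\ulM'\to V_v\ulM$. By functoriality of $T_v$ the map $T_vf$ is $G$-equivariant and carries $T_v\ulM'$ into $T_v\ulM$; its image, being the image of a full $A_v$-lattice under the ambient isomorphism $V_vf$, is a $G$-stable $A_v$-lattice in $V_v\ulM$ contained in $T_v\ulM$. (Alternatively one passes through the dual isogeny of Corollary~\ref{Cor1.11b}.)

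For part~(b) the task is to realise the purely $v$-adic datum $\Lambda_v\subseteq T_v\ulM$ as the Tate module of an actual sub-$A$-motive. Pick $n$ with $v^n\cdot(T_v\ulM/\Lambda_v)=0$, so that $\bar K:=T_v\ulM/\Lambda_v$ is a finite $A/v^nA$-module with continuous $G$-action. Since $v\ne\chr$, the local $\sigma$-shtuka $\ulM_v(\ulM)$ is étale and $T_v\ulM=T_v\bigl(\ulM_v(\ulM)\bigr)$ by Proposition~\ref{PropLS2b}; via the $v^n$-torsion version of the equivalence in Proposition~\ref{Prop2.13'} — concretely, the identification $\bigl((T_v\ulM/v^nT_v\ulM)\otimes_{A/v^nA}A_{L^\sep}/v^n\bigr)^G\cong\ulM/v^n\ulM$ from the proof of Proposition~\ref{PropT.1}, which rests on \cite[Proposition~6.1]{TW} — the $G$-equivariant surjection $T_v\ulM/v^nT_v\ulM\twoheadrightarrow\bar K$ corresponds to a surjection of $A_L/v^nA_L$-modules $\bar\rho\colon M/v^nM\twoheadrightarrow K$ commuting with the induced $\tau$'s, where $K$ is a finite torsion $A_L$-module on which $\tau_K$ is an isomorphism. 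Composing with the reduction $M\twoheadrightarrow M/v^nM$ gives a surjection $\rho\colon M\twoheadrightarrow K$ with $\tau_K\circ\s\rho=\rho\circ\tau$.

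Now I would apply Proposition~\ref{Prop1.5b}: as $\tau_K$ is an isomorphism, $\ker\tau_K$ and $\coker\tau_K$ vanish, so its hypothesis on powers of $J$ holds trivially, and $\ulM':=(\ker\rho,\tau|_{\s\ker\rho})$ is a pure $A$-motive of the same rank and dimension such that the inclusion $f\colon\ulM'\hookrightarrow\ulM$ is an isogeny with $\coker f=(K,\tau_K)$; since $\tau_K$ is an isomorphism, $f$ is separable by Definition~\ref{Def1'.2}. To compute $T_vf$, tensor $0\to M'\to M\to K\to 0$ over $A_L$ with the flat algebra $A_{v,L}$; as $K$ is a module over $A_L/v^nA_L=A_{v,L}/v^nA_{v,L}$, one obtains $\ulM_v(\ulM')=\ker\bigl(\ulM_v(\ulM)\to K\bigr)$, which by construction is exactly the sub-$\sigma$-shtuka of $\ulM_v(\ulM)$ attached to $\Lambda_v$ under Proposition~\ref{Prop2.13'}. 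Hence $T_v\ulM'=T_v\bigl(\ulM_v(\ulM')\bigr)=\Lambda_v$ and $T_vf$ is the inclusion, as required.

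The main obstacle is part~(b), and within it two points need care: that the modification of $\ulM$ forced at $v$ is automatically étale because $v\ne\chr$ — this is what makes the $J$-power hypothesis of Proposition~\ref{Prop1.5b} vacuous and forces $f$ to be separable — and that the quotient local shtuka descends from $A_{v,L}$-coefficients to $A_L$-coefficients, which succeeds only because it is killed by $v^n$, so that $A_{v,L}/v^nA_{v,L}=A_L/v^nA_L$. Granting the equivalence of Proposition~\ref{Prop2.13'} and its $v^n$-torsion analogue, everything else is formal diagram chasing.
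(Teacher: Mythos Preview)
Your proof is correct and follows essentially the same route as the paper. For part~2 the paper constructs the \'etale local sub-$\sigma$-shtuka $\ulHM{}':=(\Lambda_v\otimes_{A_v}A_{v,L^\sep})^G\subset\ulM_v(\ulM)$ and then applies Proposition~\ref{Prop1.5b} to the surjection $\ulM\shortonto\ulM_v(\ulM)/\ulHM{}'$; you instead descend the quotient $\bar K=T_v\ulM/\Lambda_v$ directly to a finite \'etale $\phi$-module $K$ and apply Proposition~\ref{Prop1.5b} to $\rho\colon M\shortonto K$ --- but these are the same construction viewed from the sublattice versus the quotient side, both resting on Proposition~\ref{Prop2.13'} and Proposition~\ref{Prop1.5b}.
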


\begin{proof}
1 follows from the $G$-invariance of $f$.

\noindent
2. Consider the $A_{v,L^\sep}[G,\phi]$-module $\Lambda_v\otimes_{A_v}A_{v,L^\sep}$. The action of $\phi$ through $\s$ on the right factor and of $G$ through both factors commute. This module is a submodule of 
\[
T_v\ulM\otimes_{A_v}A_{v,L^\sep}\;=\; \ulM_v(\ulM)\otimes_{A_{v,L}}A_{v,L^\sep}
\]
(see Proposition~\ref{Prop2.13'}), and contains $a\cdot \ulM_v(\ulM)\otimes_{A_{v,L}}A_{v,L^\sep}$ for a suitable $a\in A$. Therefore the $G$-invariants $(\Lambda_v\otimes_{A_v}A_{v,L^\sep})^G$ form an \'etale local $\sigma$-subshtuka $\ulHM{}'$ of $\ulM_v(\ulM)$ of the same rank. Now by Proposition~\ref{Prop1.5b} the kernel of the surjection $\ulM\shortonto\ulM_v(\ulM)/\ulHM{}'$ is a pure $A$-motive $\ulM'$ together with a separable isogeny $f:\ulM'\to\ulM$. Clearly $\ulM_v f(\ulM_v\ulM')=\ulHM{}'$ and $T_vf(T_v\ulM')=\Lambda_v$.
\end{proof}

\noindent
{\small
{\it Acknowledgments.} The second author is grateful to the Deutsche Forschungsgemeinschaft for the support in form of DFG-grant HA3006/2-1 and SFB 478. Both authors thank the referee for his careful reading and his useful comments.
}

% =============================================================================

{\small

}

\vfill

\noindent
\parbox[t]{8cm}{
Matthias Bornhofen  \\ 
Kolleg St. Sebastian\\
Hauptstr. 4\\
D -- 79252 Stegen \\
Germany  \\[0.1cm] }
\parbox[t]{11cm}{ 
Urs Hartl  \\ 
Institute of Mathematics  \\ 
University of Muenster\\
Einsteinstr.\ 62\\
D--48149 Muenster\\
Germany  \\[0.1cm] 
http:/\!/www.math.uni-muenster.de/u/urs.hartl/
}

% =============================================================================
\end{document}